\documentclass[a4paper]{amsart}
\usepackage{graphics}
\usepackage{amssymb}
\usepackage{latexsym,bm}
\usepackage{amsmath,amsfonts,amssymb,mathrsfs}
\usepackage{amscd}

\newcommand{\Z}{\mathbb Z} \newcommand{\Q}{\mathbb Q}
\newcommand{\fS}{\mathfrak{S}} \newcommand{\C}{\mathbb C}
\newcommand{\fsp}{\mathfrak{sp}}
\newcommand{\U}{\mathbb U}
\newcommand{\HH}{\mathscr{H}}
\newcommand{\A}{\mathscr{A}}
\newcommand{\BS}{\mathfrak S}
\newcommand{\bb}{\mathfrak{B}}
\newcommand{\lsub}[2]{{}_{#2}{#1}}
\newcommand\bi{{\bf i}}
\newcommand\bh{{\bf h}}
\newcommand\bj{{\bf j}}
\newcommand\bk{{\bf b}}
\newcommand\kbk{{\bf k}}

\newcommand\bl{{\bf l}}
\newcommand\bc{{\bf c}}
\newcommand\bu{{\bf u}}

\newcommand\bS{{\bf S}}
\newcommand{\lam}{\lambda}
\newcommand\ft{{\mathfrak{t}}}
\newcommand\fs{{\mathfrak{s}}}

\DeclareMathOperator{\End}{End} 
\DeclareMathOperator{\Delt}{\Delta} \DeclareMathOperator{\Ker}{Ker}
\DeclareMathOperator{\SpM}{SpM}
\DeclareMathOperator{\sign}{sign}
\DeclareMathOperator{\id}{id} \DeclareMathOperator{\image}{Im}
\DeclareMathOperator{\wt}{wt} \DeclareMathOperator{\bwt}{WT}
\DeclareMathOperator{\pr}{pr} \DeclareMathOperator{\ann}{ann}
\DeclareMathOperator{\Std}{Std} \DeclareMathOperator{\diag}{diag}
\DeclareMathOperator{\Hom}{Hom}

\newtheorem{thm}[equation]{Theorem}
\newtheorem{cor}[equation]{Corollary}
\newtheorem{lem}[equation]{Lemma}
\newtheorem{dfn}[equation]{Definition}

\numberwithin{equation}{section}

\begin{document}
\title[Quantized Schur--Weyl duality in type $C$]{BMW algebra, quantized coordinate algebra
and type $C$ Schur--Weyl duality}
\author{Jun Hu}
\address{School of Mathematics and Statistics\\
\newline
\null\,\quad University of Sydney\\
\newline\null\,\quad NSW 2006, Australia\\
\bigskip\medskip}

\email{junhu303@yahoo.com.cn}
\subjclass[2000]{Primary 17B37, 20C20; Secondary 20C08}

\date{}
\keywords{Birman--Murakami--Wenzl algebra, modified quantized
enveloping algebra, canonical bases}

\begin{abstract} We prove an integral version of the Schur--Weyl duality between the specialized
Birman--Murakami--Wenzl algebra $\bb_n(-q^{2m+1},q)$ and the quantum
algebra associated to the symplectic Lie algebra
$\mathfrak{sp}_{2m}$. In particular, we deduce that this Schur--Weyl
duality holds over arbitrary (commutative) ground rings, which
answers a question of Lehrer and Zhang (\cite{LZ}) in the symplectic
case. As a byproduct, we show that, as a
$\mathbb{Z}[q,q^{-1}]$-algebra, the quantized coordinate algebra
defined by Kashiwara in \cite{Ka2} (which was denoted by
$A_q^{\Z}(g)$ there) is isomorphic to the quantized coordinate
algebra arising from a generalized Faddeev--Reshetikhin--Takhtajan
construction (see \cite{FRT}, \cite{Ha}, \cite{Oe2}).
\end{abstract}

\maketitle

\tableofcontents

\section{Introduction}

Let $m, n\in\mathbb{N}$. Let ${\rm U}(\mathfrak{gl}_m)$ be the
universal enveloping algebra of the general linear Lie algebra
$\mathfrak{gl}_m(\C)$ over $\Q$. Let ${\rm U}_{\Z}(\mathfrak{gl}_m)$ be
the Kostant $\Z$-form (\cite{Ko}) in ${\rm U}(\mathfrak{gl}_m)$. For
any commutative $\Z$-algebra $K$, let ${\rm
U}_{K}(\mathfrak{gl}_m):={\rm
U}_{\Z}(\mathfrak{gl}_m)\otimes_{\Z}K$. The natural left action of
${\rm U}_{K}(\mathfrak{gl}_m)$ on $(K^{m})^{\otimes n}$ commutes
with the right place permutation action of the symmetric group
algebra $K\fS_n$. Let $\varphi_{A}, \psi_{A}$ be the natural
representations
$$ \varphi_{A}: (K\fS_n)^{\rm op}\rightarrow\End_{K}\bigl((K^{m})^{\otimes
n}\bigr),\,\,\,\,\psi_{A}:{\rm
U}_{K}(\mathfrak{gl}_m)\rightarrow\End_{K}\bigl((K^{m})^{\otimes
n}\bigr),
$$ respectively. The well-known type $A$ Schur--Weyl duality (see
\cite{CL}, \cite{dCP}, \cite{Do3}, \cite{Gr}, \cite{Sc}, \cite{W})
says that
\begin{enumerate}
\item[(a)] $\varphi_{A}\bigl(K\fS_n\bigr)=\End_{{\rm U}_{K}(\mathfrak{gl}_m)}\bigl((K^{m})^{\otimes n}\bigr)$,
\item[(b)] $\psi_{A}\bigl({\rm U}_{K}(\mathfrak{gl}_m)\bigr)=\End_{K\fS_n}\bigl((K^{m})^{\otimes n}\bigr)$;
\item[(c)] if $K$ is an infinite field, then  $$ \End_{{\rm
U}_{K}(\mathfrak{gl}_m)}\bigl((K^{m})^{\otimes
n}\bigr)=\End_{KGL_m(K)}\bigl((K^{m})^{\otimes n}\bigr),
$$
and the image of the group algebra $KGL_m(K)$ in $\End_{K}\bigl((K^{m})^{\otimes
n}\bigr)$ also coincides with $\End_{K\fS_n}\bigl((K^{m})^{\otimes
n}\bigr)$;
\item[(d)] if $K$ is a field of characteristic $0$, then there is an irreducible
${\rm U}_{K}(\mathfrak{gl}_m)$-$K\fS_n$-bimodule decomposition $$
(K^{m})^{\otimes n}=\bigoplus_{\substack{\lam=(\lam_1,\lam_2,\cdots)\vdash n\\
\ell(\lam)\leq m}}{\Delt}_{\lam}\otimes S^{\lam},$$ where
${\Delt}_{\lam}$ (resp., $S^{\lam}$) denotes the irreducible left
${\rm U}_{K}(\mathfrak{gl}_m)$-module (resp., irreducible right
$K\fS_n$-module) associated to $\lam$, $\lam\vdash n$ means $\lam$
is a partition of $n$, and $\ell(\lam)$ denotes the largest integer
$i$ such that $\lam_i\neq 0$.
\end{enumerate}\smallskip

There is a quantized version of the above type $A$ Schur--Weyl
duality. Let $q$ be an indeterminate over $\Z$. Let
$\A=\Z[q,q^{-1}]$ be the Laurent polynomial ring in $q$. Let
$\mathbb{U}_{\Q(q)}(\mathfrak{gl}_m)$ be the quantized enveloping
algebra of $\mathfrak{gl}_m$ over $\Q(q)$ (\cite{D1}, \cite{J1},
\cite{J2}), where $q$ is the quantum parameter. Let
$\mathbb{U}_{\A}(\mathfrak{gl}_m)$ be the Lusztig's $\A$-form
(\cite{Lu1}) in $\mathbb{U}_{\Q(q)}(\mathfrak{gl}_m)$. Let
$\HH_{\A}(\fS_n)$ be the Iwahori--Hecke algebra associated to the
symmetric group $\fS_n$, defined over $\A$ and with parameter $q$.
By definition, $\HH_{\A}(\fS_n)$ is generated by
$\widehat{T}_1,\cdots,\widehat{T}_{n-1}$ which satisfy the
well-known braid relations as well as the relation
$(\widehat{T}_i-q)(\widehat{T}_i+q^{-1})=0$, for $
i=1,2,\cdots,n-1$. For any commutative $\A$-algebra $K$, we use
$\zeta$ to denote the natural image of $q$ in $K$, and we define
$\mathbb{U}_{K}(\mathfrak{gl}_m):=\mathbb{U}_{\A}(\mathfrak{gl}_m)\otimes_{\A}K$,
$\HH_{K}(\fS_n):=\HH_{\A}(\fS_n)\otimes_{\A} K$. Then, there is a
left action of $\mathbb{U}_{\Q(q)}(\mathfrak{gl}_m)$ on $\Q(q)^m$
which quantizes the natural representation of $\mathfrak{gl}_m(\C)$.
Via the coproduct, we get an action of
$\mathbb{U}_{\Q(q)}(\mathfrak{gl}_m)$ on $(\Q(q)^m)^{\otimes n}$.
Furthermore, this action actually gives rise to an action of
$\mathbb{U}_{\A}(\mathfrak{gl}_m)$ on $(\A^m)^{\otimes n}$
(\cite{Du}). By base change, we get an action of
$\mathbb{U}_{K}(\mathfrak{gl}_m)$ on $(K^m)^{\otimes n}$ for any
commutative $\A$-algebra $K$. There is also a right action of
$\HH_K(\fS_n)$ on $(K^m)^{\otimes n}$. Let $\varphi_A, \psi_A$ be
the natural representations $$ \varphi_A: (\HH_{K}(\fS_n))^{\rm
op}\rightarrow\End_{K}\bigl((K^{m})^{\otimes n}\bigr),\quad
\psi_A:{\U}_{K}(\mathfrak{gl}_m)\rightarrow\End_{K}\bigl((K^{m})^{\otimes
n}\bigr),
$$ respectively. Then by \cite{BLM}, \cite{Du},
\cite{DPS} and \cite{J2},
\begin{enumerate}
\item[(a')] $\varphi_A\bigl(\HH_{K}(\fS_n)\bigr)=\End_{{\U}_{K}(\mathfrak{gl}_m)}\bigl((K^{m})^{\otimes n}\bigr)$;
\item[(b')] $\psi_A\bigl({\U}_{K}(\mathfrak{gl}_m)\bigr)=\End_{\HH_{K}(\fS_n)}\bigl((K^{m})^{\otimes n}\bigr)$;
\item[(c')] if $K$ is a field of characteristic $0$ and $\zeta$ is not a root of unity in $K$, then there is an irreducible
${\U}_{K}(\mathfrak{gl}_m)$-$\HH_{K}(\fS_n)$-bimodules decomposition
$$
(K^{m})^{\otimes n}=\bigoplus_{\substack{\lam=(\lam_1,\lam_2,\cdots)\vdash n\\
\ell(\lam)\leq m}}{\Delt}_{\lam}\otimes S^{\lam},$$ where
${\Delt}_{\lam}$ (resp., $S^{\lam}$) denotes the irreducible left
${\U}_{K}(\mathfrak{gl}_m)$-module (resp., irreducible right
$\HH_{K}(\fS_n)$-module) associated to $\lam$.
\end{enumerate}\smallskip

\noindent The algebra $\End_{\HH_{K}(\fS_n)}\bigl((K^{m})^{\otimes
n}\bigr)$ is called ``$q$-Schur algebra", which forms an important
class of quasi-hereditary algebra and has been extensively studied
by Dipper--James and many other people. It plays an important role
in the modular representation theory of finite groups of Lie type
(cf. \cite{DJ1}, \cite{DJ2}, \cite{GH}). The significance of the
above results lies in that it provide a bridge between the
representation theory of type $A$ quantum groups and of type $A$
Hecke algebras at an integral level. Note that in the semisimple
case, the above Schur--Weyl duality follows easily from the complete
reducibility. The difficult part lies in the non-semisimple case,
where the surjectivity of $\varphi_A$ was established in \cite{DPS}
by making use of Kazhdan--Lusztig bases of type $A$ Hecke algebra,
while the proof of the surjectivity of $\psi_A$ relies heavily on
the amazing work of \cite{BLM}, where the quantized enveloping
algebra of $\mathfrak{gl}_m$ is realized as certain ``limit" of
$q$-Schur algebras. To the best of our knowledge, there is no
alternative approach for this part.
\smallskip

A natural question arises: how about the Schur--Weyl dualities in
other types? The answer is: there do exist Schur--Weyl dualities in
types $B, C, D$ in semisimple case (for both classical and quantized
versions). However, it is an open question (see \cite[Page80,
Line1]{Ha}, \cite[Abstract]{LZ}) whether or not these
Schur--Weyl dualities hold in an integral or characteristic free
setting (like the type $A$ situation).\smallskip

The purpose of this paper is to give an affirmative answer to the
above open question in the quantized type $C$ case. That is, we
shall prove an integral version of quantized type $C$ Schur--Weyl
duality. Note that there is no counterpart in type $C$ of the work
\cite{BLM} in the literature. It turns out that our approach
provides a new and general framework to prove integral Schur--Weyl
dualities for all classical types. Before stating the main results
in this paper, we first recall the known results for the classical
type $C$ Schur--Weyl duality. Let $K$ be an infinite field. Let $V$
be a $2m$-dimensional $K$-linear space equipped with a skew bilinear
form $(,)$. Let $GSp(V)$ (resp., $Sp(V)$) be the symplectic
similitude group (resp., the symplectic group) on $V$ (\cite{Dt1},
\cite{Gri}). For any integer $i$ with $1\leq i\leq 2m$, set
$i':=2m+1-i$. We fix an ordered basis
$\bigl\{v_1,v_2,\cdots,v_{2m}\bigr\}$ of $V$ such that
$$ (v_i, v_{j})=0=(v_{i'}, v_{j'}),\,\,\,(v_i,
v_{j'})=\delta_{ij}=-(v_{j'}, v_{i}),\quad\forall\,\,1\leq i, j\leq
m. $$ Let $\bb_n(-2m)$ be the specialized Brauer algebra over $K$.
This algebra contains the group algebra $K\BS_n$ as a subalgebra.
There is a right action of $\bb_n(-2m)$ on $V^{\otimes n}$ which
extends the sign permutation action of $\BS_n$. We refer the reader
to \cite{DDH} for definitions of $\bb_n(-2m)$ and its action. Let
$\varphi_C, \psi_C$ be the natural representations
$$ \varphi_C: (\bb_n(-2m))^{\rm op}\rightarrow\End_{K}\bigl(V^{\otimes
n}\bigr),\,\,\,\,\,\, \psi_C:
KGSp(V)\rightarrow\End_{K}\bigl(V^{\otimes n}\bigr),
$$ respectively.

\begin{thm} {\rm (\cite{B}, \cite{B1}, \cite{B2})} \begin{enumerate}
\item[(1)]The natural left action of $GSp(V)$ on $V^{\otimes n}$ commutes
with the right action of $\bb_n(-2m)$. Moreover, if $K=\mathbb{C}$,
then
$$\begin{aligned}
\varphi_C\bigl(\bb_n(-2m)\bigr)&=\End_{\mathbb{C}
GSp(V)}\bigl(V^{\otimes n}\bigr)=\End_{\mathbb{C}
Sp(V)}\bigl(V^{\otimes n}\bigr),\\
\psi_C\bigl(\mathbb{C}GSp(V)\bigr)&=\psi_C\bigl(\mathbb{C}
Sp(V)\bigr)=\End_{\bb_n(-2m)}\bigl(V^{\otimes n}\bigr),\end{aligned}
$$
\item[(2)] if $K=\mathbb{C}$, then there is an irreducible
$\mathbb{C}GSp(V)$--$\bb_n(-2m)$--bimodule decomposition
$$
V^{\otimes n}=\bigoplus_{f=0}^{[n/2]}\bigoplus_{\substack{\lam\vdash n-2f\\
\ell(\lam)\leq m}}\Delta({\lam})\otimes D({\lam^t}),$$ where
$\Delta({\lam})$ (resp., $D({\lam^t})$) denotes the irreducible left
$\mathbb{C}GSp(V)$-module (resp., the irreducible right
$\bb_n(-2m)$-module) corresponding to $\lam$ (resp., corresponding
to $\lam^t$), $\lam^t$ denotes the transpose of $\lam$.
\end{enumerate}
\end{thm}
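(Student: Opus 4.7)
The plan is a two-step strategy: first establish the commutation of the two actions over any $K$, then derive over $K=\C$ all the surjectivity statements and the bimodule decomposition from the First Fundamental Theorem of invariant theory for $Sp(V)$ combined with the double centralizer theorem.

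For the commutation in (1), $\bb_n(-2m)$ is generated by its symmetric group subalgebra $K\BS_n$ (whose signed place-permutation action automatically commutes with any linear group action on the tensor slots) together with ``contraction--coevaluation'' generators $e_1,\ldots,e_{n-1}$. Each $e_i$ acts on $V^{\otimes n}$ as the identity on all slots except the $i$-th and $(i+1)$-st, where it equals $\iota\circ c$ with $c\colon V\otimes V\to K$ given by the symplectic form and $\iota\colon K\to V\otimes V$ its co-evaluation (the canonical element determined by $(,)$). If $g\in GSp(V)$ has similitude factor $\chi(g)\in K^\times$, then $c\circ(g\otimes g)=\chi(g)c$ and $(g\otimes g)\circ\iota=\chi(g)^{-1}\iota$, so the composition is $GSp(V)$-equivariant; hence the two actions commute over any infinite $K$.

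Now assume $K=\C$. Using the non-degenerate form to identify $V\cong V^*$ as $\C Sp(V)$-modules gives $\End_{\C}(V^{\otimes n})\cong V^{\otimes 2n}$. Weyl's First Fundamental Theorem for the symplectic group asserts that the $Sp(V)$-invariants in $V^{\otimes 2n}$ are spanned by the products of contractions with $(,)$, indexed by pair-partitions of $\{1,\ldots,2n\}$; under the identification these are precisely the operators coming from the Brauer diagram basis of $\bb_n(-2m)$, yielding $\varphi_C(\bb_n(-2m))=\End_{\C Sp(V)}(V^{\otimes n})$. Since $Sp(V)$ is reductive and $\mathrm{char}\,\C=0$, $V^{\otimes n}$ is a completely reducible $\C Sp(V)$-module, hence $\psi_C(\C Sp(V))$ is semisimple; the double centralizer theorem applied to this pair then forces $\psi_C(\C Sp(V))=\End_{\bb_n(-2m)}(V^{\otimes n})$. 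The similitude character $\chi$ acts as a scalar on each $Sp(V)$-isotypic component of $V^{\otimes n}$, so the images of $\C Sp(V)$ and $\C GSp(V)$ in $\End_{\C}(V^{\otimes n})$ coincide, and likewise for their commutants, completing (1).

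Part (2) is then the standard output of the double centralizer theorem applied to the semisimple pair above: $V^{\otimes n}=\bigoplus_\mu \Delt(\mu)\otimes\Hom_{\C Sp(V)}(\Delt(\mu),V^{\otimes n})$, with each non-zero $\Hom$-space an irreducible $\bb_n(-2m)$-module. Identifying which $\Delt(\mu)$ actually appear -- the partitions $\lam\vdash n-2f$ with $0\le f\le\lfloor n/2\rfloor$ and $\ell(\lam)\le m$ -- is a classical branching computation: iterating the ``trace'' map $V^{\otimes n}\to V^{\otimes n-2}$ given by contracting a pair of adjacent tensor factors against $(,)$ stratifies $V^{\otimes n}$ by the $f$-fold insertion of the canonical element, and at each level the harmonic (traceless) tensors decompose into the standard irreducible $Sp(V)$-modules labelled by partitions with at most $m$ rows. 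The transposition $\lam\mapsto\lam^t$ in the label $D(\lam^t)$ is a labeling convention for Brauer algebra simples adapted to type $C$. The only non-formal input in this entire plan is Weyl's First Fundamental Theorem for $Sp(V)$, which is also the main obstacle; everything else is semisimplicity, diagrammatic calculation, and double centralizer bookkeeping.
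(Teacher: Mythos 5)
The paper does not prove this theorem; it is stated with attributions to Brauer and Brown and no argument is given, so there is no internal proof to compare against. Your sketch is a correct rendering of the classical argument: the commutation via the similitude scaling of the contraction and coevaluation maps, the identification of $\End_{\C Sp(V)}(V^{\otimes n})$ with $(V^{\otimes 2n})^{Sp(V)}$ via $V\cong V^{*}$, Weyl's first fundamental theorem to identify the invariants with the span of Brauer diagrams, the double centralizer theorem (applicable because $V^{\otimes n}$ is completely reducible over $\C Sp(V)$, so the image of $\C Sp(V)$ in $\End_{\C}(V^{\otimes n})$ is a semisimple finite-dimensional algebra) to get the reverse surjectivity, and the observation that $GSp(V)=\C^{\times}\cdot Sp(V)$ over $\C$ so that the two group images and the two commutants coincide. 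The bimodule decomposition and the identification of the occurring highest weights via iterated contraction/traceless decomposition are likewise the standard route. The one place to be explicit is the passage from the group algebra of the infinite group $Sp(V)$ to a finite-dimensional semisimple quotient before invoking the double centralizer theorem, which you do correctly; the qualifier "over any infinite $K$" in the commutation step is unnecessary (the equivariance argument is ring-independent) but harmless. You correctly flag Weyl's FFT for $Sp(V)$ as the genuine input; everything else is formal.
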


By the work of \cite{dCP}, \cite{DDH} and \cite{Oe1}, the complex
field $\mathbb{C}$ used in part (1) of the above theorem can be
replaced by arbitrary infinite field. That is, we have a
characteristic free version of type $C$ Schur--Weyl duality in group
case.

\begin{thm} {\rm (\cite{dCP}, \cite{DDH}, \cite{Oe1})} Let $K$ be an arbitrary infinite
field. Then
\begin{enumerate}
\item[(1)] $\psi_C\bigl(K
GSp(V)\bigr)=\End_{\bb_n(-2m)}\bigl(V^{\otimes n}\bigr)$;
\item[(2)] $\varphi_C\bigl(\bb_n(-2m)\bigr)=\End_{KGSp(V)}\bigl(V^{\otimes
n}\bigr) =\End_{KSp(V)}\bigl(V^{\otimes n}\bigr)$.
\end{enumerate}\end{thm}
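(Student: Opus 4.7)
The plan is to reduce to the case of algebraically closed $K$ and then invoke characteristic-free invariant theory for the symplectic group.

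First I would reduce to the case where $K$ is algebraically closed. Both sides of (1) and (2) are compatible with extension of scalars: the $\End$-algebras because $V^{\otimes n}$ is finite-dimensional; the image of $\bb_n(-2m)$ because the Brauer algebra has a combinatorial $\Z$-basis that acts by the same formulas over any ring; and the image of $KGSp(V)$ because $K$ being infinite makes $GSp(V)(K)$ Zariski dense in $GSp(V)(\bar K)$, so $\psi_C(KGSp(V))\otimes_K\bar K$ equals the image of the full algebraic group acting on $V^{\otimes n}$. It therefore suffices to prove the theorem for $K$ algebraically closed. Furthermore, the central $\mathbb{G}_m\subset GSp(V)$ acts on $V^{\otimes n}$ by a single character (the $n$-th power of the similitude character), so $\End_{KGSp(V)}(V^{\otimes n})=\End_{KSp(V)}(V^{\otimes n})$ and the two images appearing in (2) coincide.

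Next I would prove (2). Using the non-degenerate symplectic form to identify $V$ with $V^*$ one obtains an $Sp(V)$-equivariant isomorphism $\End_K(V^{\otimes n})\cong V^{\otimes 2n}$ sending centralisers to invariants. The characteristic-free first fundamental theorem for $Sp(V)$, proved by De Concini--Procesi \cite{dCP} via standard monomial theory and independently by Oehms \cite{Oe1} via bideterminants in the symplectic coordinate algebra, asserts that $(V^{\otimes 2n})^{Sp(V)}$ is spanned, over an arbitrary commutative ring, by the contraction tensors indexed by Brauer diagrams on $2n$ nodes. These contractions are precisely the images under $\varphi_C$ of the diagram basis of $\bb_n(-2m)$, so part (2) follows once one verifies the commutation of the two actions, which is a routine check on $v_{i_1}\otimes\cdots\otimes v_{i_n}$.

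Finally I would deduce (1) by a rank/tilting argument. Oehms \cite{Oe1} introduces an integral symplectic Schur algebra $S^{sp}_K$, defined as the linear dual of the degree-$n$ homogeneous component of the coordinate ring of the symplectic monoid, and identifies $\psi_C(KSp(V))$ with the image of $S^{sp}_K$ acting on $V^{\otimes n}$; a straightening algorithm on bideterminants shows that this image has $K$-rank independent of $K$, so by comparison with the semisimple characteristic-zero case in Theorem~1.1 and by part (2), this image fills the entire Brauer-algebra centraliser. In algebraic-group language the same conclusion follows from the fact that $V^{\otimes n}$ is a tilting module for $Sp(V)$, which forces the natural map from the distribution algebra onto $\End_{\bb_n(-2m)}(V^{\otimes n})$ to be surjective. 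The main obstacle is exactly this last step: when $\ch K>0$ the bimodule $V^{\otimes n}$ is not semisimple, so the ordinary double-centraliser theorem cannot be invoked to derive (1) from (2); one genuinely needs the straightening/tilting input to control the size of $\psi_C(KGSp(V))$, and this is where the real content of \cite{dCP}, \cite{DDH} and \cite{Oe1} is concentrated.
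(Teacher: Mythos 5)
You have correctly identified that this theorem is a cited result in the paper — it is not proved there but is attributed to \cite{dCP}, \cite{DDH}, \cite{Oe1} — and your sketch is a fair account of how those references establish it. The paper's own contribution is a \emph{different} route to the same conclusion: Theorems \ref{mainthm1} and \ref{mainthm2} give the quantized statement for $\U_K(\mathfrak{sp}_{2m})$ and $\bb_n(-q^{2m+1},q)$ over any commutative $\A$-algebra, and the corollary immediately after records that specializing $q\mapsto 1$ recovers the hyperalgebra version of the present theorem; the paper further remarks that the corollary also follows from the main result of \cite{DDH} via the equivalence between rational $Sp_{2m}$-modules and locally finite $\U_K(\mathfrak{sp}_{2m})$-modules. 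Your proposal goes in the opposite direction: characteristic-free first fundamental theorem plus centraliser identification $\End(V^{\otimes n})\cong V^{\otimes 2n}$ for part (2), and symplectic Schur algebras with base-change stability of ranks (equivalently, tilting theory) for part (1). Both routes are sound; what the paper's quantized approach buys — and what your classical sketch cannot — is the uniform statement for the quantum group and BMW algebra at an arbitrary $q$, from which the $q=1$ case is a free specialization, whereas the references you invoke only treat the classical group and Brauer algebra.

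Two small imprecisions in your reduction paragraph are worth flagging, though neither is fatal. First, the stability of $\End_{\bb_n(-2m)}(V^{\otimes n})$ under extension of scalars is a consequence of flatness of field extensions applied to a kernel of a map of finite-dimensional spaces, not of finite-dimensionality per se; and the stability of $\End_{KGSp(V)}(V^{\otimes n})$ uses that these are rational invariants of a split group, which commute with base change (the group algebra itself is infinite-dimensional, so the previous argument does not apply directly). Second, the center of $GSp(V)$ acts on $V^{\otimes n}$ by the $n$-th power of the identity character $\lambda\mapsto\lambda$, not of the similitude character $\lambda\mapsto\lambda^2$; and the equality $\End_{KGSp}=\End_{KSp}$ via ``generated by $Sp$ and scalars'' is only clean over a field where every similitude norm has a square root, which is why you are right to invoke it only after passing to $\bar K$ — alternatively, it drops out for free from the sandwich $\varphi_C(\bb_n)\subseteq\End_{KGSp}\subseteq\End_{KSp}$ once $\varphi_C(\bb_n)=\End_{KSp}$ is established.
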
\smallskip

For the quantized type $C$ Schur--Weyl duality, we require $V$ to be
a $2m$ dimensional vector space over $\Q(q)$ equipped with a skew
bilinear form $(,)$. We fix an ordered basis $\{v_i\}_{i=1}^{2m}$ as
before. Let ${\U}_{\Q(q)}(\mathfrak{sp}_{2m})$ be the quantized
enveloping algebra of $\mathfrak{sp}_{2m}(\C)$ over $\Q(q)$, where $q$
is the quantum parameter. Let $\bb_n(-q^{2m+1},q)$ be the
specialized Birman--Murakami--Wenzl algebra (specialized BMW algebra for short)
over $\Q(q)$. There is a right action of $\bb_n(-q^{2m+1},q)$ on
$V^{\otimes n}$ which quantizes the right action of $\bb_n(-2m)$. We
refer the reader to Section 3 for precise definitions of
$\bb_n(-q^{2m+1},q)$ and its action. Let $\varphi_C, \psi_C$ be the
natural representations
$$\begin{aligned} \varphi_C:&\,\,\,
(\bb_n(-q^{2m+1},q))^{\rm op}\rightarrow\End_{\Q(q)}\bigl(V^{\otimes
n}\bigr),\\
\psi_C:&\,\,\,
{\U}_{\Q(q)}(\mathfrak{sp}_{2m})\rightarrow\End_{\Q(q)}\bigl(V^{\otimes
n}\bigr), \end{aligned}$$ respectively.

\begin{thm} {\rm (\cite[10.2]{CP}, \cite{LZ})} \label{ssqc}\begin{enumerate}
\item[(1)] The natural left action of ${\U}_{\Q(q)}(\mathfrak{sp}_{2m})$
 on $V^{\otimes n}$ commutes with the right action of $\bb_n(-q^{2m+1},q)$. Moreover,
 $$\begin{aligned}
\varphi_C\bigl(\bb_n(-q^{2m+1},q)\bigr)&=\End_{{\U}_{\Q(q)}(\mathfrak{sp}_{2m})}\bigl(V^{\otimes
n}\bigr),\\
\psi_C\bigl({\U}_{\Q(q)}(\mathfrak{sp}_{2m})\bigr)&=\End_{\bb_n(-q^{2m+1},q)}\bigl(V^{\otimes
n}\bigr);\end{aligned}
$$
\item[(2)] there is an irreducible
${\U}_{\Q(q)}(\mathfrak{sp}_{2m})$-$\bb_n(-q^{2m+1},q)$-bimodule
decomposition
$$
V^{\otimes n}=\bigoplus_{f=0}^{[n/2]}\bigoplus_{\substack{\lam\vdash n-2f\\
\ell(\lam)\leq m}}\Delta({\lam})\otimes D({\lam^t}),$$ where
$\Delta({\lam})$ (respectively, $D({\lam^{t}})$) denotes the irreducible
left ${\U}_{\Q(q)}(\mathfrak{sp}_{2m})$-module (respectively, the
irreducible right $\bb_n(-q^{2m+1},q)$-module) corresponding to
$\lam$ (resp., corresponding to $\lam^t$).\end{enumerate}
\end{thm}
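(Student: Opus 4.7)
The plan is to follow the standard semisimple template for Schur--Weyl dualities: first verify that the two actions commute on $V\otimes V$, then decompose $V^{\otimes n}$ using the semisimplicity of the category of finite-dimensional type-$1$ modules over $\U_{\Q(q)}(\fsp_{2m})$, and finally close the loop by a dimension count that forces each image to coincide with the commutant of the other.

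First I would check commutation on the two-fold tensor $V\otimes V$; commutation on $V^{\otimes n}$ then follows by iterating via the coproduct. The braid-type generator of $\bb_n(-q^{2m+1},q)$ acts on adjacent tensor factors as a scalar multiple of the Jimbo $R$-matrix, which is a $\U_{\Q(q)}(\fsp_{2m})$-intertwiner by construction as the image of the universal $R$-matrix; the ``cap--cup'' generator $E_i$ acts as the composition of the evaluation $V\otimes V\to\Q(q)$ coming from the quantum symplectic form with the corresponding coevaluation, both of which are $\U_{\Q(q)}(\fsp_{2m})$-morphisms because $V$ is self-dual up to a grouplike twist in the category of type-$1$ modules.

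Second I would establish the isotypic decomposition in (2). Since $\Q(q)$ has characteristic zero and $q$ is generic, every finite-dimensional type-$1$ module for $\U_{\Q(q)}(\fsp_{2m})$ decomposes into a direct sum of simples $\Delta(\lambda)$ indexed by dominant integral weights. Applying the quantum analogue of Sundaram's symplectic branching rule to $V^{\otimes n}$ (equivalently, expanding the $n$-th power of the character of $V$ via the Weyl character formula together with the symplectic Littlewood identity) produces $V^{\otimes n}\cong\bigoplus_\lambda \Delta(\lambda)\otimes M_\lambda$, where $\lambda$ runs over partitions with $\ell(\lambda)\le m$ and $|\lambda|=n-2f$ for some $0\le f\le \lfloor n/2\rfloor$. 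The multiplicity $\dim M_\lambda$ equals the number of up--down tableaux of length $n$ terminating at $\lambda$; a parallel count on the BMW side shows that this is exactly $\dim D(\lambda^t)$, so $M_\lambda\cong D(\lambda^t)$ as $\Q(q)$-vector spaces.

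Finally I would deduce (1) by a dimension comparison. The decomposition gives
\[
\dim_{\Q(q)}\End_{\U_{\Q(q)}(\fsp_{2m})}\bigl(V^{\otimes n}\bigr)=\sum_\lambda (\dim M_\lambda)^2=\sum_\lambda (\dim D(\lambda^t))^2,
\]
and for generic $q$ the specialization $z=-q^{2m+1}$ renders the abstract BMW algebra semisimple, so the last sum coincides with $\dim_{\Q(q)}\bb_n(-q^{2m+1},q)$. Since $\varphi_C$ lands in $\End_{\U_{\Q(q)}(\fsp_{2m})}(V^{\otimes n})$ by commutation, matching dimensions forces $\varphi_C$ to be an isomorphism onto this endomorphism algebra, provided one verifies that no simple $\bb_n(-q^{2m+1},q)$-module $D(\lambda^t)$ contributing to the sum is annihilated. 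The surjectivity of $\psi_C$ is then a formal consequence of the classical double centralizer theorem for semisimple algebras. The main obstacle I foresee is this last ingredient: controlling the kernel of $\varphi_C$, or equivalently matching the dimension of $\bb_n(-q^{2m+1},q)$ with $\sum_\lambda(\dim D(\lambda^t))^2$ after specialization. This is traditionally handled either by a flat-deformation argument reducing the problem to the classical Brauer setting, where the corresponding surjectivity is already available (cf.\ \cite{dCP}, \cite{DDH}), or by exhibiting a cellular basis of $\bb_n(-q^{2m+1},q)$ whose images under $\varphi_C$ can be shown to be linearly independent.
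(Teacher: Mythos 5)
The paper does not give its own proof of this result: it is cited from \cite[10.2]{CP} and \cite{LZ} as a known starting point, so there is nothing internal to compare your argument against. Judged on its own terms, your sketch follows the standard semisimple Schur--Weyl template, and the first two steps (commutation via $R$-matrix plus evaluation/coevaluation, and the isotypic decomposition from the symplectic branching rule) are sound. But there is a genuine error in the closing dimension count.

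You write that ``for generic $q$ the specialization $z=-q^{2m+1}$ renders the abstract BMW algebra semisimple, so the last sum coincides with $\dim_{\Q(q)}\bb_n(-q^{2m+1},q)$.'' This is false when $m<n$. The specialization $r=-q^{2m+1}$ is precisely one of the non-generic loci for the BMW algebra; over $\Q(q)$ the algebra $\bb_n(-q^{2m+1},q)$ has dimension $(2n-1)!!$ always, while the branching decomposition only produces partitions $\lam$ with $\ell(\lam)\le m$, so
$\sum_{\lam}(\dim D(\lam^t))^2 < (2n-1)!!$ whenever $m<n$. Consequently $\varphi_C$ has a nonzero kernel and is \emph{not} an isomorphism onto $\End_{\U}(V^{\otimes n})$, only a surjection; your ``matching dimensions forces $\varphi_C$ to be an isomorphism'' step collapses. (This is exactly why the paper spends all of Section~6 on the case $m<n$: the ``flat deformation to the Brauer setting'' you suggest does not dissolve the difficulty, because the classical Brauer algebra $\bb_n(-2m)$ is equally non-semisimple for $m<n$.) What actually closes the argument is the double centralizer theorem applied to the semisimple algebra $\psi_C(\U_{\Q(q)})$: once you know $V^{\otimes n}$ decomposes as $\bigoplus_\lam\Delta(\lam)\otimes M_\lam$ and can identify each multiplicity space $M_\lam$ as the irreducible BMW-module $D(\lam^t)$ (for instance via the cellular structure, or by exhibiting the relevant seminormal idempotents in the image), surjectivity of $\varphi_C$ onto the commutant follows from pairwise non-isomorphism of the $M_\lam$ as $\bb_n$-modules, with no dimension count on $\bb_n$ itself and no semisimplicity claim for it. You gesture at this with ``provided no simple $D(\lam^t)$ contributing to the sum is annihilated,'' but the framing as a dimension match on the full BMW algebra should be dropped.
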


Let $\mathbb{U}_{\A}(\mathfrak{sp}_{2m})$ be the Lusztig's $\A$-form in
$\mathbb{U}_{\Q(q)}(\mathfrak{sp}_{2m})$. Let $V_{\A}$ be the free
$\A$-module spanned by $\{v_i\}_{i=1}^{2m}$. Note that
$\bb_n(-q^{2m+1},q)$ has a natural $\A$-form
$\bb_n(-q^{2m+1},q)_{\A}$. For any commutative $\A$-algebra $K$, let
$\zeta$ be the natural image of $q$ in $K$, and we define
$\mathbb{U}_{K}(\mathfrak{sp}_{2m}):=\mathbb{U}_{\A}(\mathfrak{sp}_{2m})\otimes_{\A}K$,
$\bb_n(-\zeta^{2m+1},\zeta):=\bb_n(-q^{2m+1},q)_{\A}\otimes_{\A} K$.
The representation $\psi_C$ naturally gives rise to an action of
$\U_{\A}(\mathfrak{sp}_{2m})$ on $V_{\A}^{\otimes n}$ which commutes
with the right action of $\bb_n(-q^{2m+1},q)_{\A}$. By base change,
for any commutative $\A$-algebra $K$, we get an action of
$\U_{K}(\mathfrak{sp}_{2m})$ on $V_{K}^{\otimes n}$ which commutes
with the right action of $\bb_n(-\zeta^{2m+1},\zeta)$;

\medskip
The main results in this paper are the following two
theorems.\smallskip

\begin{thm} \label{mainthm1} For any commutative $\A$-algebra $K$, $$
\psi_C\Bigl(\U_{K}(\mathfrak{sp}_{2m})\Bigr)=\End_{\bb_n(-\zeta^{2m+1},\zeta)}\bigl(V_K^{\otimes
n}\bigr).
$$
\end{thm}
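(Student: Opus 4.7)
The plan is to reduce the statement to the base ring $\A=\Z[q,q^{-1}]$ and then deduce the general case by base change. Write $F=\Q(q)$ throughout.

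The first step is to establish base-change compatibility on both sides. Using the cellular structure of $\bb_n(-q^{2m+1},q)_{\A}$ (together with the fact that $V_{\A}^{\otimes n}$ carries a monomial basis compatible with this structure), one argues that $\End_{\bb_n(-q^{2m+1},q)_{\A}}(V_{\A}^{\otimes n})$ is a free $\A$-module whose formation commutes with arbitrary base change $\A\to K$. On the other side, $\psi_C(\U_{\A}(\mathfrak{sp}_{2m}))$ is automatically an $\A$-submodule of $\End_{\A}(V_{\A}^{\otimes n})$, and its formation also commutes with base change since $\psi_C$ is $\A$-linear and $V_{\A}^{\otimes n}$ is free over $\A$. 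Thus the theorem reduces to the single equality
\begin{equation*}
\psi_C\bigl(\U_{\A}(\mathfrak{sp}_{2m})\bigr)\;=\;\End_{\bb_n(-q^{2m+1},q)_{\A}}\bigl(V_{\A}^{\otimes n}\bigr).
\end{equation*}

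By Theorem~\ref{ssqc}, the corresponding equality holds after tensoring up to $F$. Hence both sides are full-rank $\A$-lattices in the same $F$-vector space, and what remains is to prove that the left-hand lattice is saturated inside the right-hand one. To obtain saturation, I would dualize and pass to the quantized coordinate algebra. The $F$-dual of $\End_F(V_F^{\otimes n})$ is a graded piece of $V_F^{\otimes n}\otimes (V_F^{\otimes n})^*$; the dual of $\psi_C(\U_F(\mathfrak{sp}_{2m}))$ identifies with a graded piece of the quantized function algebra of $Sp_{2m}$, while the dual of $\End_{\bb_n(-q^{2m+1},q)}(V_F^{\otimes n})$ identifies with the corresponding graded piece built from BMW matrix coefficients via an FRT-type construction. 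Over $\A$, this yields two a priori different integral forms of the same graded piece: the Kashiwara form $A_q^{\Z}(\mathfrak{sp}_{2m})$ of \cite{Ka2}, built from the global crystal basis (and hence visibly compatible with $\U_{\A}(\mathfrak{sp}_{2m})$), and the form produced by the generalized FRT construction of \cite{FRT,Ha,Oe2} (which is manifestly generated by the matrix coefficients controlling $\End_{\bb_n(-q^{2m+1},q)_{\A}}(V_{\A}^{\otimes n})$). The core of the proof is to identify these two $\A$-forms as $\A$-subalgebras of the generic coordinate algebra; once this is in hand, saturation is immediate and Theorem~\ref{mainthm1} follows.

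The main obstacle is precisely this identification of the Kashiwara form with the FRT form, which is the byproduct announced in the abstract. Establishing it requires constructing an explicit $\A$-algebra homomorphism from the FRT algebra into $A_q^{\Z}(\mathfrak{sp}_{2m})$, using the BMW relations to show that this map lands in the Kashiwara form, and then exhibiting enough integral elements on the Kashiwara side (via canonical/global bases) to force surjectivity. This step is where the BMW algebra enters in an essential way, and it is the ingredient that replaces the type $A$ argument of \cite{BLM}, for which no symplectic analogue is presently available.
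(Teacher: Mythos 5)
Your proposal follows the paper's overall strategy: reduce to the ground ring $\A$, dualize into the quantized coordinate algebra, and identify the Kashiwara integral form $A_q^{\Z}(\mathfrak{g})$ with the FRT/Oehms form $\widetilde{A}^{sy}_{\A}(2m)$. This is indeed the paper's main mechanism (Theorem~\ref{coincide}), and the base-change reduction at the start is sound given Oehms's result that the symplectic $q$-Schur algebra is free over $\A$ and stable under base change.

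However, there is a genuine gap in the step where you write that ``once this is in hand, saturation is immediate.'' Knowing that the two $\A$-forms of the coordinate algebra coincide does \emph{not} by itself produce the surjectivity $\psi_C(\U_{\A})=S^{sy}_{\A}(2m,n)$. What it gives you is that the $\A$-module $A^{sy}_{\A}(2m,n)$ sits inside $A_q^{\Z}(\mathfrak{g})$ and is spanned by (a truncation of) the upper global crystal basis $\{\widetilde{G}(b)\}$. But the integrality condition defining $A_q^{\Z}(\mathfrak{g})$ ($\langle u,\U_{\A}\rangle\subseteq\A$) does not say that the pairing $\U_{\A}\times A_q^{\Z}(\mathfrak{g})\to\A$ is surjective onto the $\A$-dual of $A^{sy}_{\A}(2m,n)$; you need an explicit dual basis to conclude that $\U_{\A}$ hits every element of $S^{sy}_{\A}(2m,n)\cong(A^{sy}_{\A}(2m,n))^*$. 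The paper gets this by routing through the \emph{modified} quantum algebra $\dot{\U}_{\A}$: Lusztig's canonical basis $\{\widehat{G}(b)\}$ of $\dot{\U}_{\A}$ is dual (via (\ref{dualbase})) to the upper global crystal basis of $A_q^{\Z}(\mathfrak{g})$, and combining this with the invertibility over $\A$ of the transition matrix between the crystal basis and the bideterminant basis produces the dual elements $\widehat{G}^{\lam}_{\bi,\bj}\in\dot{\U}_{\A}$ of (\ref{newbase}) that map onto the dual bideterminant basis of $S^{sy}_{\A}(2m,n)$. That establishes surjectivity of $\widetilde{\psi}_C$ from $\dot{\U}_{\A}$. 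To transfer this back to $\U_{\A}$, the paper needs Lemma~\ref{keylem}, which is not formal: it requires showing that the idempotents $1_\lambda$ act through explicit elements $p'_\lambda\in\U_{\A}$ built from Gaussian binomials. Your proposal omits both the detour through $\dot{\U}$ and the explicit dual-basis argument, and neither can be suppressed. A secondary (and less serious) remark: proving the identification itself is the bulk of Section~4 of the paper and depends on Lemma~\ref{coWeyl2} (the co-Weyl property of $\nabla^r_K(\lam)\otimes\nabla_K(\lam)$ over $\U_K(\mathfrak{g}\oplus\mathfrak{g})$) and an induction on dominant weights; the sketch you give (``construct a map, land in the Kashiwara form, force surjectivity'') is closer to the right spirit than to the actual argument, which proceeds by exhibiting both sides as filtered by co-Weyl modules and matching the filtrations layer by layer.
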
\smallskip

\begin{thm} \label{mainthm2} For any commutative $\A$-algebra $K$, $$
\varphi_C\Bigl(\bb_n(-\zeta^{2m+1},\zeta)\Bigr)=\End_{\U_{K}(\mathfrak{sp}_{2m})}\bigl(V_K^{\otimes
n}\bigr).
$$
\end{thm}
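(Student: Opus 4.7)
The plan is to deduce Theorem \ref{mainthm2} from Theorem \ref{mainthm1}, the generic semisimple duality of Theorem \ref{ssqc}, and a base change argument. First I would check that the formation of the $\U$-centralizer on tensor space commutes with base change along any homomorphism $\A\to K$: since $V_\A^{\otimes n}$ is a finite free $\A$-module and $\psi_C(\U_\A(\fsp_{2m}))$ is a finitely generated $\A$-subalgebra of $\End_\A(V_\A^{\otimes n})$, the natural map
\[
\End_{\U_\A(\fsp_{2m})}\bigl(V_\A^{\otimes n}\bigr)\otimes_\A K \longrightarrow \End_{\U_K(\fsp_{2m})}\bigl(V_K^{\otimes n}\bigr)
\]
is an isomorphism. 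This reduces the theorem to the single case $K=\A$, i.e.\ to the equality
\[
\varphi_C\bigl(\bb_n(-q^{2m+1},q)_\A\bigr)=\End_{\U_\A(\fsp_{2m})}\bigl(V_\A^{\otimes n}\bigr)
\]
of $\A$-submodules of $\End_\A(V_\A^{\otimes n})$; tensoring afterwards yields the theorem over any commutative $\A$-algebra $K$.

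The containment $\subseteq$ is immediate from the commutation of the two actions, and after tensoring with $\Q(q)$ Theorem \ref{ssqc}(1) upgrades it to equality. Hence both sides are $\A$-lattices of the same rank inside the ambient $\Q(q)$-vector space $\End_{\U_{\Q(q)}(\fsp_{2m})}(V_{\Q(q)}^{\otimes n})$, and it remains to show that $\varphi_C(\bb_n(-q^{2m+1},q)_\A)$ is $\A$-saturated inside $\End_{\U_\A(\fsp_{2m})}(V_\A^{\otimes n})$. Theorem \ref{mainthm1}, taken as already proved, identifies the latter with the bicommutant of $\varphi_C(\bb_n(-q^{2m+1},q)_\A)$ acting on $V_\A^{\otimes n}$, so the outstanding content is an integral double centralizer property for the BMW image.

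To obtain this saturation I would use the cellular structure of $\bb_n(-q^{2m+1},q)_\A$ together with matrix-coefficient arguments. By Theorem \ref{ssqc}(2) the generic kernel of $\varphi_C$ over $\Q(q)$ is the cellular two-sided ideal corresponding to partitions $\lam\vdash n-2f$ with $\ell(\lam)>m$; intersecting with the $\A$-form makes this kernel a pure $\A$-submodule of $\bb_n(-q^{2m+1},q)_\A$ spanned by the corresponding cellular basis elements, so that the quotient is a free $\A$-module and $\varphi_C$ descends to an injection of lattices. To promote this to an equality of lattices one must show that every integrally $\U_\A$-equivariant endomorphism of $V_\A^{\otimes n}$ lies in the image of $\varphi_C$, which is controlled by the matrix coefficients of $V_\A$ with respect to the canonical basis of $\U_\A(\fsp_{2m})$. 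This is where the integral Faddeev--Reshetikhin--Takhtajan/Kashiwara coordinate-algebra identification advertised in the abstract enters: it furnishes an integral basis of matrix coefficients dual to an $\A$-basis of the centralizer that visibly sits inside $\varphi_C(\bb_n(-q^{2m+1},q)_\A)$.

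The main obstacle is precisely this saturation step. In type $A$ it is handled by the BLM realization of $\U_\A(\mathfrak{gl}_m)$ as a limit of $q$-Schur algebras; in type $C$ no such realization is available, and the bulk of the paper is devoted to constructing the coordinate-algebra substitute that plays the same role here.
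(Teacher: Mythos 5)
Your reduction-to-$\A$ step rests on a false general principle. For a finitely generated $\A$-subalgebra $A\subseteq\End_{\A}\bigl(V_{\A}^{\otimes n}\bigr)$, the natural map
$$\End_{A}\bigl(V_{\A}^{\otimes n}\bigr)\otimes_{\A}K\longrightarrow\End_{A\otimes_\A K}\bigl(V_{K}^{\otimes n}\bigr)$$
need not be an isomorphism just because $V_{\A}^{\otimes n}$ is finite free and $A$ is finitely generated: the centralizer is a kernel inside a free module, and kernels do not commute with arbitrary base change. The paper obtains this compatibility in a more specific way: Theorem~\ref{mainthm1} identifies $\End_{\U_{K}}\bigl(V_{K}^{\otimes n}\bigr)$ with $\End_{S_{K}^{sy}(2m,n)}\bigl(V_{K}^{\otimes n}\bigr)$, the symplectic $q$-Schur algebra is integrally quasi-hereditary with $V_{K}^{\otimes n}$ a tilting module, and then \cite[Lemma~4.4(c)]{DPS} gives the base-change isomorphism. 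This is substantive input you cannot skip.

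More importantly, your plan for the ``$\A$-saturation'' step does not match what the problem actually requires. The Kashiwara/FRT coordinate-algebra identification of Section~4 is the engine behind Theorem~\ref{mainthm1} (surjectivity of $\psi_C$): its $\A$-basis of matrix coefficients is dual to a basis of $S^{sy}_{\A}(2m,n)=\End_{\bb_n}\bigl(V_{\A}^{\otimes n}\bigr)$, and tells you nothing direct about $\End_{S^{sy}_{\A}}\bigl(V_{\A}^{\otimes n}\bigr)=\End_{\U_{\A}}\bigl(V_{\A}^{\otimes n}\bigr)$. Likewise, the assertion that the generic kernel of $\varphi_C$ intersected with the $\A$-form is spanned by cellular basis elements is not automatic for a cellular algebra at a special parameter, and you give no argument for it. The paper bypasses both difficulties: after reducing to fields, it proves $\varphi_C$ is \emph{injective} when $m\geq n$ by explicit annihilator computations in Enyang's basis of the BMW algebra (Section~5, in particular Lemma~\ref{keylem2} and Theorem~\ref{keythm}), and handles $m<n$ by embedding $\mathfrak{sp}_{2m}\hookrightarrow\mathfrak{sp}_{2m_0}$ with $m_0\geq n$, showing a certain square commutes, and proving surjectivity of $\widetilde{\pi}^{\otimes 2n}$ on coinvariants via Lusztig's based-module theory (Section~6, Lemma~\ref{lm61}). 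None of that concrete combinatorial content is produced by your matrix-coefficient sketch; the difficult parts of the theorem are precisely the parts your proposal leaves unresolved.
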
\smallskip

Note that if we specialize the parameter $q$ to $1_{K}\in K$, then
the BMW algebra $\bb_n(-q^{2m+1},q)$ becomes the specialized Brauer
algebra $\bb_n(-2m)$, and the action of $\bb_n(-q^{2m+1},q)$ on
$n$-tensor space becomes the action of $\bb_n(-2m)$ used in
\cite{DDH}. Applying the above two theorem, we get the following
corollary.

\begin{cor} For any commutative $\Z$-algebra $K$, \smallskip
\begin{enumerate}
\item[(1)] $\psi_C\bigl({\rm
U}_{K}(\mathfrak{sp}_{2m})\bigr)=\End_{\bb_n(-2m)_K}\bigl(V_{K}^{\otimes
n}\bigr)$;
\item[(2)] $\varphi_C\Bigl(\bb_n(-2m)_{K}\Bigr)=\End_{{\rm
U}_{K}(\mathfrak{sp}_{2m})}\bigl(V_K^{\otimes n}\bigr)$.
\end{enumerate}
\end{cor}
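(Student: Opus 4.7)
The plan is to deduce this corollary directly from Theorems \ref{mainthm1} and \ref{mainthm2} by specializing the quantum parameter $q$ to $1_{K}$. Given a commutative $\Z$-algebra $K$, I would first endow $K$ with the structure of a commutative $\A$-algebra via the ring homomorphism $\A=\Z[q,q^{-1}]\to K$ defined by $q\mapsto 1_{K}$, so that $\zeta=1_{K}$. Under this specialization $-\zeta^{2m+1}=-1$, and the paragraph immediately preceding the corollary already asserts that $\bb_n(-q^{2m+1},q)_{\A}\otimes_{\A}K$ coincides with the classical Brauer algebra $\bb_n(-2m)_{K}$, and that the right action on $V_K^{\otimes n}$ reduces to the Brauer action used in \cite{DDH}. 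Thus on the BMW/Brauer side the specialized algebra and its action agree with the ones in the corollary.

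On the quantum group side, I would invoke the standard fact that Lusztig's $\A$-form $\U_{\A}(\mathfrak{sp}_{2m})$, specialized via $q\mapsto 1$, recovers the Kostant $\Z$-form $\U_{\Z}(\mathfrak{sp}_{2m})$ of the classical universal enveloping algebra, with divided powers of Chevalley generators going to divided powers and the $K_i$'s going to $1$; moreover, the action $\psi_C$ at $q=1$ matches the classical action on $V^{\otimes n}$. Together with base change this identifies $\U_{\A}(\mathfrak{sp}_{2m})\otimes_{\A}K$ (in Lusztig's sense) with $\U_{\Z}(\mathfrak{sp}_{2m})\otimes_{\Z}K$ (the Kostant form appearing in the corollary), and identifies their images under $\psi_C$ inside $\End_K(V_K^{\otimes n})$.

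With these two identifications in place, both parts of the corollary follow by reading off Theorems \ref{mainthm1} and \ref{mainthm2} for the specific $\A$-algebra structure $q\mapsto 1_K$: part (1) is the direct specialization of Theorem \ref{mainthm1}, and part (2) is the direct specialization of Theorem \ref{mainthm2}. The only subtlety — and therefore the main thing to verify carefully — is that the equality of endomorphism algebras genuinely descends along $\A\to K$ at $q=1$. Since $V_{\A}^{\otimes n}$ is a finitely generated free $\A$-module, the formation of $\End$ commutes with base change, so the surjectivities established in Theorems \ref{mainthm1} and \ref{mainthm2} pass to $V_K^{\otimes n}$; the matching of the resulting subalgebras with the classical ones is then exactly the content of the two specialization identifications discussed above.
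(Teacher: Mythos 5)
Your argument is the same as the paper's (the paper disposes of this corollary in one sentence before its statement, by observing that at $q\mapsto 1_K$ the specialized BMW algebra and its action degenerate to the Brauer algebra and its action from \cite{DDH}, and then invoking Theorems \ref{mainthm1} and \ref{mainthm2}). One point to phrase more carefully: Lusztig's $\A$-form at $q=1$ is not literally \emph{isomorphic} to the Kostant $\Z$-form — one must quotient by the ideal generated by the $k_i-1$ — but since the $k_i$ already act as the identity on every weight space of $V_K^{\otimes n}$ once $q=1$, the images $\psi_C(\U_{\A}\otimes_{\A}K)$ and $\psi_C({\rm U}_{\Z}\otimes_{\Z}K)$ in $\End_K(V_K^{\otimes n})$ coincide, which is all that is used; the rest of your reasoning (base change of $\End$ over a free module, matching of actions) is correct and matches the paper.
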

\noindent
Note that this corollary can also be deduced from the main result in \cite{DDH} by using the equivalence between the category of rational $Sp_{2m}(K)$-modules and the category of locally finite $U_K(\mathfrak{sp}_{2m})$-modules.
\smallskip

The algebra
$S_K^{sy}(2m,n):=\End_{\bb_n(-\zeta^{2m+1},\zeta)}\bigl(V_K^{\otimes
n}\bigr)$ is called ``symplectic $\zeta$-Schur algebra" by Oehms
(\cite{Oe2}). It is a cellular (in the sense of \cite{GL}) and
quasi-hereditary $K$-algebra. The strategy that we use to prove Theorem \ref{mainthm1}
is to inspect the induced natural
homomorphism $\widetilde{\psi}_C$ from Lusztig's modified quantum
algebra (see \cite{Lu3}) $\dot{\U}_K(\mathfrak{sp}_{2m})$ to the
symplectic $q$-Schur algebra $S_K^{sy}(2m,n)$, and (roughly
speaking) to interpret $\widetilde{\psi}_C$ as the dual of the
natural map from the $n$th homogeneous component of the quantized
coordinate algebra of $\SpM_{2m}(K)$ (symplectic monoid) to the
quantized coordinate algebra of ${\rm Sp}_{2m}(K)$ (symplectic
group). It turns out that the kernel of $\widetilde{\psi}_C$ is
spanned by the canonical basis elements it contains. As a
consequence, we deduce the following result, which is announced in
\cite{Dt2} without proof.

\begin{cor} \label{maincor} For any commutative $\A$-algebra $K$, $S_{K}^{sy}(2m,n)$ is
isomorphic to the generalized $q$-Schur algebra $\lsub{\bS}{K}(\pi)$
defined in \cite{Dt2}, where $\pi$ is the set of dominant weights
occurring in $V^{\otimes n}$. In particular, if specializing $q$ to
$1$, then we recover the symplectic Schur algebra studied in
\cite{Do2} and \cite{Dt1}.
\end{cor}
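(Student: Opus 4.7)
The plan is to deduce this corollary directly from Theorem \ref{mainthm1}. By that theorem, the representation $\psi_C$ surjects onto $S_K^{sy}(2m,n)$. Since only finitely many weights occur in $V_K^{\otimes n}$, namely those in $\pi$, the action of $\U_K(\mathfrak{sp}_{2m})$ on $V_K^{\otimes n}$ factors through Lusztig's modified form, giving a surjection $\widetilde{\psi}_C\colon \dot{\U}_K(\mathfrak{sp}_{2m}) \twoheadrightarrow S_K^{sy}(2m,n)$. By construction in \cite{Dt2}, the generalized $q$-Schur algebra $\lsub{\bS}{K}(\pi)$ is the quotient of $\dot{\U}_K(\mathfrak{sp}_{2m})$ by the two-sided ideal $J(\pi)$ spanned by the canonical basis elements of $\dot{\U}_K$ whose weight lies outside the saturated set generated by $\pi$. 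The task therefore reduces to proving $\ker\widetilde{\psi}_C = J(\pi)$.

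The inclusion $J(\pi)\subseteq\ker\widetilde{\psi}_C$ is automatic from weight considerations: any idempotent $1_\mu$ with $\mu\notin\pi$ annihilates $V_K^{\otimes n}$, and more generally a canonical basis element with source or target weight outside $\pi$ must lie in the kernel. For the reverse inclusion I would follow the strategy sketched in the introduction, namely, interpret $\widetilde{\psi}_C$ as the $K$-linear transpose of the natural surjection from the $n$th homogeneous component of the quantized coordinate algebra of the symplectic monoid $\SpM_{2m}(K)$ onto the corresponding component of the quantized coordinate algebra of the symplectic group. The Faddeev--Reshetikhin--Takhtajan style coordinate algebra of $\SpM_{2m}(K)$ carries (as established in the main body of the paper, via its identification with Kashiwara's $A_q^{\Z}(\mathfrak{sp}_{2m})$) a canonical basis which is compatible via the duality pairing with the canonical basis of $\dot{\U}_K$. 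Hence the kernel on the coordinate-algebra side is the span of those canonical basis elements it contains, and dualizing yields that $\ker\widetilde{\psi}_C$ is itself spanned by the canonical basis elements of $\dot{\U}_K$ it contains. Since these are exactly the canonical basis elements with weights outside $\pi$, we obtain $\ker\widetilde{\psi}_C = J(\pi)$.

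The main obstacle is this ``kernel spanned by canonical basis'' step; it relies on the compatibility between Kashiwara's global crystal basis on $\dot{\U}_K$ and the canonical basis on the FRT-type coordinate algebra, which is precisely the algebra isomorphism advertised in the abstract and which the earlier sections of the paper establish. Granting this, the final sentence on specialization $q\mapsto 1$ follows: the set $\pi$ is unchanged under this specialization, and the quotient of the Kostant form of $U(\mathfrak{sp}_{2m})$ by the corresponding weight ideal is the classical symplectic Schur algebra of \cite{Do2}, \cite{Dt1}.
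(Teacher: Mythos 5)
Your proposal is correct in outline and follows the strategy advertised in the paper's introduction, but it diverges from the paper's actual proof at the final step. The paper (whose argument for this corollary is embedded in the proof of Corollary~\ref{kernelbasis}) takes a shorter route: it observes that $\widetilde{\psi}_C$ factors through $\lsub{\bS}{K}(\pi)$, invokes Theorem~\ref{mainthm1} for surjectivity, and then compares dimensions, using that both $\lsub{\bS}{\Q(q)}(\pi)$ and $S^{sy}_{\Q(q)}(2m,n)$ have dimension $\sum_{\lam\in\pi}(\dim V(\lam))^2$. The kernel description (Corollary~\ref{kernelbasis}) is then read off as a byproduct. You instead propose to prove $\Ker\widetilde{\psi}_C = J(\pi)$ directly via the canonical-basis compatibility established in Theorem~\ref{coincide} and formula~(\ref{newbase}); this is a valid route and is essentially the content of Corollary~\ref{kernelbasis}, but it front-loads the hard work rather than getting it for free from a dimension count, so the paper's ordering is cleaner.

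Two imprecisions in your sketch that a careful write-up would need to fix. First, the easy inclusion $J(\pi)\subseteq\Ker\widetilde{\psi}_C$ is not captured by saying ``$1_\mu$ with $\mu\notin\pi$ annihilates $V_K^{\otimes n}$'': the set $\pi$ consists only of \emph{dominant} weights, whereas $V^{\otimes n}$ has many non-dominant weight spaces, so such $1_\mu$ need not kill $V^{\otimes n}$. The ideal $J(\pi)$ in Doty's construction is spanned by the canonical basis elements $\widehat{G}(b)$ with $b\in\widetilde{B}(\lam)$, $\lam\notin\pi$, where $\lam$ is the label in Lusztig's partition $\widehat B=\bigsqcup_\lam\widetilde B(\lam)$ of the canonical basis, not a ``source or target'' weight; the correct argument for the inclusion uses that these elements act trivially on every simple constituent of $V^{\otimes n}$. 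Second, the map $\pi_C\!\downarrow_{A^{sy}_K(2m,n)}$ from the $n$-th graded piece of the monoid coordinate algebra into $\widetilde A^{sy}_K(2m)$ is an \emph{injection} (Corollary~\ref{Doc}), not a surjection; it is its transpose that is surjective and realises $\widetilde{\psi}_C$. Neither slip is fatal, but both should be corrected before the proof is considered complete.
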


The strategy that we use to prove Theorem \ref{mainthm2} is
similar to that used in \cite{DDH}. We first prove the equality
under the assumption that $m\geq n$. Then we reduce the case $m<n$
to the case $m=n$ via a commutative diagram. Finally, we convert the
task of proving the equality concerning $\varphi_C$ to a purely type
$C$ quantum algebra representation theorietic problem which involves
no BMW algebras. However, the direct generalization from \cite{DDH}
does not work here. In our quantized case the proof is much more
difficult. We expect that our approach for both
equalities can be applied to prove integral versions of various
other Schur--Howe--Weyl dualities in Lie theory.\smallskip

The paper is organized as follows. In Section 2, we collect some
basic knowledge about the usual and the modified form of the
quantized enveloping algebra of $\mathfrak{sp}_{2m}(\C)$ as well as
their actions on the $n$-tensor space $V^{\otimes n}$. The new
result is Lemma \ref{keylem}, which enables us to reduce the proof
of the equality concerning $\psi_C$ to the proof of an equality
concerning $\widetilde{\psi}_C$. In Section 3, we show that each
finite truncation ${A}_{\A}^{sy}(2m,\leq\!n)$ of the
quantized coordinate algebra ${A}_{\A}^{sy}(2m)$ of $\SpM_{2m}(K)$
is a cellular coalgebra. The two-sided simple
comodule decomposition of the quantized coordinate algebra
$\widetilde{A}_{\Q(q)}^{sy}(2m)$ of ${\rm Sp}_{2m}(K)$ is obtained, which actually
coincides with Peter--Weyl's decomposition proved by Kashiwara
(\cite{Ka2}). In Section 4, after proving that the tensor product $\nabla_K^r(\lam)\otimes\nabla_K(\lam)$
of a cell left comodule and a cell right comodule of ${A}_{K}^{sy}(2m,\leq\!n)$ is actually a co-Weyl module of
the quantum algebra $\U_K(\mathfrak{g}\oplus\mathfrak{g})$ (Lemma \ref{coWeyl2}), we are able to identify the
type $C$ quantized coordinate algebra $A_q^{\Z}(\mathfrak{g})$
defined by Kashiwara in \cite{Ka2} with the quantized coordinate
algebra $\widetilde{A}_{\A}^{sy}(2m)$ arising from generalized FRT
construction. The proof relies on some nice properties of the upper global crystal basis (i.e., the dual canonical basis) of the quantized coordinate algebra $A_q^{\Z}(\mathfrak{g})$ introduced by Kashiwara. Then we give a proof of our first
main result---Theorem \ref{mainthm1}. In Section 5, we give a proof
of our second main result---Theorem \ref{mainthm2} in the case where
$m\geq n$, following a similar idea (but different and
more difficult arguments than) in \cite[Section 3]{DDH}. The case
where $m<n$ is dealt with in Section 6. We reduce the proof of
Theorem \ref{mainthm2} to the proof of the surjectivity of a map
between coinvariants of two tensor spaces and the commutativity of a
certain diagram of maps (Lemma \ref{lm61}). For the former, we use
Lusztig's theory on based modules (\cite{Lu3}) as in \cite[Section
4]{DDH}. The proof of the latter turns out to be quite delicate and
more interesting than in the classical case.
\smallskip

The main results of this paper were announced at the International
Conference ``GL07: Geometry and Lie Theory" (Sydney, July, 2007).
The author thanks the organizers for inviting him to attend and give
talk at this conference. He would like to thank Professor S. Doty, Professor G. Lehrer,
Professor C.W. Curtis, Professor S. Ariki, Professor F. Goodman and the anonymous referee
for their helpful remarks and comments.

\section{Tensor space and modified quantized enveloping algebra of type $C$}

In this section we shall recall some basic facts about the usual and
the modified form of the quantized enveloping algebra of type $C$ as
well as their actions on the $n$-tensor space $V^{\otimes n}$. We
shall show that, for any commutative $\A$-algebra $K$, the image of
the quantum algebra ${\U}_{K}(\mathfrak{sp}_{2m})$ in the
endomorphism algebra of the $n$-tensor space $V_K^{\otimes n}$
coincides with the image of the corresponding modified quantum
algebra $\dot{\U}_{K}(\mathfrak{sp}_{2m})$.\smallskip

Recall the Dynkin diagram of $\fsp_{2m}$ \begin{center}
\scalebox{0.25}[0.25]{\includegraphics{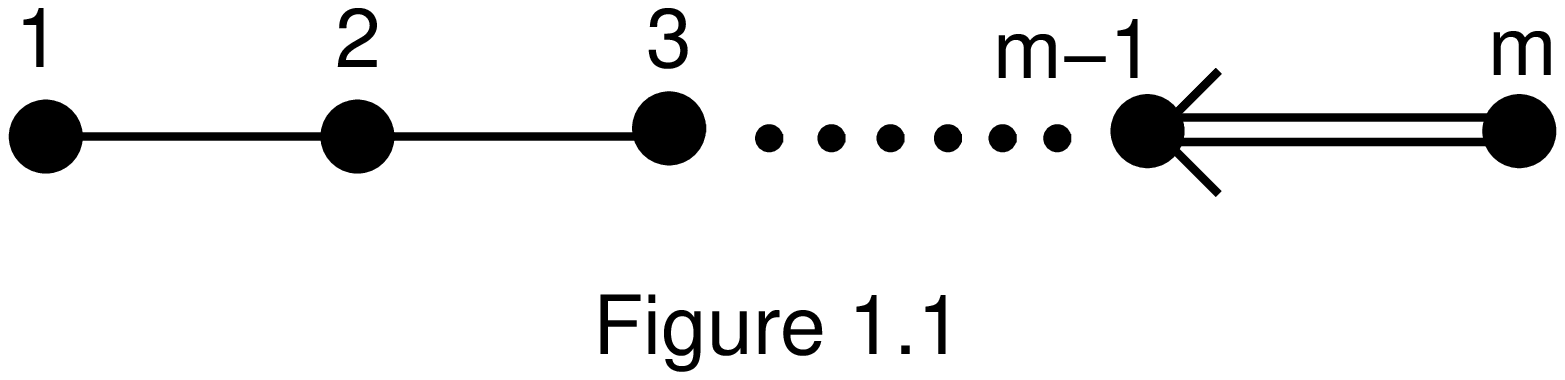}}\qquad ,
\end{center}
where each vertex labeled by $i$ represents a simple root
$\alpha_i$. For each integer $i$ with $1\leq i\leq m$, let
$\alpha^{\vee}_i$ be the corresponding simple co-root. Let $X$ be
the weight lattice of $\mathfrak{sp}_{2m}$. We realize $X$ as a free
$\Z$-module with basis $\varepsilon_1,\cdots,\varepsilon_m$. Then
$\alpha_m=2\varepsilon_m,\,\,\,\,\alpha_i=\varepsilon_i-\varepsilon_{i+1},\,\,i=1,2,\cdots,m-1$.

Throughout this paper, we shall identify the weight $\lam=\lam_1\varepsilon_1+\cdots+\lam_m\varepsilon_m\in X$
with the sequence $(\lam_1,\cdots,\lam_m)$ of integers. We shall also write $\lam=(\lam_1,\cdots,\lam_m)$ and
$|\lam|:=\lam_1+\cdots+\lam_m$. Let $$X^{+}:=\{\lam\in X|\langle\lam,\alpha_i\rangle\geq 0,\forall\,1\leq i\leq m\},$$ i.e., the set of dominant weights. Then $\lam=(\lam_1,\cdots,\lam_m)\in X^{+}$ if and only if $\lam$ is a partition.
Let $(,)$ be the symmetric bilinear form $X$ defined by
$(\varepsilon_i,\varepsilon_j)=\delta_{i,j}$ for any $i,j$. The
Cartan matrix $A=(a_{i,j})$ (where
$a_{i,j}:=2(\alpha_j,\alpha_i)/(\alpha_i,\alpha_i)$)
of $\fsp_{2m}$ is:%
\[
\left(
\begin{array}
[c]{cccccc}%
\text{ \ }2 & -1 &  &  &  & \\
-1 & \text{ \ }2 & -1 &  &  & \\
& -1 & . & . &  & \\
&  & . & . & . & \\
&  &  & . & \text{ \ }2 & -2\\
&  &  &  & -1 & \text{ \ }2
\end{array}
\right)
\]
with rows and columns indexed by $\{1,...,m\}$. Given a fixed
indeterminate
$q$ set%
\begin{gather*}
q_{i}=\left\{
\begin{tabular}
[c]{l}%
$q$,\,\,\, if $i\neq m$\\
$q^{2}$,\, if $i=m$%
\end{tabular}
\right.  \text{,}\\
\lbrack k]_{i}=\frac{q_{i}^{k}-q_{i}^{-k}}{q_{i}-q_{i}^{-1}}\text{
and }[k]_{i}^!=[k]_{i}[k-1]_{i}\cdot\cdot\cdot\lbrack1]_{i}.
\end{gather*}
The quantized enveloping algebra $\U_{\Q(q)}:=\U_{\Q(q)}(\fsp_{2m})$
is the associative unital algebra over $\Q(q)$ generated by
$e_{i},f_{i},k_i,k_i^{-1}$
$i=1,...,m$, subject to the relations:%
\begin{gather*}
k_ik_i^{-1}=k_i^{-1}k_i=1,\quad\,\,
k_i^{\pm 1}k_j^{\pm 1}=k_j^{\pm 1}k_i^{\pm 1},\\
k_ie_{j}k_i^{-1}=q^{a_{i,j}}e_{j},\quad\,\,\,
k_if_{j}k_i^{-1}=q^{-a_{i,j}}f_{j},\\
\lbrack e_{i},f_{i}]=\frac{\widetilde{k}_{i}-\widetilde{k}_{i}^{-1}}{q_{i}-q_{i}^{-1}}
\delta_{i,j}\text{ where }\widetilde{k}_{i}=\left\{
\begin{tabular}
[c]{l}%
$k_i$,\,\,\, if $i\neq m$\\
$k_i^{2}$,\,\, if $i=m$%
\end{tabular}
\right.  ,\\
\text{if }i\neq j\text{ },\,\,\overset{1-a_{i,j}}{\underset{k=0}{\sum}}%
(-1)^{k}e_{i}^{(k)}e_{j}e_{i}^{(1-a_{i,j}-k)}=0,\\
\text{if }i\neq j\text{ },\,\,\overset{1-a_{i,j}}%
{\underset{k=0}{\sum}}(-1)^{k}f_{i}^{(k)}f_{j}f_{i}^{(1-a_{i,j}-k)}=0,
\end{gather*}
where $e_{i}^{(k)}=e_{i}^{k}/[k]_{i}^!$ and $f_{i}^{(k)}=f_{i}^{k}/[k]_{i}^!$.\smallskip

$\U_{\Q(q)}$ is a Hopf algebra with coproduct $\Delta$, counit
$\varepsilon$ and antipode $S$ defined on generators by
$$\begin{aligned} &\Delta(e_i)=e_i\otimes 1+\widetilde{k}_{i}\otimes e_i,\quad
\Delta(f_i)=1\otimes f_i+f_i\otimes
\widetilde{k}_{i}^{-1},\quad \Delta(k_i)=k_i\otimes k_i,\\
& \varepsilon(e_i)=\varepsilon(f_i)=0,\quad \varepsilon(k_i)=1,\\
& S(e_i)=-\widetilde{k}_{i}^{-1}e_i,\quad S(f_i)=-f_i\widetilde{k}_{i},\quad S(k_i)=k_i^{-1}.
\end{aligned}$$

Recall our definition of $V_{\A}$ in the first paragraph below
Theorem \ref{ssqc}. For each $1\leq i\leq 2m$, let $i':=2m+1-i$. The
action of the generators of ${\U}_{\Q(q)}$ on
$V_{\Q(q)}:=V_{\A}\otimes_{\A}{\Q(q)}$ is as follows (cf.
\cite[(4.16)]{Ha})\footnote{Note that our $\widetilde{k}_m=k_m^{2}$
in this paper corresponds to $k_m$ in the notation of \cite{Ha}.}.
$$\begin{aligned}
e_{i}{v}_{j}&:=\begin{cases} {v}_i, &\text{if $j=i+1$,}\\
-{v}_{(i+1)'}, &\text{if $j=i'$,}\\
0, &\text{otherwise;}\end{cases}\,\,
e_{m}{v}_{j}:=\begin{cases} {v}_{m}, &\text{if $j=m'$,}\\
0, &\text{otherwise,}\end{cases}\\
f_{i}{v}_{j}&:=\begin{cases} {v}_{i+1}, &\text{if $j=i$,}\\
-{v}_{i'}, &\text{if $j=(i+1)'$,}\\
0, &\text{otherwise;}\end{cases}\,\,\,\,
f_{m}{v}_{j}:=\begin{cases} {v}_{m'}, &\text{if $j=m$,}\\
0, &\text{otherwise,}\end{cases}\\
k_{i}{v}_{j}&:=\begin{cases} q {v}_j, &\text{if $j=i$ or $j=(i+1)'$,}\\
q^{-1}{v}_{j}, &\text{if $j=i+1$ or $j=i'$,}\\
{v}_{j}, &\text{otherwise,}\end{cases}\cr
k_{m}{v}_{j}&:=\begin{cases} q{v}_j, &\text{if $j=m$,}\\
 q^{-1}{v}_{j}, &\text{if $j=m'$,}\\
{v}_{j}, &\text{otherwise,}\end{cases}\end{aligned}
$$
where $1\leq i<m,\,\, j\in\{1,\cdots,m\}\cup\{m',\cdots,1'\}$. Via
the coproduct, we get an action of ${\U}_{\Q(q)}$ on
$V_{\Q(q)}^{\otimes n}$. Let
$\mathbb{U}_{\A}:=\mathbb{U}_{\A}(\mathfrak{sp}_{2m})$ be the
Lusztig's $\A$-form in $\mathbb{U}_{\Q(q)}(\mathfrak{sp}_{2m})$. As
an $\A$-algebra, $\mathbb{U}_{\A}$ is generated by $$
e_i^{(a)},\,\,f_i^{(a)},\,\,k_i,\,\,k_i^{-1},\,\,a=0,1,2,\cdots,
1\leq i\leq m.
$$

\begin{lem} \label{Alattice} The above action of ${\U}_{\Q(q)}$ on
$V_{\Q(q)}^{\otimes n}$ naturally gives rise to an action of
$\U_{\A}$ on $V_{\A}^{\otimes n}$.
\end{lem}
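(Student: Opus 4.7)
The plan is to verify that each generator $e_i^{(a)}$, $f_i^{(a)}$, $k_i^{\pm 1}$ of $\U_{\A}$ sends $V_{\A}^{\otimes n}$ into itself. The argument splits naturally into a base case $n=1$, treated directly from the explicit formulas, followed by an induction on $n$ that uses compatibility of the $\A$-form with the Hopf structure of $\U_{\Q(q)}$.

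For $n=1$, direct inspection of the formulas displayed immediately above the lemma shows that $e_i v_j$, $f_i v_j$, and $k_i^{\pm 1} v_j$ all lie in $V_{\A}$ for every basis vector $v_j$. Moreover, for each $i$ the set $\{v_j : e_i v_j\neq 0\}$ has at most two elements, and in every case $e_i$ sends each such vector into the kernel of $e_i$ (for $i<m$, $e_i v_{i+1}=v_i$ with $e_i v_i=0$, and $e_i v_{i'}=-v_{(i+1)'}$ with $e_i v_{(i+1)'}=0$; for $i=m$, $e_m v_{m'}=v_m$ with $e_m v_m=0$). Hence $e_i^2=0$ on $V_{\Q(q)}$, and similarly $f_i^2=0$. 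Consequently $e_i^{(a)}=f_i^{(a)}=0$ as operators on $V_{\Q(q)}$ for $a\geq 2$, so those divided powers trivially preserve the lattice, while $e_i,f_i,k_i^{\pm 1}$ preserve $V_{\A}$ by inspection.

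For the inductive step I would invoke Lusztig's theorem that $\U_{\A}$ is a Hopf $\A$-subalgebra of $\U_{\Q(q)}$: the coproduct of any divided power $e_i^{(a)}$ or $f_i^{(a)}$ is an $\A$-linear combination of tensor products of divided powers with suitable powers of $\widetilde{k}_i$, and hence lies in $\U_{\A}\otimes_{\A}\U_{\A}$. Writing $V_{\A}^{\otimes n}=V_{\A}\otimes_{\A}V_{\A}^{\otimes(n-1)}$ and decomposing $\Delta(u)=\sum_k u_k^{(1)}\otimes u_k^{(2)}$ with each $u_k^{(j)}\in\U_{\A}$, one has
$$
u\cdot(v\otimes w) \;=\; \sum_k \bigl(u_k^{(1)}v\bigr)\otimes\bigl(u_k^{(2)}w\bigr)
$$
for $v\in V_{\A}$ and $w\in V_{\A}^{\otimes(n-1)}$. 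By the base case $u_k^{(1)}v\in V_{\A}$, and by the induction hypothesis $u_k^{(2)}w\in V_{\A}^{\otimes(n-1)}$, so the right-hand side lies in $V_{\A}^{\otimes n}$, completing the induction.

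The only point that requires input beyond direct calculation is the Hopf-$\A$-subalgebra property of $\U_{\A}$, which is classical and available from \cite{Lu1}. Everything else reduces to the simple observation that $e_i$ and $f_i$ act on the natural module $V_{\Q(q)}$ with square zero, which is the main simplifying feature that makes the lattice calculation painless.
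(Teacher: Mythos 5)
Your argument is correct and follows essentially the same route as the paper: invoke the Hopf $\A$-subalgebra property of $\U_{\A}$ (so the coproduct lands in $\U_{\A}\otimes_{\A}\U_{\A}$) to reduce everything to the case $n=1$, and then verify that $\U_{\A}V_{\A}\subseteq V_{\A}$ directly from the action formulas. The only difference is that you have written out the induction explicitly and observed that $e_i^2=f_i^2=0$ on the natural module, so that the higher divided powers $e_i^{(a)},f_i^{(a)}$ ($a\geq 2$) act as $0$ and the lattice-stability check on $V_{\A}$ reduces to looking only at $e_i,f_i,k_i^{\pm1}$ — a pleasant simplification the paper leaves implicit under the phrase ``follows from direct verification.''
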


\begin{proof} It is well-known that $\U_{\A}$ is an $\A$-Hopf
algebra. Hence it suffices to show that $\U_{\A}V_{\A}\subseteq
V_{\A}$. However, this follows from direct verification.
\end{proof}

For any commutative $\A$-algebra $K$, we define
$\mathbb{U}_{K}=\mathbb{U}_{K}(\mathfrak{sp}_{2m}):=\mathbb{U}_{\A}\otimes_{\A}K$.
By base change, we see that there is a representation
$$
{\psi}_C:{\U}_{K}\rightarrow\End_{K}\bigl(V_K^{\otimes n}\bigr).
$$

In Lusztig's book \cite[Part IV]{Lu3}, a ``modified form"
$\dot\U_{\Q(q)}$ of ${\U}_{\Q(q)}$ was introduced. The algebra
$\dot\U_{\Q(q)}$ in general does not have a unit element. But it
does have a family $\{1_\lambda\}_{\lambda\in X}$ of orthogonal
idempotents such that ${\dot{\U}}_{\Q(q)} = \oplus_{\lambda,\mu\in
X} 1_{\lambda} {\dot{\U}}_{\Q(q)} 1_\mu$. In a sense, the family
$\{1_\lambda\}$ serves as a replacement for the identity. Let
$\pi_{\lam,\mu}$ be the canonical projection from $\U_{\Q(q)}$ onto
$1_{\lambda} {\dot{\U}}_{\Q(q)} 1_\mu$ (see \cite[(23.1.1)]{Lu3}).
As a $\Q(q)$-algebra, $\dot\U_{\Q(q)}$ is generated by the elements
$e_i 1_{\lam}$, $f_i 1_{\lam}$ and $1_{\lam}$ with
$i\in\{1,2,\cdots,m\}$ and $\lam\in X$, where the following
relations are satisfied.
$$1_{\lam}1_{\mu}=1_{\mu}1_{\lam}=\delta_{\lam,\mu}1_{\lam},\quad
e_i 1_{\lam}=1_{\lam+\alpha_i} (e_i1_{\lam}),\quad f_i
1_{\lam}=1_{\lam-\alpha_i} (f_i1_{\lam}),$$
$$(e_i1_{\lam-\alpha_j})(f_j 1_{\lam})-(f_j 1_{\lam+\alpha_i})(e_i1_{\lam})
=\delta_{i,j}[\langle\lam,\alpha_i^{\vee}\rangle]_i 1_{\lam},$$
$$\sum_{k=0}^{1-a_{ij}}(-1)^k e_i^{(k)} e_j e_i^{(1-a_{ij}-k)}=
\sum_{k=0}^{1-a_{ij}}(-1)^k f_i^{(k)} f_j f_i^{(1-a_{ij}-k)}=0\quad
\text{if}\quad i\neq j,$$ where
$\langle\lam,\alpha_i^{\vee}\rangle:={2(\lam,\alpha_i)}/{(\alpha_i,\alpha_i)}$,
and the last identity is understood as its canonical image under
$\pi_{\lam,\mu}$ for any $\lam,\mu\in X$.\smallskip

Let $\dot{\U}_{\A}$ be the $\A$-subalgebra of $\dot{\U}_{\Q(q)}$
generated by $e_i^{(k)}1_{\lam}, f_i^{(k)}1_{\lam}$ for
$i=1,2,\cdots,$ $m$, $k=0,1,2,\cdots$, $\lam\in X$. Then by
\cite[(23.2)]{Lu3}, $\dot{\U}_{\A}$ is a free $\A$-module, and in
fact $\dot{\U}_{\A}$ is an $\A$-form of
$\dot{\U}_{\Q(q)}$.\smallskip

Since $V_{\Q(q)}^{\otimes n}$ is a finite dimensional integrable
module over $\U_{\Q(q)}$, it follows that $V_{\Q(q)}^{\otimes n}$
naturally becomes a unital $\dot{\U}_{\Q(q)}$-module in the sense of
\cite[(23.1.4)]{Lu3}. For each $\lam\in X$, we define $p_{\lam}$ to
be the projection operator from $V^{\otimes n}$ onto its
$\lam$-weight space (with respect to the subalgebra generated by $k_1^{\pm 1},\cdots,k_{m-1}^{\pm 1},k_m^{\pm 1}$).

\begin{lem} Let $\widetilde{\psi}_{C}$ be the map $$
1_{\lam}\mapsto p_{\lam},\quad
e_i1_{\lam}\mapsto\psi_C(e_i)p_{\lam},\quad
f_i1_{\lam}\mapsto\psi_C(f_i)p_{\lam},
\,\,i=1,2,\cdots,m,\,\,\lam\in X.$$ Then $\widetilde{\psi}_{C}$ can
be naturally extended to a representation of
$\dot{\U}_{\Q(q)}$ on $V^{\otimes n}$ such that $\widetilde{\psi}_C(P1_{\lam})=\psi_{C}(P)p_{\lam}$ for any
$P\in\U_{\Q(q)}$ and $\lam\in X$.
\end{lem}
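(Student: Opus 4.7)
The plan is to reduce this lemma to Lusztig's general principle (\cite[23.1.4]{Lu3}) that every integrable $\U_{\Q(q)}$-module carries a canonical unital $\dot{\U}_{\Q(q)}$-module structure in which $1_\lambda$ acts as projection onto the $\lambda$-weight space. The first step is to observe that $V_{\Q(q)}^{\otimes n}$ is finite dimensional and decomposes as $V_{\Q(q)}^{\otimes n}=\bigoplus_{\lam\in X} \bigl(V_{\Q(q)}^{\otimes n}\bigr)_{\lam}$ with only finitely many nonzero summands (the tensor factor $V_{\Q(q)}$ has weights $\pm\varepsilon_i$, so all weights of $V^{\otimes n}$ lie in the integral weight lattice $X$ and the module is integrable). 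Consequently the operators $p_{\lam}$ are orthogonal idempotents on $V^{\otimes n}$ satisfying $\sum_{\lam}p_{\lam}=\mathrm{id}$ as a finite sum, and $p_\mu \psi_C(e_i)p_\lam=0$ unless $\mu=\lam+\alpha_i$, and similarly for $f_i$.

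Next I would verify that the assignments $1_{\lam}\mapsto p_{\lam}$, $e_i 1_{\lam}\mapsto \psi_C(e_i)p_{\lam}$, $f_i 1_{\lam}\mapsto \psi_C(f_i)p_{\lam}$ respect the defining relations of $\dot{\U}_{\Q(q)}$. The relations $1_{\lam}1_{\mu}=\delta_{\lam,\mu}1_{\lam}$ and the intertwining relations $e_i 1_{\lam}=1_{\lam+\alpha_i}(e_i 1_{\lam})$, $f_i 1_{\lam}=1_{\lam-\alpha_i}(f_i 1_{\lam})$ follow from the weight-shifting behaviour noted above. For the commutator relation, on the $\lam$-weight space $\widetilde{k}_i$ acts as the scalar $q_i^{\langle\lam,\alpha_i^{\vee}\rangle}$, so
\[
\bigl[\psi_C(e_i),\psi_C(f_j)\bigr]p_{\lam}=\delta_{ij}\frac{q_i^{\langle\lam,\alpha_i^{\vee}\rangle}-q_i^{-\langle\lam,\alpha_i^{\vee}\rangle}}{q_i-q_i^{-1}}p_{\lam}=\delta_{ij}[\langle\lam,\alpha_i^{\vee}\rangle]_i\, p_{\lam},
\]
matching the defining relation in $\dot{\U}_{\Q(q)}$. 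The quantum Serre relations transfer directly from $\U_{\Q(q)}$ to $\dot{\U}_{\Q(q)}$ by right-multiplication by $p_{\lam}$ and summing, since they hold identically in $\U_{\Q(q)}$ and commute with the projections in the required way.

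Finally, to obtain the compatibility $\widetilde{\psi}_C(P 1_{\lam})=\psi_C(P)p_{\lam}$ for arbitrary $P\in\U_{\Q(q)}$, I would argue by induction on the length of a monomial in the generators $e_i,f_i,k_i^{\pm 1}$: the base case is immediate from the definition together with the fact that $k_i^{\pm 1}$ acts on the $\lam$-weight space by the scalar $q^{\pm\langle\lam,\alpha_i^{\vee}\rangle_{\mathrm{std}}}$, and the inductive step uses the intertwining relations established above. There is essentially no serious obstacle; the entire content is the routine verification that weight-space projections convert the $\U_{\Q(q)}$-module structure into a $\dot{\U}_{\Q(q)}$-module structure. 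The only care needed is to keep track of how $\widetilde{k}_i$ (as opposed to $k_i$) enters the commutator relation, which is why the exponents $q_i^{\langle\lam,\alpha_i^{\vee}\rangle}$ come out correctly after taking into account the definition $\widetilde{k}_m=k_m^2$.
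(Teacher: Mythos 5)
Your proof is correct and takes essentially the same approach as the paper: the paper's own proof is the one-line remark that the lemma ``follows directly from the definition of $\dot{\U}_{\Q(q)}$ and the fact that $V_{\Q(q)}^{\otimes n}$ is a direct sum of its weight spaces,'' relying (as you do) on Lusztig's \cite[23.1.4]{Lu3} and the preceding observation that $V_{\Q(q)}^{\otimes n}$ is a finite dimensional integrable module. Your write-up simply spells out the relation-checking that the paper leaves implicit, and it correctly accounts for the role of $\widetilde{k}_i$ versus $k_i$ in the commutator relation.
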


\begin{proof} This follows directly from the definition of
 $\dot{\U}_{\Q(q)}$ and the fact that $V_{\Q(q)}^{\otimes n}$ is a
direct sum of its weight spaces.
\end{proof}

By restriction and applying Lemma \ref{Alattice}, we see that
$V_{\A}^{\otimes n}$ naturally becomes an $\dot{\U}_{\A}$-module.
For any commutative $\A$-algebra $K$, we define
$\dot{\U}_{K}=\dot{\U}_{K}(\mathfrak{sp}_{2m}):=\dot{\U}_{\A}\otimes_{\A}K$.
By base change, we get a representation
$$
\widetilde{\psi}_C:\dot{\U}_{K}\rightarrow\End_{K}\bigl(V_K^{\otimes
n}\bigr),
$$

\begin{lem} \label{keylem} With the above notation, for any
commutative $\A$-algebra $K$, $$ {\psi}_C\Bigl({\U}_{K}\Bigr)=
\widetilde{\psi}_C\Bigl(\dot{\U}_{K}\Bigr).
$$
\end{lem}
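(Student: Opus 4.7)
The plan is to establish the two inclusions $\psi_C(\U_K) \subseteq \widetilde{\psi}_C(\dot{\U}_K)$ and $\widetilde{\psi}_C(\dot{\U}_K) \subseteq \psi_C(\U_K)$ separately, reducing each to the weight-space decomposition of the tensor space. The starting observation is that $V_K^{\otimes n}$ is the direct sum of its weight spaces $V_K^{\otimes n}[\mu]$ for $\mu$ in a \emph{finite} subset $W\subseteq X$: since each tensor factor has only the weights $\pm\varepsilon_i$, every weight of $V_K^{\otimes n}$ lies in $\{-n,\ldots,n\}^m$. The weight projection $p_\mu$ is equal to $\widetilde{\psi}_C(1_\mu)$ by construction.

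The first inclusion is immediate from this observation. For any $P\in\U_K$, the decomposition $\mathrm{id}_{V_K^{\otimes n}} = \sum_{\mu\in W} p_\mu$ together with the defining identity $\widetilde{\psi}_C(P\cdot 1_\mu) = \psi_C(P)\,p_\mu$ gives
\[
\psi_C(P) \;=\; \sum_{\mu\in W} \psi_C(P)\,p_\mu \;=\; \sum_{\mu\in W}\widetilde{\psi}_C(P\cdot 1_\mu) \;=\; \widetilde{\psi}_C\Bigl(P\cdot\sum_{\mu\in W} 1_\mu\Bigr),
\]
where the finite sum $P\cdot\sum_{\mu\in W}1_\mu$ is a well-defined element of $\dot{\U}_K$ (each $P\cdot 1_\mu$ is a well-defined element of $\dot{\U}_K$ because $e_i^{(a)}\cdot 1_\mu,\,f_i^{(a)}\cdot 1_\mu\in\dot{\U}_\A$ by definition and $k_i^{\pm1}\cdot 1_\mu$ is a scalar multiple of $1_\mu$).

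For the reverse inclusion, since $\dot{\U}_K$ is spanned as a $K$-module by elements $u\cdot 1_\mu$ with $u\in\U_K$ and $\mu\in X$, and since $\widetilde{\psi}_C(u\cdot 1_\mu) = \psi_C(u)\,p_\mu$ vanishes whenever $\mu\notin W$, it suffices to show that $p_\mu\in\psi_C(\U_K)$ for every $\mu\in W$ (then $\psi_C(u)\,p_\mu = \psi_C(u\cdot u_\mu)\in\psi_C(\U_K)$). I would construct such an element inside the image of the Cartan subalgebra $\U_K^0$ by exploiting Lusztig's divided-power elements $\bigl[\begin{smallmatrix}k_i;c\\t\end{smallmatrix}\bigr]\in\U_\A^0$, which act on $V_\A^{\otimes n}[\nu]$ (with $\langle\nu,\alpha_i^\vee\rangle = r$) as the Gaussian binomial $\bigl[\begin{smallmatrix}r+c\\t\end{smallmatrix}\bigr]_{q_i}\in\A$. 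Because $W$ is finite and Gaussian binomials in several variables provide an $\A$-basis of integer-valued $q$-polynomial functions on bounded subsets of $\Z^m$, an integral Lagrange-type interpolation over $\A$ produces $u_\mu\in\U_\A^0$ acting as $\delta_{\mu,\nu}\,\mathrm{id}$ on each $V_\A^{\otimes n}[\nu]$; base change then yields $p_\mu = \psi_C(u_\mu\otimes 1_K)$.

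The main obstacle I anticipate is precisely this integral interpolation step. Over $\Q(q)$ the construction is essentially trivial---the distinct eigenvalues $q^{\mu_i}\in\Q(q)$ give Vandermonde invertibility, so Lagrange interpolation in the $k_i$'s alone suffices---but over an arbitrary commutative $\A$-algebra $K$ the specialized eigenvalues $\zeta^{\mu_i}$ can coincide (for instance when $\zeta = 1$ or a root of unity), so a naive interpolation fails. One must genuinely work at the $\A$-form level with Lusztig's divided powers before specializing. Once the existence of $u_\mu$ in $\U_\A^0$ is secured, the rest of the argument is straightforward.
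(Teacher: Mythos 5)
Your proposal follows essentially the same route as the paper: reduce to the $\A$-form, get $\psi_C(\U_\A)\subseteq\widetilde\psi_C(\dot\U_\A)$ by summing the idempotents $1_\mu$ over the finite set of weights of $V^{\otimes n}$, and then for the reverse inclusion reduce to showing that each weight projector $p_\lambda$ lies in $\psi_C(\U_\A)$, using Lusztig's divided-power Cartan elements $\left[\begin{smallmatrix}K_i;c\\t\end{smallmatrix}\right]\in\U_\A$. The one place the two treatments genuinely diverge is how that last step is carried out: you invoke a general integral Lagrange-type interpolation principle (that Gaussian binomials furnish an $\A$-basis of integer-valued $q$-polynomial functions on a bounded box), whereas the paper sidesteps any such general statement by writing down the explicit element
$$
p'_\lambda=\prod_{i=1}^{m-1}\left[\begin{matrix}K_i;-\hat\lambda_i-1\\2n\end{matrix}\right]\left[\begin{matrix}K_i;-\hat\lambda_i+2n\\2n\end{matrix}\right]\cdot\left[\begin{matrix}K_m;-\hat\lambda_m-1\\4n\end{matrix}\right]\left[\begin{matrix}K_m;-\hat\lambda_m+4n\\4n\end{matrix}\right]
$$
and verifying directly, from the vanishing of $\begin{bmatrix}a\\t\end{bmatrix}$ for $0\le a<t$ together with the bound $|\hat\mu_i|\le n$ (resp.\ $2n$ for $i=m$) on weights of $V^{\otimes n}$, that $\psi_C(p'_\lambda)=p_\lambda$. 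Your interpolation claim is in fact true (the relevant matrix of Gaussian binomials is unitriangular after a shift, hence invertible over $\A$), but it is a nontrivial statement that would itself need proof; the paper's factorized formula achieves the same end with only elementary vanishing properties and no appeal to a basis theorem, so if you flesh this out you would do well to simply exhibit the product rather than argue via interpolation. The rest of your argument — including the observation that $P\cdot 1_\mu\in\dot\U_\A$ because $\dot\U_\A$ is a $\U_\A$-bimodule, and the base-change step at the end — is correct and matches the paper's logic.
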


\begin{proof} It suffices to show that ${\psi}_C\Bigl({\U}_{\A}\Bigr)=
\widetilde{\psi}_C\Bigl(\dot{\U}_{\A}\Bigr)$. \smallskip

Let $X_n$ be the set of weights (with respect to the Cartan part of
$\mathfrak{sp}_{2m}$) in $V^{\otimes n}$. Obviously, $X_n$ is a
finite set. As linear operators on $V^{\otimes n}$, it is easy to
check that for $i=1,2,\cdots,m$ and $a=0,1,2,\cdots$,
$$\begin{aligned}
&\psi_C\bigl(e_i^{(a)}\bigr)=\widetilde{\psi}_{C}\Bigl(\sum_{\lam\in
X_n}e_i^{(a)}1_{\lam}\Bigr),\quad\,
\psi_C\bigl(f_i^{(a)}\bigr)=\widetilde{\psi}_{C}\Bigl(\sum_{\lam\in
X_n}f_i^{(a)}1_{\lam}\Bigr),\\
&\psi_{C}(k_i)=\widetilde{\psi}_{C}\Bigl(\sum_{\lam\in X_n}
q^{\langle\lam,\alpha_i^{\vee}\rangle}1_{\lam}\Bigr).\end{aligned}
$$ As a result, we deduce that
${\psi}_C\Bigl({\U}_{\A}\Bigr)\subseteq\widetilde{\psi}_C\Bigl(\dot{\U}_{\A}\Bigr)$.
It remains to show that
$\widetilde{\psi}_C\Bigl(\dot{\U}_{\A}\Bigr)\subseteq
{\psi}_C\Bigl({\U}_{\A}\Bigr)$.
\smallskip

It suffices to show that
$p_{\lam}=\widetilde{\psi}_C(1_{\lam})\in{\psi}_C\Bigl({\U}_{\A}\Bigr)$
for each $\lam\in X_n$. For each integer $i$ with $1\leq i\leq m$,
we set (following Lusztig)
$$
\left[\begin{matrix}K_i; c\\
t
\end{matrix}\right]=\prod_{s=1}^t\frac{\widetilde{k}_iq_i^{c-s+1}-\widetilde{k}_i^{-1}q_i^{-c+s-1}}{q_i^s-q_i^{-s}},\quad\,\,
\left[\begin{matrix}K_i; c\\
0
\end{matrix}\right]=1,
$$
for $t\geq 1$, $c\in\mathbb{Z}$. By \cite[Lemma 4.4]{Lu0}, we know
that
$$
\left[\begin{matrix}K_i; c\\
t
\end{matrix}\right]\in\U_{\A}.
$$
Let $\lam\in X_n$. We write
$\hat{\lam}_i:=\langle\lam,\alpha_i^{\vee}\rangle$ for each $i$.
Note that for each $i$,
$$
\hat{\lam}_i\in\begin{cases} \bigl\{-n,-n+1,\cdots,-1,0,1,\cdots,n-1,n\bigr\}, &\text{if $i\neq m$,}\\
\bigl\{-2n,-2n+2,\cdots,-2,0,2,\cdots,2n-2,2n\bigr\}, &\text{if
$i=m$.}
\end{cases}
$$
We define $$ \begin{aligned}
&p'_{\lam}:=\biggl(\prod_{i=1}^{m-1}\left[\begin{matrix}K_i; -\hat{\lam}_i-1\\
2n
\end{matrix}\right]
\left[\begin{matrix}K_i; -\hat{\lam}_i+2n\\
2n
\end{matrix}\right]
\biggr)\times\\
&\qquad\qquad\qquad\biggl(\left[\begin{matrix}K_m; -\hat{\lam}_m-1\\
4n
\end{matrix}\right]
\left[\begin{matrix}K_m; -\hat{\lam}_m+4n\\
4n
\end{matrix}\right]
\biggr). \end{aligned}$$

Clearly, $p'_{\lam}\in\U_{\A}$. For each $\mu\in X_n$, we use
$V_{\Q(q)}^{\otimes n}[\mu]$ to denote the $\mu$-weight space of
$V_{\Q(q)}^{\otimes n}$. One can verify directly that $$
\psi_C(p'_{\lam})(x)=\begin{cases}x,
&\text{if $x\in V_{\Q(q)}^{\otimes n}[\lam]$,}\\
0,&\text{if $x\in V_{\Q(q)}^{\otimes n}[\mu]$ with $\mu\neq\lam$.}
\end{cases}
$$
As a result, we deduce that
$p_{\lam}=\widetilde{\psi}_C(1_{\lam})=\psi_C(p'_{\lam})\in{\psi}_C\Bigl({\U}_{\A}\Bigr)$,
as required. This completes the proof of the lemma.\end{proof}
\smallskip

\noindent {\it Remark 2.4.} Lemma \ref{keylem} enables use to reduce
the proof of the equality concerning $\psi_C$ to the proof of the
equality concerning $\widetilde{\psi}_C$. Note that the arguments
used in the proof of Lemma \ref{keylem} actually work in all types.

\bigskip\bigskip

\section{BMW algebras and a generalized FRT construction}

In this section we shall first recall the definitions of specialized
BMW algebras and Oehms's results on a generalized
Faddeev--Reshetikhin--Takhtajan (FRT for short) construction. Then
we shall analyze the structure of each finite truncation
${A}_{\A}^{sy}(2m,\leq\!n)$ of the quantized coordinate
algebra ${A}_{\A}^{sy}(2m)$ of ${\rm SpM}_{2m}$. Using
Oehms's symplectic bideterminant basis for the quantized coordinate
algebra of symplectic monoid scheme $\SpM_{2m}$, we shall conclude
that ${A}_{\A}^{sy}(2m,\leq\!n)$ is a cellular coalgebra.
The two-sided simple comodule decomposition of the quantized
coordinate algebra $\widetilde{A}_{\Q(q)}^{sy}(2m)$ is obtained.

Let $x,r,z$ be three indeterminates over $\mathbb{Z}$. Let $R$ be
the ring $$
R:=\mathbb{Z}[r,r,^{-1},z,x]/\bigl((1-x)z+(r-r^{-1})\bigr).
$$
For simplicity, we shall use the same letters $r,r^{-1},z,x$ to
denote their images in $R$ respectively.

\begin{dfn} {\rm (\cite{BW}, \cite{MW}, \cite{M})}\label{bmw} The Birman--Murakami--Wenzl
algebra (or BMW algebra for short) $\bb_n(r,x,z)$ is a unital
associative $R$-algebra with generators $T_i, E_i$, $1\le i\le n-1$
and relations\begin{enumerate}
\item $T_i-T_i^{-1}=z(1-E_i)$, for $1\le i\le n-1$,
\item $E_i^2=xE_i$, for $1\le i\le n-1$,
\item $T_{i}T_{i+1}T_i=T_{i+1}T_iT_{i+1}$,  for $1\le i\le n-2$,
\item $T_iT_j=T_jT_i$,  for $|i-j|>1$,
\item $E_iE_{i+1}E_i=E_i,\,\,E_{i+1}E_{i}E_{i+1}=E_{i+1}$,\,\, for $1\le i\le n-2$,
\item $T_iT_{i+1}E_{i}=E_{i+1}E_i,\,\,T_{i+1}T_{i}E_{i+1}=E_{i}E_{i+1}$,\,\,for $1\le i\le n-2$,
\item $ E_i T_i=T_iE_i=r^{-1} E_i$,  for $1\le i\le n-1$.
\item $E_{i}T_{i+1}E_{i}=rE_{i},\,\,E_{i+1}T_{i}E_{i+1}=rE_{i+1}$,\,\,for $1\le i\le n-2$.
\end{enumerate}
\end{dfn}

In \cite{MW}, Morton and Wassermann proved that $\bb_n(r,x,z)$ is
isomorphic to the Kauffman's tangle algebra~\cite{Kau} whose
$R$-basis is indexed by Brauer diagrams. As a consequence, they show
that $\bb_n(r,x,z)$ is a free $R$-module with rank $(2n-1)!!$. In
fact, the same is still true if one replaces the ring $R$ by any
commutative $R$-algebra $K$. Note that if we specialize $r$ to $1$
and $z$ to $0$, then $\bb_n(r,x,z)$ will become the usual Brauer
algebra with parameter $x$.

We regard $\A$ as an $R$-algebra by sending $r$ to $-q^{2m+1}$, $z$
to $q-q^{-1}$ and $x$ to $1-\sum_{i=-m}^{m}q^{2i}$. The resulting
$\A$-algebra will be denoted by $\bb_n(-q^{2m+1},q)_{\A}$. We set
$\bb_n(-q^{2m+1},q)=\bb_n(-q^{2m+1},q)_{\A}\otimes_{\A}\Q(q)$, and
we call it {\it the specialized Birman--Murakami--Wenzl algebra} (or
{\it specialized BMW algebra} for short). Note that if we specialize
further $q$ to $1$, then $\bb_n(-q^{2m+1},q)$ will become the
specialized Brauer algebra $\bb_n(-2m)$ used in \cite{DDH} and
\cite{Hu1}.

There is an action of the algebra $\bb_n(-q^{2m+1},q)_{\A}$ on the
$n$-tensor space $V_{\A}^{\otimes n}$ which we now recall. We set $$
(\rho_1,\cdots,\rho_{2m}):=(m,m-1,\cdots,1,-1,\cdots,-m+1,-m),
$$
and $\epsilon_i:=\sign(\rho_i)$. For any $i,j\in\{1,2,\cdots,2m\}$,
we use $E_{i,j}$ to denote the corresponding basis of matrix units
for $\End_{\Q(q)}(V_{\Q(q)})$. Let $``-"$ be the ring involution defined on $\A$ by
$\overline{q^{\pm 1}}=q^{\mp 1}$, $\overline{k}=k$ for any $k\in\Z$. The involution $``-"$ can be
uniquely extended to an involution of $\End_{\A}\bigl(V_{\A}^{\otimes 2}\bigr)$ such that $$
\overline{E_{i,j}\otimes E_{k,l}}=E_{i,j}\otimes E_{k,l},\,\,\overline{rx}=\overline{r}\,\overline{x},
$$
for any integers $1\leq i,j,k,l\leq 2m$, any $r\in\A$ and any
$x\in\End_{\A}\bigl(V_{\A}^{\otimes 2}\bigr)$. Following
\cite[Section 2]{Oe2}, we set $$\begin{aligned} \beta &:=\sum_{1\leq
i\leq 2m}\Bigl(q^2E_{i,i}\otimes E_{i,i}+E_{i,i'}\otimes
E_{i',i}\Bigr)+q\sum_{\substack{1\leq
i,j\leq 2m\\ i\neq j,j'}} E_{i,j}\otimes E_{j,i}+\\
&\qquad\qquad (q^2-1)\sum_{1\leq j<i\leq 2m}\Bigl(E_{i,i}\otimes
E_{j,j}-q^{\rho_i-\rho_j}\epsilon_i\epsilon_j E_{i,j'}\otimes
E_{i',j}\Bigr),\\
\gamma &:=\sum_{1\leq i,j\leq
2m}q^{\rho_i-\rho_j}\epsilon_i\epsilon_j E_{i,j'}\otimes E_{i',j}.
\end{aligned}
$$
We define $\beta':=\overline{q\beta^{-1}}$, $
\gamma':=\overline{\gamma}$. By direct verification, we have that
$$\begin{aligned} \beta'&:=\sum_{1\leq i\leq
2m}\Bigl(qE_{i,i}\otimes E_{i,i}+q^{-1}E_{i,i'}\otimes
E_{i',i}\Bigr)+\sum_{\substack{1\leq
i,j\leq 2m\\ i\neq j,j'}} E_{i,j}\otimes E_{j,i}+\\
&\qquad\qquad (q-q^{-1})\sum_{1\leq i<j\leq 2m}\Bigl(E_{i,i}\otimes
E_{j,j}-q^{\rho_j-\rho_i}\epsilon_i\epsilon_j E_{i,j'}\otimes
E_{i',j}\Bigr),\\
\gamma'&:=\sum_{1\leq i,j\leq
2m}q^{\rho_j-\rho_i}\epsilon_i\epsilon_j E_{i,j'}\otimes E_{i',j}.
\end{aligned}
$$
Note that the operators\footnote{The reason we use the operators $\beta', \gamma'$ is because we want to let
$\bb_n(-q^{2m+1},q)_{\A}$ act on $V_{\A}^{\otimes n}$ from the right hand side instead of from the left hand side.} $\beta', \gamma'$ are related to each other by
the equation
\begin{equation}\label{BG}\beta'-(\beta')^{-1}=(q-q^{-1})(\id_{V^{\otimes
2}}-\gamma').
\end{equation}
For $i=1,2,\cdots,n-1$, we set $$ \beta'_i:=\id_{V^{\otimes
i-1}}\otimes\beta'\otimes\id_{V^{\otimes n-i-1}},\quad
\gamma'_i:=\id_{V^{\otimes i-1}}\otimes\gamma'\otimes\id_{V^{\otimes
n-i-1}}.
$$
By \cite[(10.2.5)]{CP} and \cite[Section 4]{Ha}, the map
$\varphi_{C}$ which sends each $T_i$ to $\beta'_i$ and each $E_i$ to
$\gamma'_i$ for $i=1,2,\cdots,n-1$ can be naturally extended to a
right action of $\bb_n(-q^{2m+1},q)_{\A}$ on $V_{\A}^{\otimes n}$
such that all the statements in Theorem \ref{ssqc} hold.  Note that
if we specialize $q$ to $1$, then $T_i$ degenerates to
$-s_i\in\bb_n(-2m)$ for each $i$ and this action of
$\bb_n(-q^{2m+1},q)$ becomes the action studied in \cite{DDH} of the
specialized Brauer algebra $\bb_n(-2m)$ on $V_{\Z}^{\otimes n}$.
\smallskip

Now we recall what Oehms called ``generalized
Faddeev--Reshetikhin--Takhtajan construction". The basic
references are \cite{FRT}, \cite{Ha} and \cite{Oe2}. We concentrate
on the quantized type $C$ case. Let $u_i, 1\leq i\leq 2m$, (resp.,
$X_{i,j}, 1\leq i,j\leq 2m$) be a basis of $V^{\ast}$ (resp., of
$V^{\ast}\otimes V$) satisfying $u_i(v_j)=\delta_{i,j}$ (resp.,
$X_{i,j}=u_i\otimes v_j$) for each $i,j$. Set $$
I(2m,n):=\bigl\{(i_1,\cdots,i_n)\bigm|\text{$i_j\in\{1,2,\cdots,2m\}$
for each $j$}\bigr\}.
$$
For each $\bi=(i_1,\cdots,i_n)\in I(2m,n)$, we set
$v_{\bi}:=v_{i_1}\otimes\cdots\otimes v_{i_n}$. An endomorphism
$\mu\in\End(V^{\otimes n})$ is uniquely determined by its
coefficient $\mu_{\bi,\bj}$ with respect to the basis
$\{v_{\bi}\}_{\bi\in I(2m,n)}$, that is, $$
\mu(v_{\bj})=\sum_{\bi\in I(2m,n)}\mu_{\bi,\bj}v_{\bi}.
$$

For any commutative $\A$-algebra $K$, we use $F_{K}(2m)$ to denote
the tensor algebra over $V^{\ast}\otimes V$, which can be identified
with the free $K$-algebra generated by the $(2m)^2$ symbols
$X_{i,j}$ for $i,j\in\{1,2,\cdots,2m\}$. For each $\bi,\bj\in
I(2m,n)$, we write $$ X_{\bi,\bj}:=X_{i_1,j_1}X_{i_2,j_2}\cdots
X_{i_n,j_n}.
$$
Following \cite[Section 2]{Oe2}, for an endomorphism $\mu\in\End(V^{\otimes n})$, we write $$ \mu\wr
X_{\bi,\bj}=\sum_{\bk\in I(2m,n)}\mu_{\bi,\bk}X_{\bk,\bj},\quad
X_{\bi,\bj}\wr\mu=\sum_{\bk\in I(2m,n)}X_{\bi,\bk}\mu_{\bk,\bj}.
$$
Note that the algebra $F_{K}(2m)$ possesses a structure of bialgebra
where comultiplication and augmentation on the generators
$x_{\bi,\bj}$ are given by $$ \Delta(X_{\bi,\bj})=\sum_{\bk\in
I(2m,n)}X_{\bi,\bk}\otimes X_{\bk,\bj},\quad
\varepsilon(X_{\bi,\bj})=\delta_{\bi,\bj}.
$$

Following Oehms \cite{Oe2}, we can apply the generalized FRT construction with
respect to the subset $\{\beta,\gamma\}\subseteq\End(V)\otimes
\End(V)$ to obtain a new bialgebra $A_K^{sy}(2m)$. Precisely, we
define $$ A_K^{sy}(2m):=F_{K}(2m)/\langle\beta\wr
X_{\bi,\bj}-X_{\bi,\bj}\wr\beta,\,\gamma\wr
X_{\bi,\bj}-X_{\bi,\bj}\wr\gamma,\,\,\bi,\bj\in I(2m,2)\rangle,
$$
which is necessarily a bialgebra as the ideal generated by $\beta\wr
X_{\bi,\bj}-X_{\bi,\bj}\wr\beta$ and $\gamma\wr
X_{\bi,\bj}-X_{\bi,\bj}\wr\gamma$ for $\bi,\bj\in I(2m,2)$ is
actually a bi-ideal. Clearly, $A_K^{sy}(2m)$ is a $\Z$-graded
algebra, that is,
$$ A_K^{sy}(2m)=\bigoplus_{0\leq n\in\Z}A_K^{sy}(2m,n).
$$
For each $\bi,\bj\in I(2m,n)$, let $x_{\bi,\bj}$ be the canonical
image of $X_{\bi,\bj}$ in $A^{sy}(2m,n)$. By the main result in
\cite{Oe2}, the natural map
$A_{\A}^{sy}(2m,n)\otimes_{\A}K\rightarrow A_K^{sy}(2m,n)$ is an
isomorphism for any commutative $\A$-algebra $K$. The algebra
$A_K^{sy}(2m)$ is a quantization of the coordinate ring of the
symplectic monoid scheme $\SpM_{2m}$ which is defined by (for any commutative
$\mathbb{Z}$-algebra $K$)
$$ \SpM_{2m}(K):=\bigl\{A\in M_{2m}(K)\bigm|\exists d(A)\in K,
A^tJA=AJA^t=d(A)J\bigr\},
$$
where $J$ is the Gram matrix of the given skew bilinear form with
respect to the basis $\{v_i\}_{i=1}^{2m}$.\smallskip

In \cite{Oe2}, Oehms constructed a basis for $A_{\A}^{sy}(2m,n)$ for
each $n$. To describe that basis, we need some more notations. For
$i=1,2,\cdots,n-1$, we set $$ \beta_i:=\id_{V^{\otimes
i-1}}\otimes\beta\otimes\id_{V^{\otimes n-i-1}},\quad
\gamma_i:=\id_{V^{\otimes i-1}}\otimes\gamma\otimes\id_{V^{\otimes
n-i-1}}.
$$ Recall that for $i=1,2,\cdots,n-2$, we have the braid relation
$\beta_i\beta_{i+1}\beta_i=\beta_{i+1}\beta_i\beta_{i+1}$.
Recall also that (see Section 1) we have used $\BS_n$ to denote the symmetric group on $n$ letters.
For each $w\in\BS_n$ there is a well--defined element
$\beta(w)\in\End(V^{\otimes n})$, where $\beta(w)=\beta_{i_1}\dots
\beta_{i_k}$ whenever $k$ is minimal such that
$w=(i_1,i_1+1)\dots(i_k,i_k+1)$. For each partition $\lam$ of $n$,
let $\lam^t$ be the transpose of $\lam$, and let $\BS_{\lam^t}$ be
the corresponding Young subgroup of $\BS_n$ (which is the subgroup
fixing the sets
$\{1,2,\cdots,\lam^t_1\},\{\lam^t_1+1,\lam^t_1+2,\cdots,\lam^t_1+\lam^t_2\},\cdots$).
For each $w\in\BS_n$ and each pair of multi-indices $\bi,\bj\in
I(2m,n)$, we set (following Oehms)
$$
T_q^{\lam}(\bi:\bj):=\sum_{w\in\BS_{\lam^t}}(-q^2)^{-\ell(w)}\beta(w)\wr
x_{\bi,\bj},
$$
and call $T_q^{\lam}(\bi:\bj)$ a {\it quantum symplectic
bideterminant}. \smallskip

Recall that for each partition $\lam=(\lam_1,\lam_2,\cdots,)$ one
can naturally associates a Young diagram reading row lengths out of
the components $\lam_i$. For example, $$ (3,3,2,1)\leftrightarrow
\begin{tabular}
[c]{|l|ll}\hline ${}$ & ${}$ & \multicolumn{1}{|l|}{${}$}\\
\hline ${}$ & ${}$ & \multicolumn{1}{|l|}{${}$}\\
\hline ${}$ & ${}$ & \multicolumn{1}{|l}{}\\
\cline{1-2} ${}$ &  & \\
\cline{1-1}
\end{tabular}\,\, .
$$
For each positive integer $k$, let $\Lambda^{+}(m,k)$ be the set of
partitions of $k$ into not more than $m$ parts.  Let
$\lam\in\Lambda^{+}(m,k)$ with $0\leq k\leq n$. For each $\bi\in
I(2m,k)$, one can construct a $\lam$-tableau $T_{\bi}^{\lam}$ by
inserting the components of $\bi$ column by column into the boxes of
the Young diagram of $\lam$. In the above example, $$
T_{\bi}^{\lam}=\begin{tabular} [c]{|l|ll}\hline $\mathtt{i_1}$ &
$\mathtt{i_5}$ & \multicolumn{1}{|l|}{$\mathtt{i_8}$}\\\hline
$\mathtt{i_2}$ & $\mathtt{i_6}$ &
\multicolumn{1}{|l|}{$\mathtt{i_9}$}\\\hline
$\mathtt{i_3}$ & $\mathtt{i_7}$ & \multicolumn{1}{|l}{}\\\cline{1-2}%
$\mathtt{i_4}$ &  & \\\cline{1-1}%
\end{tabular}\,\, .
$$
Let $\bi_{\lam}$ be the unique multi-index in $I(2m,k)$ such that in
the corresponding $\lam$-tableau $T_{\bi}^{\lam}$, the $j$th row is
filled with the number $j$  for each integer $j$ with $1\leq j\leq
m$. For each integer $1\leq i\leq \lam_1=\ell(\lam^t)$, we use
$w_{i,0}$ to denote the unique longest element in the symmetric
group $\mathfrak{S}_{\lam^t_i}$. Let $$
w_{0,\lam^t}:=\prod_{i=1}^{\lam_1}w_{i,0}.
$$
Let $\widehat{\bi}_{\lam}\in I(2m,k)$ be such that
$T_{\widehat{\bi}_{\lam}}^{\lam}=T_{\bi}^{\lam}w_{0,\lam^t}$.
Following \cite[Section 6]{Oe2}, we put a new order $``\prec"$ on
the set $\{1,2,\cdots,2m\}$, namely, $$ m\prec m'\prec (m-1)\prec
(m-1)'\prec\cdots\prec 1\prec 1'.
$$
We define $I_{\lam}^{mys}$ to be the set of $\bi\in I(2m,k)$ such
that the entries in $T_{\bi}^{\lam}$ are weakly increasing along
rows and strictly increasing down columns according to the order
$``\prec"$, and for each $1\leq i\leq m$, $i, i'$ are limited to the
first $m-i+1$ rows.

In the previous example, we have
$$ T_{\bi}^{\lam}=\begin{tabular} [c]{|l|ll}\hline
$\mathtt{1}$ & $\mathtt{1}$ &
\multicolumn{1}{|l|}{$\mathtt{1}$}\\\hline $\mathtt{2}$ &
$\mathtt{2}$ & \multicolumn{1}{|l|}{$\mathtt{2}$}\\\hline
$\mathtt{3}$ & $\mathtt{3}$ & \multicolumn{1}{|l}{}\\\cline{1-2}%
$\mathtt{4}$ &  & \\\cline{1-1}%
\end{tabular}\,\, ,\quad\,\,T_{\widehat{\bi}_{\lam}}^{\lam}=\begin{tabular} [c]{|l|ll}\hline
$\mathtt{4}$ & $\mathtt{3}$ &
\multicolumn{1}{|l|}{$\mathtt{2}$}\\\hline $\mathtt{3}$ &
$\mathtt{2}$ & \multicolumn{1}{|l|}{$\mathtt{1}$}\\\hline
$\mathtt{2}$ & $\mathtt{1}$ & \multicolumn{1}{|l}{}\\\cline{1-2}%
$\mathtt{1}$ &  & \\\cline{1-1}%
\end{tabular}\,\, .
$$

\begin{lem} \label{reversetableau} For any $\lam\in\Lambda^{+}(m,k)$,  we have $$
\widehat{\bi}_{\lam}\in I_{\lam}^{mys}.
$$
\end{lem}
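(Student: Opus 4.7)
The plan is to unpack the definition of $\widehat{\bi}_\lam$ and then verify the three defining conditions of $I_\lam^{mys}$ directly. First I would compute the tableau $T_{\widehat{\bi}_\lam}^{\lam}$ explicitly. By construction $T_{\bi_\lam}^{\lam}$ has the $j$-th row uniformly filled with the entry $j$, so reading it column by column shows that the $c$-th column reads $1,2,\ldots,\lam^t_c$ from top to bottom. The element $w_{0,\lam^t}$ is the product of the longest elements of the column-stabilizing factors $\mathfrak{S}_{\lam^t_i}$, so its action on the right reverses each column of the tableau. Consequently $T_{\widehat{\bi}_\lam}^{\lam}$ is the tableau whose $c$-th column reads $\lam^t_c,\lam^t_c-1,\ldots,2,1$ from top to bottom. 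In particular, since $\ell(\lam)\le m$ forces $\lam^t_c\le m$ for every $c$, all entries of $T_{\widehat{\bi}_\lam}^{\lam}$ lie in $\{1,2,\ldots,m\}$, and no primed entry $j'$ appears at all.

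Next I would verify the three conditions for $I_\lam^{mys}$. On $\{1,\ldots,m\}$ the order $\prec$ restricts to the reverse of the usual numerical order, so usual weakly (respectively strictly) decreasing sequences become $\prec$-weakly (respectively $\prec$-strictly) increasing. The $r$-th row of $T_{\widehat{\bi}_\lam}^{\lam}$ reads $\lam^t_c-r+1$ as $c$ runs over the columns of length at least $r$, which is weakly decreasing in the usual order because $\lam^t$ is weakly decreasing, hence $\prec$-weakly increasing. The $c$-th column reads $\lam^t_c,\lam^t_c-1,\ldots,1$ from top to bottom, strictly decreasing in the usual order and therefore $\prec$-strictly increasing. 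For the final condition I need each $i$ and each $i'$ to occur only within the first $m-i+1$ rows; since no primed entry appears, only the entries $i\in\{1,\ldots,m\}$ need be checked, and such an $i$ occurs in row $r$ of column $c$ exactly when $r=\lam^t_c-i+1$, so the bound $\lam^t_c\le m$ immediately gives $r\le m-i+1$.

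I do not foresee a real obstacle here. The whole statement reduces to the observation that column-reversing the row-constant tableau of shape $\lam$ with $\ell(\lam)\le m$ produces a tableau whose entries lie in $\{1,\ldots,m\}$ and whose row/column monotonicity is automatic from the weak decrease of $\lam^t$. The only minor care needed is in tracking the direction of $\prec$ on the unprimed integers (opposite to the natural order) and handling the row-restriction bookkeeping, both of which are routine.
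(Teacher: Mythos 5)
Your proposal is correct and is exactly the routine verification that the paper's one-line proof ("This follows directly from the definition of $I_{\lam}^{mys}$") is alluding to: column-reversing the row-constant tableau gives column $c$ reading $\lam^t_c,\lam^t_c-1,\ldots,1$, and since $\lam^t_c\le\ell(\lam)\le m$ all entries are unprimed and the row/column monotonicity and row-restriction conditions hold. Same approach, just written out.
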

\begin{proof} This follows directly from the definition of
$I_{\lam}^{mys}$.
\end{proof}

For each ${\lam}\in\Lambda^{+}(m,k)$, let $I_{\lam}^{<}$ be the set
of multi-indices $\bi\in I(2m,k)$ such that the entries in
$T_{\bi}^{\lam}$ are strictly increasing down columns with respect
to the usual order ``$<$" on $\{1,2,\cdots,2m\}$. For each integer
$j\geq 1$, we write
$$
\Delta^{(j)}:=\bigl(\Delta\otimes 1^{\otimes j-1}\bigr)\circ\cdots\circ\bigl(\Delta\otimes 1\bigr)\circ\Delta.
$$
The following result was used in \cite[Section 15]{Oe2} without proof.
Since we shall use it in this paper, we include a proof here.

\begin{lem} \label{comultirule} Let ${\lam}\in\Lambda^{+}(m,k)$,
$\bi,\bj\in I_{\lam}^{mys}$ and $j\geq 1$ be an integer. We have
that $$\begin{aligned}
&\quad\,\,\,\Delta^{(j)}\Bigl(T_q^{\lam}(\bi,\bj)\Bigr)\\
&=\sum_{\bh^{(1)},\cdots,\bh^{(j)}\in
I_{\lam}^{<}}T_q^{\lam}(\bi,\bh^{(1)})\otimes
T_q^{\lam}(\bh^{(1)},\bh^{(2)}) \otimes\cdots\otimes
T_q^{\lam}(\bh^{(j)},\bj).
\end{aligned} $$
\end{lem}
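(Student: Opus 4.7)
The plan is to proceed by induction on $j$, reducing the general statement to the base case $j=1$ via the coassociativity identity $\Delta^{(j)}=(\Delta\otimes\id^{\otimes(j-1)})\circ\Delta^{(j-1)}$. Assuming the $(j-1)$-case, I would apply $\Delta$ to the leftmost factor $T_q^{\lambda}(\bi,\bh^{(1)})$ of $\Delta^{(j-1)}(T_q^{\lambda}(\bi,\bj))$ and invoke the $j=1$ case there, after which the indexing rearranges to give the $j$-case.

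For the base case $j=1$, the starting point is the definition $T_q^{\lambda}(\bi,\bj)=E_{\lambda}\wr x_{\bi,\bj}$, where $E_{\lambda}:=\sum_{w\in\BS_{\lambda^t}}(-q^2)^{-\ell(w)}\beta(w)$. Combining this with the bialgebra coproduct $\Delta(x_{\bi,\bj})=\sum_{\bh\in I(2m,k)}x_{\bi,\bh}\otimes x_{\bh,\bj}$ and using the linearity of $\wr$ in its left slot yields the preliminary identity
\[
\Delta(T_q^{\lambda}(\bi,\bj))=\sum_{\bh\in I(2m,k)}T_q^{\lambda}(\bi,\bh)\otimes x_{\bh,\bj}.
\]
The defining relation $\beta\wr X_{\bi,\bj}=X_{\bi,\bj}\wr\beta$ of $A_K^{sy}(2m)$, imposed in degree $2$, propagates to arbitrary degree by sliding $\beta$ past the multiplicative generators; consequently $\beta(w)\wr x_{\bi,\bj}=x_{\bi,\bj}\wr\beta(w)$ for all $w$, giving the alternative form $T_q^{\lambda}(\bi,\bj)=x_{\bi,\bj}\wr E_{\lambda}$ and hence $\Delta(T_q^{\lambda}(\bi,\bj))=\sum_{\bh}x_{\bi,\bh}\otimes T_q^{\lambda}(\bh,\bj)$.

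The heart of the argument is to show that these expressions collapse onto the sum over $\bh\in I_{\lambda}^{<}$ with $T_q^{\lambda}(\bh,\bj)$ in the second tensor factor. Writing things out using $T_q^{\lambda}(\bi,\bh)=x_{\bi,\bh}\wr E_{\lambda}$, the desired identity is equivalent to the matrix identity $E_{\lambda}\cdot P_{I_{\lambda}^{<}}\cdot E_{\lambda}=E_{\lambda}$ on $V^{\otimes k}$, where $P_{I_{\lambda}^{<}}$ is the coordinate projector onto $\mathrm{span}\{v_{\bh}\mid\bh\in I_{\lambda}^{<}\}$. To verify this I would partition $I(2m,k)$ into $\BS_{\lambda^t}$-orbits, where $\BS_{\lambda^t}$ permutes positions inside each column of the tableau $T_{\bh}^{\lambda}$. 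On a free orbit one has a unique column-strict representative $\bh^{*}\in I_{\lambda}^{<}$, and the $q$-antisymmetry inherited from the quadratic relation for $\beta$ shows that the orbit sum correctly produces $T_q^{\lambda}(\bi,\bh^{*})\otimes T_q^{\lambda}(\bh^{*},\bj)$. On a non-free orbit, $\bh$ has a column repetition and one must verify that the contribution vanishes.

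The main obstacle is precisely this non-free-orbit case. In the classical Brauer ($q=1$) limit the $\gamma$-terms drop out and the vanishing is just the usual fact that a determinant with two equal columns is zero. In the quantum BMW setting, however, $\beta$ carries additional $\gamma$-dependent corrections so that $E_{\lambda}(v_{\bh})$ for column-repeated $\bh$ does \emph{not} vanish; instead one gets a $\gamma$-correction that must be annihilated upon applying $E_{\lambda}$ a second time. The needed cancellation can be traced to the BMW identities $\gamma\beta=\beta\gamma=-q^{-2m-1}\gamma$ (coming from the relation $E_i T_i=T_i E_i=r^{-1}E_i$ with $r=-q^{2m+1}$), which reduce each $\gamma$-correction to a scalar multiple of a factor fixed by the stabilizer of $\bh$ in $\BS_{\lambda^t}$, whose $(-q^2)^{-\ell(\cdot)}$-weighted sum vanishes exactly as in the classical alternating-group argument.
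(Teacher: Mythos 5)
Your proposal takes a genuinely different and more ambitious route than the paper. The paper first reduces to a single column via the factorization $T_q^{\lam}(\bi,\bj)=T_q^{\omega_{\mu_1}}(\bi^1,\bj^1)\cdots T_q^{\omega_{\mu_p}}(\bi^p,\bj^p)$ and the multiplicativity of $\Delta$, then cites \cite[(20)]{Oe2} for the column-strict base case and \cite[(13), Corollary 11.9]{Oe2} to transport general $\bi,\bj\in I_{\lam}^{mys}$ to column-strict representatives \emph{by transpositions chosen to avoid symplectic pairs $(i,i')$}. Your approach attempts a from-scratch proof by reducing the coproduct formula to an operator identity $E_{\lam}P_{I_{\lam}^{<}}E_{\lam}=E_{\lam}$ on $V^{\otimes k}$ and arguing by $\BS_{\lam^t}$-orbit decomposition. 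The reduction to the operator identity is correct and the identity itself does appear to hold, so your framing is viable in principle; but there is a genuine gap in your argument for the identity.

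The gap is the orbit-by-orbit analysis. The operator $\beta$ (and hence $E_{\lam}$) does \emph{not} preserve the span of a $\BS_{\lam^t}$-orbit of simple tensors: the $\gamma$-corrections in $\beta$ couple distinct orbits. Concretely, take $m=2$, $\lam=\omega_2$, $k=2$, and the free orbit $\{(1,4),(4,1)\}$, where $4=1'$. One computes $E_{\lam}(v_1\otimes v_4)=v_1\otimes v_4-q^{-2}v_4\otimes v_1$, which stays in the orbit, but $E_{\lam}(v_4\otimes v_1)$ contains nonzero components along $v_2\otimes v_3$ and $v_3\otimes v_2$ (from the $E_{i,j'}\otimes E_{i',j}$ summands of $\beta$) and is \emph{not} proportional to $E_{\lam}(v_1\otimes v_4)$ for generic $q$. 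Thus the proportionality $T_q^{\lam}(\bi,\bh w)\propto T_q^{\lam}(\bi,\bh^{*})$ which your free-orbit step rests on fails precisely when the transposition exchanges a symplectic pair; this is exactly the subtlety the paper sidesteps by choosing only non-symplectic reflections. Your identity $E_{\lam}P_{I_{\lam}^{<}}E_{\lam}=E_{\lam}$ still holds in this example, but it holds because of a cross-orbit cancellation between the $(1,4)$-orbit and the $(2,3)$-orbit, which the orbit-by-orbit sketch cannot see. You would need to supply the Oehms-type straightening relations or some genuinely different argument to prove the operator identity. As a minor point, the BMW relation you cite is stated for the wrong operator: the paper's $\beta$ is not the BMW image of $T_i$ (that is $\beta'=\overline{q\beta^{-1}}$), and tracing through the bar-involution gives $\beta\gamma=\gamma\beta=-q^{2m}\gamma$, not $-q^{-2m-1}\gamma$.
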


\begin{proof} We only prove the special case where $j=1$. The general case follows from the same argument.
For each positive integer $r$, we set $\omega_r:=(\underbrace{1,1,\cdots,1}_{\text{$r$
copies}})$. Let $\lam^t=(\mu_1,\cdots,\mu_p)$ be the transpose of
$\lam$, where $p=\lam_1$. We split $\bj$ into $p$ multi-indices
$\bj^l\in I(2m,\mu_l)$, where for each $l\in\{1,2,\cdots,p\}$, the
entries of $\bj^l$ are taken from the $l$th column of
$T_{\bj}^{\lam}$. The same thing can be done with $\bi$. Then $$
T_q^{\lam}(\bi,\bj)=T_q^{\omega_{\mu_1}}(\bi^1,\bj^1)T_q^{\omega_{\mu_2}}(\bi^2,\bj^2)\cdots
T_q^{\omega_{\mu_p}}(\bi^p,\bj^p),
$$
hence $$
\Delta\Bigl(T_q^{\lam}(\bi,\bj)\Bigr)=\Delta\Bigl(T_q^{\omega_{\mu_1}}(\bi^1,\bj^1)\Bigr)
\Delta\Bigl(T_q^{\omega_{\mu_2}}(\bi^2,\bj^2)\Bigr)\cdots
\Delta\Bigl(T_q^{\omega_{\mu_p}}(\bi^p,\bj^p)\Bigr).
$$
Therefore, to prove the lemma, it suffices to consider the case
where $p=1$.

Now assume that $p=1$. Then $\lam=\omega_k$. If $\bi,\bj\in
I_{\omega_k}^{<}$, then the lemma follows from \cite[(20)]{Oe2}. In
general, by the definition of $I_{\lam}^{mys}$ (\cite[Section
6]{Oe2}), we can find some simple reflections
$s_{q_1},\cdots,s_{q_a},s_{l_1},$ $\cdots,s_{l_b}\in\BS_{\lam^t}$
such that
\begin{enumerate}
\item $\widetilde{\bi}:=\bi s_{q_1}s_{q_2}\cdots s_{q_a}\in I_{\omega_k}^{<}$,
$\widetilde{\bj}:=\bj s_{l_1}s_{l_2}\cdots s_{l_b}\in
I_{\omega_k}^{<}$;
\item For each integer $1\leq c\leq a$, the action of $s_{q_c}$ on $\bi s_{q_1}s_{q_2}\cdots s_{q_{c-1}}$
does not exchange the indices $i,i'$ for any $i$;
\item For each integer $1\leq c\leq b$, the action of $s_{l_c}$ on $\bj s_{l_1}s_{l_2}\cdots s_{l_{c-1}}$
does not exchange the indices $i,i'$ for any $i$.
\end{enumerate}
Now using \cite[(13), (20), Corollary 11.9]{Oe2}, we deduce that there exist integer $a(\bi), b(\bj)$, such that $$
T_q^{\lam}({\bi},{\bj})=(-q)^{a(\bi)+b(\bj)}T_q^{\lam}(\widetilde{\bi},\widetilde{\bj}).
$$
Therefore, applying \cite[(20)]{Oe2}, we get that
$$\begin{aligned} \Delta\bigl(T_q^{\lam}(\bi,\bj)\bigr)&=(-q)^{a(\bi)+b(\bj)}\Delta\bigl(T_q^{\lam}(\widetilde{\bi},
\widetilde{\bj})\bigr)\\
&=\sum_{\bh\in I_{\lam}^{<}}(-q)^{a(\bi)}T_q^{\lam}(\widetilde{\bi},\bh)\otimes
(-q)^{b(\bj)}T_q^{\lam}(\bh,\widetilde{\bj})\\
&=\sum_{\bh\in I_{\lam}^{<}}T_q^{\lam}(\bi,\bh)\otimes
T_q^{\lam}(\bh,\bj),
\end{aligned}
$$
as required.
\end{proof}

For each integer $n\geq 0$, set $$
\Lambda_n:=\bigl\{\underline{\lam}:=(\lam,l)\bigm|l\in\Z, 0\leq
l\leq n/2, \lam\in\Lambda^{+}(m,n-2l)\bigr\}.
$$
Let $d_q\in A_{\A}^{sy}(2m)$ be the central group-like element as
defined in \cite[(7)]{Oe2} and \cite[Corollary 6.3]{Ha}. By
definition,
$$
d_q=-q^{-\rho_k-\rho_l}\epsilon_k\epsilon_lx_{(k,k'),(l,l')}\wr\gamma\in
A_{\A}^{sy}(2m,2),
$$
which is independent of the choices of $k,l\in\{1,2,\cdots,2m\}$.
For each $\underline{\lam}:=(\lam,l)\in\Lambda_n$ and each
$\bi,\bj\in I_{\lam}^{mys}$, we set
$D_{\bi,\bj}^{\underline{\lam}}:=d_q^lT_q^{\lam}(\bi,\bj)$.  Oehms
(\cite[(7.1)]{Oe2}) proved that $A_{\A}^{sy}(2m,n)\otimes_{\A}K\cong
A_K^{sy}(2m,n)$ for any commutative $\A$-algebra $K$. Indeed,
$A_{\A}^{sy}(2m,n)$ is a free $\A$-module and the elements in the
following set
\begin{equation}\label{basis1}
\Bigl\{D_{\bi,\bj}^{\underline{\lam}}\Bigm|\text{$\underline{\lam}=(\lam,l)\in\Lambda_n$,
$\bi,\bj\in I_{\lam}^{mys}$}\Bigr\}
\end{equation}
form an $\A$-basis of $A_{\A}^{sy}(2m,n)$.
\smallskip

For each integer $k\geq 0$, we put an order on the set
$\Lambda^{+}(m,k)$, write $\lam\prec\mu$ if $\lam^t_i=\mu^t_i$ for
$i=1,2,\cdots,s-1$ and $\lam^t_s<\mu^t_s$ for some $s$. For example,
for $\lam=(2,2,1),\mu=(3,1,1)\in\Lambda^{+}(3,5)$, we have
$\mu\prec\lam$. Next, we put an order on $\Lambda_n$. For any
$\underline{\lam}=(\lam,l),\underline{\mu}=(\mu,b)\in\Lambda_n$,
write $\underline{\lam}\prec\underline{\mu}$ if $l<b$ or $l=b$ and
$\lam\prec\mu$. For each $\underline{\lam}=(\lam,l)\in\Lambda_n$, we
set $M(\underline{\lam})=I_{\lam}^{mys}$. Let ``$\ast$" be the
$\A$-linear involution of ${A}_{\A}^{sy}(2m,n)$ which is defined on
generators by
$\Bigl(D_{\bi,\bj}^{\underline{\lam}}\Bigr)^{\ast}=D_{\bj,\bi}^{\underline{\lam}}$
for all $\underline{\lam}\in\Lambda_n, \bi,\bj\in\Lambda_n$.

\begin{lem} \label{cellco} {\rm (\cite[Theorem 7.1]{Oe2})} With respect to the
ordered set $(\Lambda_n,\prec)$, the finite set
$M(\underline{\lam})$ and the $\A$-linear involution ``$\ast$", the
coalgebra ${A}_{\A}^{sy}(2m,n)$ is a cellular coalgebra in the sense of \cite[Section 5, page 860]{Oe2} with
cellular basis given by $$
\Bigl\{D_{\bi,\bj}^{\underline{\lam}}\Bigm|\text{$\underline{\lam}=(\lam,l)\in\Lambda_n$,
$\bi,\bj\in I_{\lam}^{mys}$}\Bigr\}.
$$
\end{lem}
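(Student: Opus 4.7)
The plan is to verify in turn the three defining axioms of a cellular coalgebra in the sense of Oehms. First, the basis property---that $\{D_{\bi,\bj}^{\underline{\lam}}\}$ is a free $\A$-basis---is already established in (\ref{basis1}) via Oehms's symplectic bideterminant theorem. Second, the involution $\ast$ swaps the index tuples by definition; that it extends to a well-defined anti-coalgebra involution follows from the symmetry of the FRT-type defining relations of $A_{\A}^{sy}(2m)$ under $X_{i,j}\leftrightarrow X_{j,i}$, i.e. from the fact that $\beta^t, \gamma^t$ satisfy the same quadratic relations as $\beta, \gamma$. The real content is therefore the cell comultiplication axiom, which asserts
$$\Delta\bigl(D_{\bi,\bj}^{\underline{\lam}}\bigr)\equiv \sum_{\bh\in I_{\lam}^{mys}} D_{\bi,\bh}^{\underline{\lam}}\otimes D_{\bh,\bj}^{\underline{\lam}}\pmod{A_{\A}^{sy}(2m,n)^{>\underline{\lam}}\otimes A_{\A}^{sy}(2m,n)+A_{\A}^{sy}(2m,n)\otimes A_{\A}^{sy}(2m,n)^{>\underline{\lam}}},$$
where $A_{\A}^{sy}(2m,n)^{>\underline{\lam}}$ is the $\A$-span of basis elements indexed by $\underline{\mu}\succ \underline{\lam}$.

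To prove this, I would start from Lemma \ref{comultirule}. Because $d_q$ is central and group-like (so $\Delta(d_q^l)=d_q^l\otimes d_q^l$), multiplying Lemma \ref{comultirule} through by $d_q^l$ gives
$$\Delta\bigl(D_{\bi,\bj}^{\underline{\lam}}\bigr)=\sum_{\bh\in I_{\lam}^{<}} D_{\bi,\bh}^{\underline{\lam}}\otimes D_{\bh,\bj}^{\underline{\lam}}.$$
Now split $I_{\lam}^{<}=I_{\lam}^{mys}\sqcup (I_{\lam}^{<}\setminus I_{\lam}^{mys})$. The first piece contributes precisely the desired cellular leading term. It therefore suffices to show that for $\bh\in I_{\lam}^{<}\setminus I_{\lam}^{mys}$, the element $D_{\bi,\bh}^{\underline{\lam}}$ lies in the $\A$-span of basis elements $D_{\bi'',\bh''}^{\underline{\mu}}$ with $\underline{\mu}\succ \underline{\lam}$, and symmetrically for $D_{\bh,\bj}^{\underline{\lam}}$.

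The main obstacle, and the technical heart of the proof, is this straightening step. By the definition of $I_{\lam}^{mys}$, such an $\bh$ means some column of $T_{\bh}^{\lam}$ contains a forbidden pair $i, i'$---one violating the restriction that both $i$ and $i'$ lie in the first $m-i+1$ rows. Applying Oehms's symplectic column relations (the $\gamma$-straightening rules of \cite[Sections 6 and 15]{Oe2}) to this column produces a factor of the central element $d_q$ times a quantum bideterminant whose shape has two fewer boxes in that column. That is, $T_q^{\lam}(\bi,\bh)$ rewrites as an $\A$-linear combination of $d_q\cdot T_q^{\mu}(\bi',\bh')$ where $\mu$ is obtained from $\lam$ by deleting two boxes from one column. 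Hence $D_{\bi,\bh}^{\underline{\lam}}=d_q^l T_q^{\lam}(\bi,\bh)$ becomes an $\A$-linear combination of $D_{\bi',\bh'}^{\underline{\mu}}$ with $\underline{\mu}=(\mu,l+1)$; since the order $\prec$ on $\Lambda_n$ was defined so that larger $l$ dominates, any such $\underline{\mu}$ satisfies $\underline{\mu}\succ \underline{\lam}$, as required. Iterating the straightening (which must terminate by the freeness of the basis (\ref{basis1})) expresses every offending $D_{\bi,\bh}^{\underline{\lam}}$ in the required higher part, establishing the cell comultiplication axiom and hence the lemma.
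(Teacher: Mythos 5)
First, a contextual point: the paper does not prove this lemma at all---it is cited verbatim from Oehms \cite[Theorem~7.1]{Oe2}, so there is no proof in the paper to compare your argument against. Your proposal is therefore a reconstruction of Oehms's proof. The overall architecture (verify the basis axiom, the involution axiom, and the cell comultiplication axiom, and for the latter start from Lemma~\ref{comultirule}, multiply through by the group-like $d_q^l$, and split the $\bh$-sum into $I_{\lam}^{mys}$ and its complement) is the right shape.

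However, there is a genuine gap in the straightening step. You claim that $\bh\in I_{\lam}^{<}\setminus I_{\lam}^{mys}$ forces some column of $T_{\bh}^{\lam}$ to contain a forbidden pair $i,i'$ violating the ``first $m-i+1$ rows'' rule, so that a $\gamma$-straightening produces a $d_q$ factor and pushes the term to $\underline{\mu}\succ\underline{\lam}$. But $I_{\lam}^{<}$ requires only that columns increase strictly in the usual order $<$, whereas membership in $I_{\lam}^{mys}$ additionally demands (i) weak row-increase and strict column-increase in the entirely different order $\prec$, and (ii) the symplectic row constraint. A tuple $\bh\in I_{\lam}^{<}$ can fail (i) without containing any forbidden $(i,i')$ pair at all (e.g.\ a column with entries $1,2$ is increasing in $<$ but decreasing in $\prec$, since $2\prec 1$). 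For such terms the $\gamma$-relations do not apply; one needs Oehms's $\beta$-straightening (cf.\ \cite[Corollary~11.9, Proposition~8.4]{Oe2}), which does \emph{not} create a factor of $d_q$, so the argument ``larger $l$ dominates'' gives nothing. Showing that these contributions land in $\succ\underline{\lam}$---and that the recursion terminates (your appeal to ``freeness of the basis'' doesn't give a well-founded induction; one needs that each step strictly increases $\underline{\mu}$, which is precisely what is at stake)---is the genuine content of Oehms's Theorem~7.1, and your sketch omits it.
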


For each $\underline{\lam}=(\lam,l)\in\Lambda_n$, we set
$$\begin{aligned}
{A}_{\A}^{sy}(2m,n)^{\succ\underline{\lam}}&:=\text{$\A$-Span}\Bigl\{D_{\bi,\bj}^{\underline{\mu}}\Bigm|
\text{$\underline{\lam}\prec\underline{\mu}=(\mu,b)\in\Lambda_n$,\,\,$\bi,\bj\in
I_{\mu}^{mys}$}\Bigr\},\\
{A}_{\A}^{sy}(2m,n)^{\succeq\underline{\lam}}&:=\text{$\A$-Span}\Bigl\{D_{\bi,\bj}^{\underline{\mu}}\Bigm|
\text{$\underline{\lam}\preceq\underline{\mu}=(\mu,b)\in\Lambda_n$,\,\,$\bi,\bj\in
I_{\mu}^{mys}$}\Bigr\}.\end{aligned}
$$
By general theory of cellular coalgebra (\cite[Section 5, page 860]{Oe2}), we know that both
${A}_{\A}^{sy}(2m,n)^{\succ\underline{\lam}}$ and
${A}_{\A}^{sy}(2m,n)^{\succeq\underline{\lam}}$ are two-sided
coideal of ${A}_{\A}^{sy}(2m,n)$, and we have a two-sided
${A}_{\A}^{sy}(2m,n)$ comodule isomorphism $$
{A}_{\A}^{sy}(2m,n)^{\succeq\underline{\lam}}/{A}_{\A}^{sy}(2m,n)^{\succ\underline{\lam}}\cong
\nabla^r(\underline{\lam})\otimes
\nabla(\underline{\lam}),
$$
where $\nabla(\underline{\lam})$ (resp.,
$\nabla^r(\underline{\lam})$) is the cell right
(resp., left) comodule corresponding to $\underline{\lam}$. Note
that
$\nabla^r(\underline{\lam})=\nabla(\underline{\lam})$
as $\A$-module, while the left ${A}_{\A}^{sy}(2m,n)$-coaction is
obtained by twisting the right ${A}_{\A}^{sy}(2m,n)$-coaction using
``$\ast$". If we extend the base ring $\A$ to the rational
function field $\Q(q)$, then $\nabla(\underline{\lam})_{\Q(q)}$ is
an irreducible right ${A}_{\Q(q)}^{sy}(2m,n)$-comodule.
\smallskip

For any commutative $\A$-algebra $K$, we set $$
S^{sy}_{K}(2m,n):=\End_{\bb_n(-\zeta^{2m+1},\zeta)}\Bigl(V_{K}^{\otimes
n}\Bigr),
$$
where $\zeta$ is the image of $q$ in $K$. The algebra
$S_K^{sy}(m,n)$ is called ``symplectic $\zeta$-Schur algebra" by Oehms
(\cite{Oe2}). By \cite[Proposition 2.1]{Ha}, there is a
non-degenerate pairing between $F_{\Q(q)}(2m)$ and
$\End_{\Q(q)}(V_{\Q(q)}^{\otimes n})$ such that for any
$X_{\bi,\bj}\in F_{\Q(q)}(2m,n)$, $f\in
\End_{\Q(q)}(V_{\Q(q)}^{\otimes \widehat{n}})$, where $\bi,\bj\in
I(2m,n), n,\widehat{n}\in\Z^{\geq 0}$, $$ \langle
X_{\bi,\bj},f\rangle_0:=\begin{cases}
u_{\bi}\Bigl(f\bigl(v_{\bj}\bigr)\Bigr),&\text{if $n=\widehat{n}$;}\\
0, &\text{otherwise.}
\end{cases}
$$
where $u_{\bi}:=u_{i_1}\otimes\cdots\otimes u_{i_n}$, $\{u_i\}$ is
the basis of $V^{\ast}$ dual to $\{v_i\}$. By \cite[Proposition
2.1]{Ha}, it induces a non-degenerate pairing $\langle,\rangle_0$
between $A^{sy}_{\Q(q)}(2m)$ and $$ \bigoplus_{0\leq
n\in\Z}S^{sy}_{\Q(q)}(2m,n).
$$
Furthermore, it induces a pairing $\langle,\rangle_0$ between
$A^{sy}_{\Q(q)}(2m)$ and ${\U}_{\Q(q)}$. Precisely, for any
$x_{\bi,\bj}\in A^{sy}_{\Q(q)}(2m,n)$, $u\in {\U}_{\Q(q)}$, where
$\bi,\bj\in I(2m,n), n\in\Z^{\geq 0}$, $$ \langle
x_{\bi,\bj},u\rangle_0:=\langle x_{\bi,\bj},\psi_C(u)\rangle_0,
$$
where $\psi_C$ is the canonical homomorphism $$
\psi_C:{\U}_{\Q(q)}\rightarrow
S^{sy}_{\Q(q)}(2m,n):=\End_{\bb_n(-q^{2m+1},q)}\Bigl(V_{\Q(q)}^{\otimes
n}\Bigr).$$ In \cite{Oe1} and \cite{Oe2}, Oehms proved the pairing
$\langle,\rangle_0$ actually induces an $\A$-algebra isomorphism
$S^{sy}_{\A}(2m,n)\cong \bigl(A^{sy}_{\A}(2m,n)\bigr)^{\ast}$ for
each $n\in\Z^{\geq 0}$. Note that our $F(2m)$ is just $T(E)$ in the
notation of \cite[Section 2]{Ha}, while the ideal generated by
$\beta\wr X_{\bi,\bj}-X_{\bi,\bj}\wr\beta$ for $\bi,\bj\in I(2m,2)$
in our paper is the same as the ideal generated by
$\image\bigl(\id_{E\otimes E}-\beta_{E}\bigr)$ in \cite[Section
2]{Ha}. Note also that over $\A$, the algebra $A^{sy}_{\A}(2m)$ is
only a homomorphic image of $S(E)$ in \cite[Proposition 2.1]{Ha}.
However, if we work over $\Q(q)$, then $A^{sy}_{\Q(q)}(2m)$
coincides with $S(E)$ in the notation of \cite[Proposition 2.1]{Ha}
because of the relation (\ref{BG}). Therefore, for each $\underline{\lam}=(\lam,l)\in\Lambda_n$,
$\nabla(\underline{\lam})_{\Q(q)}$ is also an irreducible
left ${S}_{\Q(q)}^{sy}(2m,n)$-module, and hence an irreducible left
$\U_{\Q(q)}$-module.\smallskip

For any commutative $\A$-algebra $K$,  let $\zeta$ be the natural
image of $q$ in $K$, we define
$\widetilde{A}_K^{sy}(2m):=A_K^{sy}(2m)/\langle d_{\zeta}-1\rangle$,
where $d_{\zeta}$ is the natural image of $d_q$ in $A_K^{sy}(2m)$.
Note that $A_K^{sy}(2m)$ is a quantized version of the coordinate algebra of the symplectic monoid $\SpM_{2m}(K)$,
while $\widetilde{A}_K^{sy}(2m)$ is a quantized version of the coordinate algebra of the symplectic group ${\rm Sp}_{2m}(K)$.
The algebra $\widetilde{A}_K^{sy}(2m)$ will play a key role in the proof of Theorem \ref{mainthm1} in the next section.
We use $\pi_C$ to denote the natural projection
$$
\pi_{C}:\,\,{A}_K^{sy}(2m)\twoheadrightarrow\widetilde{A}_K^{sy}(2m).
$$

For each integer $n\geq 0$ and each commutative $\A$-algebra $K$, we
define
$$
\Lambda^{+}(m,\leq\! n):=\bigsqcup_{0\leq k\leq
n}\Lambda^{+}(m,k),\quad
{A}_{K}^{sy}(2m,\leq\! n):=\bigoplus_{0\leq
k\leq n}{A}_{K}^{sy}(2m,k).
$$
{\it We shall call ${A}_{\A}^{sy}(2m,\leq\!n)$ a finite
truncation of the quantized coordinate algebra
${A}_{\A}^{sy}(2m)$ of $\SpM_{2m}$}. Clearly, ${A}_{\A}^{sy}(2m,\leq\!\! n)$ is a sub-coalgebra of
${A}_{\A}^{sy}(2m)$ as well as a free $\A$-submodule with basis $$
\Bigl\{T_{q}^{\lam}(\bi,\bj)\Bigm|\text{$\lam\in\Lambda^{+}(m,\leq\!
n)$,\,\, $\bi,\bj\in I_{\lam}^{mys}$}\Bigr\}.
$$

The author is grateful to Professor S. Doty and the referee for the first part of the following corollary.

\begin{cor} \label{Doc} Let $n\geq 0$ be an integer and $K$ be a commutative $\A$-algebra.

1) The maps
$$\begin{aligned}\pi_{C}\!\!\downarrow_{{A}_{K}^{sy}(2m,n)}:\,\, {A}_{K}^{sy}(2m,n)&\rightarrow\pi_{C}\bigl({A}_K^{sy}(2m,n)\bigr)\\
\pi_{C}\!\!\downarrow_{{A}_{K}^{sy}(2m,\leq\! n)}:\,\, {A}_{K}^{sy}(2m,\leq\! n)&\rightarrow\pi_{C}\bigl({A}_K^{sy}(2m,\leq\! n)\bigr)\end{aligned}$$ are both isomorphisms.

2) $\widetilde{A}_{\A}^{sy}(2m)$ is a free $\A$-module and the elements
in the following set $$
\biggl\{\pi_C\Bigl(T_q^{\lam}(\bi,\bj)\Bigr)\biggm|\begin{matrix}\text{$\lam\in\Lambda^{+}(m,n-2l)$
for some integer $n,l$}\\ \text{with $n\geq 0$, $0\leq l\leq n/2$,
$\bi,\bj\in I_{\lam}^{mys}$}\end{matrix}\biggr\}
$$
form an $\A$-basis of $\widetilde{A}_{\A}^{sy}(2m)$. Moreover, the canonical map
$$\widetilde{A}_{\A}^{sy}(2m)\otimes_{\A}K\rightarrow
\widetilde{A}_K^{sy}(2m)$$ is an isomorphism.
\end{cor}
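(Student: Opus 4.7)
The plan is to exploit Oehms's explicit basis (\ref{basis1}) of $A_\A^{sy}(2m,n)$ together with the centrality of $d_q$. The key preliminary observation is that multiplication by $d_q$ is an $\A$-linear injection $A_\A^{sy}(2m, k)\hookrightarrow A_\A^{sy}(2m, k+2)$ sending $D_{\bi,\bj}^{(\lam, l)}$ to $D_{\bi,\bj}^{(\lam, l+1)}$; in particular $d_q$ is a non-zero-divisor, and the ideal $\ker\pi_C=(d_q-1)A_\A^{sy}(2m)$ is free over $\A$ on the family $\{D_{\bi,\bj}^{(\mu, l+1)}-D_{\bi,\bj}^{(\mu, l)}\}$, with linear independence checked by matching coefficients of each Oehms basis vector together with the finite support condition.

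For part~(1), it suffices to verify injectivity of the two restrictions. Given $(d_q-1)g\in A_K^{sy}(2m, n)$ with homogeneous decomposition $g=\sum_k g_k$, the degree-$d$ component of $(d_q-1)g$ is $d_q g_{d-2}-g_d$. Requiring this to vanish for $d<n$ gives (starting from $d=0$) $g_d=0$ inductively, and requiring it to vanish for $d>n$, combined with $d_q$ being a non-zero-divisor and $g$ having finite support, forces the top pieces of $g$ to vanish as well; then the condition at $d=n+2$ finally gives $g_n=0$. Hence $g=0$. An analogous bookkeeping handles $A_K^{sy}(2m,\leq n)$, comparing the rank of $A_K^{sy}(2m,\leq n)\cap\ker\pi_C$ (free on those differences with $|\mu|+2l\leq n-2$) against the ranks of domain and image.

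For part~(2), since $\pi_C(d_q)=1$ we have $\pi_C(D_{\bi,\bj}^{(\lam, l)})=\pi_C(T_q^\lam(\bi,\bj))$, independent of $l$, so the proposed set spans $\widetilde{A}_\A^{sy}(2m)$. To prove linear independence, I would lift a relation $\sum c_{\lam,\bi,\bj}\pi_C(T_q^\lam(\bi,\bj))=0$ to $\sum c_{\lam,\bi,\bj}T_q^\lam(\bi,\bj)=(d_q-1)g$ in $A_\A^{sy}(2m)$, expand $g=\sum\beta_{\mu,l,\bi',\bj'}D_{\bi',\bj'}^{(\mu, l)}$ in Oehms's basis, and match coefficients. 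The coefficient of $D^{(\mu, 0)}$ gives $-\beta_{\mu,0,\bi,\bj}=c_{\mu,\bi,\bj}$, while the coefficient of $D^{(\mu, l)}$ for $l\geq 1$ yields the telescoping relation $\beta_{\mu,l-1,\bi,\bj}=\beta_{\mu,l,\bi,\bj}$; the finite support of $g$ then forces all $\beta$'s to vanish, hence all $c$'s.

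The base change assertion is then immediate from right-exactness of tensor product together with Oehms's result $A_\A^{sy}(2m)\otimes_\A K\cong A_K^{sy}(2m)$:
$$\widetilde{A}_\A^{sy}(2m)\otimes_\A K=\bigl(A_\A^{sy}(2m)/(d_q-1)\bigr)\otimes_\A K\cong A_K^{sy}(2m)/(d_\zeta-1)=\widetilde{A}_K^{sy}(2m).$$
The main delicate point throughout is the combinatorial bookkeeping in parts~(1) and~(2), but the remarkably clean way in which $d_q$ permutes Oehms's basis makes these verifications essentially routine rather than a genuine obstacle.
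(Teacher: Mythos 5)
Your argument for the first map in part~(1) is sound and is essentially the paper's: write a kernel element as $(d_q-1)y$, expand $y$ in Oehms's homogeneous basis, and compare degrees using that $d_q$ is homogeneous of degree $2$. Your preliminary observation that multiplication by $d_q$ permutes Oehms's basis (sending $D^{(\lam,l)}_{\bi,\bj}$ to $D^{(\lam,l+1)}_{\bi,\bj}$), hence is injective over any commutative $\A$-algebra, is exactly the justification the paper leaves implicit.

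For the second map in part~(1) your proposal does not work, and in fact your own bookkeeping shows why: you correctly identify $A^{sy}_K(2m,\leq\!n)\cap\ker\pi_C$ as free on the differences $D^{(\mu,l+1)}_{\bi,\bj}-D^{(\mu,l)}_{\bi,\bj}$ with $|\mu|+2(l+1)\leq n$, and this space is nonzero as soon as $n\geq 2$ (take $\mu=\emptyset$, $l=0$: then $d_q-1\in A^{sy}_K(2m,\leq\!2)$ maps to $0$). So the restriction $\pi_C\!\!\downarrow_{A^{sy}_K(2m,\leq n)}$ is not injective, and "comparing ranks" cannot make it an isomorphism. You should have flagged that your kernel computation contradicts the claim rather than gesturing at analogous bookkeeping. (The paper's one-line deduction "as a result" has the same problem: injectivity on each homogeneous piece does not give injectivity on the direct sum, precisely because the images $\pi_C(A^{sy}_K(2m,k))$ for $k$ of the same parity overlap.) What the rest of the paper actually needs, and uses via the subsequent identification, is only that $\pi_C(A^{sy}_K(2m,\leq n))$ is free on $\{\pi_C(T_q^\lam(\bi,\bj)):\lam\in\Lambda^+(m,\leq n)\}$, which follows from part~(2).

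Your telescoping argument for part~(2) is correct and, notably, self-contained: lift a relation to $\sum c_{\lam}T_q^\lam(\bi,\bj)=(d_q-1)g$, expand $g$ in Oehms's basis, read off $-\beta_{\mu,0}=c_\mu$ and $\beta_{\mu,l-1}=\beta_{\mu,l}$ for $l\geq 1$, and let finite support kill everything. This is cleaner than the paper's unelaborated "immediate consequence of~(1)," since it does not lean on the problematic $\leq\!n$ map, and it gives the $\A$-freeness and basis claim directly. The base-change step is routine and fine.
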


\begin{proof} Suppose that $0\neq x\in\Ker\bigl(\pi_{C}\!\!\downarrow_{{A}_{K}^{sy}(2m,n)}\bigr)$. Since $d_q$ is central, it follows that $x=(d_q-1)y$ for some $y\in A_K^{sy}(2m)$. Note that $d_q\neq 0$ is homogeneous of degree $2$, while $x$ is homogeneous of degree $n$. By Lemma \ref{cellco}, the elements in the following set $$
\Bigl\{d_q^lT_q^{\lam}(\bi,\bj)\Bigm|\text{$\underline{\lam}=(\lam,l)\in\Lambda_k$ for some $k\geq 0$ and $0\leq l\leq k/2$, $\bi,\bj\in I_{\lam}^{mys}$}\Bigr\}
$$
form a homogeneous basis of $A_K^{sy}(2m)$. Expressing $y$ into a linear combination of this basis and comparing the degree of each homogeneous component, we get a contradiction. This proves that $\pi_{C}\!\!\downarrow_{{A}_{K}^{sy}(2m,n)}$ is injective and hence an isomorphism. As a result, we deduce that
$\pi_{C}\!\!\downarrow_{{A}_{K}^{sy}(2m,\leq\! n)}$ is also an isomorphism. This proves 1). The statement 2) is an immediate consequence of the statement 1).
\end{proof}

{\it Henceforth, we shall use Corollary \ref{Doc} to identify ${A}_{K}^{sy}(2m,n)$ and ${A}_{K}^{sy}(2m,\leq\! n)$ as subspaces
of $\widetilde{A}_{\A}^{sy}(2m)$ without further comments.} The involution ``$\ast$" gives rise to an $\A$-linear involution of ${A}_{\A}^{sy}(2m,\leq\! n)$, which will be still denoted
by ``$\ast$". Recall that in the paragraph below (\ref{basis1}) we have introduced an order ``$\prec$" on the set of partitions of a fixed integer. Now we generalize it to the case of partitions of possibly different integers.
For any ${\lam},{\mu}\in\Lambda^{+}(m,\leq\! n)$,
write ${\lam}\prec{\mu}$ if $$ \text{$|\lam|-|\mu|\in
2\mathbb{N}$\quad or\quad $|\lam|=|\mu|$ and $\lam\prec\mu$.}$$

\begin{cor}  \label{cellco2} With respect to the
ordered set $(\Lambda^{+}(m,\leq\!\! n),\prec)$, the
finite set $I_{\lam}^{mys}$ (for each $\lam\in
\Lambda^{+}(m,\leq\!\! n)$) and the $\A$-linear involution
``$\ast$", the coalgebra ${A}_{\A}^{sy}(2m,\leq\!\! n)$ is
a cellular coalgebra with cellular basis given by $$
\Bigl\{T_{q}^{\lam}(\bi,\bj)\Bigm|\text{$\lam\in\Lambda^{+}(m,\leq\!
n)$,\,\, $\bi,\bj\in I_{\lam}^{mys}$}\Bigr\}.
$$
Furthermore, for each commutative $\A$-algebra $K$, the canonical
map $$ {A}_{\A}^{sy}(2m,\leq\!\! n)\otimes_{\A}
K\rightarrow {A}_{K}^{sy}(2m,\leq\!\! n)
$$
is an isomorphism.
\end{cor}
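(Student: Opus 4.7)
The plan is to bootstrap the cellular structure on each graded piece (Lemma \ref{cellco}) to the truncation ${A}_{\A}^{sy}(2m,\leq\! n)$, using Corollary \ref{Doc} to pass to the identified copy inside $\widetilde{A}_{\A}^{sy}(2m)$, where the central element $d_q$ acts as $1$. First I would verify the basis claim: Lemma \ref{cellco} gives an $\A$-basis $\{d_q^l T_q^{\lam}(\bi,\bj)\}$ of $\bigoplus_{k=0}^n A_{\A}^{sy}(2m,k)$ indexed by $(\lam,l) \in \Lambda_k$ and $\bi,\bj \in I_{\lam}^{mys}$, and under $\pi_C$ each such basis element collapses to $T_q^{\lam}(\bi,\bj)$. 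Corollary \ref{Doc}(2) asserts linear independence of these images in $\widetilde{A}_{\A}^{sy}(2m)$, so by Corollary \ref{Doc}(1) the collapsed set $\{T_q^{\lam}(\bi,\bj) : \lam \in \Lambda^{+}(m,\leq\! n),\ \bi,\bj \in I_{\lam}^{mys}\}$ is an $\A$-basis of the identified copy of ${A}_{\A}^{sy}(2m,\leq\! n)$.

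Next the two cellular coalgebra axioms. The involution $\ast$ is inherited from Lemma \ref{cellco}: since $d_q^{\ast}=d_q$ and $d_q$ becomes $1$ after identification, the rule $(D_{\bi,\bj}^{\underline{\lam}})^{\ast} = D_{\bj,\bi}^{\underline{\lam}}$ specializes to $(T_q^{\lam}(\bi,\bj))^{\ast} = T_q^{\lam}(\bj,\bi)$, as required. For the triangular comultiplication I would apply Lemma \ref{comultirule} with $j=1$ to get
\begin{equation*}
\Delta(T_q^{\lam}(\bi,\bj)) \;=\; \sum_{\bh \in I_{\lam}^{<}} T_q^{\lam}(\bi,\bh) \otimes T_q^{\lam}(\bh,\bj).
\end{equation*}
Since the sum ranges over $I_{\lam}^{<}$ rather than the cellular index set $I_{\lam}^{mys}$, I would then apply the straightening procedure used inside the proof of Lemma \ref{comultirule}: simple reflections in $\BS_{\lam^t}$ that do not exchange the indices $i,i'$ rewrite each $T_q^{\lam}(\bi,\bh)$ with $\bh \in I_{\lam}^{<}$ as a scalar multiple $(-q)^{a(\bh)} T_q^{\lam}(\bi,\widetilde{\bh})$ for some $\widetilde{\bh} \in I_{\lam}^{mys}$, modulo terms lying in the span of $T_q^{\mu}(\cdots)$ for $\mu \succ \lam$ that arise from symplectic straightening relations. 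Substituting on both tensor factors and collecting terms expresses $\Delta(T_q^{\lam}(\bi,\bj))$ in the required form $\sum_{\bh \in I_{\lam}^{mys}} T_q^{\lam}(\bi,\bh) \otimes T_q^{\lam}(\bh,\bj)$ modulo ${A}_{\A}^{sy}(2m,\leq\! n)^{\succ\lam} \otimes {A}_{\A}^{sy}(2m,\leq\! n) + {A}_{\A}^{sy}(2m,\leq\! n) \otimes {A}_{\A}^{sy}(2m,\leq\! n)^{\succ\lam}$. Compatibility with the extended order $\prec$ on $\Lambda^{+}(m,\leq\! n)$ follows from its consistency with the graded orders on each $\Lambda_k$ via the identification (the $l$-parameter in $\underline{\lam}=(\lam,l)\in\Lambda_k$ records exactly the size-difference contribution to $\prec$).

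Finally, the base change statement $A_{\A}^{sy}(2m,\leq\! n)\otimes_{\A} K \cong A_K^{sy}(2m,\leq\! n)$ follows from Oehms's graded isomorphism ${A}_{\A}^{sy}(2m,k)\otimes_{\A}K\cong A_K^{sy}(2m,k)$ summed over $k=0,\ldots,n$, combined with Corollary \ref{Doc}. The main technical obstacle is the comultiplication step: one must check carefully that the straightening from $I_{\lam}^{<}$ to $I_{\lam}^{mys}$ produces no spurious ``same-$\lam$'' contributions, and that all correction terms genuinely land in the strictly higher filtration piece ${A}_{\A}^{sy}(2m,\leq\! n)^{\succ\lam}$. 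This hinges on the careful choice of reflections avoiding the exchange of indices $i,i'$, together with the triangular straightening relations for quantum symplectic bideterminants from \cite{Oe2}.
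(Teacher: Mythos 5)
The paper gives no explicit proof of this corollary; it is presented as an immediate consequence of Lemma \ref{cellco} and Corollary \ref{Doc}. Your verification of the cellular axioms via Lemma \ref{comultirule} and a straightening argument is not wrong in spirit, but there is a much shorter route which avoids the straightening step you yourself flag as the main obstacle.

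Under the identification of Corollary \ref{Doc}, multiplication by $d_q$ gives $\pi_C(A_{\A}^{sy}(2m,k)) \subseteq \pi_C(A_{\A}^{sy}(2m,k+2))$ for every $k$, so
\begin{equation*}
\pi_C\bigl(A_{\A}^{sy}(2m,\leq n)\bigr) \;=\; \pi_C\bigl(A_{\A}^{sy}(2m,n)\bigr)\;\oplus\;\pi_C\bigl(A_{\A}^{sy}(2m,n-1)\bigr),
\end{equation*}
the sum being direct by parity of $|\lam|$ together with the linear independence of the images $\pi_C(T_q^{\lam}(\bi,\bj))$ given in Corollary \ref{Doc}(2). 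Each summand is a subcoalgebra of $\widetilde{A}_{\A}^{sy}(2m)$, and by Corollary \ref{Doc}(1) and Lemma \ref{cellco} it is a cellular coalgebra isomorphic to $A_{\A}^{sy}(2m,n)$ (resp.\ $A_{\A}^{sy}(2m,n-1)$), the cellular basis $D_{\bi,\bj}^{\underline{\lam}}$ being sent to $\pi_C(T_q^{\lam}(\bi,\bj))$. The poset $(\Lambda^{+}(m,\leq n),\prec)$ is precisely the disjoint union of $(\Lambda_n,\prec)$ and $(\Lambda_{n-1},\prec)$ under $(\lam,l)\leftrightarrow\lam$, with no relations across the two pieces. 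Since the comultiplication of a cellular basis element stays inside its own summand, the cellular axioms are inherited verbatim from Lemma \ref{cellco}; no rewriting from $I_{\lam}^{<}$ to $I_{\lam}^{mys}$ needs to be rejustified. The base change claim follows summand by summand from Oehms's $A_{\A}^{sy}(2m,k)\otimes_{\A}K\cong A_{K}^{sy}(2m,k)$.

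Two cautions on your own route. First, in the proof of Lemma \ref{comultirule} the scalar-only rewritings (reflections not exchanging $i,i'$) go from $\bi\in I_{\lam}^{mys}$ to $\widetilde{\bi}\in I_{\lam}^{<}$; a generic $\bh\in I_{\lam}^{<}$ is not in the image, and passing from $I_{\lam}^{<}$ back to $I_{\lam}^{mys}$ really does invoke Oehms's full straightening algorithm. Your ``modulo $\mu\succ\lam$'' caveat is the right caveat, but you have left open exactly the part that would need proof: that the correction terms land in ${A}_{\A}^{sy}(2m,\leq n)^{\succ\lam}$ for the \emph{extended} order $\prec$ mixing different $|\lam|$'s, and the argument above sidesteps that issue entirely. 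Second, note that $\pi_C$ restricted to the literal direct sum $\bigoplus_{k\leq n}A_{\A}^{sy}(2m,k)$ is \emph{not} injective --- the element $(1-d_q)T_q^{\lam}(\bi,\bj)\in A_{\A}^{sy}(2m,|\lam|)\oplus A_{\A}^{sy}(2m,|\lam|+2)$ lies in the kernel whenever $|\lam|+2\leq n$ --- so the basis claim is genuinely a statement about the image $\pi_C(A_{\A}^{sy}(2m,\leq n))\subseteq\widetilde{A}_{\A}^{sy}(2m)$, which is what the paper works with from Corollary \ref{Doc} onward. Your phrase ``collapses to'' already records this phenomenon, but the conclusion should be phrased for the image explicitly.
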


Let $\Lambda^{+}(m):=\bigcup_{k\geq 0}\Lambda^{+}(m,k)$. For each
${\lam}\in\Lambda^{+}(m,\leq n)$, we set
$$\begin{aligned}
{A}_{\A}^{sy}(2m,\leq\! n)^{\succ{\lam}}&=\text{$\A$-Span}\bigl\{T_q^{\mu}(\bi,\bj)\bigm|
\text{${\lam}\prec{\mu}\in\Lambda^{+}(m,\leq\! n)$, $\bi,\bj\in
I_{\mu}^{mys}$}\bigr\},\\
{A}_{\A}^{sy}(2m,\leq\! n)^{\succeq{\lam}}&=\text{$\A$-Span}\bigl\{T_q^{\mu}(\bi,\bj)\bigm|
\text{${\lam}\preceq{\mu}\in\Lambda^{+}(m,\leq\! n)$, $\bi,\bj\in
I_{\mu}^{mys}$}\bigr\}.\end{aligned}
$$
By general theory of cellular coalgebra, we have that both
${A}_{\A}^{sy}(2m,\leq\! n)^{\succ{\lam}}$ and
${A}_{\A}^{sy}(2m,\leq\! n)^{\succeq{\lam}}$ are two-sided coideal
of ${A}_{\A}^{sy}(2m,\leq\! n)$, and we have a two-sided
${A}_{\A}^{sy}(2m,\leq\! n)$-comodule isomorphism
$$
{A}_{\A}^{sy}(2m,\leq\! n)^{\succeq{\lam}}/{A}_{\A}^{sy}(2m,\leq\! n)^{\succ{\lam}}\cong
\nabla^r({\lam})\otimes \nabla({\lam}),
$$
where $\nabla({\lam})$ (resp., $\nabla^r({\lam})$) is the cell right
(resp., left) comodule corresponding to ${\lam}$. Note that
$\nabla^r({\lam})=\nabla({\lam})$ as $\A$-module, while the left
${A}_{\A}^{sy}(2m,\leq\! n)$-coaction is obtained by
twisting the right ${A}_{\A}^{sy}(2m,\leq\! n)$-coaction
using ``$\ast$". If we extend the base ring $\A$ to the rational
function field $\Q(q)$, then $\nabla({\lam})_{\Q(q)}$ is an
irreducible right ${A}_{\Q(q)}^{sy}(2m,\leq\!
n)$-comodule.

Suppose that $\lam\in\Lambda^{+}(m,k)$ with $0\leq k\leq n$. We set
$\underline{\lam}=(\lam,0)\in\Lambda_{k}$. Note that the
${A}_{\A}^{sy}(2m,\leq\! n)$-comodule $\nabla(\lam)_{\A}$
is isomorphic to the restriction of the
${A}_{\A}^{sy}(2m,k)$-comodule $\nabla(\underline{\lam})_{\Q(q)}$.
In particular, every simple ${A}_{\Q(q)}^{sy}(2m,\leq\!
n)$-comodule comes from the restriction of a simple
${A}_{\Q(q)}^{sy}(2m,k)$-comodule, or equivalently, of a simple
${S}_{\Q(q)}^{sy}(2m,k)$-module for some $0\leq k\leq n$. Therefore,
by the surjection from $\dot{\U}_{\Q(q)}$ onto
${S}_{\Q(q)}^{sy}(2m,k)$, we see that every simple
${A}_{\Q(q)}^{sy}(2m,\leq\! n)$-comodule comes from the
restriction of a simple $\dot{\U}_{\Q(q)}$-module. For any field $K$
which is an $\A$-algebra, we define $$
\nabla_K(\lam):=\nabla(\lam)\otimes_{\A}K,\,\,\,
{S}_{K}^{sy}(2m,k):={S}_{\A}^{sy}(2m,k)\otimes_{\A}K.
$$
Then $\nabla_K(\lam)\cong \nabla_K(\lam,0)$ can be regarded as a
${S}_{K}^{sy}(2m,k)$-module.

\begin{cor} Let $K$ be a field which is an $\A$-algebra. Let $k\geq 0$ be an integer and $\lam\in\Lambda^{+}(m,k)$.
Then as a ${S}_{K}^{sy}(2m,k)$-module, $\nabla_K(\lam)$ has a unique
simple socle.
\end{cor}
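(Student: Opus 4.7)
The plan is to exploit the cellular coalgebra structure on $A_K^{sy}(2m,\leq\!n)$ established in Corollary \ref{cellco2}, together with the resulting Graham--Lehrer cellular algebra structure on $S_K^{sy}(2m,k)\cong A_K^{sy}(2m,k)^{\ast}$. Under the Oehms pairing $\langle-,-\rangle_0$, the basis $\{\xi^{\lam}_{\bi,\bj}\}$ of $S_K^{sy}(2m,k)$ dual to the cellular basis $\{T_q^{\lam}(\bi,\bj)\}$ inherits, via Lemma \ref{comultirule} and the identity $(\xi_1\xi_2)(x)=(\xi_1\otimes\xi_2)(\Delta(x))$, a Graham--Lehrer cellular multiplication
\[\xi^{\lam}_{\bi,\bj}\,\xi^{\mu}_{\bh,\bu}\equiv\delta_{\lam,\mu}\delta_{\bj,\bh}\xi^{\lam}_{\bi,\bu}\pmod{\textup{higher cell ideals}},\]
with involution induced from $\ast$. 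By Graham--Lehrer's theorem, the left cell module $W_K(\lam)$ of $S_K^{sy}(2m,k)$ has a unique simple head $L_K(\lam)=W_K(\lam)/\textup{rad}(\phi_{\lam})$, where $\phi_{\lam}$ is the canonical bilinear form attached to the cellular basis.

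To derive the socle statement, I would identify $\nabla_K(\lam)$ as a left $S_K^{sy}(2m,k)$-module with the $\ast$-contragredient of $W_K(\lam)$. Concretely, the right $A_K^{sy}(2m,k)$-coaction on $\nabla_K(\lam)$ translates (via the Oehms pairing) into a left $S_K^{sy}(2m,k)$-action
\[\xi^{\lam}_{\bi,\bj}\cdot v_{\bh}=\delta_{\bj,\bh}v_{\bi}\pmod{\textup{higher}},\]
and the involution $\ast$ then converts this into the left action on $W_K(\lam)^{\ast}$. Under this identification, the unique simple head $L_K(\lam)$ of $W_K(\lam)$ becomes the unique simple submodule of $\nabla_K(\lam)$, i.e., its socle; non-triviality of the socle is automatic as $\nabla_K(\lam)$ is finite-dimensional and non-zero.

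The main obstacle is the careful bookkeeping of the several dualities (right comodule to left module via $\langle-,-\rangle_0$, $K$-linear duality, and the involution $\ast$), so that Graham--Lehrer's ``unique simple head'' for the cell module $W_K(\lam)$ transforms rigorously into a ``unique simple socle'' for the cell right comodule $\nabla_K(\lam)$. Once these compatibilities are verified, the corollary follows as a direct application of the Graham--Lehrer theorem to the cellular algebra $S_K^{sy}(2m,k)$.
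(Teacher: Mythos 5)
Your approach follows the paper's own (very terse) argument closely: pass from the cellular coalgebra structure on $A_K^{sy}(2m,k)$ to the cellular algebra structure on $S_K^{sy}(2m,k)$, observe that the dual of $\nabla_K(\lam)$ is a cell module, and then invoke the general theory. However, there is a genuine gap. You write that ``by Graham--Lehrer's theorem, the left cell module $W_K(\lam)$ of $S_K^{sy}(2m,k)$ has a unique simple head $L_K(\lam)=W_K(\lam)/\mathrm{rad}(\phi_\lam)$'', but this conclusion holds \emph{only} when the canonical bilinear form $\phi_\lam$ on the cell module is non-zero. If $\phi_\lam=0$, then $\mathrm{rad}(\phi_\lam)=W_K(\lam)$, $L_K(\lam)=0$, and Graham--Lehrer's theorem says nothing about the Jacobson radical of $W_K(\lam)$; in particular, a cell module of a general cellular algebra need not have a simple head. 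Consequently your final step --- that ``the corollary follows as a direct application of the Graham--Lehrer theorem to the cellular algebra $S_K^{sy}(2m,k)$'' --- is not justified by cellularity alone.

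What is missing is precisely the ingredient the paper emphasizes: $S_K^{sy}(2m,k)$ is not merely cellular but \emph{cellular quasi-hereditary} (this is the content of the cited result of Oehms). For a cellular algebra, quasi-heredity is equivalent to $\phi_\lam\neq 0$ for every cell index $\lam$, and once that is in hand the Graham--Lehrer theorem does give $\mathrm{rad}(\phi_\lam)=\mathrm{rad}(W_K(\lam))$ for every $\lam$, hence a unique simple head of $W_K(\lam)$ and, dually, a unique simple socle of $\nabla_K(\lam)$. So you should explicitly invoke the quasi-heredity of $S_K^{sy}(2m,k)$ (or otherwise verify the non-vanishing of each $\phi_\lam$); your duality bookkeeping is fine, but without this step the argument does not close.
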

\begin{proof} By \cite{Oe2}, ${S}_{K}^{sy}(2m,k)$ is a cellular quasi-hereditary algebra. By
definition, the dual of $\nabla_K(\lam)$ is a cell module of
${S}_{K}^{sy}(2m,k)$. Thus, the corollary follows from the general
theory of cellular quasi-hereditary algebra.
\end{proof}

Recall that $\psi_C$ induces a natural morphism from
$\U_K(\mathfrak{g})$ to ${S}_{K}^{sy}(2m,k)$, via which
${S}_{K}^{sy}(2m,k)$-module $\nabla_K(\lam)$ can be regarded as an
$\U_K$-module. Note that the above corollary does not immediately
imply that $\nabla_K(\lam)$ has a unique simple socle as an
$\U_K$-module because at this moment we did not know if that natural
morphism from $\U_K$ to ${S}_{K}^{sy}(2m,k)$ is surjective or not.
However, in the next section we shall show that this is indeed the
case.

\begin{cor} \label{codecomp} For each integer $n\geq 0$, we have a two-sided
${A}_{\A}^{sy}(2m,\leq\! n)$-comodule isomorphism $$
\theta_n: {A}_{\Q(q)}^{sy}(2m,\leq\!
n)\cong\bigoplus_{\lam\in\Lambda^{+}(m,\leq
n)}\nabla^r({\lam})_{\Q(q)}\otimes
\nabla({\lam})_{\Q(q)}.
$$
Furthermore, we have a two-sided
$\widetilde{A}_{\A}^{sy}(2m)$-comodule isomorphism $$
{\theta}:\widetilde{A}_{\Q(q)}^{sy}(2m)\cong\bigoplus_{k\geq
0,
\lam\in\Lambda^{+}(m,k)}\nabla^r({\lam})_{\Q(q)}\otimes
\nabla({\lam})_{\Q(q)},
$$
and a commutative diagram $$\begin{CD}
{A}_{\Q(q)}^{sy}(2m,\leq\! n)
@>{\theta_n}>>\bigoplus_{\lam\in\Lambda^{+}(m,\leq
n)}\nabla^r({\lam})_{\Q(q)}\otimes
\nabla({\lam})_{\Q(q)} \\
@V{} VV @V{}VV\\
\widetilde{A}_{\Q(q)}^{sy}(2m) @>{{\theta}}>>
\bigoplus_{k\geq 0,
\lam\in\Lambda^{+}(m,k)}\nabla^r({\lam})_{\Q(q)}\otimes
\nabla({\lam})_{\Q(q)}
\end{CD},
$$
where the two vertical maps are natural embedding.
\end{cor}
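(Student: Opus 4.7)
\noindent\emph{Proof plan.} The strategy is to first establish the finite decomposition $\theta_n$, then pass to the direct limit to obtain $\theta$, and finally read off the commutative diagram by inspection.

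For $\theta_n$ I would begin with the cellular structure supplied by Corollary \ref{cellco2}: $A^{sy}_{\A}(2m, \leq\! n)$ carries a bicomodule filtration indexed by $\lam \in \Lambda^{+}(m, \leq\! n)$ (ordered by $\prec$) whose successive layers are isomorphic to $\nabla^{r}(\lam) \otimes \nabla(\lam)$. After base change to $\Q(q)$ the task reduces to showing that this filtration splits as a direct sum of bicomodules. The cleanest route is via duality: the non-degenerate pairing $\langle\cdot,\cdot\rangle_{0}$ recalled in Section 3 identifies $A^{sy}_{\Q(q)}(2m, k)$ with the linear dual of the symplectic $q$-Schur algebra $S^{sy}_{\Q(q)}(2m, k)$ as coalgebras (with the bicomodule structure coming from the natural bimodule structure on the algebra), and Theorem \ref{ssqc} implies that $S^{sy}_{\Q(q)}(2m, k)$ is a semisimple $\Q(q)$-algebra. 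Hence $A^{sy}_{\Q(q)}(2m, \leq\! n) = \bigoplus_{k \leq n} A^{sy}_{\Q(q)}(2m, k)$ is cosemisimple. Combined with the fact, already noted in the excerpt, that the $\nabla(\lam)_{\Q(q)}$'s are pairwise non-isomorphic simple comodules, this forces the cellular filtration to split into its isotypic bicomodule components, giving $\theta_{n}$.

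For $\theta$ I would observe that by Corollary \ref{Doc} the subspaces $A^{sy}_{\Q(q)}(2m, \leq\! n)$ form an increasing chain whose union is all of $\widetilde{A}^{sy}_{\Q(q)}(2m)$. The cellular basis elements $T^{\lam}_{q}(\bi, \bj)$ are common to all $A^{sy}_{\Q(q)}(2m, \leq\! n')$ with $n' \geq |\lam|$, and each $\theta_{n'}$ places them in the same summand $\nabla^{r}(\lam)_{\Q(q)} \otimes \nabla(\lam)_{\Q(q)}$, since the cellular ideals $A^{sy}_{\A}(2m, \leq\! n')^{\succeq \lam}$ and their quotients modulo $A^{sy}_{\A}(2m, \leq\! n')^{\succ \lam}$ are intrinsic to $\lam$. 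The family $\{\theta_{n}\}$ is therefore coherent; its direct limit defines $\theta$, with the index set enlarging from $\Lambda^{+}(m, \leq\! n)$ to $\Lambda^{+}(m) = \bigcup_{k \geq 0} \Lambda^{+}(m, k)$. The commutativity of the square is then essentially a tautology: both compositions send the $\lam$-isotypic summand on the upper-left into the $\lam$-isotypic summand on the lower-right, and the two vertical maps are by construction the obvious inclusions.

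The only substantive ingredient is the splitting of the cellular filtration over $\Q(q)$; this is the main obstacle, but as indicated it reduces to the semisimplicity of the symplectic $q$-Schur algebra at a generic parameter, which is an immediate consequence of Theorem \ref{ssqc}. Everything else is formal bookkeeping.
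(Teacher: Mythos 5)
Your proof is correct and follows essentially the same route as the paper: use the cellular bicomodule filtration from Corollary \ref{cellco2}, observe that it splits after base change to $\Q(q)$, and then pass to the union over $n$ for $\theta$ and read off the commutative square. The paper leaves the splitting over $\Q(q)$ implicit as ``general theory of cellular coalgebras,'' whereas you make it explicit by dualizing to the semisimple algebra $S^{sy}_{\Q(q)}(2m,k)$ (via the Oehms pairing and Theorem \ref{ssqc}); both routes are valid and amount to the same cosemisimplicity fact.
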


\begin{proof} This follows from the cellular structure of
${A}_{\A}^{sy}(2m,\leq\!\! n)$ given in Corollary
\ref{cellco2} and the fact $$
\widetilde{A}_{\A}^{sy}(2m)=\bigcup_{n\geq
0}{A}_{\A}^{sy}(2m,\leq\! n),
$$
and the following commutative diagram $$\begin{CD}
{A}_{\Q(q)}^{sy}(2m,\leq\! n)
@>{\theta_n}>>\bigoplus_{\lam\in\Lambda^{+}(m,\leq
n)}\nabla^r({\lam})_{\Q(q)}\otimes
\nabla({\lam})_{\Q(q)} \\
@V{} VV @V{}VV\\
{A}_{\Q(q)}^{sy}(2m,\leq\! \widehat{n})
@>{{\theta_{\widehat{n}}}}>> \bigoplus_{\lam\in\Lambda^{+}(m,\leq
\widehat{n})}\nabla^r({\lam})_{\Q(q)}\otimes
\nabla({\lam})_{\Q(q)}
\end{CD},
$$
where $\widehat{n}\geq n\geq 0$ and the two vertical maps are
natural embedding.
\end{proof}

\begin{cor} \label{ModuleBase} For each integer $n\geq 0$ and each
$\lam\in\Lambda^{+}(m,n)$, let $\pr_{\lam}$ be the natural
projection from $\widetilde{A}_{\Q(q)}^{sy}(2m)$ onto
$\nabla^r({\lam})_{\Q(q)}\otimes
\nabla({\lam})_{\Q(q)}$. Then the elements in the following set $$
\Bigl\{\pr_{\lam}\bigl(\pi_{C}(T_q^{\lam}(\bi,\bj))\bigr)\Bigm|\bi,\bj\in
I_{\lam}^{mys}\Bigr\}
$$
form a $\Q(q)$-basis of
$\nabla^r({\lam})_{\Q(q)}\otimes
\nabla({\lam})_{\Q(q)}$.
\end{cor}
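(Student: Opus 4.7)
The plan is to deduce this essentially by unwinding definitions, combining the cellular coalgebra structure of ${A}_{\A}^{sy}(2m,\leq\! n)$ from Corollary \ref{cellco2} with the commutative diagram of Corollary \ref{codecomp} and the injectivity of $\pi_C$ on the finite truncation established in Corollary \ref{Doc}. No new difficulty is introduced; the work has already been done in the preceding three corollaries.

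First, I would note that by Corollary \ref{Doc}(1), the restriction of $\pi_C$ to $A_{\Q(q)}^{sy}(2m,\leq\! n)$ is an isomorphism onto its image in $\widetilde{A}_{\Q(q)}^{sy}(2m)$, and this restriction is precisely the left vertical map of the commutative diagram in Corollary \ref{codecomp}. Since $\lam\in \Lambda^{+}(m,n)\subseteq \Lambda^{+}(m,\leq\! n)$, the right vertical map in that diagram identifies the $\lam$-summand on top with the $\lam$-summand on the bottom. Hence the diagram yields
$$
\pr_{\lam}\bigl(\pi_C(T_q^{\lam}(\bi,\bj))\bigr)=\pr_{\lam}^{(n)}\bigl(\theta_n(T_q^{\lam}(\bi,\bj))\bigr),
$$
where $\pr_{\lam}^{(n)}$ denotes the canonical projection from $\bigoplus_{\mu\in\Lambda^{+}(m,\leq\! n)}\nabla^r({\mu})_{\Q(q)}\otimes\nabla({\mu})_{\Q(q)}$ onto its $\lam$-summand.

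Second, I would invoke the cellular coalgebra structure of ${A}_{\A}^{sy}(2m,\leq\! n)$ from Corollary \ref{cellco2}. By construction $T_q^{\lam}(\bi,\bj)\in {A}_{\A}^{sy}(2m,\leq\! n)^{\succeq\lam}$, and by the general theory of cellular coalgebras the residues of $\{T_q^{\lam}(\bi,\bj)\mid \bi,\bj\in I_{\lam}^{mys}\}$ in the quotient ${A}_{\A}^{sy}(2m,\leq\! n)^{\succeq\lam}/{A}_{\A}^{sy}(2m,\leq\! n)^{\succ\lam}\cong \nabla^r(\lam)\otimes \nabla(\lam)$ form a basis (indexed by $I_{\lam}^{mys}\times I_{\lam}^{mys}$). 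The isomorphism $\theta_n$ of Corollary \ref{codecomp} splits the cellular filtration in the sense that $\theta_n$ sends ${A}_{\Q(q)}^{sy}(2m,\leq\! n)^{\succeq\lam}$ onto $\bigoplus_{\mu\succeq\lam}\nabla^r({\mu})_{\Q(q)}\otimes\nabla({\mu})_{\Q(q)}$, so $\pr_{\lam}^{(n)}\circ\theta_n$ restricted to $A^{\succeq\lam}$ coincides with the projection onto the cellular quotient $\nabla^r(\lam)_{\Q(q)}\otimes\nabla(\lam)_{\Q(q)}$.

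Combining the two steps, $\pr_{\lam}(\pi_C(T_q^{\lam}(\bi,\bj)))$ is exactly the cell basis element of $\nabla^r(\lam)_{\Q(q)}\otimes \nabla(\lam)_{\Q(q)}$ associated to $(\bi,\bj)$, and these elements form a $\Q(q)$-basis as $(\bi,\bj)$ ranges over $I_{\lam}^{mys}\times I_{\lam}^{mys}$. The only step that requires any care is verifying the compatibility of the two projections $\pr_{\lam}$ and $\pr_{\lam}^{(n)}$ via the commutative diagram of Corollary \ref{codecomp}, but this follows at once from the fact that the right vertical embedding is the obvious direct sum inclusion.
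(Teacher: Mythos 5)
Your proof is correct and follows what the paper leaves implicit: the paper states Corollary \ref{ModuleBase} with no proof, evidently regarding it as an immediate consequence of the cellular structure in Corollary \ref{cellco2}, the decomposition and commutative diagram in Corollary \ref{codecomp}, and the identification via $\pi_C$ from Corollary \ref{Doc}, which is exactly the chain of observations you make. The one step you assert without spelling out — that $\theta_n$ carries $A^{\succeq\lam}_{\Q(q)}$ onto $\bigoplus_{\mu\succeq\lam}\nabla^r(\mu)_{\Q(q)}\otimes\nabla(\mu)_{\Q(q)}$ — does hold, since over $\Q(q)$ the comodule category is semisimple and a comodule isomorphism must preserve isotypic components, but it would have been worth a sentence.
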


We end this section with the following lemma. Recall the definition of $\bi_{\lam}$, $\widehat{\bi}_{\lam}$
given in the paragraph above Lemma \ref{reversetableau}.

\begin{lem} \label{Base2} For each $\lam\in\Lambda^{+}(m,n)$,  we have $$
\begin{aligned}
&\pi_{C}\Bigl(T_q^{\lam}({\bi_{\lam}},{\bi_{\lam}})\Bigr)\equiv
q^{a_{\lam}}\pi_{C}\Bigl(T_q^{\lam}(\widehat{\bi}_{\lam},\widehat{\bi}_{\lam})\Bigr)+\sum_{\substack{\bj,\kbk\in
I_{\lam}^{mys}\\\bj,\kbk\triangleleft\widehat{\bi}_{\lam}}}C_{\bj,\kbk}^{\lam}\pi_{C}\Bigl(T_q^{\lam}({\bj},{\kbk})\Bigr)\\
&\qquad\qquad\qquad\qquad\Bigl(\!\!\!\!\mod{{A}_{\A}^{sy}(2m,\leq\! n)^{\succ{\lam}}}\Bigr),\end{aligned}
$$
where $a_{\lam}\in\Z$, $C_{\bj,\kbk}^{\lam}\in\A$, and
``$\triangleleft$" is the same as defined in \cite[Proposition 8.4]{Oe2}. In
particular, we have that $$0\neq\pr_{\lam}\bigl(\pi_{C}(T_q^{\lam}(\bi_{\lam},\bi_{\lam}))\bigr)\in
\nabla^r({\lam})\otimes \nabla({\lam}).$$
\end{lem}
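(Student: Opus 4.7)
The plan is to apply the transformation formula from the proof of Lemma \ref{comultirule} to the column-reversal relation between $\bi_{\lam}$ and $\widehat{\bi}_{\lam}$, and then read off the non-vanishing from the cellular basis structure of Corollary \ref{ModuleBase}.

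By construction, $\widehat{\bi}_{\lam} = \bi_{\lam} \cdot w_{0, \lam^t}$, where $w_{0, \lam^t} \in \BS_{\lam^t}$ is the longest element of the column Young subgroup. Crucially, every entry of $\bi_{\lam}$ lies in the unprimed sector $\{1, 2, \ldots, m\}$, so the same holds for every intermediate multi-index obtained along any reduced expression of $w_{0, \lam^t}$. Consequently, no adjacent transposition in this sequence ever exchanges an index $i$ with its partner $i'$, which is precisely the hypothesis required to invoke \cite[(13), (20), Corollary 11.9]{Oe2}. Repeating the argument of the proof of Lemma \ref{comultirule} verbatim, one obtains an exact identity
$$ T_q^{\lam}(\bi_{\lam}, \bi_{\lam}) = (-q)^{a_{\lam}}\, T_q^{\lam}(\widehat{\bi}_{\lam}, \widehat{\bi}_{\lam}) $$
in $A_{\A}^{sy}(2m, n)$ for a suitable integer $a_{\lam}$.

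Taking $\pi_C$ of both sides yields the stated congruence; any further straightening of the right-hand side by Oehms's algorithm \cite[Proposition 8.4]{Oe2} contributes only $\A$-combinations of cellular basis elements $T_q^{\lam}(\bj, \kbk)$ with $\bj, \kbk \in I_{\lam}^{mys}$, $\bj, \kbk \triangleleft \widehat{\bi}_{\lam}$, together with terms in $A_{\A}^{sy}(2m, \leq n)^{\succ \lam}$ (including those of the form $d_q^l T_q^{\mu}$ with smaller $|\mu|$, which are higher in the $\prec$-order on $\Lambda^{+}(m, \leq n)$). The ``in particular'' assertion is then immediate from Corollary \ref{ModuleBase}: the images $\pr_{\lam}(\pi_C(T_q^{\lam}(\bj, \kbk)))$ for $\bj, \kbk \in I_{\lam}^{mys}$ form a $\Q(q)$-basis of $\nabla^r(\lam) \otimes \nabla(\lam)$, so the nonzero leading coefficient in front of $\pr_{\lam}(\pi_C(T_q^{\lam}(\widehat{\bi}_{\lam}, \widehat{\bi}_{\lam})))$ cannot be cancelled by the strictly smaller correction terms, forcing $\pr_{\lam}(\pi_C(T_q^{\lam}(\bi_{\lam}, \bi_{\lam}))) \neq 0$.

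The principal technical obstacle is the column-reversal bookkeeping: one must verify carefully that the successive simple reflections composing $w_{0, \lam^t}$ act on multi-indices that always remain in the unprimed sector, so that no $\gamma$-operator is ever activated and, consequently, no $d_q$-multiples (which would raise the partition class in $\prec$) are created during the transformation. This is exactly what the definition of $\bi_{\lam}$ guarantees, and it is the feature distinguishing this setup from the analogous but more delicate case of arbitrary multi-indices.
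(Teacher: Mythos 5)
Your proof is correct and takes a cleaner, genuinely different route from the paper's. The paper's one-line proof invokes Oehms's general straightening rule \cite[(8), Prop.\ 8.4, Cor.\ 9.12]{Oe2} together with homogeneity and the fact that $d_q$ has degree $2$ in order to control the undetermined correction terms modulo ${A}_{\A}^{sy}(2m,\leq n)^{\succ\lam}$. You instead reuse the references \cite[(13), (20), Cor.\ 11.9]{Oe2} already deployed in the proof of Lemma \ref{comultirule}: since every entry of $\bi_{\lam}$ lies in the unprimed set $\{1,\ldots,m\}$, the non-exchange hypothesis holds automatically at every step of the column reversal from $\bi_{\lam}$ to $\widehat{\bi}_{\lam}$, which yields the exact single-term relation $T_q^{\lam}(\bi_{\lam},\bi_{\lam})=q^{a_{\lam}}T_q^{\lam}(\widehat{\bi}_{\lam},\widehat{\bi}_{\lam})$ (the sign disappears because the reversal acts on both arguments, producing an even power of $-1$). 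Because $\widehat{\bi}_{\lam}\in I_{\lam}^{mys}$ by Lemma \ref{reversetableau}, no straightening is needed at all, so the congruence holds with every $C^{\lam}_{\bj,\kbk}=0$; your approach therefore proves a sharper statement than the lemma records and makes the degree argument unnecessary. Your sentence about ``any further straightening'' and the resulting $d_q^l$-terms is vacuous under your own exact identity and should be removed, since it introduces hedging that undercuts the main point. The ``in particular'' non-vanishing then follows at once from Corollary \ref{ModuleBase}, exactly as you conclude.
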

\begin{proof} This follows directly from \cite[(8), Proposition 8.4, Corollary 9.12]{Oe2} and the facts that
any quantum symplectic bideterminant in $A_{\Q(q)}^{sy}(2m)$ is
homogeneous and the central group-like element $d_q$ is homogeneous
of degree $2$.
\end{proof}

\bigskip\bigskip
\section{A comparison of two quantized coordinate algebras}

In \cite{Ka2}, Kashiwara introduced a version of quantized coordinate algebras (which was
denoted by $A_q^{\Z}(\mathfrak{g})$ there) for any symmetrizable Kac--Moody Lie algebras $\mathfrak{g}$.
In this section we shall first show that in the case of type $C$, the $\mathbb{Z}[q,q^{-1}]$ algebra $A_q^{\Z}(\mathfrak{g})$ and the quantized coordinate algebra $\widetilde{A}_{\A}^{sy}(2m)$ are isomorphic to
each other. Then we shall give a proof of Theorem \ref{mainthm1}. Throughout this section, we set
$$\mathfrak{g}:=\mathfrak{sp}_{2m}(\C),\,\,
\U_{\Q(q)}:=\U_{\Q(q)}(\mathfrak{g}),\,\,
\dot{\U}_{\Q(q)}:=\dot{\U}_{\Q(q)}(\mathfrak{g}).$$

Following \cite{Ka2}, we use $O_{int}(\mathfrak{g})$ to denote the
category of $\U_{\Q(q)}$-modules $M$ such that \begin{enumerate}
\item $M=\oplus_{\lam\in X}M_{\lam}$, where
$M_{\lam}:=\bigl\{x\in
M\bigm|k_ix=q^{\langle\lam,\alpha_i^{\vee}\rangle}x, \forall\,1\leq
i\leq m\bigr\}$,
\item for any $i$, $M$ is a union of finite dimensional
$\U_{\Q(q)}(\mathfrak{g}_i)$-modules, where
$\U_{\Q(q)}(\mathfrak{g}_i)$ denotes the $\Q(q)$-subalgebra
generated by $$e_i,f_i,k_1^{\pm 1},k_2^{\pm 1},\cdots,k_m^{\pm 1}.$$
\item for any $u\in M$, there exists $l\geq 0$ satisfying
$e_{i_1}\cdots e_{i_l}u=0$ for any $i_1,\cdots,i_l\in
\{1,2,\cdots,m\}$.
\end{enumerate}
Then $O_{int}(\mathfrak{g})$ is semisimple and any simple object is
isomorphic to the irreducible module $V(\lam)$ with highest weight
$\lam$. Note that $\U_{\Q(q)}$ has a structure of
bi-$\U_{\Q(q)}$-module. Hence $\U_{\Q(q)}^{\ast}$ was naturally
endowed with a structure of bi-$\U_{\Q(q)}$-module.

\begin{dfn} {\rm (\cite[(7.2.1)]{Ka2})} We set $$\begin{aligned}
A_q(\mathfrak{g})&:=\biggl\{u\in
\U_{\Q(q)}^{\ast}\biggm|\begin{matrix}\text{$\U_{\Q(q)}u$ belongs to $O_{int}(\mathfrak{g})$ and $u\U_{\Q(q)}$}\\
\text{ belongs to $O_{int}(\mathfrak{g^{opp}})$}\end{matrix}
\biggr\},\\
A_q^{\Z}(\mathfrak{g})&:=\Bigl\{u\in A_q(\mathfrak{g})\Bigm|\langle
u,\U_{\A}\rangle\subseteq\A\Bigr\},\\
A_q^{\Q}(\mathfrak{g})&:=\Bigl\{u\in A_q(\mathfrak{g})\Bigm|\langle
u,\U_{\A}\rangle\subseteq\Q[q,q^{-1}]\Bigr\},
\end{aligned}
$$
where $\langle,\rangle$ is the natural pairing.
\end{dfn}

Recall the pairing $\langle,\rangle_0$ between ${A}^{sy}_{\Q(q)}(2m)$ and ${\U}_{\Q(q)}$ (see Section 3, the second paragraph below Lemma \ref{cellco}).

\begin{lem} {\rm (\cite[Theorem 6.4(2)]{Ha})} \label{cqpairing} We have that $$
\langle d_q-1, y\rangle_0=0,\quad\text{for any $y\in{\U}_{\Q(q)}$}.
$$
and the pairing $\langle,\rangle_0$ induces a non-degenerate Hopf
pairing $\langle,\rangle_0$ between $\widetilde{A}^{sy}_{\Q(q)}(2m)$
and the quantized enveloping algebra ${\U}_{\Q(q)}$
$$
\widetilde{A}^{sy}_{\Q(q)}(2m)\times{\U}_{\Q(q)}\rightarrow \Q(q).
$$
As a result, we have two Hopf algebra injections $$
\iota_A:\widetilde{A}^{sy}_{\Q(q)}(2m)\hookrightarrow\Bigl({\U}_{\Q(q)}\Bigr)^{\circ},\,\,
\iota_U:{\U}_{\Q(q)}\hookrightarrow\Bigl(\widetilde{A}^{sy}_{\Q(q)}(2m)\Bigr)^{\circ},\,\,$$
where for any Hopf algebra $H$, $H^{\circ}$ denotes the Hopf dual of
$H$.
\end{lem}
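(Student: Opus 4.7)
The plan is to prove the three assertions in turn: first the identity $\langle d_q-1, y\rangle_0=0$, then non-degeneracy of the induced pairing on $\widetilde{A}^{sy}_{\Q(q)}(2m)\times\U_{\Q(q)}$, and finally the two Hopf algebra injections. For the first, by the very construction of $A^{sy}_{\A}(2m)$ via FRT, the pairing $\langle,\rangle_0$ is automatically a Hopf pairing, in the sense that the comultiplication, counit and product on $A^{sy}_{\A}(2m)$ are dual, under $\langle,\rangle_0$, to the corresponding structure on $\U_{\Q(q)}$. Since $d_q$ is a central group-like element of $A^{sy}_{\Q(q)}(2m)$, the map $\chi:y\mapsto\langle d_q,y\rangle_0$ is therefore an algebra homomorphism $\U_{\Q(q)}\to\Q(q)$, and to prove $\langle d_q-1,y\rangle_0=0$ it suffices to check $\chi=\epsilon$ on the algebra generators $e_i,f_i,k_i^{\pm 1}$. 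The crucial input is that the operator $\gamma\in\End(V^{\otimes 2})$ appearing in the defining formula of $d_q$ represents the $\U_{\Q(q)}$-invariant symplectic form on $V$, so that $\gamma$ commutes with $\psi_C(\U_{\Q(q)})$; a direct calculation on the basis $\{v_{\bi}\}$, using the explicit action of the generators in Section 2 together with the coproduct formulas, then yields $\chi(k_i)=1$ and $\chi(e_i)=\chi(f_i)=0$.

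Once the first part is in place, the pairing descends to $\widetilde{A}^{sy}_{\Q(q)}(2m)$, and for non-degeneracy I would invoke the two-sided comodule decomposition of Corollary \ref{codecomp},
$$
\widetilde{A}^{sy}_{\Q(q)}(2m)\;\cong\;\bigoplus_{k\geq 0,\,\lam\in\Lambda^{+}(m,k)}\nabla^r(\lam)_{\Q(q)}\otimes\nabla(\lam)_{\Q(q)},
$$
in which each $\nabla(\lam)_{\Q(q)}$ is an irreducible left $\U_{\Q(q)}$-module via $\psi_C$. Non-degeneracy in the first variable is then a standard matrix-coefficient argument: a nonzero element in $\nabla^r(\lam)_{\Q(q)}\otimes\nabla(\lam)_{\Q(q)}$ is detected by some $\psi_C(y)$ because, by the Jacobson density theorem applied to the absolutely irreducible module $\nabla(\lam)_{\Q(q)}$, the image $\psi_C(\U_{\Q(q)})$ fills $\End_{\Q(q)}(\nabla(\lam)_{\Q(q)})$. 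Non-degeneracy in the second variable reduces to the injectivity of the combined representation $\U_{\Q(q)}\to\prod_\lam\End_{\Q(q)}(\nabla(\lam)_{\Q(q)})$; this is well known, since as $\lam$ varies over all partitions of length $\leq m$ the $\nabla(\lam)_{\Q(q)}$ exhaust the finite-dimensional type $1$ irreducible $\U_{\Q(q)}$-modules, and these collectively separate the points of $\U_{\Q(q)}(\fsp_{2m})$.

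The third part is then immediate: a non-degenerate Hopf pairing between two Hopf algebras automatically induces injective Hopf algebra homomorphisms into the respective Hopf duals via $x\mapsto\langle x,-\rangle_0$ and $y\mapsto\langle-,y\rangle_0$, and these maps are Hopf morphisms precisely because the pairing is a Hopf pairing. The main obstacle is the first part; although morally $d_q-1$ is ``the quantum determinant minus one'' and so must vanish on $\U_{\Q(q)}(\fsp_{2m})$ (which preserves the symplectic form), verifying the identity rigorously entails careful bookkeeping of the signs $\epsilon_k$ and the exponents $\rho_k$ in the formula for $d_q$. This is most cleanly handled by choosing the indices $k,l$ conveniently and then exploiting the $\U_{\Q(q)}$-invariance of $\gamma$ to reduce the calculation to a small number of matrix entries on $V^{\otimes 2}$.
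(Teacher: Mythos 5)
The paper does not actually prove this lemma: it is stated with an explicit citation to \cite[Theorem 6.4(2)]{Ha} (Hayashi's paper), and the author relies on Hayashi for it. Your proposal is therefore a reconstruction of a proof rather than a restatement of the paper's argument, so there is no ``paper proof'' to compare against directly.

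That said, your reconstruction is sound in outline and uses only ingredients available at this point in the paper. For the identity $\langle d_q-1,y\rangle_0=0$, the observation that $d_q$ is group-like so $y\mapsto\langle d_q,y\rangle_0$ is a character, and that it therefore suffices to evaluate on generators, is exactly the right reduction; the evaluation $\chi(k_a)=1$ follows from Lemma \ref{Apairing} (since $\wt$ of the multi-index $(l,l')$ is zero and $\varepsilon(d_q)=1$), while $\chi(e_i)=\chi(f_i)=0$ needs the $\U_{\Q(q)}$-invariance of $\gamma$ as you say. One small caveat: the operator commuting with the right BMW action in the paper is $\gamma'$, not $\gamma$; the commutation of $\gamma$ with the comodule structure on $F_{\Q(q)}(2m)$ is what is used in the FRT construction, and one should be careful to state which side acts where when invoking ``invariance'' so as not to confuse $\gamma$ with $\gamma'=\overline\gamma$. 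For non-degeneracy, your appeal to Corollary \ref{codecomp} plus absolute irreducibility of each $\nabla(\lam)_{\Q(q)}$ and the collective faithfulness of the type $1$ finite-dimensional representations of $\U_{\Q(q)}(\fsp_{2m})$ is a correct and standard route; it is the natural internal argument if one did not want to cite Hayashi. The passage from a non-degenerate Hopf pairing to the two injections into Hopf duals is indeed formal. Overall the proposal is a plausible self-contained replacement for the citation, with the single detail about $\gamma$ versus $\gamma'$ worth tightening.
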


\begin{lem} \label{mqpairing} With the above notations, the pairing $\langle,\rangle_0$ naturally induces
a non-degenerate Hopf pairing $\langle,\rangle_0$ between
$\widetilde{A}^{sy}_{\Q(q)}(2m)$ and the modified quantized
enveloping algebra $\dot{\U}_{\Q(q)}$
$$
\widetilde{A}^{sy}_{\Q(q)}(2m)\times\dot{\U}_{\Q(q)}\rightarrow
\Q(q).
$$
As a result, we have two Hopf algebra injections
$$
\widetilde{\iota}_A:\widetilde{A}^{sy}_{\Q(q)}(2m)\hookrightarrow\Bigl(\dot{\U}_{\Q(q)}\Bigr)^{\circ},\,\,
\widetilde{\iota}_U:\dot{\U}_{\Q(q)}\hookrightarrow\Bigl(\widetilde{A}^{sy}_{\Q(q)}(2m)\Bigr)^{\circ},\,\,$$
\end{lem}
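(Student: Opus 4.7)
The plan is to construct the pairing with $\dot{\U}_{\Q(q)}$ by factoring through the representation $\widetilde{\psi}_C$ on tensor powers. Concretely, I would first identify $\widetilde{A}^{sy}_{\Q(q)}(2m)=\bigcup_n A^{sy}_{\Q(q)}(2m,\leq\! n)$ via Corollary \ref{Doc}, and for each $a\in A^{sy}_{\Q(q)}(2m,\leq\! n)$ and $u\in\dot{\U}_{\Q(q)}$ set
$$\langle a, u \rangle_0 := \bigl\langle a,\, \widetilde{\psi}_C(u)\bigr\rangle_0,$$
where the right-hand side is the already-existing pairing between $A^{sy}_{\Q(q)}(2m,\leq\! n)$ and $\End_{\Q(q)}(V_{\Q(q)}^{\otimes n})$. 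Only the finitely many summands $u 1_\lambda$ with $\lambda$ a weight of $V_{\Q(q)}^{\otimes n}$ yield non-zero operators, so the sum is finite. Two routine checks complete this step: the definition is independent of the choice of $n$ with $a\in A^{sy}_{\Q(q)}(2m,\leq\! n)$, and for $u\in\U_{\Q(q)}$ it recovers Lemma \ref{cqpairing} because $\sum_{\lambda}\widetilde{\psi}_C(u 1_\lambda)=\psi_C(u)$ on $V_{\Q(q)}^{\otimes n}$.

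Next I would verify the Hopf pairing axioms. The bialgebra structure on $\dot{\U}_{\Q(q)}$ in the sense of \cite{Lu3} has a coproduct $\Delta$ with image in a completion of $\dot{\U}_{\Q(q)}\otimes\dot{\U}_{\Q(q)}$, but because any element of $\widetilde{A}^{sy}_{\Q(q)}(2m)$ is concentrated in finitely many weight components, the identities
$$\langle ab, u\rangle_0=\langle a\otimes b,\Delta(u)\rangle_0,\qquad \langle a, uv\rangle_0=\langle\Delta(a), u\otimes v\rangle_0,$$
together with the antipode and counit compatibilities, reduce to the corresponding statements for the pairing with $\U_{\Q(q)}$. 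The latter hold by Lemma \ref{cqpairing}, and the passage from $\U_{\Q(q)}$ to $\dot{\U}_{\Q(q)}$ is governed by the identity $\widetilde{\psi}_C(P 1_\lambda)=\psi_C(P)p_\lambda$ exploited in Lemma \ref{keylem}.

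For the non-degeneracy, one direction is immediate: if $\langle a, u\rangle_0=0$ for all $u\in\dot{\U}_{\Q(q)}$, then in particular $\langle a, y\rangle_0=0$ for all $y\in\U_{\Q(q)}$, so $a=0$ by Lemma \ref{cqpairing}. The other direction is the main obstacle and where the new input is needed. Given $u\in\dot{\U}_{\Q(q)}$ pairing trivially with all of $\widetilde{A}^{sy}_{\Q(q)}(2m)$, I would use the decomposition of Corollary \ref{codecomp}
$$\widetilde{A}^{sy}_{\Q(q)}(2m)\cong\bigoplus_{k\geq 0,\,\lambda\in\Lambda^{+}(m,k)}\nabla^r(\lambda)_{\Q(q)}\otimes\nabla(\lambda)_{\Q(q)}$$
together with the bimodule nature of the pairing to translate its triviality into the statement that $u$ acts as zero on every simple module $\nabla(\lambda)_{\Q(q)}$, $\lambda\in\Lambda^{+}(m)$. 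Since $\Lambda^{+}(m)$ exhausts the dominant weights of $\mathfrak{sp}_{2m}$, these comprise all simple objects of $O_{int}(\mathfrak{g})$, on which $\dot{\U}_{\Q(q)}$ acts faithfully by construction (see \cite[Part IV]{Lu3}); hence $u=0$.

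Finally, the two Hopf algebra injections are formal consequences: $\widetilde{\iota}_A(a):=\langle a,-\rangle_0$ and $\widetilde{\iota}_U(u):=\langle -,u\rangle_0$ are injective by non-degeneracy, land in the respective Hopf duals by the pairing axioms established above, and are algebra (respectively, coalgebra) homomorphisms for the same reason. The one subtlety is that $\dot{\U}_{\Q(q)}$ is not a Hopf algebra in the classical sense, so ``Hopf dual'' must be interpreted relative to its completed coproduct; since elements of $\widetilde{A}^{sy}_{\Q(q)}(2m)$ are matrix coefficients of finite-dimensional integrable modules, they automatically live in this restricted dual.
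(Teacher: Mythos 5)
Your proposal is correct and follows essentially the same route as the paper's own (very terse) proof, which simply cites Lemma~\ref{keylem}, Lemma~\ref{cqpairing}, and the two facts that every simple $\dot{\U}_{\Q(q)}$-module occurs inside some $V_{\Q(q)}^{\otimes n}$ and that $\dot{\U}_{\Q(q)}$ acts faithfully on the totality of its simple integrable modules. Your definition of the pairing via $\widetilde{\psi}_C$, the reduction of the Hopf-pairing axioms to the $\U_{\Q(q)}$ case through $\widetilde{\psi}_C(P1_\lambda)=\psi_C(P)p_\lambda$, and the non-degeneracy argument via Corollary~\ref{codecomp} are exactly the intended fleshing-out; using the Peter--Weyl-type decomposition of $\widetilde{A}^{sy}_{\Q(q)}(2m)$ in place of the paper's fact (a) is a cosmetically different but equivalent packaging of the same observation that all dominant weights of $\mathfrak{sp}_{2m}$ occur.
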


\begin{proof} This is an easy consequence of Lemma \ref{keylem}, Lemma \ref{cqpairing} as well as the
following two standard facts:\begin{enumerate}
\item[(a)] any simple $\dot{\U}_{\Q(q)}$-module is a submodule of
$V_{\Q(q)}^{\otimes n}$ for some $n\in\Z^{\geq 0}$;
\item[(b)] if $u\in \dot{\U}_{\Q(q)}$ acts as $0$ on every simple
$\dot{\U}_{\Q(q)}$-module, then $u=0$.
\end{enumerate}
\end{proof}

For each $\bi\in I(2m,n)$, we define
$\wt(\bi)=(\mu_1,\cdots,\mu_{m})$, where $$ \mu_s:=\#\bigl\{1\leq
j\leq n\bigm|i_{j}=s\bigr\}-\#\bigl\{1\leq j\leq
n\bigm|i_{j}=s'\bigr\},\,\,s=1,2,\cdots,m.$$ We identify
$\wt(\bi)=(\mu_1,\cdots,\mu_{m})$ with the weight
$\mu_1\varepsilon_1+\cdots+\mu_m\varepsilon_m\in X$.

\begin{lem} \label{Apairing} Let $\bi, \bj\in I(2m,n)$. Set
$\mu=\wt(\bj)$. Then for any integer $a$ with $1\leq a\leq m$,
$$ \langle\pi_C\bigl(
T_q^{\lam}(\bi,\bj)\bigr),k_a\rangle_0=q^{\langle\mu,\alpha_a^{\vee}\rangle}\varepsilon\bigl(\pi_C\bigl(
T_q^{\lam}(\bi,\bj)\bigr)\bigr).
$$
\end{lem}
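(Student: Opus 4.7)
The plan is to unwind both sides of the claimed identity directly from the definitions of the pairing $\langle\,,\,\rangle_0$, the counit $\varepsilon$ and the quantum symplectic bideterminant $T_q^{\lam}(\bi,\bj)$. Since by Lemma \ref{cqpairing} both the pairing with $k_a\in\U_{\Q(q)}$ and the counit descend through the projection $\pi_C$, it suffices to prove the corresponding identity at the level of $A_{\A}^{sy}(2m,n)$, namely
$$\langle T_q^{\lam}(\bi,\bj),k_a\rangle_0=q^{\langle\mu,\alpha_a^\vee\rangle}\,\varepsilon\bigl(T_q^{\lam}(\bi,\bj)\bigr).$$

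The first step is to observe that the basis vector $v_{\bj}=v_{j_1}\otimes\cdots\otimes v_{j_n}$ is a weight vector of $V_{\Q(q)}^{\otimes n}$ of weight $\mu=\wt(\bj)$. This is immediate from the explicit action of $k_i$ on the basis $\{v_j\}$ recorded just before Lemma \ref{Alattice} (each $v_s$ has weight $\varepsilon_s$ and each $v_{s'}$ has weight $-\varepsilon_s$) together with $\Delta(k_i)=k_i\otimes k_i$, so weights add across tensor factors. Consequently $\psi_C(k_a)v_{\bj}=q^{\langle\mu,\alpha_a^\vee\rangle}v_{\bj}$.

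Next, using the expansion $T_q^{\lam}(\bi,\bj)=\sum_{w\in\BS_{\lam^t}}(-q^2)^{-\ell(w)}\beta(w)\wr x_{\bi,\bj}$ and the definition $\beta(w)\wr x_{\bi,\bj}=\sum_{\bk\in I(2m,n)}\beta(w)_{\bi,\bk}\,x_{\bk,\bj}$, we compute the pairing termwise. Since $\langle x_{\bk,\bj},k_a\rangle_0=u_{\bk}\bigl(\psi_C(k_a)v_{\bj}\bigr)=q^{\langle\mu,\alpha_a^\vee\rangle}\delta_{\bk,\bj}$, the inner sum collapses and gives
$$\langle T_q^{\lam}(\bi,\bj),k_a\rangle_0=q^{\langle\mu,\alpha_a^\vee\rangle}\sum_{w\in\BS_{\lam^t}}(-q^2)^{-\ell(w)}\beta(w)_{\bi,\bj}.$$
On the other side, the formula $\varepsilon(X_{\bk,\bj})=\delta_{\bk,\bj}$ yields
$$\varepsilon\bigl(T_q^{\lam}(\bi,\bj)\bigr)=\sum_{w\in\BS_{\lam^t}}(-q^2)^{-\ell(w)}\beta(w)_{\bi,\bj},$$
so the two expressions agree up to the desired scalar factor.

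There is no real obstacle here; the only point that deserves care is the book-keeping: one must verify that $v_{\bj}$ really is a weight vector of weight $\wt(\bj)$ (so that the definition of $\wt$ matches with the $\U_{\Q(q)}$-weight coming from the action), and one must invoke Lemma \ref{cqpairing} to justify replacing $\pi_C(T_q^{\lam}(\bi,\bj))$ by $T_q^{\lam}(\bi,\bj)$ on both sides. Once these are in place, the proof is a direct calculation reducing the entire matter to the fact that $v_{\bj}$ is an eigenvector of $\psi_C(k_a)$ with eigenvalue $q^{\langle\mu,\alpha_a^\vee\rangle}$.
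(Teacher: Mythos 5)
Your proof is correct and spells out precisely the ``direct verification'' that the paper leaves to the reader: after observing that $v_{\bj}$ is a $\U_{\Q(q)}$-weight vector of weight $\wt(\bj)$, one pairs the bideterminant termwise with $k_a$ and compares with the expansion of $\varepsilon(T_q^{\lam}(\bi,\bj))$, noting that both sides reduce to the same sum $\sum_{w\in\BS_{\lam^t}}(-q^2)^{-\ell(w)}\beta(w)_{\bi,\bj}$ up to the scalar $q^{\langle\mu,\alpha_a^{\vee}\rangle}$. The only minor point worth stating explicitly is that the pairing and the counit both descend through $\pi_C$ because the ideal $\langle d_q-1\rangle$ lies in the kernel of each, which is exactly what Lemma \ref{cqpairing} records.
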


\begin{proof} This follows from direct verification.
\end{proof}

Note that ${\U}_{\Q(q)}(\mathfrak{g})\cong
{\U}_{\Q(q)}(\mathfrak{g}^{opp})$ via the anti-automorphism $\phi$
defined on generators by:
$$ e_i\mapsto f_i,\quad f_i\mapsto e_i,\quad k_i\mapsto k_i,\quad
i=1,2,\cdots,m. $$ We identify
${\U}_{\Q(q)}(\mathfrak{g}\oplus\mathfrak{g})$ with
${\U}_{\Q(q)}(\mathfrak{g})\otimes{\U}_{\Q(q)}(\mathfrak{g})$. Using
$\phi$, the bi-${\U}_{\Q(q)}(\mathfrak{g})$ structure on
$\bigl({\U}_{\Q(q)}\bigr)^{\ast}$ can be interpreted as a left
${\U}_{\Q(q)}(\mathfrak{g}\oplus\mathfrak{g})$-structure, i.e., $$
\bigl((a\otimes
b)f\bigr)(x):=f\bigl(\phi(b)xa\bigr),\,\,\,\forall\,a,b,x\in{\U}_{\Q(q)}(\mathfrak{g}),
f\in\bigl({\U}_{\Q(q)}(\mathfrak{g})\bigr)^{\ast}.
$$

Let $W_m=W(C_m)$ be the Weyl group of type $C_m$. Let $w_0$ be the
longest element in $W_m$. If $\lam=(\lam_1,\cdots,\lam_m)\in X$,
then $w_0\lam=(-\lam_1,\cdots,-\lam_m)$. Let $k\in\Z^{\geq 0}$ and
$\lam\in\Lambda^{+}(m,k)$. Recall our definitions of $\bi_{\lam}$
given above Lemma \ref{reversetableau}. We have the following
observation.

\begin{cor} \label{highestweight} Let $k\in\Z^{\geq 0}$ and
$\lam\in\Lambda^{+}(m,k)$. Then
$\pi_C\bigl(T_q^{\lam}({\bi}_{\lam},{\bi}_{\lam})\bigr)$ is a weight
vector of weight $\bigl(\lam,\lam\bigr)$ satisfying
$$e_i\pi_C\bigl(T_q^{\lam}({\bi}_{\lam},{\bi}_{\lam})\bigr)=0=\pi_C\bigl(T_q^{\lam}({\bi}_{\lam},{\bi}_{\lam})\bigr)f_i,\,\,\,
\forall\,1\leq i\leq m. $$
\end{cor}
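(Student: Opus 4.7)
The plan is to verify the three assertions (bi-weight $=(\lam,\lam)$, left $e_i$-annihilation, right $f_i$-annihilation) by reducing each to a statement about a single vector
\[
y_\lam v_{\bi_\lam}\in V_{\Q(q)}^{\otimes k},\qquad
y_\lam:=\sum_{w\in\BS_{\lam^t}}(-q^2)^{-\ell(w)}\beta(w).
\]
The bridge is the identity $\langle T_q^\lam(\bi,\bj),\,u\rangle_0 = u_{\bi}\bigl(u\cdot y_\lam v_{\bj}\bigr)$ for $u\in\U_{\Q(q)}$, which is immediate from the definition of $\langle,\rangle_0$ on the generators $x_{\bi,\bj}$ and the Schur--Weyl commutation of each $\beta(w)$ with the $\U_{\Q(q)}$-action on $V^{\otimes k}$ (Theorem \ref{ssqc}).

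For the bi-weight, I would unpack the left $\U_{\Q(q)}(\mathfrak g\oplus\mathfrak g)$-structure and combine the elementary identity $k_a v_{\bj}=q^{\langle\wt(\bj),\alpha_a^\vee\rangle}v_{\bj}$ with Lemma \ref{Apairing}, applied to both indices, to see that the bi-weight of $T_q^\lam(\bi,\bj)$ is $(\wt(\bi),\wt(\bj))$. Since $\bi_\lam$ contains each $a\in\{1,\dots,m\}$ exactly $\lam_a$ times and has no primed entries, $\wt(\bi_\lam)=\lam$, so the bi-weight is $(\lam,\lam)$.

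The heart of the proof is the combinatorial claim that $y_\lam v_{\bi_\lam}$ is a $\U_{\Q(q)}$-highest weight vector. Granting this, for every $u\in\U_{\Q(q)}$
\[
\langle e_i\!\cdot\!\pi_C(T_q^\lam(\bi_\lam,\bi_\lam)),\,u\rangle_0
=u_{\bi_\lam}\bigl(u\cdot e_i(y_\lam v_{\bi_\lam})\bigr)=0,
\]
so non-degeneracy of the pairing (Lemma \ref{cqpairing}) yields $e_i\!\cdot\!\pi_C(T_q^\lam(\bi_\lam,\bi_\lam))=0$. To prove $e_iy_\lam v_{\bi_\lam}=y_\lam e_iv_{\bi_\lam}=0$ (using Schur--Weyl commutation), the case $i=m$ is trivial since $e_m v_a=0$ for $a\in\{1,\dots,m\}$. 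For $i<m$, the coproduct writes $e_iv_{\bi_\lam}$ as a sum of tensors, each of which, up to an invertible scalar from the $\widetilde k_i$ factors, replaces one occurrence of $v_{i+1}$ (in row $i{+}1$ of some column $c$ of $T^\lam_{\bi_\lam}$) by $v_i$; the column $c$ thereby acquires two adjacent equal entries. Because the column-reading collects each column's positions contiguously, $\BS_{\lam^t}$ splits as a product of symmetric groups on these column blocks, so $y_\lam$ factors as a tensor product of column-wise $q$-antisymmetrizers. A termwise check of the four blocks in the formula for $\beta$ yields $\beta(v_a\otimes v_a)=q^2\,v_a\otimes v_a$, hence $(1+(-q^2)^{-1}\beta)(v_a\otimes v_a)=0$, and the standard coset factorization propagates this to the full column-wise antisymmetrizer.

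The right-side annihilation is then a clean weight-space consequence. Since $y_\lam v_{\bi_\lam}$ is a nonzero highest weight vector of weight $\lam$ (nonzero because the identity summand $w=1$ contributes $v_{\bi_\lam}$), the cyclic submodule $\U_{\Q(q)}\!\cdot\!y_\lam v_{\bi_\lam}\subseteq V^{\otimes k}$ is a copy of the irreducible $V(\lam)$, whose weights lie in $\lam-\sum_j\Z_{\ge0}\alpha_j$. In
\[
\langle \pi_C(T_q^\lam(\bi_\lam,\bi_\lam))\!\cdot\!f_i,\,u\rangle_0
=u_{\bi_\lam}\bigl(f_i\cdot(u\,y_\lam v_{\bi_\lam})\bigr),
\]
the vector $u\,y_\lam v_{\bi_\lam}$ lies in this copy of $V(\lam)$, while $u_{\bi_\lam}$ extracts the coefficient of $v_{\bi_\lam}$, which has weight $\lam$; a nonzero contribution would force a weight $\mu$ of $V(\lam)$ with $\mu-\alpha_i=\lam$, i.e.\ $\mu=\lam+\alpha_i\not\le\lam$, which is impossible. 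The chief technical obstacle is the identity $\beta(v_a\otimes v_a)=q^2\,v_a\otimes v_a$: one has to verify that none of the contraction-type summands in the formula for $\beta$ fires on a diagonal input, using in particular that $a=a'$ has no solution in dimension $2m$. Once that is in place the rest is standard bookkeeping with coproducts, Schur--Weyl commutation, and the weight lattice.
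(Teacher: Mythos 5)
Your proposal is correct, and it takes a genuinely different route from the paper. The paper's proof stays entirely on the ``dual side'': it identifies $\pi_C(T_q^\lam(\bi_\lam,\bi_\lam))$ with an element of $(\U_{\Q(q)})^\ast$, uses Lemma \ref{comultirule} plus Lemma \ref{Apairing} to extract the $k_a$-eigenvalue from the coproduct expansion, and then disposes of the $e_i$- and $f_i$-annihilation with the terse remark that $\langle \pi_C(T_q^\lam(\bh,\bi_\lam)),e_i\rangle_0=0=\langle \pi_C(T_q^\lam(\bi_\lam,\bh)),f_i\rangle_0$ ``by direct verification.'' You instead push everything across the pairing to the primal side, collapsing all three assertions into facts about the single concrete vector $y_\lam v_{\bi_\lam}\in V_{\Q(q)}^{\otimes k}$. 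The heart of your argument — that $y_\lam$ factors through column-wise $q$-antisymmetrizers and that $\beta(v_a\otimes v_a)=q^2\,v_a\otimes v_a$ forces $(1+(-q^2)^{-1}\beta)$ to kill a repeated adjacent column entry — is a cleaner, more transparent replacement for the paper's ``direct verification,'' and the weight-lattice argument you give for the right $f_i$-annihilation (via the copy of $V(\lam)$ generated by $y_\lam v_{\bi_\lam}$) is an elegant alternative to the paper's component-by-component pairing computation.

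Two small imprecisions worth tidying, neither fatal. First, you cite Theorem \ref{ssqc} for the commutation of $\beta(w)$ with the $\U_{\Q(q)}$-action, but that theorem is literally about the $\beta'_i,\gamma'_i$ operators, not $\beta_i$; the fact that $\beta$ itself is a $\U_{\Q(q)}$-module endomorphism is part of the FRT setup (\cite{Ha}) and should be invoked from there, or deduced from the relation $\beta'=\overline{q\beta^{-1}}$ together with the fact that an intertwiner's inverse is an intertwiner and that the weight decomposition is bar-stable. Second, your parenthetical justification that $y_\lam v_{\bi_\lam}\ne 0$ ``because the identity summand contributes $v_{\bi_\lam}$'' glosses over the possibility that other $\beta(w)$-summands also contribute; a short leading-term argument (using that $\beta$ sends a strictly increasing unprimed pair $v_a\otimes v_b$, $a<b\le m$, to $q\,v_b\otimes v_a$ with no other terms, so no $w\ne 1$ can reproduce the sorted tensor) closes this, though as you implicitly observe it is not strictly needed: if $y_\lam v_{\bi_\lam}=0$ then $\pi_C(T_q^\lam(\bi_\lam,\bi_\lam))=0$ by non-degeneracy and the Corollary holds vacuously.
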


\begin{proof} Note that $\bi_{\lam}\in I_{\lam}^{<}$. We identify
$\pi_C\bigl(T_q^{\lam}({\bi}_{\lam},{\bi}_{\lam})\bigr)$ as an element in $\bigl({\U}_{\Q(q)}\bigr)^{\ast}$ via $\iota_A$.
Recall that the $\U_{\Q(q)}(\mathfrak{g}\oplus\mathfrak{g})$-structure on $\bigl({\U}_{\Q(q)}\bigr)^{\ast}$ comes from its
bi-$\U_{\Q(q)}(\mathfrak{g})$ structure.

We first look at the left $\U_{\Q(q)}$-action on
$\bigl({\U}_{\Q(q)}\bigr)^{\ast}$. Recall that $\pi_C$ is a
bialgebra map. For each integer $j\geq 1$, by Lemma
\ref{comultirule} and \cite[(20)]{Oe2}, we know that
$$\begin{aligned}
&\quad\,\,\,\Delta^{(j)}\Bigl(\pi_C\bigl(T_q^{\lam}(\bi_{\lam},\bi_{\lam})\bigr)\Bigr)\\
&=\sum_{\bh^{(1)},\cdots,\bh^{(j)}\in
I_{\lam}^{<}}\pi_C\bigl(T_q^{\lam}(\bi_{\lam},\bh^{(1)})\bigr)\otimes\pi_C\bigl(T_q^{\lam}(\bh^{(1)},\bh^{(2)})\bigr)
\otimes\cdots\\
&\qquad\qquad\otimes\pi_C\bigl(T_q^{\lam}(h^{(j)},\bi_{\lam})\bigr).
\end{aligned} $$
In particular, we have that $$
\pi_C\bigl(T_q^{\lam}(\bi_{\lam},\bi_{\lam})\bigr)=\sum_{\bh\in I_{\lam}^{<}}
\varepsilon\Bigl(\pi_C\bigl(T_q^{\lam}(\bh,\bi_{\lam})\bigr)\Bigr)\pi_C\bigl(T_q^{\lam}(\bi_{\lam},\bh)\bigr).
$$
With these in mind and using Lemma \ref{Apairing}, for any $f\in{\U}_{\Q(q)}$ and any integer $a$ with $1\leq a\leq m$,
we get that $$
\begin{aligned}&\quad\,\,\langle
k_a\pi_C\bigl(T_q^{\lam}(\bi_{\lam},\bi_{\lam})\bigr),f\rangle_0\\
&=\langle
\pi_C\bigl(T_q^{\lam}(\bi_{\lam},\bi_{\lam})\bigr),fk_a\rangle_0\\
&=\sum_{\bh\in
I_{\lam}^{<}}\langle\Delta\Bigl(\pi_C\bigl(T_q^{\lam}(\bi_{\lam},\bi_{\lam})\bigr)\Bigr),f\otimes
k_a\rangle_0\\
&=\sum_{\bh\in
I_{\lam}^{<}}\langle\pi_C\bigl(T_q^{\lam}(\bi_{\lam},\bh)\bigr),f\rangle_0
\langle\pi_C\bigl(T_q^{\lam}(\bh,\bi_{\lam})\bigr),k_a\rangle_0\\
&=q^{\langle\lam,\alpha_a^{\vee}\rangle}\sum_{\bh\in
I_{\lam}^{<}}\langle\varepsilon\Bigl(\pi_C\bigl(T_q^{\lam}(\bh,\bi_{\lam})\bigr)\Bigr)
\pi_C\bigl(T_q^{\lam}(\bi_{\lam},\bh)\bigr),f\rangle_0\\
&=q^{\langle\lam,\alpha_a^{\vee}\rangle}\langle
\pi_C\bigl(T_q^{\lam}(\bi_{\lam},\bi_{\lam})\bigr),f\rangle_0,
\end{aligned}$$
which implies that
$k_a\pi_C\bigl(T_q^{\lam}(\bi_{\lam},\bi_{\lam})\bigr)=q^{\langle\lam,\alpha_a^{\vee}\rangle}
\pi_C\bigl(T_q^{\lam}(\bi_{\lam},\bi_{\lam})\bigr)$. In a similar way, one can prove
that if we regard $\bigl({\U}_{\Q(q)}\bigr)^{\ast}$ as a right
$\U_{\Q(q)}$-module, then
$\pi_C\bigl(T_q^{\lam}(\bi_{\lam},\bi_{\lam})\bigr)$ is a weight
vector of weight $\lam$.

It remains to show that $e_i\pi_C\bigl(T_q^{\lam}(\bi_{\lam},\bi_{\lam})\bigr)=0=\pi_C\bigl(T_q^{\lam}(\bi_{\lam},\bi_{\lam})\bigr)f_i$ for any $1\leq i\leq m$. It suffices to show that for any $f\in{\U}_{\Q(q)}$,

\begin{equation}
\label{4twozero}
\langle
e_i\pi_C\bigl(T_q^{\lam}(\bi_{\lam},\bi_{\lam})\bigr),f\rangle_0=\langle
\pi_C\bigl(T_q^{\lam}(\bi_{\lam},\bi_{\lam})\bigr)f_i,f\rangle_0.\end{equation}
By definition, $$\begin{aligned}
&\langle
e_i\pi_C\bigl(T_q^{\lam}(\bi_{\lam},\bi_{\lam})\bigr),f\rangle_0=
\langle
\pi_C\bigl(T_q^{\lam}(\bi_{\lam},\bi_{\lam})\bigr),fe_i\rangle_0,\\
&\langle
\pi_C\bigl(T_q^{\lam}(\bi_{\lam},\bi_{\lam})\bigr)f_i,f\rangle_0=
\langle
\pi_C\bigl(T_q^{\lam}(\bi_{\lam},\bi_{\lam})\bigr),f_if\rangle_0.
\end{aligned}
$$
By direct verification, one can show that for any $\bh\in I_{\lam}^{<}$, $$
\langle
\pi_C\bigl(T_q^{\lam}(\bh,\bi_{\lam})\bigr),e_i\rangle_0=0=\langle
\pi_C\bigl(T_q^{\lam}(\bi_{\lam},\bh)\bigr),f_i\rangle_0,
$$
from which the equality (\ref{4twozero}) follows immediately. This completes the proof of the corollary.
\end{proof}

By Corollary \ref{codecomp} and the discussion above it, we see that
every simple $\widetilde{A}_{\Q(q)}^{sy}(2m)$ comodule comes from
the restriction of a simple ${\U}_{\Q(q)}$-module. For each $\lam\in
X^{+}$, let $V(\lam)$ (resp., $V^{r}(\lam)$) denotes the left
(resp., the right) simple module with highest weight $\lam$.
By Corollary \ref{ModuleBase}, Lemma \ref{Base2} and Lemma
\ref{highestweight}, it is easy to see that $\nabla(\lam)_{\Q(q)}$ is
identified with $V(\lam)$ as left ${\U}_{\Q(q)}$-module, and
$\nabla^r(\lam)_{\Q(q)}$ is identified with
$V^{r}(\lam)$ as right ${\U}_{\Q(q)}$-module. By \cite[Proposition 7.2.2]{Ka2}, we have a
Peter--Weyl decomposition
\begin{equation}\label{PeterWey}
A_q(\mathfrak{g})\cong\bigoplus_{\lam\in X^{+}}V^{r}(\lam)\otimes
V(\lam),
\end{equation}
from which the following result follows easily.

\begin{lem} \label{Qembed} With the above notations, we have that
$$\iota_{A}\Bigl(\widetilde{A}^{sy}_{\Q(q)}(2m)\Bigr)=
A_q(\mathfrak{g}).$$
\end{lem}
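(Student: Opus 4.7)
The strategy is to establish both containments separately, using the structural decompositions available on each side. The forward containment $\iota_{A}\bigl(\widetilde{A}^{sy}_{\Q(q)}(2m)\bigr)\subseteq A_q(\mathfrak{g})$ reduces to checking that every image element satisfies the three defining conditions of $O_{int}(\mathfrak{g})$ (and its opposite). The reverse containment is obtained by comparing the Peter--Weyl decomposition of $A_q(\mathfrak{g})$ recalled in (\ref{PeterWey}) with the two-sided comodule decomposition of $\widetilde{A}_{\Q(q)}^{sy}(2m)$ given in Corollary \ref{codecomp}, after identifying the comodule structure with the bi-$\U_{\Q(q)}$-module structure via $\iota_U$ (Lemma \ref{cqpairing}).

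\noindent\textbf{Forward inclusion.} Take $x\in\widetilde{A}^{sy}_{\Q(q)}(2m)$. By Corollary \ref{codecomp} we may assume, after decomposing, that $x$ lies inside a single finite-dimensional summand $\nabla^r(\lam)_{\Q(q)}\otimes\nabla(\lam)_{\Q(q)}$. Since $\iota_A$ is a Hopf algebra injection, it intertwines the bi-comodule structure of $\widetilde{A}^{sy}_{\Q(q)}(2m)$ with the bi-$\U_{\Q(q)}$-module structure of $\U_{\Q(q)}^{\ast}$. In particular, both $\U_{\Q(q)}\cdot\iota_A(x)$ and $\iota_A(x)\cdot\U_{\Q(q)}$ are contained in the (finite-dimensional) images of this single summand. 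The weight decomposition required by (1) of $O_{int}(\mathfrak{g})$ is supplied by Lemma \ref{Apairing}, and conditions (2) and (3) follow automatically from finite-dimensionality of a weight module. The analogous statement on the right-hand side establishes membership in $O_{int}(\mathfrak{g}^{opp})$. Thus $\iota_A(x)\in A_q(\mathfrak{g})$.

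\noindent\textbf{Reverse inclusion.} Combining Corollary \ref{codecomp} with the identifications $\nabla(\lam)_{\Q(q)}\cong V(\lam)$ and $\nabla^r(\lam)_{\Q(q)}\cong V^r(\lam)$ (verified just above the lemma via Corollary \ref{ModuleBase}, Lemma \ref{Base2} and Corollary \ref{highestweight}), we obtain an isomorphism of bi-$\U_{\Q(q)}$-modules
\[
\iota_A\bigl(\widetilde{A}_{\Q(q)}^{sy}(2m)\bigr)\,\cong\,\bigoplus_{\lam\in X^{+}}V^r(\lam)\otimes V(\lam),
\]
where we use that for $\mathfrak{sp}_{2m}$ the set $X^{+}$ of dominant weights coincides with $\bigcup_{k\ge 0}\Lambda^{+}(m,k)$. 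On the other hand, by Kashiwara's Peter--Weyl decomposition (\ref{PeterWey}),
\[
A_q(\mathfrak{g})\,\cong\,\bigoplus_{\lam\in X^{+}}V^r(\lam)\otimes V(\lam)
\]
as bi-$\U_{\Q(q)}$-modules. Since each bimodule $V^r(\lam)\otimes V(\lam)$ is irreducible under $\U_{\Q(q)}\otimes\U_{\Q(q)}^{opp}$, the image $\iota_A\bigl(\widetilde{A}_{\Q(q)}^{sy}(2m)\bigr)$, being a sub-bi-$\U_{\Q(q)}$-module of $A_q(\mathfrak{g})$ whose isotypic decomposition contains every $V^r(\lam)\otimes V(\lam)$ with $\lam\in X^{+}$, must coincide with $A_q(\mathfrak{g})$ summand-by-summand.

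\noindent\textbf{Main obstacle.} The only delicate point is keeping track that the bi-comodule structure used in Corollary \ref{codecomp} corresponds, under $\iota_A$, to the bi-$\U_{\Q(q)}$-module structure on $\U_{\Q(q)}^{\ast}$ given by the left/right regular actions; this is precisely the compatibility built into the Hopf pairing $\langle\,,\,\rangle_0$ of Lemma \ref{cqpairing}, and the identification of highest-weight vectors carried out in Corollary \ref{highestweight}. Once this compatibility is in hand, the proof is essentially a comparison of two Peter--Weyl decompositions indexed by the same set $X^{+}$.
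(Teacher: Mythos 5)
Your proposal is correct and takes essentially the same route as the paper: identify the cell comodules $\nabla^r(\lam)_{\Q(q)}\otimes\nabla(\lam)_{\Q(q)}$ with $V^r(\lam)\otimes V(\lam)$ via Corollaries \ref{codecomp}, \ref{ModuleBase}, Lemma \ref{Base2} and Corollary \ref{highestweight}, then compare with Kashiwara's Peter--Weyl decomposition (\ref{PeterWey}). The paper states that the result ``follows easily'' from these ingredients; your write-up merely spells out the two containments (in particular the forward inclusion, via Lemma \ref{Apairing} and finite-dimensionality, which the paper leaves tacit), but the idea and the lemmas invoked coincide with the paper's.
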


For later use, we denote by $\Phi_{\lam}$ the canonical embedding induced from the isomorphism (\ref{PeterWey})
from $V^{r}(\lam)\otimes V(\lam)$ into $A_q(\mathfrak{g})$ for each
$\lam\in X^{+}$.

\smallskip

In \cite{Ka2}, Kashiwara introduced a crystal basis
$B(A_q(\mathfrak{g}))$ of $A_q(\mathfrak{g})$. He proved that
$B(A_q(\mathfrak{g}))$ has the crystal structure $$
B(A_q(\mathfrak{g}))=\bigsqcup_{\lam\in X^{+}}\widetilde{B}(\lam),
$$
where $\widetilde{B}(\lam):=B^{r}(\lam)\otimes B(\lam)$, and
$B^{r}(\lam)$ (resp., $B(\lam)$) denotes the crystal basis of
$V^{r}(\lam)$ (resp., of $V(\lam)$). For each
$b\in\widetilde{B}(\lam)$, let $\widetilde{G}(b)$ be
the corresponding upper global crystal base of $A_q(\mathfrak{g})$. Recall that
$\mathfrak{g}=\mathfrak{sp}_{2m}$ in this paper. By the results in
\cite{Lu4}, $\dot{\U}_{\Q(q)}$ has also a crystal base
$\sqcup_{\lam\in X^{+}}\widetilde{B}(\lam)$ as well as a canonical
base (or lower global crystal base)
$\{\widehat{G}(b)|b\in\widetilde{B}(\lam),\lam\in X^{+}\}$. This
canonical base is an $\A$-basis of $\dot{\U}_{\A}$. There exists a
canonical coupling $\langle,\rangle_1$ between $A_q(\mathfrak{g})$
and $\dot{\U}_{\Q(q)}$ (cf. \cite[(10.1.1)]{Ka3}) defined by $$
\langle\Phi_{\lam}(u\otimes
v),P\rangle_1:=(u,Pv),\,\,\forall\,\lam\in X^{+}, u\in V^{r}(\lam),
v\in V(\lam), P\in\dot{\U}_{\Q(q)},
$$
where $(,)$ is the pairing between $V^{r}(\lam)$ and $V(\lam)$
introduced in \cite[(7.1.2)]{Ka2}. By the results in \cite{Ka3}, for
each $\lam\in X^{+}$, there is a bijection $\widetilde{\Psi}:
\widetilde{B}(\lam)\rightarrow\widetilde{B}(\lam)$, such that

\begin{equation}\label{dualbase}
\langle\widetilde{G}(b'),\widehat{G}(b)\rangle_1=\delta_{b,\widetilde{\Psi}(b')},\,\,\,\forall\,b,b'\in\widetilde{B}(\lam).
\end{equation}
In \cite{Ka2}, Kashiwara proved that the upper global crystal bases
$\{\widetilde{G}(b)|b\in\widetilde{B}(\lam),\lam\in X^{+}\}$ form a
$\Q[q,q^{-1}]$ basis of $A_q^{\Q}(\mathfrak{g})$, and he remarked
that it is actually an $\A$-basis of $A_q^{\Z}(\mathfrak{g})$. For the reader's convenience, we include a
proof here.

\begin{lem} With the above notations, we have that the elements in the following set $$
\Bigl\{\widetilde{G}(b)\Bigm|b\in\widetilde{B}(\lam),\lam\in
X^{+}\Bigr\}
$$
form an $\A$-basis of $A_q^{\Z}(\mathfrak{g})$.
\end{lem}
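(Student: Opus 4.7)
The plan is to exploit the duality \eqref{dualbase} between the upper global crystal bases $\{\widetilde{G}(b)\}$ of $A_q(\mathfrak{g})$ and the canonical basis $\{\widehat{G}(b)\}$ of $\dot{\U}_{\A}$. Linear independence of $\{\widetilde{G}(b)\}$ over $\A$ is inherited from Kashiwara's theorem for $A_q^{\Q}(\mathfrak{g})$, so only two statements remain to be checked: that each $\widetilde{G}(b)$ actually lies in $A_q^{\Z}(\mathfrak{g})$, and that any $u\in A_q^{\Z}(\mathfrak{g})$ admits an expansion in $\{\widetilde{G}(b)\}$ with coefficients in $\A$.

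Both statements will be deduced at once from the characterization
\begin{equation*}
A_q^{\Z}(\mathfrak{g})=\bigl\{u\in A_q(\mathfrak{g})\bigm|\langle u,x\rangle_1\in\A\text{ for all }x\in\dot{\U}_{\A}\bigr\}.
\end{equation*}
Granting this, the first statement follows by expanding $x=\sum_b a_b\widehat{G}(b)\in\dot{\U}_{\A}$ with $a_b\in\A$ and applying \eqref{dualbase} to obtain $\langle\widetilde{G}(b'),x\rangle_1=a_{\widetilde{\Psi}(b')}\in\A$. For the second, given $u\in A_q^{\Z}(\mathfrak{g})$, Kashiwara's theorem provides $u=\sum_b c_b\widetilde{G}(b)$ with $c_b\in\Q[q,q^{-1}]$; pairing against $\widehat{G}(\widetilde{\Psi}(b))$ via $\langle,\rangle_1$ identifies $c_b=\langle u,\widehat{G}(\widetilde{\Psi}(b))\rangle_1$, which lies in $\A$ by the characterization.

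It therefore suffices to establish the displayed characterization. The starting point is the elementary compatibility of the two pairings,
$$\langle u,P\rangle_0=\sum_{\mu\in\Pi(u)}\langle u,P\cdot 1_\mu\rangle_1,\qquad P\in\U_{\Q(q)},$$
where $\Pi(u)$ denotes the finite right-weight support of $u\in A_q(\mathfrak{g})$; this is immediate from the weight decomposition of $u$ as an element of $O_{int}(\mathfrak{g}^{\mathrm{opp}})$. Summing the right-hand side over $\Pi(u)$ yields the $\supseteq$ inclusion at once.

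The reverse inclusion is the technical core and I expect the main obstacle. Given $u\in A_q^{\Z}(\mathfrak{g})$, $P\in\U_{\A}$ and $\mu\in X$, one must show $\langle u,P\cdot 1_\mu\rangle_1\in\A$. When $\mu\notin\Pi(u)$ both sides vanish; when $\mu\in\Pi(u)$, I would reuse the weight-projector trick from the proof of Lemma \ref{keylem}. Since $\Pi(u)$ is finite, Lusztig's integral quantum binomial coefficients can be combined into an element $p'_\mu\in\U_{\A}$ acting as $1_\mu$ on every weight in $\Pi(u)$. The compatibility identity above then yields $\langle u,P\cdot 1_\mu\rangle_1=\langle u,Pp'_\mu\rangle_0\in\A$ since $u\in A_q^{\Z}(\mathfrak{g})$ and $Pp'_\mu\in\U_{\A}$. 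The delicate point is that $p'_\mu$ must be adapted to each individual $u$ by choosing the binomial parameters large enough to separate the finitely many weights in $\Pi(u)$, exactly as in the proof of Lemma \ref{keylem}.
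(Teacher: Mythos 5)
Your proposal is correct and follows essentially the same strategy as the paper: it uses the duality \eqref{dualbase} between $\{\widetilde{G}(b)\}$ and Lusztig's canonical basis of $\dot{\U}_{\A}$, together with the integral weight projectors $p'_\mu\in\U_{\A}$ from the proof of Lemma \ref{keylem}, to compare the pairings $\langle\,,\,\rangle_0$ and $\langle\,,\,\rangle_1$. The only cosmetic difference is that you package both halves of the paper's argument into a single characterization $A_q^{\Z}(\mathfrak{g})=\{u\in A_q(\mathfrak{g})\mid\langle u,\dot{\U}_{\A}\rangle_1\subseteq\A\}$, and you avoid the paper's explicit parity bookkeeping (choosing $n$ with $|\mu|\equiv n\pmod 2$ and splitting the summands between $V^{\otimes n}$ and $V^{\otimes n-1}$) by phrasing the condition directly as ``$p'_\mu$ acts as $\delta_{\mu\nu}$ on all weights $\nu$ occurring in the $V(\lam^{(i)})$''; this is mathematically equivalent once one notes that $p'_\mu$ acts as the $\mu$-weight indicator on any weight $\nu$ with $|\hat\nu_i-\hat\mu_i|$ in the range controlled by the binomial parameters $2n$, $4n$, so the parity trick is only a convenience.
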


\begin{proof} First we show that for each $\lam\in X^{+}$, and each
$b\in\widetilde{B}(\lam)$, $$\widetilde{G}(b)\in
A_q^{\Z}(\mathfrak{g}).$$ Let $P\in\U_{\A}$. By definition, for any
$\mu\in X, \lam\in X^{+}, u\in V^{r}(\lam), v\in V(\lam)$, $$
\langle \Phi_{\lam}(u\otimes v),P1_{\mu}\rangle_1=(u,P1_{\mu}v).
$$
It follows that for any given $\lam\in X^{+}, u\in V^{r}(\lam), v\in
V(\lam)$, there are only finitely many $\mu\in X$ such that $$
\langle \Phi_{\lam}(u\otimes v),P1_{\mu}\rangle_1\neq 0.
$$
Therefore, applying \cite[Proposition 7.2.2]{Ka2} and
(\ref{dualbase}), we get that
$$ \langle \widetilde{G}(b),P\rangle=\sum_{\mu\in X}\langle
\widetilde{G}(b),P1_{\mu}\rangle_1\in\A,
$$
which implies that $\widetilde{G}(b)\in A_q^{\Z}(\mathfrak{g})$.

Second, we want to show that $$
A_q^{\Z}(\mathfrak{g})\subseteq\Bigl\{x\in
A_q(\mathfrak{g})\Bigm|\langle x,\dot{\U}_{\A}\rangle_1\in\A\Bigr\},
$$
from which and together with (\ref{dualbase}) the lemma would
follow immediately.

Let $f=\sum_{i=1}^{t}x_i\otimes y_i\in A_q^{\Z}(\mathfrak{g})$, where for each $1\leq i\leq t$,
$x_i\in V^{r}(\lam^{(i)})$, $y_i\in V(\lam^{(i)})$, and $\lam^{(i)}\in X^{+}$.
Let $P\in\U_{\A}$, $\mu\in X$. Our purpose is to show that $\langle
f,P1_{\mu}\rangle_1\in\A$. Let $n$ be an integer
which is large enough such that
\begin{enumerate}
\item $|\mu|\leq n$, $|\mu|\!\!\equiv n\pmod{2\Z}$, and
\item for each $1\leq i\leq t$, either
$V(\lam^{(i)})\subseteq V^{\otimes n-1}$ or $V(\lam^{(i)})\subseteq
V^{\otimes n}$.
\end{enumerate}
We write $\{1,2,\cdots,t\}=I_1\sqcup I_2$, where $$
I_1:=\bigl\{1\leq i\leq t\bigm|V(\lam^{(i)})\subseteq V^{\otimes
n}\bigr\},\,\,\,I_2:=\bigl\{1\leq i\leq
t\bigm|V(\lam^{(i)})\subseteq V^{\otimes n-1}\bigr\}.
$$
Let $p'_{\mu}\in\U_{\A}$ be as defined in the proof of Lemma
\ref{keylem} with respect to our fixed $n$ and $\mu$. By the definition of
$A_q^{\Z}(\mathfrak{g})$, we have that $\langle f, Pp'_{\mu}\rangle\in\A$.
Note that $p'_{\mu}V^{\otimes n-1}=0=1_{\mu}V^{\otimes n-1}$. It follows that $$\begin{aligned}
\langle f,P1_{\mu}\rangle_1
&=\sum_{i\in I_1}(x_i, P1_{\mu}y_i)
=\sum_{i\in I_1}(x_i, Pp'_{\mu}y_i)=\sum_{i\in
I_1\sqcup I_2}(x_i, Pp'_{\mu}y_i)\\
&=\langle f, Pp'_{\mu}\rangle\in\A, \end{aligned}$$
as required. This completes the proof of the lemma.
\end{proof}

Let $\lam\in X^{+}$. Let $\Delta_{\A}(\lam)$ denote the standard
$\A$-form of $V(\lam)$, i.e., the $\U_{\A}(\mathfrak{g})$-submodule
generated by the highest weight vector $v_{\lam}$. Then
$\Delta_{\A}(\lam)$ is spanned by  Lusztig's {\it canonical basis}
as in \cite[\S14.4]{Lu3}. Note that the upper global crystal basis
$\bigl\{G(b)\bigm|b\in B(\lam)\bigr\}$ is Lusztig's dual canonical
basis. The dual basis to the upper global crystal basis under the
canonical contravariant form $(.,.)$ on $V(\lam)$ is the {\it lower
global crystal basis}, i.e., Lusztig's canonical basis, cf.
\cite{GLu} and \cite[\S3.3, 4.2.1]{Ka2}. Let $$
V_{\A}(\lam):=\bigl\{v\in V(\lam)\bigm|\text{$(v,w)\in\A$ for all
$w\in\Delta_{\A}(\lam)$}\bigr\}.
$$
Then $V_{\A}(\lam)\cong\Hom_{\A}(V_{\A},\A))$ is  $\U_{\A}(\mathfrak{g})$-stable and spanned by the upper global
crystal basis of $V(\lam)$.
Similar statement is true for $V^r_{\A}(\lam), V^r(\lam)$. For any field $K$ which is an $\A$-algebra, we define
$$\begin{aligned}V^r_{K}(\lam)\otimes V_K(\lam)&:=(V^r_{\A}(\lam)\otimes V_{\A}(\lam))\otimes_{\A}K,\\
\Delta^r_{K}(\lam)\otimes \Delta_K(\lam)&:=(\Delta^r_{\A}(\lam)\otimes\Delta_{\A}(\lam))\otimes_{\A}K.
\end{aligned}
$$
Since $\Delta^r_{K}(\lam)\otimes \Delta_K(\lam)$ is a highest weight module generated by its highest weight vector
(cf. \cite[Theorem 14.4.11]{Lu3}) and has the same dimension as $V^r(\lam)\otimes V(\lam)$, it follows that
$\Delta^r_{K}(\lam)\otimes \Delta_K(\lam)$ is isomorphic to the Weyl module of $\U_{K}(\mathfrak{g}\oplus\mathfrak{g})$
associated to $(\lam,\lam)$. Note that $V^r_{K}(\lam)\otimes V_K(\lam)\cong\bigl(\Delta^r_{K}(\lam)\otimes
\Delta_K(\lam)\bigr)^{\ast}$. Therefore, we have

\begin{lem} \label{coWeyl1} Let $\lam\in X^{+}$. For each field $K$ which is an $\A$-algebra, $V^{r}_{K}(\lam)\otimes V_{K}(\lam)$ is isomorphic to the co-Weyl module of $\U_{K}(\mathfrak{g}\oplus\mathfrak{g})$ associated to $(\lam,\lam)$.
\end{lem}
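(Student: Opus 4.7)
The plan is to exploit the duality between $\Delta_K(\lam)$ and $V_K(\lam)$ that has essentially been set up in the paragraph preceding the lemma, and to propagate it to the tensor product via the identification $\U_K(\mathfrak{g}\oplus\mathfrak{g}) \cong \U_K(\mathfrak{g})\otimes \U_K(\mathfrak{g})$. First I would recall, as is already indicated, that $\Delta_{\A}(\lam)$ carries Lusztig's canonical basis and that $V_{\A}(\lam)$, defined as the $\A$-dual with respect to the canonical contravariant form, is spanned by the upper global crystal basis; the contravariant form induces a perfect pairing $V_{\A}(\lam)\times \Delta_{\A}(\lam)\to \A$ which identifies $V_{\A}(\lam)$ with $\Hom_{\A}(\Delta_{\A}(\lam),\A)$ as $\U_{\A}(\mathfrak{g})$-modules (where the action on the dual is the standard one via the antipode). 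The same holds for the right module $V^r_{\A}(\lam)$ versus $\Delta^r_{\A}(\lam)$.

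Next I would form the tensor product. By standard Hopf algebra yoga, taking duals commutes with tensor products for finitely generated free modules, so after base change to the field $K$ we have
\[
V^r_K(\lam)\otimes V_K(\lam) \;\cong\; \bigl(\Delta^r_K(\lam)\otimes \Delta_K(\lam)\bigr)^{\ast}
\]
as $\U_K(\mathfrak{g})\otimes \U_K(\mathfrak{g}) = \U_K(\mathfrak{g}\oplus\mathfrak{g})$-modules. On the other hand, since $\Delta^r_\A(\lam)$ (resp.\ $\Delta_\A(\lam)$) is the standard $\A$-form of the Weyl module for the right (resp.\ left) copy of $\U_\A(\mathfrak{g})$, generated over $\A$ by its highest (lowest) weight vector, the tensor product $\Delta^r_\A(\lam)\otimes \Delta_\A(\lam)$ is a highest weight module over $\U_\A(\mathfrak{g}\oplus\mathfrak{g})$ generated by $v^r_\lam\otimes v_\lam$ of weight $(\lam,\lam)$. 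By \cite[Theorem 14.4.11]{Lu3} (invoked in the lead-in), after base change to $K$ it remains a cyclic highest weight module of highest weight $(\lam,\lam)$.

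Now the dimension count: the Weyl module $W_K(\lam,\lam)$ for $\U_K(\mathfrak{g}\oplus\mathfrak{g})$ of highest weight $(\lam,\lam)$ has dimension $\dim V(\lam)^2$ (same as in characteristic zero, by Lusztig's theory of canonical bases), which agrees with $\dim_K \Delta^r_K(\lam)\otimes \Delta_K(\lam)$. Since $\Delta^r_K(\lam)\otimes \Delta_K(\lam)$ is a cyclic highest weight module of highest weight $(\lam,\lam)$ and of the same dimension as the Weyl module, the canonical surjection $W_K(\lam,\lam)\twoheadrightarrow \Delta^r_K(\lam)\otimes \Delta_K(\lam)$ must be an isomorphism. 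Dualizing this isomorphism yields
\[
V^r_K(\lam)\otimes V_K(\lam) \;\cong\; W_K(\lam,\lam)^{\ast},
\]
and the right-hand side is by definition the co-Weyl module associated to $(\lam,\lam)$, which proves the lemma.

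The only subtle point I expect is the justification of the universality of $\Delta^r_K(\lam)\otimes \Delta_K(\lam)$ as a highest weight module for the diagonal-type Hopf algebra $\U_K(\mathfrak{g}\oplus\mathfrak{g})$; this is where one genuinely needs the integrality statement from \cite[Theorem 14.4.11]{Lu3} (i.e.\ that the canonical basis descends well under base change and that the highest weight vector generates the $\A$-form), rather than just general nonsense. Everything else---compatibility of duality with tensor products, the dimension count, and the cyclic-plus-equal-dimension argument---is routine once this integral generation is in hand.
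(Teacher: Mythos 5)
Your proof is correct and follows essentially the same path as the paper: identify $V^r_K(\lam)\otimes V_K(\lam)$ with the dual of $\Delta^r_K(\lam)\otimes\Delta_K(\lam)$, show the latter is a cyclic highest weight module of weight $(\lam,\lam)$ using \cite[Theorem 14.4.11]{Lu3}, match dimensions against the Weyl module, conclude it is the Weyl module, and dualize. The paper compresses this into the paragraph immediately preceding the lemma statement; your write-up spells out the same steps in slightly more detail.
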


We recall the Bruhat order ``$<$" on the set $X^{+}$ of dominant weights.
Namely, $\lam<\mu$ if and only if $\mu-\lam\in\sum_{i=1}^{m}\Z^{\geq
0}\alpha_i$.  Note that $\lam<\mu$ implies that $\lam\lhd\mu$, where $``\lhd"$ is the usual dominance order defined on the set of partitions (cf. \cite{DJ1}). In particular, $|\lam|\leq|\mu|$. If $|\lam|<|\mu|$, then $\lam<\mu$ implies that
$|\mu|-|\lam|\in 2\mathbb{N}$ and hence $\lam\succ\mu$; if $|\lam|=|\mu|$, then $\lam\lhd\mu$ implies that
$\lam^{t}\rhd\mu^{t}$, which also implies that
$\lam^t$ is bigger than $\mu^t$ under the lexicographical order, hence we still have $\lam\succ\mu$. For each $\lam\in X^{+}$, we define
$$\begin{aligned}
A_q^{\Z}(\mathfrak{g})^{\leq\lam}&:=\text{$\A$-Span}\Bigl\{\widetilde{G}(b)\Bigm|\lam\geq\mu\in
X^{+},b\in\widetilde{B}(\mu)\Bigr\},\\
A_q^{\Z}(\mathfrak{g})^{<\lam}&:=\text{$\A$-Span}\Bigl\{\widetilde{G}(b)\Bigm|\lam>\mu\in
X^{+},b\in\widetilde{B}(\mu)\Bigr\},
\end{aligned}
$$
and
$A_q(\mathfrak{g})^{\leq\lam}:=A_q^{\Z}(\mathfrak{g})^{\leq\lam}\otimes_{\A}\Q(q)$,
$A_q(\mathfrak{g})^{<\lam}:=A_q^{\Z}(\mathfrak{g})^{<\lam}\otimes_{\A}\Q(q)$. For each
$b\in\widetilde{B}(\lam)$, let $G(b)$ be
the corresponding upper global crystal base of $V^{r}(\lam)\otimes
V(\lam)$. By construction, we know that $V^{r}(\lam)\otimes
V(\lam)$ is spanned by $\bigl\{G(b)\bigm|b\in\widetilde{B}(\lam)\bigr\}$.
The following result is implicit in the proof of \cite[Section
5,6]{Ka2}. The author is indebted to Professor Kashiwara for
pointing out this to him.

\begin{lem} \label{basemodule1} {\rm (\cite{Ka2})} With the decomposition (\ref{PeterWey}) in mind, we have
that \begin{enumerate}
\item for each $\lam\in X^{+}, b\in\widetilde{B}(\lam)$, $$
\widetilde{G}(b)\in G(b)+\sum_{\substack{b'\in\widetilde{B}(\mu)\\
\lam>\mu\in X^{+}}}\Q(q)G(b');
$$
\item for each $\lam\in X^{+}$, $$ A_q(\mathfrak{g})^{\leq\lam}\cong\oplus_{\lam\geq\mu\in X^{+}}V^{r}(\mu)\otimes
V(\mu),\quad A_q(\mathfrak{g})^{<\lam}\cong\oplus_{\lam>\mu\in
X^{+}}V^{r}(\mu)\otimes V(\mu). $$ In particular,
$A_q(\mathfrak{g})^{\leq\lam}/A_q(\mathfrak{g})^{<\lam}\cong
V^{r}(\lam)\otimes V(\lam)$;
\item both $A_q^{\Z}(\mathfrak{g})^{\leq\lam}$ and $A_q^{\Z}(\mathfrak{g})^{<\lam}$ are
${\U}_{\A}(\mathfrak{g}\oplus\mathfrak{g})$-stable.\end{enumerate}
\end{lem}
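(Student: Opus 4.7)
\medskip
\noindent\textbf{Proof proposal.} The plan is to extract (2) and (3) formally from (1), so the heart of the matter is the unitriangularity statement in (1). For part (1), I would exploit Kashiwara's defining characterization of the upper global crystal basis $\widetilde{G}(b)$ as the unique bar-invariant element of the crystal lattice $L(A_q(\mathfrak{g}))$ whose image modulo $qL(A_q(\mathfrak{g}))$ equals $b\in\widetilde{B}(\lam)$. Using the Peter--Weyl decomposition (\ref{PeterWey}), write $\pr_{\mu}^{A}\colon A_q(\mathfrak{g})\twoheadrightarrow \Phi_{\mu}\bigl(V^{r}(\mu)\otimes V(\mu)\bigr)$ for the projection onto the $\mu$-isotypic component. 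Kashiwara's polarization and crystal lattice on $A_q(\mathfrak{g})$ are compatible (after embedding via $\Phi_{\mu}$) with the crystal lattices of each $V^{r}(\mu)\otimes V(\mu)$, so $\pr_{\lam}^{A}\bigl(\widetilde{G}(b)\bigr)$ is again bar-invariant and congruent to $b$ modulo the crystal lattice of $V^{r}(\lam)\otimes V(\lam)$. By the uniqueness characterization of the upper global crystal basis on this single isotypic component, this projection must equal $\Phi_{\lam}\bigl(G(b)\bigr)$. Hence
\[
\widetilde{G}(b)=\Phi_{\lam}\bigl(G(b)\bigr)+\sum_{\mu\neq\lam}\pr_{\mu}^{A}\bigl(\widetilde{G}(b)\bigr),
\]
and the remaining non-zero projections $\pr_{\mu}^{A}\bigl(\widetilde{G}(b)\bigr)$ each expand uniquely in the upper global crystal basis $\{G(b')\mid b'\in\widetilde{B}(\mu)\}$ of the corresponding isotypic component. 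The restriction $\mu<\lam$ in the Bruhat order is forced by a weight count: since $\widetilde{G}(b)$ lies in a weight space of bi-weight bounded above (componentwise) by $(\lam,\lam)$ and $V^{r}(\mu)\otimes V(\mu)$ carries bi-weights at most $(\mu,\mu)$, a non-zero projection requires $\lam-\mu\in\sum_{i}\Z^{\geq 0}\alpha_i$; the strictness $\mu\neq\lam$ has already been accounted for.

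For part (2), the statement is then a formal consequence of (1): the square matrix expressing $\{\widetilde{G}(b)\mid b\in\sqcup_{\mu\leq\lam}\widetilde{B}(\mu)\}$ in terms of $\{G(b)\}$ is unitriangular with respect to the partial order ``$\leq$'' on dominant weights. It follows that the $\Q(q)$-span of the former equals the $\Q(q)$-span of the latter, which by (\ref{PeterWey}) is $\bigoplus_{\mu\leq\lam}\Phi_{\mu}\bigl(V^{r}(\mu)\otimes V(\mu)\bigr)$. The strict version is proved identically.

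For part (3), note first that $A_q^{\Z}(\mathfrak{g})$ is globally $\U_{\A}(\mathfrak{g}\oplus\mathfrak{g})$-stable: if $P\in\U_{\A}$, $u\in A_q^{\Z}(\mathfrak{g})$ and $Q\in\U_{\A}$, then $\langle Pu,Q\rangle=\langle u,QP\rangle\in\A$, since $QP\in\U_{\A}$, and similarly for the right action. Combined with (2), which shows that $A_q(\mathfrak{g})^{\leq\lam}$ is $\U_{\Q(q)}(\mathfrak{g}\oplus\mathfrak{g})$-stable, it suffices to check
\[
A_q^{\Z}(\mathfrak{g})^{\leq\lam}=A_q^{\Z}(\mathfrak{g})\cap A_q(\mathfrak{g})^{\leq\lam}.
\]
The inclusion ``$\subseteq$'' is immediate. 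For ``$\supseteq$'', expand any $x\in A_q^{\Z}(\mathfrak{g})\cap A_q(\mathfrak{g})^{\leq\lam}$ in the $\A$-basis $\{\widetilde{G}(b)\}$ of $A_q^{\Z}(\mathfrak{g})$; by (1) each $\widetilde{G}(b)$ with $b\in\widetilde{B}(\mu)$ has non-zero projection to the $\mu$-isotypic component, so any coefficient with $\mu\not\leq\lam$ must vanish, as no cancellation among basis vectors is possible. The same argument applies to the strict version.

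The main obstacle is the weight/triangularity argument in (1): making rigorous the compatibility between Kashiwara's polarization-based construction of $L(A_q(\mathfrak{g}))$ and the crystal lattices of the individual factors $V^{r}(\mu)\otimes V(\mu)$, together with pinning down that the non-$\lam$ projections land precisely in $\sum_{\mu<\lam}$. All of this is implicit in Kashiwara's treatment in \cite{Ka2}, and our task is really to unwind those constructions in the form stated.
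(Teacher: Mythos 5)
The paper does not prove this lemma; it attributes it to Kashiwara \cite{Ka2} and states that the result is ``implicit in the proof of [Section 5, 6]'' there, with acknowledgement that Kashiwara himself pointed this out.  So your proposal is not competing against a written argument in the paper, but against Kashiwara's original reasoning.

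That said, your argument for part (1) has two related gaps, both located exactly where you flag ``the main obstacle.''  First, the claim that $\pr_{\lam}^{A}\bigl(\widetilde{G}(b)\bigr)$ is bar-invariant presupposes that the bar involution on $A_q(\mathfrak{g})$ is block-diagonal with respect to the Peter--Weyl decomposition (\ref{PeterWey}).  If that were true, every $\widetilde{G}(b)$ would equal $G(b)$ outright and the whole lemma would be vacuous; the lower-order terms arise precisely because the involution mixes isotypic components.  The involution is at most compatible with a suitable \emph{filtration}, not with the direct-sum decomposition, so the projected element is not a priori bar-invariant and the uniqueness characterization does not apply component by component.

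Second, the weight count does not force $\lam-\mu\in\sum_i\Z^{\geq 0}\alpha_i$.  If $b$ has bi-weight $(\nu_1,\nu_2)$ then the constraint from the $\mu$-isotypic component being non-zero at $(\nu_1,\nu_2)$ is only that both $\nu_1$ and $\nu_2$ are weights of $V(\mu)$, i.e.\ $\nu_i\leq\mu$; together with $\nu_i\leq\lam$ this says $\min$ considerations apply, not $\mu\leq\lam$.  Already in rank one with $\lam=2$ and $\nu_1=\nu_2=0$, every $\mu\in 2\Z^{\geq 0}$ has a non-vanishing $(0,0)$-weight space, including $\mu>\lam$.  What you actually need is that the bar involution of $A_q(\mathfrak{g})$ preserves the chain of subspaces $A_q(\mathfrak{g})^{\leq\lam}$ -- which is precisely the substantive content in Kashiwara's Sections 5--6 that the paper defers to, and which your argument assumes rather than proves.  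Parts (2) and (3) do follow formally once (1) is secured (your reduction of (3) to the integrality of the pairing and the identity $A_q^{\Z}(\mathfrak{g})^{\leq\lam}=A_q^{\Z}(\mathfrak{g})\cap A_q(\mathfrak{g})^{\leq\lam}$ is sound), but they inherit the gap in (1).
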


For each integer $k\geq 0$, we define $$
A_q^{\Z}(\mathfrak{g})^{\leq
k}:=\text{$\A$-Span}\Bigl\{\widetilde{G}(b)\Bigm|b\in\widetilde{B}(\mu),
\mu\in \Lambda^{+}(m,\leq\! k)\Bigr\}.
$$
Then it is clear that $$ A_q^{\Z}(\mathfrak{g})^{\leq
k}=\sum_{\lam\in\Lambda^{+}(m,\leq
k)}A_q^{\Z}(\mathfrak{g})^{\leq\lam}.
$$
In particular, $A_q^{\Z}(\mathfrak{g})^{\leq k}$ is
${\U}_{\A}(\mathfrak{g}\oplus\mathfrak{g})$-stable. Recall that for
any $\lam\in X^{+}$, $w_0\lam=(-\lam_1,\cdots,-\lam_m)$. Recall also our definitions of
$\nabla^r_K(\lam)$ and $\nabla_K(\lam)$ in Section 3. When
regarded as a left $\U_K(\mathfrak{g}\oplus\mathfrak{g})$-module,
$\nabla^r_K(\lam)\otimes\nabla_K(\lam)$ is isomorphic to
$\nabla_K(\lam)\otimes\nabla_K(\lam)$.

\begin{lem} \label{coWeyl2} Let $K$ be a field which is an $\A$-algebra and $n\geq 0$ be an integer. Let $\lam\in\Lambda^{+}(m,n)$.
If $N$ is a nonzero $\U_K(\mathfrak{g}\oplus\mathfrak{g})$-submodule
of $\nabla^r_K(\lam)\otimes\nabla_K(\lam)$, then $
\pr_{\lam}\Bigl(\pi_{C}\bigl(T_q^{\lam}(\bi_{\lam},\bi_{\lam})\bigr)\Bigr)\otimes_{\A}\!1_{K}\in
N$. In particular, $\nabla^r_K(\lam)\otimes\nabla_K(\lam)$ is
isomorphic to the co-Weyl module of\,
$\U_K(\mathfrak{g}\oplus\mathfrak{g})$ associated to $(\lam,\lam)$.
\end{lem}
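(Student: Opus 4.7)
The plan is to reduce both assertions of the lemma to an identification, as $\U_\A(\mathfrak{g}\oplus\mathfrak{g})$-modules, of the Oehms cellular $\A$-form $\nabla^r_\A(\lam) \otimes \nabla_\A(\lam)$ with the Kashiwara--Lusztig $\A$-form $V^r_\A(\lam) \otimes V_\A(\lam)$ sitting inside the simple $\U_{\Q(q)}(\mathfrak{g}\oplus\mathfrak{g})$-module $V^r(\lam) \otimes V(\lam)$. First, observe that by Lemma \ref{Base2} the vector $v_0 := \pr_\lam(\pi_C(T_q^\lam(\bi_\lam, \bi_\lam)))$ is nonzero in $\nabla^r_\A(\lam) \otimes \nabla_\A(\lam)$, and by Corollary \ref{highestweight} (combined with the anti-automorphism $\phi$ that converts the right $\U_{\Q(q)}(\mathfrak{g})$-action into a left action of the second factor) it is a $\U_{\Q(q)}(\mathfrak{g}\oplus\mathfrak{g})$-highest weight vector of weight $(\lam, \lam)$. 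These properties persist after the base change to $K$, and by Corollary \ref{ModuleBase} the $(\lam, \lam)$-weight space of $\nabla^r_K(\lam) \otimes \nabla_K(\lam)$ is one-dimensional, spanned by $v_0 \otimes 1_K$.

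Over $\Q(q)$, Corollary \ref{codecomp} together with Lemma \ref{Qembed} embeds $\nabla^r_{\Q(q)}(\lam) \otimes \nabla_{\Q(q)}(\lam)$ (simple as a $\U_{\Q(q)}(\mathfrak{g}\oplus\mathfrak{g})$-module) isomorphically onto the $(\lam,\lam)$-isotypic component $V^r(\lam) \otimes V(\lam) \subset A_q(\mathfrak{g})$, and under this embedding $v_0$ becomes a nonzero scalar multiple of the standard highest weight vector $v_\lam^r \otimes v_\lam$. On the Kashiwara side, Lemma \ref{basemodule1}(2) identifies $A_q(\mathfrak{g})^{\leq\lam}/A_q(\mathfrak{g})^{<\lam}$ with the same space, Lemma \ref{basemodule1}(3) shows that both filtration steps are $\U_\A$-stable, and Lemma \ref{basemodule1}(1) together with Kashiwara's integrality result guarantees that this quotient carries a canonical $\U_\A$-stable $\A$-form which, via the pairing $\langle,\rangle_1$ and Lusztig's theory of based modules as in Lemma \ref{coWeyl1}, coincides with $V^r_\A(\lam) \otimes V_\A(\lam)$. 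The central technical step is then to verify that the image of the Oehms form $\nabla^r_\A(\lam) \otimes \nabla_\A(\lam)$ in $V^r(\lam) \otimes V(\lam)$ agrees (up to a unit scalar in $\A$) with this Kashiwara-based $\A$-form; this uses the compatibility of Bruhat order with Oehms's order $\prec$ spelled out just before Lemma \ref{basemodule1}, the triangularity of Lemma \ref{basemodule1}(1), and the explicit leading-term description of $\pi_C(T_q^\lam(\bi_\lam,\bi_\lam))$ furnished by Lemma \ref{Base2}.

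Once the $\A$-form identification is in place, base change to $K$ yields a $\U_K(\mathfrak{g}\oplus\mathfrak{g})$-module isomorphism $\nabla^r_K(\lam) \otimes \nabla_K(\lam) \cong V^r_K(\lam) \otimes V_K(\lam)$, and the ``in particular'' statement follows immediately from Lemma \ref{coWeyl1}. The co-Weyl module has simple socle $L_K(\lam,\lam)$, which contains the unique-up-to-scalar highest weight vector of weight $(\lam,\lam)$---namely $v_0 \otimes 1_K$. Since any nonzero $\U_K(\mathfrak{g}\oplus\mathfrak{g})$-submodule $N$ of $\nabla^r_K(\lam)\otimes\nabla_K(\lam)$ must contain this simple socle, we obtain $v_0 \otimes 1_K \in N$, which is the main claim. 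The principal obstacle is the $\A$-form matching in the second paragraph: Oehms's quantum symplectic bideterminants and Kashiwara's upper global crystal basis come from genuinely different constructions, so aligning the $\A$-lattices they span, up to units in $\A$, demands careful bookkeeping of the triangularity in Lemma \ref{basemodule1}(1) against the cellular filtration of $A^{sy}_\A(2m,\leq n)$ and the scalar appearing in Lemma \ref{Base2}.
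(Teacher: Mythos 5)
Your proposal reverses the logical dependency in the paper, and this introduces a genuine circularity.

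You want to prove the main claim by first establishing that the Oehms $\A$-lattice $\nabla^r_\A(\lam)\otimes\nabla_\A(\lam)$ coincides (via $\iota_A$) with the Kashiwara $\A$-lattice $V^r_\A(\lam)\otimes V_\A(\lam)$, then base-change to $K$ to deduce that $\nabla^r_K(\lam)\otimes\nabla_K(\lam)$ is a co-Weyl module (Lemma \ref{coWeyl1}), and finally read off the main claim from the fact that a co-Weyl module has simple socle containing the unique highest weight line. The high-level observations along the way are all fine: $v_0$ is a highest weight vector of weight $(\lam,\lam)$ by Corollary \ref{highestweight} and Lemma \ref{Base2}, the $(\lam,\lam)$-weight space is one-dimensional by Corollary \ref{ModuleBase}, and a co-Weyl module does have the stated socle property. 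But the ``central technical step'' you identify---that $\iota_A$ maps the Oehms lattice \emph{onto} the Kashiwara lattice---is exactly the content of Theorem \ref{coincide} (Step 2 of its proof), and that theorem is established \emph{after} Lemma \ref{coWeyl2} and \emph{uses} Lemma \ref{coWeyl2} as its key input. Specifically, in the proof of Theorem \ref{coincide} the author deduces that the map $\iota_K : \nabla^r_K(\lam)\otimes\nabla_K(\lam) \to V^r_K(\lam)\otimes V_K(\lam)$ is an isomorphism precisely because ``by Lemma \ref{coWeyl1} and Lemma \ref{coWeyl2}, we know that both modules are co-Weyl modules.'' So the lattice identification you want to start from is a consequence, not an antecedent, of the lemma you are trying to prove.

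Moreover, the lattice identification does not come for free from what you cite. Having a $\Q(q)$-linear isomorphism, equality of $\A$-ranks, and a triangular change of basis (Lemma \ref{basemodule1}(1)) is not enough to conclude that two $\A$-lattices inside a $\Q(q)$-space coincide, or even that they agree up to a unit scalar: $\,2\Z \subset \Z$ has rank one inclusion with unit determinant over $\Q$ but is a proper sublattice. To make your argument work you would need an independent proof of the lattice equality---independent, that is, of the co-Weyl module structure on $\nabla^r_K(\lam)\otimes\nabla_K(\lam)$---and such a proof is not supplied. The paper's actual proof of Lemma \ref{coWeyl2} avoids this circle entirely: it is a direct, explicit four-step argument that uses the weight-stability property \eqref{weightstable}, the explicit action of the divided powers $e_i^{(t)},f_i^{(t)}$ on the quantum symplectic bideterminant basis (Step 1), and a careful combinatorial analysis of the weights occurring in any nonzero submodule $N$ (Steps 2--4) to push a weight of $N$ up to $(\lam,\lam)$, without ever invoking the comparison with Kashiwara's coordinate algebra. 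You should either adopt that direct strategy, or find and carry out a genuinely independent proof of the lattice identification.
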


\begin{proof} We divide the proof into four steps:\smallskip

{\it Step 1.} First, we claim that for any integer $t\geq 0$ and any
$\bi,\bj\in I_{\lam}^{<}\cup I_{\lam}^{mys}$, $$\begin{aligned}
e_i^{(t)}T_q^{\lam}(\bi,\bj)&=\sum_{\substack{\bh\in I_{\lam}^{<}\\
\bh\overset{(i,t)}{\rightsquigarrow}\bj}}(-1)^{a'(\bh,\bj)}q^{a(\bh,\bj)}T_q^{\lam}(\bi,\bh),\\
f_i^{(t)}T_q^{\lam}(\bi,\bj)&=\sum_{\substack{\bh\in I_{\lam}^{<}\\
\bh\overset{(i,t)}{\rightsquigarrow}\bj}}(-1)^{b'(\bh,\bj)}q^{b(\bh,\bj)}T_q^{\lam}(\bi,\bh),\\
T_q^{\lam}(\bi,\bj)f_i^{(t)}&=\sum_{\substack{\bh\in I_{\lam}^{<}\\
\bh\overset{(i,t)}{\rightsquigarrow}\bi}}(-1)^{c'(\bi,\bh)}q^{c(\bi,\bh)}T_q^{\lam}(\bh,\bj),\\
T_q^{\lam}(\bi,\bj)e_i^{(t)}&=\sum_{\substack{\bh\in I_{\lam}^{<}\\
\bh\overset{(i,t)}{\rightsquigarrow}\bi}}(-1)^{d'(\bi,\bh)}q^{d(\bi,\bh)}T_q^{\lam}(\bh,\bj),
\end{aligned}
$$
where $a'(\bh,\bj),a(\bh,\bj),b'(\bi,\bh),b(\bi,\bh),c'(\bh,\bj),c(\bh,\bj),d'(\bi,\bh),d(\bi,\bh)
\in\Z$ and $\bh\overset{(i,t)}{\rightsquigarrow}\bi$ means that $\bh$ differs
from $\bi$ on exactly $t$ indices, on which each index $i$ is changed into $i+1$ or $i-1$.

We only prove the first equality as the others can be proved in a
similar way. By the same argument used in the proof of Lemma
\ref{comultirule}, we can assume without loss of generality that
$\bi,\bj\in I_{\lam}^{<}$. For any $f\in\U_{\Q(q)}$, we have that $$
\langle e_i^{(t)}T_q^{\lam}(\bi,\bj),f\rangle_0=\langle
T_q^{\lam}(\bi,\bj),fe_i^{(t)}\rangle_0 =\sum_{\bh\in
I_{\lam}^{<}}\langle T_q^{\lam}(\bi,\bh),f\rangle_0 \langle
T_q^{\lam}(\bh,\bj),e_i^{(t)}\rangle_0.
$$
It suffices to show that $\langle T_q^{\lam}(\bh,\bj),e_i^{(t)}\rangle_0\neq 0$ only if $\bh\overset{(i,t)}{\rightsquigarrow}\bj$, and in that case it is equal to $(-1)^{a'(\bh,\bj)}q^{a(\bh,\bj)}$ for some
$a'(\bh,\bj),a(\bh,\bj)\in\Z$.

By definition, $$
\langle T_q^{\lam}(\bh,\bj),e_i^{(t)}\rangle_0=\sum_{\substack{\bk\in I(2m,n)\\ w\in\BS_{\lam^t}}}
(-q^2)^{-\ell(w)}\beta(w)_{\bh,\bk}u_{\bk}\Bigl(e_i^{(t)}v_{\bj}\Bigr).
$$
Note that $\bh,\bj\in I_{\lam}^{<}$. Recall the definition of $e_i^{(t)}$ (cf. \cite[3.1.5]{Lu3}) and the action of $e_i$ given in Section 2. To calculate the above sum, it suffices to consider only those $\bk\in I(2m,n)$ such that $\bk\overset{(i,t)}{\rightsquigarrow}\bj$ and the entries in each column of $T_{\bk}^{\lam}$ are weakly increasing from top to bottom. By the definition of $\beta(w)$, we know that $\beta(w)_{\bh,\bk}\neq 0$ only if each column of $\bh$ has the same set of entries as the corresponding  column of $\bk$. It follows that we can further restrict ourselves to those $\bk\in I_{\lam}^{<}$.
Now applying \cite[Lemma 11.8]{Oe2}, we deduce that only the case when $w=1, \bk=\bh$ can make contribution to the above sum, from which our claim follows immediately.
\smallskip

{\it Step 2.} We define $P(N):=\{(\mu,\nu)\in X\times X|N_{\mu,\nu}\neq 0\}$. We claim that for some $k_2,\cdots,k_{m}\in\Z$ and some $\nu\in X$, $(\nu,\lam_1\varepsilon_1+k_2\varepsilon_2+\cdots+k_{m}\varepsilon_{m})\in P(N)$.

Recall that $W_m$ is the Weyl group of type $C_m$. By \cite[Lemma
1.13]{APW}, we know that for any $\lam\in X$,
\begin{equation}\label{weightstable}
\text{$\lam\in P(N)$ implies that $(w_1\lam,w_2\lam)\in P(N)$ for
any $w_1,w_2\in W_m$.}
\end{equation}

Therefore, it is equivalent to show that for some $k_1,\cdots,k_{m-1}\in\Z$ and some $\nu\in X$, $(\nu,k_1\varepsilon_1+\cdots+k_{m-1}\varepsilon_{m-1}+\lam_1\varepsilon_m)\in P(N)$. For simplicity, for each $\bi,\bj\in I_{\lam}^{mys}$, we write $$
v(\bi,\bj):=\pr_{\lam}\Bigl(\pi_{C}\bigl(T_q^{\lam}(\bi,\bj)\bigr)\Bigr).
$$
Then the elements in $\{v(\bi,\bj)|\bi,\bj\in I_{\lam}^{mys}\}$ form a $K$-basis of $\nabla^r_K(\lam)\otimes\nabla_K(\lam)$.

Since $N$ is a submodule of $\nabla^r_K(\lam)\otimes\nabla_K(\lam)$,
any weight $(\nu,\mu)$ of $N$ satisfies $\lam\geq\nu, \lam\geq\mu$.
In particular, $\lam_1\geq \nu_1, \lam_1\geq \mu_1$. By
(\ref{weightstable}), we can assume without loss of generality that
for some integers $k_1,\cdots,k_{m}$ with $0<k_m\leq\lam_1$ and some
$\nu\in X$, $(\nu,k_1\varepsilon_1+\cdots+k_{m}\varepsilon_{m})\in
P(N)$. Furthermore, we assume that our $k_m$ is chosen such that
$k_m$ is as big as possible. For each weight vector $0\neq x\in N$
with weight $(\nu,k_1\varepsilon_1+\cdots+k_{m}\varepsilon_{m})$, we
can write $$ x=\sum_{\bi,\bj\in
I_{\lam}^{mys}}C_{\bi,\bj}v(\bi,\bj),
$$
for some $C_{\bi,\bj}\in K$. Set $$J(x)=\{\bj\in
I_{\lam}^{mys}|C_{\bi,\bj}\neq 0, \text{for some $\bi\in
I_{\lam}^{mys}$}\}.$$ For each $\bj\in J(x)$, the assumption $k_m>0$
and the fact that $\bj\in I_{\lam}^{mys}$ imply that $j_1=m$. Let
$\mathfrak{t}_{\lam}$ be the standard $\lam$-tableau such that the
numbers $1,2,\cdots,k$ entered in usual order along the successive
columns of $\lam$. We define $$\begin{aligned}
c_{\bj}&:=\#\bigl\{1\leq t\leq k\bigm|\text{$j_t=j_1$ and $t$ sits in the first row of $\mathfrak{t}_{\lam}$}\bigr\},\\
\widehat{c}_{\bj}&:=\#\bigl\{1\leq t\leq k\bigm|\text{$j_t=j'_1$ and $t$ sits in the first row of $\mathfrak{t}_{\lam}$}\bigr\}.
\end{aligned}
$$
We assert that $c_{\bj}=k_m$ and $\widehat{c}_{\bj}=0$, $\forall\,\bj\in
J(x)$. In fact, if this is not true, then we can find some $\bj_0\in
J(x)$ such that $0<\widehat{c}_{\bj_0}\geq \widehat{c}_{\bj}$ for any $\bj\in J(x)$.
It is easy to see that $e_m^{(\widehat{c}_{\bj_0})}x$ is a nonzero weight
vector with weight
$\biggl(\nu,\sum_{t=1}^{m-1}k_t\varepsilon_t+(k_{m}+2\widehat{c}_{\bj_0})\varepsilon_{m}\biggr)$,
a contradiction to the maximality of $k_m$.\smallskip

If $k_m=\lam_1$, then we are done. Henceforth we assume
$0<k_m<\lam_1$. For each $\bj\in J(x)$, we define
$b_{\bj}\in\{1,2,\cdots,2m\}$ to be the least integer (with respect
to the order $``\prec"$) in the first row of $T_{\bj}^{\lam}$ such
that $b_{\bj}\succ m$. In particular, $b_{\bj}\succ m'$ (because
$\widehat{c}_{\bj}=0$). We set $$J_1(x):=\bigl\{\bj\in
J(x)\bigm|b_{\bj}\preceq b_{\bh},\,\,\forall\,\bh\in
J(x)\bigr\}.$$ Let $b=b_{\bj}$ for any $\bj\in J_1(x)$. For any
$\bj\in J_1(x)$, we define $$ c''_{\bj}:=\#\bigl\{1\leq t\leq
n\bigm|\text{$j_t=b$ and $t$ sits in the first row of
$\mathfrak{t}_{\lam}$}\bigr\}.
$$
Let $$
J_2(x):=\bigl\{\bj\in J_1(x)\bigm|c''_{\bj}\geq c''_{\bh},\,\,\forall\,\bh\in J_1(x)\bigr\}.
$$
Let $c''=c''_{\bj}$ for any $\bj\in J_2(x)$. Then $c''>0$. We define
$$
z:=\begin{cases}f_m^{(c''+k_m)}f_{m-1}^{(c'')}\cdots f_{b+1}^{(c'')}f_{b}^{(c'')}, &\text{if $1\leq b<m$;}\\
e_m^{(k_m+c'')}\cdots e_{b'+1}^{(c'')}e_{b'}^{(c'')}f_m^{(k_m)},
&\text{if $m<b<2m$.}
\end{cases}
$$
Using the first two formulae we have given in Step 1 and the fact
that $T_q^{\lam}(\bi,\bh)=0$ whenever there are two identical
indices appearing in an adjacent position in a column of
$T_{\bh}^{\lam}$ (\cite[Corollary 9.2]{Oe2}) as well as the
definition of $I_{\lam}^{mys}$ and the action of each $e_i, f_i$
given in Section 2, we deduce that $zx\in N$ is a nonzero weight vector with weight equal to either
$$\biggl(\nu,\sum_{\substack{1\leq s\leq m\\ s\neq b, s\neq m}}k_s\varepsilon_s+
(k_b-c'')\varepsilon_b-(k_{m}+c'')\varepsilon_{m}\biggr),$$
or $$\biggl(\nu,\sum_{\substack{1\leq s\leq m\\ s\neq b', s\neq m}}k_s\varepsilon_s+
(k_{b'}+c'')\varepsilon_{b'}+(k_{m}+c'')\varepsilon_{m}\biggr).$$
In both case, applying (\ref{weightstable}) if necessary, we get a contradiction to
the maximality of $k_m$. This proves our claim.
\smallskip

{\it Step 3.} We claim that for any integer $1\leq t\leq m$, there
exist some integers $k''_{t+1},\cdots,k''_m\in\Z$ and some $\nu''\in
X$, such that
$\biggl(\nu'',\sum_{j=1}^{t}\lam_j\varepsilon_j+\sum_{s=t+1}^{m}k''_{s}\varepsilon_{s}\biggr)\in
P(N)$.\smallskip

We make induction on $t$. If $t=1$, this is true by the result
obtained in Step 2. Suppose that the claim is true for $t-1$. That
is, for some integers $k_{t},\cdots,k_m\in\Z$ and some $\nu\in X$,
we have that
$$(\nu,\widetilde{\lam}):=\biggl(\nu,\sum_{j=1}^{t-1}\lam_j\varepsilon_j+\sum_{s=t}^{m}k_{s}\varepsilon_{s}\biggr)\in
P(N).$$ Using (\ref{weightstable}), we can further assume that the weight $\widetilde{\lam}$ is chosen
such that \begin{equation}\label{max4}
\text{$k_m=\max\bigl\{|k_s|\bigm|t\leq s\leq m\bigr\}\geq 0$, and $k_m$ is as big as possible.}
\end{equation}
We are going to show that for some integers $\widehat{k}_{t},\cdots,\widehat{k}_{m-1}\in\Z$, $$
\biggl(\nu,\sum_{j=1}^{t-1}\lam_j\varepsilon_j+\lam_t\varepsilon_m+\sum_{s=t}^{m-1}\widehat{k}_{s}\varepsilon_{s}\biggr)\in
P(N). $$ If this is true, then we can apply (\ref{weightstable})
again to get our claim.

If $\ell(\lam)\leq t-1$, then there is nothing to prove. Henceforth, we assume that $\ell(\lam)\geq t$. In particular,
$\lam_t>0$. By (\ref{weightstable}), we know that
$(\nu,\sum_{j=1}^{t-1}\lam_j\varepsilon_j+k_m\varepsilon_t+\sum_{s=t+1}^{m}k_{s-1}\varepsilon_{s})\in P(N)$, which implies that
$\sum_{j=1}^{t-1}\lam_j\varepsilon_j+k_m\varepsilon_t+\sum_{s=t+1}^{m}k_{s-1}\varepsilon_{s}\leq\lam$. It follows that
$0\leq k_m\leq\lam_t$. We assert that $k_m=\lam_t$, from which our claim will follows immediately.

Suppose $k_m<\lam_t$. For each nonzero weight vector $x$ of weight $(\nu,\widetilde{\lam})$ in $N$, we write $$ x=\sum_{\bi,\bj\in I_{\lam}^{mys}}C_{\bi,\bj}(x)v(\bi,\bj).
$$
for some $C_{\bi,\bj}(x)\in K$. We define $J(x), c_{\bj}, \widehat{c}_{\bj}, b_{\bj}, J_1(x), b, c''_{\bj}, J_2(x), c''$ as in Step 2.
We first show that $k_m>0$. In fact, if $k_m=0$, then by (\ref{max4}) we know that $k_i=0$ for any
$t\leq i\leq m$. Using the definition of $I_{\lam}^{mys}$, it is easy to see that for each $\bj\in J(x)$, \begin{equation}\label{extre4}
\text{$\forall\,1\leq l\leq t-1$, $l$ appears $\lam_l$ times in $\bj$ and $l'$ does not appear in $\bj$.}
\end{equation}
Since $\lam_t>0$, it follows that for any $\bj\in J(x)$, $$j_1\in\{m,m',m-1,(m-1)',\cdots,t,t'\}.$$
If there exists $\bj\in J(x)$ such that $j_1=m$, then (as $k_m=0$) $b_{\bj}=m'$. In this case $b=m'$ and it is easy to see that
$e_m^{(c'')}x\in N$ is a nonzero weight vector of weight $(\nu,\sum_{j=1}^{t-1}\lam_j\varepsilon_j+2c''\varepsilon_m)$. Since $c''>0$, we get a contradiction to (\ref{max4}).

Henceforth we assume that $j_1\neq m$ for any $\bj\in J(x)$. In particular, $j_1\succ m'$.
Note that $j_1\preceq t'$. We define $j$ to be the unique integer such that $j=j_1$ for some $\bj\in J(x)$ and
$j\preceq h_1,\,\forall\,\bh\in J(x)$. Set $c=\max\{c_{\bj}|\bj\in J(x), j_1=j\}$. Then $c>0$.
If $t\leq j<m$ then it is easy to see that $f_{m}^{(c)}\cdots f_{j+1}^{(c)}f_j^{(c)}x\in N$ is a nonzero weight vector of weight $(\nu,\sum_{s=1}^{t-1}\lam_s\varepsilon_s-c\varepsilon_j-c\varepsilon_m)$; while if $m'<j\leq t'$ then it is easy to see that $e_{m}^{(c)}\cdots e_{j'+1}^{(c)}e_{j'}^{(c)}x\in N$ is a nonzero weight vector of weight $(\nu,\sum_{s=1}^{t-1}\lam_s\varepsilon_s+c\varepsilon_j+c\varepsilon_m)$. In both cases using (\ref{weightstable}), we get a contradiction to (\ref{max4}). Therefore, we must have $k_m>0$. In particular, for any $\bj\in J(x)$, $j_1=m$.

The remaining argument is similar to that used in Step 2 with some
slight modification. First, by the same argument used in Step 2, we
can show that $j_1=m$, $c_{\bj}=k_m$ and $\widehat{c}_{\bj}=0$. In
particular, $b\succ m'$. We claim that $b\preceq t'$. In fact, if
$b\succeq t-1$, then for any $\bj\in J(x)$, the first row of the
remaining tableau after deleting all the entries of $T_{\bj}^{\lam}$
in $\{t-1,(t-1)',\cdots,2,2',1,1'\}$ has length $k_m$. On the other
hand, we know that for any $\bj\in I_{\lam}^{mys}$ satisfying
(\ref{extre4}), the first row of the remaining tableau after
deleting all of the entries of $T_{\bj}^{\lam}$ in
$\{t-1,(t-1)',\cdots,2,2',1,1'\}$ must have length $\lam_1\geq
\lam_t$. It follows that $k_m=\lam_t$, a contradiction to our
assumption. This proves that $m'\prec b\preceq t'$.

Now we follow exactly the same argument used in Step 2 to define an element $z$. Note that the condition $m'\prec b\preceq t'$ ensures that $zx\neq 0$ is a weight vector in $N$ with weight $$
\biggl(\nu,\sum_{j=1}^{t-1}\lam_j\varepsilon_j+\sum_{s=t}^{m}\widehat{k}_{s}\varepsilon_{s}\biggr)
$$
such that $|\widehat{k}_m|>k_m$. Applying (\ref{weightstable}) if necessary, we get a contradiction to (\ref{max4}).
This proves our assertion that $k_m=\lam_t$.
\smallskip

By induction and set $t=m$, we get that for some $\nu\in X$, $(\nu, \lam)\in P(N)$.
\smallskip

{\it Step 4.} Starting from a nonzero weight vector $x\in N$ with weight $(\nu, \lam)$ and Using a similar argument as used in Step 3, we can prove that $(\lam,\lam)\in P(N)$. Note that the $(\lam,\lam)$-weight space of $\nabla^r_K(\lam)\otimes\nabla_K(\lam)$ is one-dimensional and is spanned by $
\pr_{\lam}\Bigl(\pi_{C}\bigl(T_q^{\lam}(\bi_{\lam},\bi_{\lam})\bigr)\Bigr)\otimes_{\A}1_{K}$.
As $N$ is a submodule of $\nabla^r_K(\lam)\otimes\nabla_K(\lam)$, we can deduce that $$
\pr_{\lam}\Bigl(\pi_{C}\bigl(T_q^{\lam}(\bi_{\lam},\bi_{\lam})\bigr)\Bigr)\otimes_{\A}1_{K}\in N.$$ This completes the proof of the lemma.
\end{proof}

\begin{cor} Let $K$ be a field which is an $\A$-algebra and $n\geq 0$ be an integer. Let $\lam\in\Lambda^{+}(m,n)$.
Then $\nabla_K(\lam)$ is isomorphic to the co-Weyl module of\,\,$\U_K(\mathfrak{g})$ associated to $\lam$.
\end{cor}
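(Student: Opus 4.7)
The plan is to reduce the statement to Lemma \ref{coWeyl2} by ``isolating'' one tensor factor using the highest weight space on the other side. First I would note that the co-Weyl module of $\U_K(\mathfrak{g}\oplus\mathfrak{g})$ associated to $(\lam,\lam)$ is, by the very definition combined with the argument of Lemma \ref{coWeyl1} applied to each factor separately, isomorphic to $V^{r}_K(\lam)\otimes V_K(\lam)$ as a $\U_K(\mathfrak{g}\oplus\mathfrak{g})$-module. Thus Lemma \ref{coWeyl2} furnishes an isomorphism
$$
\Phi:\, \nabla^{r}_K(\lam)\otimes\nabla_K(\lam)\;\xrightarrow{\ \sim\ }\; V^{r}_K(\lam)\otimes V_K(\lam)
$$
of $\U_K(\mathfrak{g}\oplus\mathfrak{g})$-modules.

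Next I would exploit the weight grading with respect to the first copy of $\U_K(\mathfrak{g})$ (i.e.\ the subalgebra $\U_K(\mathfrak{g})\otimes 1$), which commutes with the action of $1\otimes\U_K(\mathfrak{g})$. The $\lam$-weight space of $\nabla^{r}_K(\lam)\otimes\nabla_K(\lam)$ with respect to this action equals $\bigl(\nabla^{r}_K(\lam)\bigr)_{\lam}\otimes\nabla_K(\lam)$, and likewise on the right-hand side one gets $\bigl(V^{r}_K(\lam)\bigr)_{\lam}\otimes V_K(\lam)$. Since $\nabla^{r}(\lam)_{\Q(q)}\cong V^{r}(\lam)$ is the irreducible highest weight module over $\Q(q)$ (this identification is recorded in the paragraph preceding Lemma \ref{basemodule1}, and a nonzero highest weight vector is explicitly given in Lemma \ref{Base2}), the $\lam$-weight space is one-dimensional over $\Q(q)$; by base change from the free $\A$-lattice this remains one-dimensional over $K$. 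The same holds for $V^{r}_K(\lam)$.

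Consequently, $\Phi$ restricts to an isomorphism of $1\otimes\U_K(\mathfrak{g})$-modules
$$
\bigl(\nabla^{r}_K(\lam)\bigr)_{\lam}\otimes\nabla_K(\lam)\;\xrightarrow{\ \sim\ }\;\bigl(V^{r}_K(\lam)\bigr)_{\lam}\otimes V_K(\lam),
$$
both sides being a one-dimensional scalar extension of $\nabla_K(\lam)$, respectively $V_K(\lam)$. Cancelling the one-dimensional first tensor factors yields the required isomorphism $\nabla_K(\lam)\cong V_K(\lam)$ of $\U_K(\mathfrak{g})$-modules, where $V_K(\lam)$ is by definition the co-Weyl module associated to $\lam$.

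The main potential obstacle is the bookkeeping of the one-dimensionality of the $\lam$-weight space on the first factor; one must be careful that this holds over arbitrary $K$, not only in characteristic zero. This is handled cleanly by base change from the $\A$-form, since the weight space decomposition is preserved under tensoring with $K$ and the ranks of weight spaces are computed from the corresponding $\Q(q)$-dimensions, which are the ones of the irreducible $\U_{\Q(q)}$-module $V(\lam)$.
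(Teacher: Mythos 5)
Your proof is correct, but it takes a genuinely different route from the paper's. The paper argues via socles: any simple $\U_K(\mathfrak{g})$-submodule $L\subseteq\nabla_K(\lam)$ yields a simple submodule $L\otimes L\subseteq\nabla^r_K(\lam)\otimes\nabla_K(\lam)$; since the latter is a co-Weyl module for $\U_K(\mathfrak{g}\oplus\mathfrak{g})$ by Lemma \ref{coWeyl2}, it has a simple socle, so $\nabla_K(\lam)$ has a simple socle; the universal property of co-Weyl modules then gives an embedding $\nabla_K(\lam)\hookrightarrow V_K(\lam)$, and a dimension count forces this to be an isomorphism. Your argument instead combines Lemma \ref{coWeyl1} and Lemma \ref{coWeyl2} to produce an explicit isomorphism $\Phi:\nabla^r_K(\lam)\otimes\nabla_K(\lam)\xrightarrow{\sim}V^r_K(\lam)\otimes V_K(\lam)$ of $\U_K(\mathfrak{g}\oplus\mathfrak{g})$-modules, and then isolates the second tensor factor by restricting $\Phi$ to the $\lam$-weight space of the first factor, which is one-dimensional because the weight decomposition is defined over $\A$ by the idempotents $1_\mu\in\dot{\U}_\A$ (or equivalently the projectors $p'_\mu$ from the proof of Lemma \ref{keylem}), whose images are free $\A$-summands with rank computed over $\Q(q)$. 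Both routes are sound; yours is slightly more direct in that it avoids the socle argument and the dimension count, trading them for the standard but slightly delicate fact that weight spaces commute with base change. One minor point: your opening sentence re-derives what Lemma \ref{coWeyl1} already asserts verbatim, so you can simply cite it.
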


\begin{proof} If $L$ is a simple $\U_K(\mathfrak{g})$-submodule of $\nabla_K(\lam)$, then $L\otimes L$ is a simple
$\U_K(\mathfrak{g}\oplus\mathfrak{g})$-submodule of $\nabla^r_K(\lam)\otimes\nabla_K(\lam)$. By Lemma \ref{coWeyl2},
$\nabla^r_K(\lam)\otimes\nabla_K(\lam)$ is isomorphic to the co-Weyl module of $\U_K(\mathfrak{g}\oplus\mathfrak{g})$ associated to $(\lam,\lam)$. So it must have a unique simple $\U_K(\mathfrak{g}\oplus\mathfrak{g})$-socle. This implies that
$\nabla_K(\lam)$ also has a unique simple $\U_K(\mathfrak{g})$-socle. By the universal property of co-Weyl module, we see that there exists an embedding from $\nabla_K(\lam)$ into the co-Weyl module $V_K(\lam)$. Comparing their dimensions, we deduce that this embedding must be an isomorphism.
\end{proof}

\begin{thm} \label{coincide} With the above notations, we have that
$$\iota_{A}\Bigl(\widetilde{A}^{sy}_{\A}(2m)\Bigr)=
A_q^{\Z}(\mathfrak{g}),\,\,\,\iota_{A}\Bigl(\widetilde{A}^{sy}_{\A}(2m,\leq\! k)\Bigr)=A_q^{\Z}(\mathfrak{g})^{\leq k},\,\,\forall\,k\geq 0.
$$ In other words, the quantized coordinate
algebra defined by Kashiwara and the quantized coordinate algebra
$\widetilde{A}_{\A}^{sy}(2m)$ arising from a generalized FRT
construction are isomorphic to each other as $\A$-algebras.
Furthermore, we have the following commutative diagram $$
\begin{CD}
\dot{\U}_{\Q(q)} @>{\id}>>\dot{\U}_{\Q(q)} \\
@V{} VV @V{}VV\\
A_q(\mathfrak{g})^{\ast}@
>{\iota_{A}^{\ast}}>>
(\widetilde{A}^{sy}_{\Q(q)}(2m))^{\ast}
\end{CD},
$$
where the two vertical maps are induced by the two natural pairings
$\langle,\rangle_0$, $\langle,\rangle_1$ respectively.
\end{thm}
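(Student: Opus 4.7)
The plan is to establish the main equality of $\A$-lattices $\iota_A(\widetilde{A}^{sy}_\A(2m)) = A_q^\Z(\mathfrak{g})$ inside $A_q(\mathfrak{g})$ first, then deduce both the truncated statement and the commutativity of the diagram as consequences.

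\textbf{Easy inclusion.} For $x \in \widetilde{A}^{sy}_\A(2m)$, Corollary \ref{Doc} writes $x$ as an $\A$-combination of elements $\pi_C(T_q^\lam(\bi,\bj))$, each supported in a finite homogeneous component. Since $\U_\A$ preserves the lattice $V_\A^{\otimes n}$ by Lemma \ref{Alattice}, the defining formula $\langle x,u\rangle_0 = u_\bi(\psi_C(u)v_\bj)$ forces $\langle \iota_A(x), \U_\A\rangle \subseteq \A$. Combined with Lemma \ref{Qembed}, this gives $\iota_A(x) \in A_q^\Z(\mathfrak{g})$.

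\textbf{Reverse inclusion.} This is the main obstacle. I would argue by descending induction on $\lam \in X^+$ with respect to the Bruhat order, using in parallel the filtration $A_q^\Z(\mathfrak{g})^{\leq\lam} \supsetneq A_q^\Z(\mathfrak{g})^{<\lam}$ of Lemma \ref{basemodule1} and the cellular filtration of $\widetilde{A}^{sy}_\A(2m)$ coming from Corollary \ref{cellco2}. The inductive step reduces to matching the two $\A$-lattices these filtrations cut out on the common cell quotient $V^r(\lam)\otimes V(\lam) = A_q(\mathfrak{g})^{\leq\lam}/A_q(\mathfrak{g})^{<\lam}$. On the Kashiwara side, Lemma \ref{basemodule1}(1)(2) identifies $A_q^\Z(\mathfrak{g})^{\leq\lam}/A_q^\Z(\mathfrak{g})^{<\lam}$ with the $\A$-span of the upper global crystal basis, which is exactly the co-Weyl $\A$-form $V^r_\A(\lam)\otimes V_\A(\lam)$ of Lemma \ref{coWeyl1}. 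On the cellular side, Corollary \ref{codecomp} identifies the image of $\iota_A(\widetilde{A}^{sy}_\A(2m)) \cap A_q(\mathfrak{g})^{\leq\lam}$ with $\nabla^r_\A(\lam)\otimes \nabla_\A(\lam)$, which by Lemma \ref{coWeyl2} is again a co-Weyl $\A$-form, generated as a $\U_\A(\mathfrak{g}\oplus\mathfrak{g})$-module by the highest weight vector $\pr_\lam(\pi_C(T_q^\lam(\bi_\lam,\bi_\lam)))$ (cf.\ Corollary \ref{highestweight} and Lemma \ref{Base2}). Two co-Weyl $\A$-forms of the same highest weight $(\lam,\lam)$ inside a common $\Q(q)$-vector space agreeing on the one-dimensional top weight space must coincide, closing the induction.

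\textbf{Truncated identity and diagram.} Restricting the same induction to $\lam \in \Lambda^+(m,\leq k)$ and invoking Corollary \ref{cellco2} together with the definition of $A_q^\Z(\mathfrak{g})^{\leq k}$ yields $\iota_A(\widetilde{A}^{sy}_\A(2m,\leq\! k)) = A_q^\Z(\mathfrak{g})^{\leq k}$. For the commutative diagram it suffices to check $\langle \iota_A(x),P\rangle_1 = \langle x,P\rangle_0$ for $x \in \widetilde{A}^{sy}_{\Q(q)}(2m)$ and $P \in \dot{\U}_{\Q(q)}$. By $\Q(q)$-linearity and the Peter--Weyl decomposition \eqref{PeterWey}, one reduces to elements $\Phi_\lam(u\otimes v)$; both pairings then compute $(u,Pv)$ via the contravariant form on $V(\lam)$, once this form is identified with the restriction of $\langle\cdot,\cdot\rangle_0$ to $\nabla_{\Q(q)}(\lam)\cong V(\lam)$ through Corollary \ref{ModuleBase} and Lemma \ref{Base2}. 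The principal difficulty is the inductive comparison of the two integral structures on the cell quotient $V^r(\lam)\otimes V(\lam)$; Lemma \ref{coWeyl2}, which pins down the cellular quotient as the full co-Weyl module over $\U_K(\mathfrak{g}\oplus\mathfrak{g})$, is the essential new ingredient that makes this step go through.
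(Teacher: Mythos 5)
Your proposal follows the same architecture as the paper's proof: the easy inclusion via direct verification, then the reverse inclusion by an induction along the cell/Bruhat filtration, comparing the two $\A$-lattices on each cell quotient $V^r(\lam)\otimes V(\lam)$ and invoking Lemmas \ref{coWeyl1} and \ref{coWeyl2} to identify both sides with co-Weyl modules, then reducing the commutativity of the diagram to a check on highest weight vectors. You also correctly single out Lemma \ref{coWeyl2} as the essential ingredient.

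However, there is a genuine error in the inductive step. You assert that $\nabla^r_\A(\lam)\otimes\nabla_\A(\lam)$ is ``generated as a $\U_\A(\mathfrak{g}\oplus\mathfrak{g})$-module by the highest weight vector $\pr_\lam(\pi_C(T_q^\lam(\bi_\lam,\bi_\lam)))$,'' and use this to conclude that two co-Weyl $\A$-forms agreeing on the top weight space must coincide. This misreads Lemma \ref{coWeyl2}: what that lemma proves is that every nonzero submodule of the cell quotient \emph{contains} the highest weight vector (i.e.\ the module has unique simple socle), which is precisely the co-Weyl property. Generation by the highest weight vector is the defining property of the \emph{Weyl} module $\Delta_\A(\lam)\otimes\Delta_\A(\lam)$, which is a proper sublattice of the co-Weyl $\A$-form in general. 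So the claimed coincidence of the two lattices does not follow from generation. The paper instead considers the $\U_\A(\mathfrak{g}\oplus\mathfrak{g})$-map $\overline\iota_\A$ induced by $\iota_A$ on the cell quotient, observes that it sends the highest weight generator to $\pm q^a\widetilde G(b_\lam)$ (a basis element of the target, via Lemma \ref{basemodule1}(1), Corollary \ref{highestweight} and Lemma \ref{Base2}), so that after base change $\overline\iota_K$ is nonzero for \emph{every} field $K$; the unique-simple-socle property from Lemma \ref{coWeyl2} then forces each $\overline\iota_K$ to be an isomorphism, and hence $\overline\iota_\A$ is an isomorphism over $\A$. Your conclusion survives once this substitution is made, but the ``generated by the highest weight vector'' claim as written is false and must be replaced by the unique-simple-socle argument combined with base change to arbitrary residue fields.
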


\begin{proof} We first show that $\iota_{A}\Bigl({A}^{sy}_{\A}(2m,\leq\! k)\Bigr)=
A_q^{\Z}(\mathfrak{g})^{\leq k}$ for any $k\geq 0$, from which the equality $\iota_{A}\Bigl(\widetilde{A}^{sy}_{\A}(2m)\Bigr)=A_q^{\Z}(\mathfrak{g})$ follows at once. We divide the proof into two steps:
\smallskip

\noindent {\it Step 1.} We claim that
$\iota_{A}\Bigl({A}^{sy}_{\A}(2m,\leq\! k)\Bigr)\subseteq
A_q^{\Z}(\mathfrak{g})^{\leq k}$.\smallskip

By Lemma \ref{Qembed} and the bimodules decomposition we have discussed before, we have $$
\iota_{A}\Bigl({A}^{sy}_{\A}(2m,\leq\! k)\Bigr)\subseteq\iota_{A}\Bigl({A}^{sy}_{\Q(q)}(2m,\leq\! k)\Bigr)\subseteq A_q(\mathfrak{g})^{\leq k}.
$$
For any integer $\lam\in\Lambda^{+}(m,\leq\! k)$, and any
$\bi,\bj\in I_{\lam}^{mys}$, it is easy to verify directly that
$$\langle \pi_C\bigl(T_q^{\lam}(\bi,\bj)\bigr),\U_{\A}\rangle_0\in\A.$$
It follows that
$\iota_{A}\Bigl({A}^{sy}_{\A}(2m,\leq\! k)\Bigr)\subseteq
A_q^{\Z}(\mathfrak{g})$. Hence by Lemma \ref{basemodule1}, $$
\iota_{A}\Bigl({A}^{sy}_{\A}(2m,\leq\! k)\Bigr)\subseteq
A_q(\mathfrak{g})^{\leq k}\cap A_q^{\Z}(\mathfrak{g})=A_q^{\Z}(\mathfrak{g})^{\leq k},
$$ as required.

\smallskip
\noindent {\it Step 2.} We now show that
$\iota_{A}\Bigl({A}^{sy}_{\A}(2m,\leq\! k)\Bigr)=
A_q^{\Z}(\mathfrak{g})^{\leq\! k}$. Our strategy is to show that for any field $K$ which is an $\A$-algebra, $
\iota_{K}:=\iota_{A}\otimes_{\A}1_K$ is an injection from ${A}^{sy}_{K}(2m,\leq\! k)$ into $
A_q^{K}(\mathfrak{g})^{\leq\! k}$.

For each $\lam\in\Lambda^{+}(m,k)$, let $b_{\lam}$ be the unique element in $\widetilde{B}(\lam)$ such
that $G(b_{\lam})\in V^{r}(\lam)\otimes V(\lam)$ is a highest weight
vector of weight $(\lam,\lam)$. Note that $$
e_iG(b_{\lam})=0,\,\,\,\forall\,e_i\in\U_{\Q(q)}(\mathfrak{g}\oplus\mathfrak{g}).
$$

We use induction on dominant weights with respect to the order ``$\prec$". If $\lam\in X^{+}$ is maximal
with respect to the order ``$\prec$", then $\lam$ is minimal
with respect to the order ``$<$". Then $$
\nabla^r(\lam)\otimes\nabla(\lam)\subseteq {A}^{sy}_{\A}(2m,\leq\! k),\,\,\,
V_{\A}^r(\lam)\otimes V_{\A}(\lam)\subseteq A_q^{\Z}(\mathfrak{g})^{\leq k}.
$$
By Lemma \ref{basemodule1}, we also know that $\widetilde{G}(b_{\lam})=G(b_{\lam})$.
In that case, as both $\widetilde{G}(b_{\lam})$ and
$\iota_{A}\Bigl(\pi_{C}\bigl(T_q^{\lam}(\bi_{\lam},\bi_{\lam})\bigr)\Bigr)$
are the highest weight vectors of
weight $(\lam,\lam)$ in $A_q^{\Z}(\mathfrak{g})$, we deduce (by
Corollary \ref{highestweight} and Lemma \ref{Base2}) that
$$ \widetilde{G}(b_{\lam})=\pm
q^{a}\iota_{A}\Bigl(\pi_{C}\bigl(T_q^{\lam}(\bi_{\lam},\bi_{\lam})\bigr)\Bigr),
$$ for some $a\in\Z$. Since $$
\iota_{A}\Bigl(\nabla^r(\lam)\otimes\nabla(\lam)\Bigr)\subseteq A_q^{\Z}(\mathfrak{g})^{\leq k}\bigcap
\bigl(V^{r}(\lam)\otimes V(\lam)\bigr)=V_{\A}^r(\lam)\otimes V_{\A}(\lam),
$$
we deduce that for any field $K$ which is an $\A$-algebra, $\iota$ induces a nonzero
$\U_K(\mathfrak{g}\oplus\mathfrak{g})$-homomorphism $\iota_K:=\iota_{A}\otimes_{\A}1_K$ from $\nabla^r_K(\lam)\otimes\nabla_K(\lam)$ to
$V_{K}^r(\lam)\otimes V_{K}(\lam)$. By Lemma \ref{coWeyl1} and Lemma \ref{coWeyl2}, we know that both modules are co-Weyl modules of $\U_K(\mathfrak{g}\oplus\mathfrak{g})$ associated to $(\lam,\lam)$. It follows that $\iota_K$ must always be an isomorphism. Hence $\iota_A$ must be an isomorphism as well. In particular, $$
\iota_{A}\Bigl(\nabla^r(\lam)\otimes\nabla(\lam)\Bigr)=V_{\A}^r(\lam)\otimes V_{\A}(\lam).
$$

In general, let $\lam\in X^{+}$, assume that for any field $K$ which is an $\A$-algebra,
$\iota_{K}$ is an injection from ${A}^{sy}_{K}(2m,\leq k)^{\succ\lam}$ into
$$A_q^{K}(\mathfrak{g})^{\leq k}(\ngeq\lam):=\sum_{\substack{\mu\in\Lambda^{+}(m,\leq k)\\
\mu\ngeq\lam}}A_q^{K}(\mathfrak{g})^{\leq\mu}.
$$
We want to prove that $\iota_{K}$ is also an injection from ${A}^{sy}_{K}(2m,\leq k)^{\succeq\lam}$
into $$A_q^{K}(\mathfrak{g})^{\leq k}(\ngtr\lam):=\sum_{\substack{\mu\in\Lambda^{+}(m,\leq k)\\
\mu\ngtr\lam}}A_q^{K}(\mathfrak{g})^{\leq\mu}.
$$
By bimodules decomposition, Lemma \ref{basemodule1} and definition, we know that $$
\iota_{\A}\Bigl({A}^{sy}_{\A}(2m,\leq k)^{\succeq\lam}\Bigr)\subseteq
A_q^{\Z}(\mathfrak{g})\bigcap A_q(\mathfrak{g})^{\leq k}(\ngtr\lam):=A_q^{\Z}(\mathfrak{g})^{\leq k}(\ngtr\lam).
$$
By the same argument as before, we know that $$ \widetilde{G}(b_{\lam})=\pm
q^{a}\iota_{A}\Bigl(\pi_{C}\bigl(T_q^{\lam}(\bi_{\lam},\bi_{\lam})\bigr)\Bigr),
$$
for some $a\in\Z$.
Therefore, $\iota_{\A}$ induces a nonzero $\U_{\A}(\mathfrak{g}\oplus\mathfrak{g})$-homomorphism $\overline{\iota}_{\A}$ from $$
\nabla^r(\lam)\otimes\nabla(\lam)\cong {A}^{sy}_{\A}(2m,\leq k)^{\succeq\lam}/{A}^{sy}_{\A}(2m,
\leq k)^{\succ\lam}$$ to
$$
V_{\A}^r(\lam)\otimes V_{\A}(\lam)\cong A_q^{\Z}(\mathfrak{g})^{\leq k}(\ngtr\lam)/A_q^{\Z}(\mathfrak{g})^{\leq k}(\ngeq\lam).
$$
For any field $K$ which is an $\A$-algebra, we get by base change a nonzero
$\U_K(\mathfrak{g}\oplus\mathfrak{g})$-homomorphism $\overline{\iota}_K$ from $$
\nabla^r_K(\lam)\otimes\nabla_K(\lam)\cong {A}^{sy}_{K}(2m,\leq k)^{\succeq\lam}/{A}^{sy}_{K}(2m,
\leq k)^{\succ\lam}$$ to
$$
V_{K}^r(\lam)\otimes V_{K}(\lam)\cong A_q^{K}(\mathfrak{g})^{\leq k}(\ngtr\lam)/A_q^{K}(\mathfrak{g})^{\leq k}(\ngeq\lam).
$$
By Lemma \ref{coWeyl1} and Lemma \ref{coWeyl2}, we know that both modules are co-Weyl modules of $\U_K(\mathfrak{g}\oplus\mathfrak{g})$ associated to $(\lam,\lam)$. It follows that $\overline{\iota}_K$ must always be an isomorphism. It follows that ${\iota}_K$ must always be an injection, as required.

By induction, we know that for any field $K$ which is an $\A$-algebra,
$\iota_{K}$ is an injection from ${A}^{sy}_{K}(2m,\leq\! k)$ into $
A_q^{K}(\mathfrak{g})^{\leq\! k}$. Comparing their dimensions, we can deduce that it must be an isomorphism, from which
we can deduce that $\iota_{A}$ must be an isomorphism as well. This proves the first part of this Theorem.
\smallskip

It remains to prove the commutativity of the diagram. Note that as $\dot\U_{\Q(q)}(\mathfrak{g}\oplus\mathfrak{g})$-module, $\widetilde{A}^{sy}_{\Q(q)}(2m)$ is generated by the elements $\pi_C\bigl(T_q^{\lam}(i_{\lam},i_{\lam})\bigr)$, $\lam\in\Lambda^{+}(m)$. Therefore, it suffices to show that for any $\lam\in\Lambda^{+}(m,k)$, $k\geq 0$ and any $P\in\U_{\A}, \mu\in X$, $\langle\pi_C\bigl(T_q^{\lam}(\bi_{\lam},\bi_{\lam})\bigr), P1_{\mu}\rangle_0=\langle\pi_C\bigl(T_q^{\lam}(\bi_{\lam},\bi_{\lam})\bigr), P1_{\mu}\rangle_1$.

Note that there exists also a canonical coupling $\langle,\rangle_1$ between $A_q(\mathfrak{g})$
and ${\U}_{\Q(q)}$ (cf. \cite{Ka3}) defined by $$
\langle\Phi_{\lam}(u\otimes
v),P\rangle_1:=(u,Pv),\,\,\forall\,\lam\in X^{+}, u\in V^{r}(\lam),
v\in V(\lam), P\in{\U}_{\Q(q)},
$$
where $(,)$ is the pairing between $V^{r}(\lam)$ and $V(\lam)$
introduced in \cite[(7.1.2)]{Ka2}. By direct verification, we see that $$\begin{aligned}
\langle \pi_C\bigl(T_q^{\lam}(\bi_{\lam},\bi_{\lam})\bigr), P1_{\mu}\rangle_0&=\delta_{\lam,\mu}\langle \pi_C\bigl(T_q^{\lam}(\bi_{\lam},\bi_{\lam})\bigr), P\rangle_0,\\
\langle \iota_A\bigl(\pi_C\bigl(T_q^{\lam}(\bi_{\lam},\bi_{\lam})\bigr)\bigr), P1_{\mu}\rangle_1&=\delta_{\lam,\mu}\langle \iota_A\bigl(\pi_C\bigl(T_q^{\lam}(\bi_{\lam},\bi_{\lam})\bigr)\bigr), P\rangle_1.
\end{aligned}$$
Therefore, it suffices to show that $$
\pi_C\bigl(T_q^{\lam}(\bi_{\lam},\bi_{\lam})\bigr), P\rangle_0=
\langle \iota_A\bigl(\pi_C\bigl(T_q^{\lam}(\bi_{\lam},\bi_{\lam})\bigr)\bigr), P\rangle_1. $$
Using the PBW basis of $\U_{\Q(q)}$ and Lemma \ref{highestweight}, we can reduce to the proof to the case where $P$ is generated by $k_1^{\pm 1},\cdots,k_m^{\pm 1}$. In that case, the proof follows from an easy verification. This completes the proof of the Theorem.\end{proof}

We remark that each integer $k\geq 0$, the dual of the
$\U_{\Q(q)}\bigl(\mathfrak{g}\oplus\mathfrak{g}\bigr)$-module $A_q(\mathfrak{g})^{\leq k}$
together with the dual basis of its upper global crystal basis actually forms a based module
in the sense of \cite[27.1.2]{Lu3}.\medskip

Henceforth, we shall identify $A_q^{\Z}(\mathfrak{g})$ with
$\widetilde{A}_{\A}^{sy}(2m)$ via $\iota_A$. By Theorem \ref{coincide}, the quantized coordinate algebra
$A_q^{\Z}(\mathfrak{g})$ was
equipped with two bases. One is
$\bigl\{\widetilde{G}(b)\bigr\}_{\lam\in
X^{+},\,\,b\in\widetilde{B}(b)}$, another is
$\bigl\{\pi_C(T_q^{\lam}(\bi,\bj))\bigr\}_{\lam\in
X^{+},\,\,\bi,\bj\in I_{\lam}^{mys}}$. The transition matrix between
these two bases must be invertible as a matrix over $\A$. Combining
this with (\ref{dualbase}), we can find an $\A$-basis
$\bigl\{\widehat{G}_{\bi,\bj}^{\lam}\bigr\}_{\lam\in
X^{+},\,\,\bi,\bj\in I_{\lam}^{mys}}$ of $\dot{\U}_{\A}$, such that
\begin{equation}\label{newbase}
\langle\pi_C\bigl(T_q^{\mu}(\bk,\bl)\bigr),\widehat{G}_{\bi,\bj}^{\lam}\rangle=\begin{cases}
1, &\text{if $\lam=\mu,\bi=\bk,\bj=\bl$,}\\
0, &\text{otherwise.}
\end{cases}
\end{equation}
for any $\lam,\mu\in X^{+}$, $\bi,\bj\in I_{\lam}^{mys}$ and
$\bk,\bl\in I_{\mu}^{mys}$.
\bigskip\bigskip

\noindent {\bf Proof of Theorem \ref{mainthm1}:} For each integer
$0\leq l\leq [n/2]$ and each $\lam\in\Lambda^{+}(m,n-2l)$,
$\bi,\bj\in I_{\lam}^{mys}$, we use
$\Bigl(D_{\bi,\bj}^{\lam,l}\Bigr)^{\ast}$ to denote the base element
of $S^{sy}_{\A}(2m,n)$ dual to the base element $D_{\bi,\bj}^{\lam,l}$
of $A^{sy}_{\A}(2m,n)$. We have the following commutative diagram:
$$
\begin{CD}
\dot{\U}_{\Q(q)}
@>{\widetilde{\psi}_C}>>S^{sy}_{\Q(q)}(2m,n) @>{\id}>>S^{sy}_{\Q(q)}(2m,n)\\
@V{\text{$\widetilde{\iota}_U$}} VV  @.  @V{\wr}VV \\
\Bigl(\widetilde{A}^{sy}_{\Q(q)}(2m)\Bigr)^{\ast}
@>{\pi_{C}^{\ast}}>> \Bigl(A^{sy}_{\Q(q)}(2m)\Bigr)^{\ast} @>{}>>
\Bigl(A^{sy}_{\Q(q)}(2m,n)\Bigr)^{\ast}
\end{CD}
$$
By Theorem \ref{coincide}, (\ref{newbase}) and the fact $$ \langle
d_qf, \pi_C^{\ast}\widetilde{\iota}_U(u)\rangle_0=\langle \pi_C(f),
\widetilde{\iota}_U(u)\rangle_0,\quad\forall\,u\in\dot{\U}_{\Q(q)},f\in
A^{sy}_{\Q(q)}(2m),
$$
we deduce that $$
\widetilde{\psi}_{C}\bigl(\widehat{G}_{\bi,\bj}^{\lam}\bigr)=\begin{cases}
\Bigl(D_{\bi,\bj}^{\lam,l}\Bigr)^{\ast},&\text{if $|\lam|\leq n$ and $2l:=n-|\lam|$ is even;}\\
0, &\text{otherwise.}
\end{cases}
$$
In particular, this shows that
$\widetilde{\psi}_{C}\bigl(\dot{\U}_{\A}\bigr)=S^{sy}_{\A}(2m,n)$.
By base change, we know that for any commutative $\A$-algebra $K$,
$$\Bigl(\widetilde{\psi}_{C}\downarrow_{{\dot\U}_{\A}}\otimes_{\A}K\Bigr)\bigl(\dot{\U}_{K}\bigr)=S^{sy}_{K}(2m,n).$$
This completes the proof of Theorem \ref{mainthm1}.

\bigskip
\begin{cor} \label{kernelbasis} With the above notations, we have that $$\begin{aligned}
\Ker\bigl(\widetilde{\psi}_{C}\downarrow_{\dot{\U}_{\A}}\bigr)&=\text{$\A$-Span}\Bigl\{\widehat{G}_{\bi,\bj}^{\lam}\Bigm|\begin{matrix}
\text{$\bi,\bj\in I_{\lam}^{mys}$, $\lam\in\Lambda^{+}(m,k)$,
$k>n$}\\\text{or\,\,\,$k<n$ and $n-k$ is odd}\end{matrix}\Bigr\},\\
&=\text{$\A$-Span}\Bigl\{\widehat{G}(b)\Bigm|\begin{matrix}\text{$b\in\widetilde{B}(\lam),
\lam\in\Lambda^{+}(m,k)$, $k>n$}\\\text{or\,\,\,$k<n$ and $n-k$ is
odd}\end{matrix}\Bigr\}. \end{aligned}$$ As a result, this is still
true if we replace $\A$ by any commutative $\A$-algebra $K$.
\end{cor}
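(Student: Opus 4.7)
The plan is to realize $\Ker\bigl(\widetilde{\psi}_C\downarrow_{\dot{\U}_{\A}}\bigr)$ as the $\langle,\rangle_0$-annihilator in $\dot{\U}_{\A}$ of a specific $\A$-submodule $M_n$ of $A_q^{\Z}(\mathfrak{g})=\widetilde{A}_{\A}^{sy}(2m)$, and then to exhibit two different $\A$-bases of $M_n$ whose dual bases in $\dot{\U}_{\A}$ yield the two claimed descriptions of the kernel. The commutative diagram used in the proof of Theorem \ref{mainthm1} identifies $\widetilde{\psi}_C(u)$ with the functional $f\mapsto\langle\pi_C(f),u\rangle_0$ on $A^{sy}_{\Q(q)}(2m,n)$, so the kernel is exactly the annihilator of $M_n:=\pi_C\bigl(A^{sy}_{\A}(2m,n)\bigr)$. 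Setting $\Omega_n:=\{\lam\in X^{+}:|\lam|\leq n\text{ and }n-|\lam|\in 2\Z^{\geq 0}\}$, Corollary \ref{Doc} together with $\pi_C(d_q)=1$ shows that $M_n$ is $\A$-free on $\{\pi_C(T_q^{\lam}(\bi,\bj)):\lam\in\Omega_n,\,\bi,\bj\in I_{\lam}^{mys}\}$.

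The first equality is then immediate from the dual-basis relation (\ref{newbase}): $\widehat{G}_{\bi,\bj}^{\lam}$ pairs non-trivially only with $\pi_C(T_q^{\lam}(\bi,\bj))$, so it annihilates $M_n$ precisely when $\pi_C(T_q^{\lam}(\bi,\bj))\notin M_n$, i.e.\ when $\lam\notin\Omega_n$.

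For the second equality, the task reduces to showing that $\{\widetilde{G}(b):b\in\widetilde{B}(\lam),\,\lam\in\Omega_n\}$ is also an $\A$-basis of $M_n$. Lemma \ref{basemodule1}(1) places $\widetilde{G}(b)$ for $b\in\widetilde{B}(\lam)$ inside $\bigoplus_{\mu\leq\lam}V^{r}(\mu)\otimes V(\mu)$. Now the Bruhat relation $\mu\leq\lam$ means $\lam-\mu\in\sum\Z^{\geq 0}\alpha_i$; since $|\alpha_i|=0$ for $i<m$ and $|\alpha_m|=2$, this forces $|\lam|-|\mu|\in 2\Z^{\geq 0}$, so $\lam\in\Omega_n$ implies $\mu\in\Omega_n$. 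Hence every such $\widetilde{G}(b)$ lies in $M_n$. A rank comparison, using $|I_{\lam}^{mys}|^2=(\dim V(\lam))^2=|\widetilde{B}(\lam)|$, shows that the number of such $\widetilde{G}(b)$ equals the $\A$-rank of $M_n$; the unitriangularity asserted in Lemma \ref{basemodule1}(1), combined with the direct-sum decomposition of $A_q(\mathfrak{g})$ into Peter--Weyl components, yields $\A$-linear independence. Taking dual basis in $\dot{\U}_{\A}$ gives the second description.

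The last assertion (validity over an arbitrary commutative $\A$-algebra $K$) is a formality: the kernel, being spanned by a subset of an $\A$-basis of $\dot{\U}_{\A}$, is a direct $\A$-summand and therefore commutes with base change. The main obstacle I foresee is the compatibility argument in the third paragraph: the bideterminant basis of $M_n$ is indexed by the homogeneous degree of the FRT algebra, while the upper global crystal basis is controlled by the Bruhat order, and one must verify that both perspectives carve out the same subset $\Omega_n$. This ultimately reduces to the type-$C$ fact that $|\alpha_i|\in\{0,2\}$ for every simple root, so that Bruhat-lower weights preserve both the size constraint $|\lam|\leq n$ and the parity constraint $n-|\lam|\in 2\Z$.
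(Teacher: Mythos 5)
Your proposal is correct in substance but follows a genuinely different route from the paper's. The paper dispatches the first equality by reading off the formula $\widetilde{\psi}_C\bigl(\widehat{G}^{\lam}_{\bi,\bj}\bigr)=\bigl(D^{\lam,l}_{\bi,\bj}\bigr)^{\ast}$ (or $0$) established at the end of the proof of Theorem \ref{mainthm1}, exactly as you do through the pairing; but for the second equality it factors $\widetilde{\psi}_C$ through Doty's generalized $q$-Schur algebra $\lsub{\bS}{\A}(\pi)$ with $\pi=\Omega_n$, shows by a dimension count (using Theorem \ref{mainthm1}) that the induced map $\lsub{\bS}{\A}(\pi)\to S^{sy}_{\A}(2m,n)$ is an isomorphism, and then appeals to the definition of $\lsub{\bS}{\A}(\pi)$ in \cite{Dt2} to identify the kernel with the canonical-basis span. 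Your argument instead realizes $\Ker\bigl(\widetilde{\psi}_C\downarrow_{\dot{\U}_{\A}}\bigr)$ intrinsically as the $\langle\,,\,\rangle_0$-annihilator of $M_n=\pi_C\bigl(A^{sy}_{\A}(2m,n)\bigr)$ and compares two bases of $M_n$. This is more self-contained (no reliance on \cite{Dt2}), at the cost of having to reconcile the bideterminant indexing with the crystal indexing via the Bruhat-order/parity observation on $|\alpha_i|$, which is a pleasant and correct piece of bookkeeping.

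There is one place where you elide a step that genuinely needs to be said. You show that $\widetilde{G}(b)$ with $b\in\widetilde{B}(\lam)$, $\lam\in\Omega_n$, lies in $M_n\otimes_{\A}\Q(q)$ (via Lemma \ref{basemodule1}(1) and the Bruhat-parity argument) and that the number of such $b$ equals the $\A$-rank of $M_n$; but rank equality plus $\A$-linear independence does \emph{not} by itself give $\A$-spanning (consider $\{q e_1, q e_2\}\subset\A^2$). What saves the argument, and must be stated, is that $M_n$ is a \emph{direct $\A$-summand} of $A_q^{\Z}(\mathfrak{g})$, being the span of a subset of the bideterminant $\A$-basis: hence $M_n=\bigl(M_n\otimes_{\A}\Q(q)\bigr)\cap A_q^{\Z}(\mathfrak{g})$. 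This purity gives both (i) $\widetilde{G}(b)\in M_n$, not merely $M_n\otimes\Q(q)$, and (ii) that the free $\A$-submodule $N_n$ they span, also a direct summand (being spanned by a subset of the crystal $\A$-basis), satisfies $N_n=\bigl(N_n\otimes\Q(q)\bigr)\cap A_q^{\Z}(\mathfrak{g})=\bigl(M_n\otimes\Q(q)\bigr)\cap A_q^{\Z}(\mathfrak{g})=M_n$. With this one observation added, your proposal is complete and correct, and your base-change remark at the end is likewise correct.
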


\begin{proof} It suffices to prove the second statement. Let $\pi$ be the set of dominant weights in $V^{\otimes n}$.
Let $\lsub{\bS}{\Q(q)}(\pi)$ be the generalized Schur algebra associated to $\pi$ defined by Doty \cite{Dt2}. Then it is easy to check that the homomorphism $\widetilde{\psi}_C$ factors through $\lsub{\bS}{\Q(q)}(\pi)$. Let $\lsub{\bS}{\A}(\pi)$ be the
$\A$-form of $\lsub{\bS}{\Q(q)}(\pi)$ defined in \cite{Dt2}. For any field $K$ which is an $\A$-algebra, let
$\lsub{\bS}{K}(\pi):=\lsub{\bS}{\A}(\pi)\otimes_{\A}K$. Applying Theorem \ref{mainthm1} and comparing dimensions we deduce that the natural homomorphism from $\lsub{\bS}{K}(\pi)$ to $\End_{\bb_n(\zeta^{2m+1},\zeta)}\bigl(V_K^{\otimes n}\bigr)$ is an isomorphism. So the same is true if we replace $K$ by $\A$. Now the second statement follows directly from
the definition of $\lsub{\bS}{\A}(\pi)$ given in \cite{Dt2}.
\end{proof}

Note that in the proof of the above corollary, we have also given a proof of
Corollary \ref{maincor}.

\bigskip\bigskip
\bigskip
\section{Proof of Theorem \ref{mainthm2} in the case where $m\geq n$}

The purpose of this and the next section is to give a proof of
Theorem \ref{mainthm2}. Before starting the proof, we make some
reduction. By the results in \cite{Oe2}, we know that the symplectic
$q$-Schur algebra is stable under base change. That is, for any
commutative $\A$-algebra $K$, there is a canonical isomorphism
$$ S_{\A}^{sy}(2m,n)\otimes_{\A}K\cong S_{K}^{sy}(2m,n).
$$
Furthermore, $S_{\A}^{sy}(2m,n)$ is an integral quasi-hereditary
algebra. For any field $K$ which is an $\A$-algebra,
$V_K\cong\Delta_K(\varepsilon_1)\cong\nabla_K(\varepsilon_1)\cong
L_K(\varepsilon_1)$ is a tilting module over $S_{K}^{sy}(2m,n)$. It
follows that $V_{K}^{\otimes n}$ is also a tilting module over
$S_{K}^{sy}(2m,n)$. Applying Theorem \ref{mainthm1} and using
\cite[Lemma 4.4 (c)]{DPS}, we get that
$$\begin{aligned} \End_{\U_{\A}}\Bigl(V_{\A}^{\otimes
n}\Bigr)\otimes_{\A}K&=
\End_{S_{\A}^{sy}(2m,n)}\Bigl(V_{\A}^{\otimes
n}\Bigr)\otimes_{\A}K\\
&\cong\End_{S_{K}^{sy}(2m,n)}\Bigl(V_{K}^{\otimes
n}\Bigr)=\End_{\U_{K}}\Bigl(V_{K}^{\otimes n}\Bigr).
\end{aligned}$$
In other words, the endomorphism algebra
$\End_{\U_{K}}\Bigl(V_{K}^{\otimes n}\Bigr)$ is stable under base
change. Therefore, to prove Theorem \ref{mainthm2}, it suffices to
show that the natural homomorphism from
$(\bb_n(-q^{2m+1},q)_{\A})^{\rm op}$ to
$\End_{\U_{\A}}\Bigl(V_{\A}^{\otimes n}\Bigr)$ is surjective.
Equivalently, it suffices to prove this is true with $\A$ replaced
by any field $K$ which is an $\A$-algebra.\smallskip

In this section we shall give a proof of Theorem \ref{mainthm2} in
the case where $m\geq n$. {\it Henceforth, we shall assume that $K$
is field, and $\zeta$ is the image of $q$ in $K$ and $m\geq n$.}
Note that, in this case, by \cite[Proposition 4.2]{Ha},
$$\begin{aligned}
&\quad\dim\End_{\U_{K}}\Bigl(V_{K}^{\otimes
n}\Bigr)=\End_{\U_{\Q(q)}}\Bigl(V_{\Q(q)}^{\otimes n}\Bigr)\\
&=\sum_{\substack{0\leq f\leq [n/2]\\ \lam\vdash n-2f}}\Bigl(\dim
D(\lam^t)\Bigr)^{2}=\dim\bb_n(-q^{2m+1},q)=\dim\bb_n(-\zeta^{2m+1},\zeta).
\end{aligned}
$$
Therefore, in order to prove Theorem
\ref{mainthm2} in the case $m\geq n$, it suffices to show that
$\varphi_C$ is injective. \smallskip

Our strategy to prove the injectivity of $\varphi_{C}$ is similar to
that used in \cite[Section 3]{DDH}, but some extra technical
difficulties do arise due to the complexity of the action on
$n$-tensor space in this quantized case. First, we make some
convention on the left and right place permutation actions.
Throughout the rest of this paper, for any $\sigma,\tau\in\BS_n,
a\in\{1,2,\cdots,n\}$, we set
$$ (a)(\sigma\tau)=\bigl((a)\sigma\bigr)\tau,\quad
(\sigma\tau)(a)=\sigma\bigl(\tau(a)\bigr).
$$
In particular, we have $\sigma(a)=(a)\sigma^{-1}$. Therefore, for
any $\bi=(i_1,i_2,\cdots,i_n)\in I(2m,n), w\in\BS_n$,
$$
\bi w=(i_1,i_2,\cdots,i_n)w=(i_{w(1)},i_{w(2)},\cdots,i_{w(n)}), $$
which gives the so-called right place permutation action: $$
v_{\bi}w=(v_{i_1}\otimes\cdots\otimes v_{i_n})w=
v_{i_{w(1)}}\otimes\cdots\otimes v_{i_{w(n)}}=v_{\bi w}.
$$
For each $w\in\BS_n$, the element $T_w$ (resp., $\widehat{T}_w$) is
well defined in the BMW algebra $\bb_n(-\zeta^{2m+1},\zeta)$ (resp.,
in the Hecke algebra $\HH_{K}(\BS_n)$) because of the braid
relations. Precisely,
$$ T_w=T_{j_1}T_{j_2}\cdots T_{j_k}\in \bb_n(-\zeta^{2m+1},\zeta),\quad
\widehat{T}_w=\widehat{T}_{j_1}\widehat{T}_{j_2}\cdots
\widehat{T}_{j_k}\in \HH_{K}(\BS_n),
$$
for any reduced expression $s_{j_1}s_{j_2}\cdots s_{j_k}$ of $w$.

Set $$\begin{aligned} \widehat{\beta} &:=\sum_{1\leq i\leq
2m}\Bigl(qE_{i,i}\otimes E_{i,i}\Bigr)+\sum_{\substack{1\leq
i,j\leq 2m\\ i\neq j}} E_{i,j}\otimes E_{j,i}+\\
&\qquad\qquad\qquad\qquad (q-q^{-1})\sum_{1\leq i<j\leq
2m}\Bigl(E_{i,i}\otimes E_{j,j}\Bigr).
\end{aligned}
$$
For $i=1,2,\cdots,n-1$, we set $$ \widehat{\beta}_i:=\id_{V^{\otimes
i-1}}\otimes\widehat{\beta}\otimes\id_{V^{\otimes n-i-1}}.
$$
By \cite{J2}, the map $\widehat{\varphi}$ which sends each
$\widehat{T}_i$ to $\widehat{\beta}_i$ for $i=1,2,\cdots.n-1$ can be
naturally extended to a representation of $\HH_{\A}(\BS_n)$ on
$V_{\A}^{\otimes n}$.

\begin{lem} \label{obser2} Let $\bi=(i_1,i_2,\cdots,i_n)\in
I(2m,n)$. Suppose that $i_j\neq i'_k$ for any $1\leq j,k\leq n$.
Then for any $w\in\BS_n$, $$ v_{\bi}T_w=v_{\bi}\widehat{T}_w ;
$$
if furthermore $i_1>i_2>\cdots>i_n$, then $ v_{\bi}T_w=v_{\bi w}.$
\end{lem}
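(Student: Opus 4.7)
The plan is a direct comparison of matrix coefficients of $\beta'$ and $\widehat{\beta}$. Inspecting the two explicit formulas recorded in Section~3, the difference $\beta'-\widehat{\beta}$ decomposes as a sum of operators each of the form $c\,E_{i,j'}\otimes E_{i',j}$ for some coefficient $c\in\A$ (specifically, the surviving $q^{-1}E_{i,i'}\otimes E_{i',i}$ terms minus the $E_{i,i'}\otimes E_{i',i}$ contributions picked up by $\widehat{\beta}$, together with the $(q-q^{-1})q^{\rho_j-\rho_i}\epsilon_i\epsilon_j E_{i,j'}\otimes E_{i',j}$ correction in $\beta'$). Applied to a pure tensor $v_a\otimes v_b$, such an operator produces $c\,\delta_{a,j'}\delta_{b,j}\,v_i\otimes v_{i'}$, which vanishes unless $a=b'$. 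Under the hypothesis $i_j\neq i'_k$ for all $j,k$, every adjacent pair $(i_s,i_{s+1})$ satisfies $i_s\neq i'_{s+1}$, hence $\beta'_s(v_{\bi})=\widehat{\beta}_s(v_{\bi})$ for each $s$, which is the single-simple-reflection version of the first identity.

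To upgrade this to $v_{\bi}T_w=v_{\bi}\widehat{T}_w$ for arbitrary $w\in\BS_n$, I will induct on $\ell(w)$, fixing a reduced expression $w=s_{j_1}\cdots s_{j_\ell}$ so that $v_{\bi}T_w$ is computed by iteratively applying $\beta'_{j_1},\ldots,\beta'_{j_\ell}$. The observation that makes the induction go through is the following stability property, read off directly from the formula for $\widehat{\beta}$: for any $\bj$, $\widehat{\beta}_k(v_{\bj})$ is a linear combination of $v_{\bj}$ and $v_{\bj s_k}$, both of which carry the same multiset of entries as $\bj$. Therefore the prime-free condition is preserved under each application of $\widehat{\beta}_k$, and the first paragraph allows the successive replacement of each $\beta'_{j_s}$ by $\widehat{\beta}_{j_s}$ at every stage of the iterated application.

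For the second statement, given the first, it suffices to prove $v_{\bi}\widehat{T}_w=v_{\bi w}$ when $i_1>\cdots>i_n$. I again induct on $\ell(w)$, writing $w=w's_k$ with $\ell(w')=\ell(w)-1$. By the inductive hypothesis $v_{\bi}\widehat{T}_{w'}=v_{\bi w'}$; the reducedness of $w's_k$ forces $w'(k)<w'(k+1)$, and the strictly decreasing hypothesis on $\bi$ then gives $i_{w'(k)}>i_{w'(k+1)}$ at the $k$th and $(k+1)$th positions of $\bi w'$. On such a strictly decreasing adjacent pair the formula for $\widehat{\beta}$ collapses to the pure swap $v_a\otimes v_b\mapsto v_b\otimes v_a$ (the $(q-q^{-1})$-correction term contributes only when the entries are in increasing order), whence $\widehat{\beta}_k(v_{\bi w'})=v_{(\bi w')s_k}=v_{\bi w}$. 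No serious obstacle is anticipated: the argument is essentially a careful bookkeeping of matrix coefficients, with the only conceptual point being the recognition that the symplectic-specific correction terms of $\beta'$ (those not present in $\widehat{\beta}$) all vanish uniformly on vectors indexed by prime-free multi-indices, and that this class of vectors is preserved by subsequent applications of the braid operators.
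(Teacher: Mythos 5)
Your argument is correct and is in substance the same "direct verification from the explicit formulas for $\beta'$ and $\widehat{\beta}$" that the paper gestures at with its one-line proof; you have simply written out the bookkeeping the paper leaves implicit. The two key observations you isolate — that every term of $\beta'-\widehat{\beta}$ has the shape $c\,E_{i,j'}\otimes E_{i',j}$ and hence annihilates $v_a\otimes v_b$ unless $a=b'$, and that $\widehat{\beta}_k(v_{\bj})$ is supported on $v_{\bj}$ and $v_{\bj s_k}$ so the prime-free condition on the index propagates through the iterated right action — are exactly what make the induction on $\ell(w)$ close, and your treatment of the strictly decreasing case via $w'(k)<w'(k+1)\Rightarrow i_{w'(k)}>i_{w'(k+1)}$ and the observation that the $(q-q^{-1})$-term of $\widehat{\beta}$ is absent for a decreasing adjacent pair is also right.
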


\begin{proof} This follows directly from the definition of action (see the formulae
given above (\ref{BG})).
\end{proof}

Let $q$ be an indeterminate over $\mathbb{Z}$. Let
$\widetilde{R}$ be the ring
$$ \widetilde{R}:=\mathbb{Z}[r,r,^{-1},q,q^{-1},x]/\bigl((1-x)(q-q^{-1})+(r-r^{-1})\bigr).
$$
$\widetilde{R}$ naturally becomes an $R$-algebra (with $z$ acting as
$q-q^{-1}$). We regard $\A$ as an $\widetilde{R}$-algebra by sending $r$ to
$-q^{2m+1}$ and $x$ to $1-\sum_{i=-m}^{m}q^{2i}$. The resulting
$\A$-algebra is exactly $\bb_n(-q^{2m+1},q)_{\A}$. We refer the reader to the beginning of Section 3 to understand
how $\A$ is an $R$-algebra. Let
$\bb_{n}(r,q):=\bb_n(r,x,z)\otimes_{R}\widetilde{R}$.
In \cite{E}, a cellular basis
for $\bb_n(r,q)$ indexed by certain bitableaux was constructed by
Enyang. The advantage of that basis is that it is explicitly
described in terms of generators and amenable to computation. In the
remaining part of this section we shall use Enyang's results in
\cite{E}. We first recall some notations and notions.\smallskip

For each natural number $n$ and each integer $f$ with $0\leq f\leq
[n/2]$, we set $\nu=\nu_{f}:=((2^f), (n-2f))$, where
$(2^f):=(\underbrace{2,2,\cdots,2}_{\text{$f$ copies}})$ and
$(n-2f)$ are considered as partitions of $2f$ and $n-2f$
respectively. So $\nu$ is a bipartition of $n$. Let $\ft^{\nu}$ be
the standard $\nu$-bitableau in which the numbers $1,2,\cdots,n$
appear in order along successive rows of the first component
tableau, and then in order along successive rows of the second
component tableau. We define
$$
\mathcal{D}_{\nu}:=\Bigl\{d\in\BS_n\Bigm|\begin{matrix}\text{$\ft^{\nu}d$
is row standard and the first column of $\ft^{(1)}$ is an}\\
\text{increasing sequence when read from top to bottom}\\
\end{matrix}\Bigr\}.
$$
For each partition $\lambda$ of $n-2f$, we denote by $\Std(\lambda)$
the set of all the standard $\lambda$-tableaux with entries in
$\{2f+1,\cdots,n\}$. The initial tableau $\ft^{\lam}$ in this case
has the numbers $2f+1,\cdots,n$ in order along successive rows.

\begin{lem} \label{lm52} Let $d\in\mathcal{D}_{\nu_f}$. Assume that $d=d's_j$ with $\ell(d)=\ell(d')+1$, where $1\leq j\leq n-1$.
Then $d'\in\mathcal{D}_{\nu_f}$.
\end{lem}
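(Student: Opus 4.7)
The plan is to work with the inverse permutation $e := d^{-1}$, for which the defining conditions of $\mathcal{D}_{\nu_f}$ translate directly into monotonicity conditions on the tableau $\ft^{\nu_f} d$: the rows are strictly increasing (i.e., $e(2i-1) < e(2i)$ for $1 \le i \le f$, and $e(2f+1) < \cdots < e(n)$), and the first column of $\ft^{(1)}$ is strictly increasing ($e(1) < e(3) < \cdots < e(2f-1)$). Under $d' = d s_j$ one computes $e' := (d')^{-1} = s_j e$, so $e'$ is obtained from $e$ by interchanging the two values $j$ and $j+1$ wherever they appear, and the length-decrease $\ell(d) = \ell(d')+1$ translates into $e^{-1}(j) > e^{-1}(j+1)$, namely the position of $j$ in $e$ is later than that of $j+1$.

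The strategy is then a case analysis on the locations $a := e^{-1}(j)$ and $b := e^{-1}(j+1)$ within the Young structure of $\nu_f$. Row-standardness of $e$ immediately rules out the case in which $a$ and $b$ lie in the same row, since the row ordering would force $a < b$, opposite to the length condition. The remaining configurations place $a$ and $b$ in different rows, and for each one checks that the row-standardness and first-column inequalities for $e'$ follow from the corresponding inequalities for $e$ together with the length-decrease. In several subconfigurations the defining conditions of $e$ are in fact already incompatible with the assumed locations of $a$ and $b$ (so those cases are vacuous), while in the remaining subconfigurations the swap produces a new tableau whose rows and first column are verified to be strictly increasing by a short chain of the known inequalities for $e$.

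The main obstacle is the combinatorial branching of the case analysis: one must examine separately the cases where both $a, b$ lie in the first component, where one of them lies in the second component, and where $b$ occupies a first-column position of $\ft^{(1)}$, deducing in each case the inequalities required for $e'$ from those for $e$. The key observation that makes everything work is the interplay between the first-column condition and the row-standardness for $e$ — the former controls the first-column values across rows, the latter controls values within each row, and together they force a contradiction in the potentially troublesome subcases where $a$ and $b$ straddle the first column of consecutive rows.
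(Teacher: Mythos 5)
Your proposal is correct, and it takes essentially the same route as the paper's own proof. The paper phrases the argument in terms of the positions of the entries $j$ and $j+1$ in the bitableau $\ft^{\nu_f}d'$ (noting that $(j)(d')^{-1}<(j+1)(d')^{-1}$, then ruling out the same-row case and the both-in-first-column case, then checking the remaining configurations), while you rephrase the same case analysis in terms of $e=d^{-1}$ and the positions $a=e^{-1}(j)$, $b=e^{-1}(j+1)$ with $a>b$ — a straightforward reparametrization. The paper is considerably terser (e.g., treating the different-components case as immediate and not spelling out the inequalities you enumerate), but the underlying argument is the same.
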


\begin{proof} Since $d=d's_j$ and $\ell(d)=\ell(d')+1$, we get
$(j)(d')^{-1}<(j+1)(d')^{-1}$. It follows that $j, j+1$ can not both
sit in the second component of $\ft^{\nu} d'$. If $j, j+1$ sits in
different components of $\ft^{\nu} d'$, then the lemma follows
immediately. So it suffices to consider the case where both $j, j+1$
sits in the first component of $\ft^{\nu}d'$. But
$d\in\mathcal{D}_f$, we deduce that $j$, $j+1$ must be located in
different rows and can not be both located in the first column of
$\ft^{(2^f)}d'$, which implies that $d'\in\mathcal{D}_{\nu_f}$ (as
$\ft^{\nu}d'$ and $\ft^{\nu}d$ differ only in the positions of $j,
j+1$).
\end{proof}

Recall that (cf. \cite{E}) the map $T_i\mapsto T_{i}, E_i\mapsto E_i, \forall\,1\leq
i\leq n-1$ extends naturally to an algebra anti-automorphism of
$\bb_n(-q^{2m+1},q)_{\A}$. We denote this anti-automorphism by
``$\ast$".

\begin{lem} \text{\rm (\cite{E})} \label{Enyangbasis} For each $\lam\vdash n-2f$, $\fs,
\ft\in\Std(\lam)$, let $m_{\fs,\ft}$ denote the canonical image in
$\bb_n(-q^{2m+1},q)_{\A}$ of the corresponding Murphy basis element
(cf. \cite{Mu}) of the Hecke algebra
$\HH_{\A}(\BS_{\{2f+1,\cdots,n\}})$. Then the set
$$ \biggl\{T_{d_1}^{\ast}E_1E_3\cdots
E_{2f-1}m_{\fs\ft}T_{d_2}\biggm|\begin{matrix}
\text{$0\leq f\leq [n/2]$, $\lam\vdash n-2f$, $\fs, \ft\in\Std(\lam)$,}\\
\text{$d_1, d_2\in\mathcal{D}_{\nu}$, where $\nu:=((2^f), (n-2f))$}
\end{matrix}\biggr\}$$
is a cellular basis of the BMW algebra $\bb_n(-q^{2m+1},q)_{\A}$.
\end{lem}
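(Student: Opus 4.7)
The plan is to follow the strategy of \cite{E}: first reduce any element of $\bb_n(-q^{2m+1},q)_{\A}$ to a ``sandwich'' of the form $T_{d_1}^{\ast}E_1E_3\cdots E_{2f-1}\,h\,T_{d_2}$ with $d_1,d_2\in\mathcal{D}_{\nu_f}$ and $h$ in a parabolic Hecke subalgebra, and then expand $h$ in Murphy's cellular basis. As a sanity check, note $|\mathcal{D}_{\nu_f}|=\binom{n}{2f}(2f-1)!!$ (the factor $(2f-1)!!$ recording the unordered pairings encoded in the first column of $\ft^{(2^f)}d$, and $\binom{n}{2f}$ the choice of paired indices), so the proposed indexing set has cardinality
\[
\sum_{f=0}^{[n/2]}\binom{n}{2f}^{\!2}\bigl((2f-1)!!\bigr)^{2}(n-2f)!\;=\;(2n-1)!!,
\]
which by Morton--Wassermann \cite{MW} is the $\A$-rank of $\bb_n(-q^{2m+1},q)_{\A}$. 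Hence linear independence will be automatic from spanning.

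For spanning I would induct on $n$ and on the length of a word in the generators. The BMW relations $E_iT_i=T_iE_i=r^{-1}E_i$, $E_iT_{i\pm1}E_i=rE_i$, $E_iE_{i\pm1}E_i=E_i$, $T_i-T_i^{-1}=z(1-E_i)$, together with the braid relations, let me push any monomial into a form in which all $E_i$'s are clustered in the middle. Any such cluster can then be rewritten, modulo terms carrying more $E$'s (i.e., living in a strictly higher cell), as a conjugate of the ``parallel cups'' $E_1E_3\cdots E_{2f-1}$; the distinguished coset representatives from $\mathcal{D}_{\nu_f}$ (which precisely encode the unordered pairing of cup-positions together with the increasing order of the $n-2f$ through-strands) pin down unique sandwich representatives, and Lemma \ref{lm52} closes the induction by ensuring that shortening $d_2$ by a simple reflection keeps the result inside $\mathcal{D}_{\nu_f}$. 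The residual middle piece $h$ acts only on the through-strands and therefore lies in a copy of $\HH_{\A}(\BS_{\{2f+1,\dots,n\}})$, so applying Murphy's basis $\{m_{\fs\ft}\}$ completes the spanning step.

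Finally, cellularity must be verified. The anti-involution $\ast$ sends the element indexed by $(f,\lam,\fs,\ft,d_1,d_2)$ to the one indexed by $(f,\lam,\ft,\fs,d_2,d_1)$, using $E_i^{\ast}=E_i$, the pairwise commutation of the $E_{2i-1}$'s, and $m_{\fs\ft}^{\ast}=m_{\ft\fs}$ for Murphy's basis. The main obstacle is the multiplication axiom: left multiplication by an arbitrary $a\in\bb_n(-q^{2m+1},q)_{\A}$ must act only on the pair $(d_1,\fs)$ modulo the ideal spanned by basis elements with either strictly larger $f$, or with the same $f$ and strictly larger $\lam$ in the dominance order. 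To verify this one commutes an incoming $T_i$ or $E_i$ past $T_{d_1}^{\ast}$ and then resolves its collision with $E_1E_3\cdots E_{2f-1}$; generic collisions stay within the same cell via standard Hecke identities, whereas non-generic ones create extra $E$'s and therefore land in a strictly higher cell (thanks to $T_i-T_i^{-1}=z(1-E_i)$, $E_iT_{i\pm1}E_i=rE_i$, and $E_i^{2}=xE_i$). The careful bookkeeping of these collision cases, together with control of the powers of $r,z,x$ produced, is Enyang's main technical contribution in \cite{E} and is where I expect the bulk of the work.
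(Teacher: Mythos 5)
The paper does not prove this lemma --- it is stated as a citation to Enyang \cite{E} and used as a black box --- so there is no in-paper argument to compare against. Your reconstruction of Enyang's approach is sound as an outline. The count is correct: an element of $\mathcal{D}_{\nu_f}$ is determined by the $2f$-element subset forming the first component together with a partition of that subset into unordered pairs (row-standardness orders each pair, the increasing-first-column condition orders the rows, and the single-row second component is forced), so $|\mathcal{D}_{\nu_f}|=\binom{n}{2f}(2f-1)!!$ and the proposed indexing set has $\sum_f\binom{n}{2f}^2\bigl((2f-1)!!\bigr)^2(n-2f)!=(2n-1)!!$ elements, matching the Morton--Wassermann rank. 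Your ``spanning implies basis'' step is valid over $\A$, since a surjective endomorphism of a finitely generated module over a commutative ring is an isomorphism, so a spanning set of the right cardinality in a free module is automatically a basis. The spanning and cellularity sketches correctly isolate what has to be verified (normalization to the sandwich form with $d_1,d_2\in\mathcal{D}_{\nu_f}$, expansion of the through-strand piece in Murphy's basis, the $\ast$-symmetry, and the one-sided multiplication axiom modulo the span of terms with larger $f$ or dominance-larger $\lam$). The collision bookkeeping you flag as the hard part is carried out in detail in \cite{E}, and deferring it is appropriate since the paper itself does the same.
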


As a consequence, by combining Lemma \ref{Enyangbasis} and
\cite[(3.3)]{E}, we have

\begin{cor} \label{Enyangbasis2} With the above notations, the set
$$ \biggl\{T_{d_1}^{\ast}E_1E_3\cdots E_{2f-1}T_{\sigma}
T_{d_2}\biggm|\begin{matrix}\text{$0\leq f\leq [n/2]$,
$\sigma\in\BS_{\{2f+1,\cdots,n\}}$, $d_1, d_2\in\mathcal{D}_{\nu}$,}\\
\text{where $\nu:=((2^f), (n-2f))$}
\end{matrix}\biggr\}$$
is a basis of the BMW algebra $\bb_n(-q^{2m+1},q)_{\A}$.
\end{cor}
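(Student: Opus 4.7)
The plan is to deduce the claim from Lemma~\ref{Enyangbasis} by a change of basis inside the Hecke subalgebra on each ``middle slot''. By Lemma~\ref{Enyangbasis}, the set
$$
\Bigl\{T_{d_1}^{\ast}E_1E_3\cdots E_{2f-1}m_{\fs\ft}T_{d_2}\Bigr\}
$$
with $f,\fs,\ft,d_1,d_2$ ranging as indicated, is an $\A$-basis of $\bb_n(-q^{2m+1},q)_{\A}$. For each fixed $f$, the inner factor $m_{\fs\ft}$ (as $\fs,\ft$ range over $\Std(\lam)$ and $\lam\vdash n-2f$) is by construction the Murphy basis of the Hecke subalgebra $\HH_{\A}(\BS_{\{2f+1,\ldots,n\}})$. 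On the other hand, the standard basis of the same Hecke algebra is $\{T_\sigma\mid\sigma\in\BS_{\{2f+1,\ldots,n\}}\}$, and the Murphy theorem asserts that the two bases are related by an $\A$-invertible (in fact upper unitriangular) transition matrix.

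First I would recall the embedding $\HH_{\A}(\BS_{\{2f+1,\ldots,n\}})\hookrightarrow \bb_n(-q^{2m+1},q)_{\A}$ via $\widehat{T}_\sigma\mapsto T_\sigma$, which is an algebra monomorphism by the braid relations in Definition~\ref{bmw}. Consequently the $\A$-linear map
$$
\Theta_{f,d_1,d_2}:\HH_{\A}(\BS_{\{2f+1,\ldots,n\}})\longrightarrow\bb_n(-q^{2m+1},q)_{\A},\qquad h\longmapsto T_{d_1}^{\ast}E_1E_3\cdots E_{2f-1}\,h\,T_{d_2},
$$
is well defined, and Enyang's basis is exactly the disjoint union over $(f,d_1,d_2)$ of the images $\Theta_{f,d_1,d_2}(m_{\fs\ft})$. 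The candidate set in the corollary is the disjoint union of $\Theta_{f,d_1,d_2}(T_\sigma)$.

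Second, since within each block $(f,d_1,d_2)$ the transition from $\{m_{\fs\ft}\}$ to $\{T_\sigma\}$ is $\A$-invertible, the corresponding blocks of the candidate set span exactly the same $\A$-submodule as Enyang's basis blocks, and are linearly independent. Summing over $(f,d_1,d_2)$ and comparing cardinalities,
$$
\sum_{f=0}^{[n/2]}|\mathcal{D}_{\nu_f}|^{2}\sum_{\lam\vdash n-2f}|\Std(\lam)|^{2}=\sum_{f=0}^{[n/2]}|\mathcal{D}_{\nu_f}|^{2}(n-2f)!=(2n-1)!!,
$$
so both sets have the same size and the new set is also an $\A$-basis.

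The only delicate point is ensuring that the block decomposition of Enyang's basis really does admit the replacement of $m_{\fs\ft}$ by $T_\sigma$ without cross-block mixing; this is guaranteed precisely by \cite[(3.3)]{E}, which tells us that the cellular filtration of $\bb_n(-q^{2m+1},q)_{\A}$ underlying Lemma~\ref{Enyangbasis} restricts on each middle slot to the Murphy cellular filtration of $\HH_{\A}(\BS_{\{2f+1,\ldots,n\}})$, so the transition matrix from $\{m_{\fs\ft}\}$ to $\{T_\sigma\}$ lifts to a block-diagonal $\A$-invertible change of basis on $\bb_n(-q^{2m+1},q)_{\A}$. Modulo this compatibility, the rest of the argument is routine.
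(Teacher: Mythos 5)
Your proposal is correct and follows essentially the same route as the paper, which disposes of this corollary by citing Lemma~\ref{Enyangbasis} together with Enyang's (3.3) for the unitriangular change of basis in the ``middle slot.'' Two small remarks.

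First, the claim that $\widehat{T}_\sigma\mapsto T_\sigma$ defines an \emph{algebra} monomorphism from $\HH_{\A}(\BS_{\{2f+1,\ldots,n\}})$ into $\bb_n(-q^{2m+1},q)_{\A}$ is not correct. The braid relations make $T_\sigma$ well defined, but the BMW generator $T_i$ does not satisfy the Hecke quadratic relation: from Definition~\ref{bmw}(1) and (7) one gets $T_i^2=1+zT_i-zr^{-1}E_i$, which has the extra term $-zr^{-1}E_i$. So the map is only a well-defined $\A$-linear isomorphism onto the $\A$-span of $\{T_\sigma\}$ (i.e.\ the ``canonical image'' of the paper), not a ring map. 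Fortunately your argument only ever uses $\A$-linearity, both in transporting the Murphy transition matrix and in sandwiching by $T_{d_1}^{\ast}E_1E_3\cdots E_{2f-1}(\cdot)T_{d_2}$, so nothing breaks; you should just not phrase it as an embedding of algebras.

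Second, the ``cross-block mixing'' you worry about at the end cannot occur by construction: the change of basis from $\{m_{\fs\ft}\}$ to $\{T_\sigma\}$ is performed \emph{inside} the Hecke algebra before applying the block-indexed linear map $\Theta_{f,d_1,d_2}$, so the resulting transition matrix on $\bb_n(-q^{2m+1},q)_{\A}$ is automatically block-diagonal with each block the (unitriangular, hence $\A$-invertible) Murphy matrix. Once this is observed, the cardinality count becomes a sanity check rather than a needed step, since a block-diagonal $\A$-invertible change of basis applied to the Enyang basis of Lemma~\ref{Enyangbasis} directly yields a new basis.
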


By base change, we can apply the previous results to the specialized
algebra $\bb_n(-\zeta^{2m+1},\zeta)$. The main result in this
section is

\begin{thm} Suppose $m\geq n$. Then the natural homomorphism
$$\varphi_C:\,\,
\bb_n(-\zeta^{2m+1},\zeta)\rightarrow\End_{K}\bigl(V^{\otimes
n}\bigr)$$ is injective. In particular, $\varphi_C$ maps
$\bb_n(-\zeta^{2m+1},\zeta)$ isomorphically onto
$$\End_{\U_{K}(\mathfrak{sp}_{2m})}\bigl(V^{\otimes n}\bigr).$$
\end{thm}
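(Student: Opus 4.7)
The theorem reduces to injectivity of $\varphi_C$: the reduction paragraph preceding the theorem shows that $\dim\bb_n(-\zeta^{2m+1},\zeta)=\dim\End_{\U_K}(V^{\otimes n})$ when $m\geq n$, and the commuting action guarantees $\varphi_C$ lands in $\End_{\U_K}(V^{\otimes n})$, so injectivity forces $\varphi_C$ to be an isomorphism onto the commutant. My plan is to prove injectivity by choosing, for each ``defect'' $f\in\{0,1,\ldots,[n/2]\}$, a specific test vector $v_{\bi^{(f)}}\in V_K^{\otimes n}$ and showing that Enyang's cellular basis elements of Corollary \ref{Enyangbasis2} are detected by these vectors.

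Concretely, assuming $m\geq n$, I would set
\[
\bi^{(f)}:=\bigl(1,1',2,2',\ldots,f,f',\,f+1,f+2,\ldots,n-f\bigr)\in I(2m,n),
\]
so that the first $2f$ entries form $f$ matched ``primed pairs'' $(a,a')$ with $a'=2m+1-a$, while the last $n-2f$ entries are pairwise distinct, unprimed, and unmatched with any earlier entry. The constraint $m\geq n$ is precisely what permits all entries to be chosen with these properties. By inspection of $\gamma'$, the operator $E_{2k-1}$ acts nontrivially on $v_{\bi^{(f)}}$ for $1\leq k\leq f$, producing
\[
v_{\bi^{(f)}}\cdot E_1E_3\cdots E_{2f-1}=\sum_{\bk}c_{\bk}\, v_{\bk}\otimes v_{\bk'}\otimes\cdots\otimes v_{f+1}\otimes\cdots\otimes v_{n-f},
\]
a nonzero sum of tensors whose last $n-2f$ positions remain the distinct unprimed string $(f+1,\ldots,n-f)$.

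The crucial point is that for any multi-index appearing in the support of $v_{\bi^{(f)}}\cdot E_1E_3\cdots E_{2f-1}$, the last $n-2f$ positions have entries $\{f{+}1,\ldots,n{-}f\}$ which satisfy the hypothesis of Lemma \ref{obser2} (all distinct, no primed pairs, and also no primed entry matching any of the earlier entries). Therefore the action of a further $T_\sigma$ with $\sigma\in\BS_{\{2f+1,\ldots,n\}}$ reduces to the Hecke action $\widehat{T}_\sigma$, and similarly $T_{d_2}$ with $d_2\in\mathcal{D}_\nu$ acts through $\widehat{T}_{d_2}$ (using Lemma \ref{lm52} inductively to reduce to simple reflections that respect the relevant index patterns). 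The left action of $T_{d_1}^{\ast}$ is handled by the same analysis together with the anti-automorphism. In this way the computation of $v_{\bi^{(f)}}\cdot T_{d_1}^{\ast}E_1E_3\cdots E_{2f-1}T_\sigma T_{d_2}$ becomes a purely type $A$ calculation in the Hecke module $V_K^{\otimes n}$, where the classical $q$-Schur--Weyl injectivity (a')--(b') recalled in the introduction applies; this gives linear independence among the basis elements with a fixed value of $f$.

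To separate contributions from different values of $f$, I would observe that $v_{\bi^{(f)}}\cdot E_1E_3\cdots E_{2f'-1}=0$ whenever $f'>f$, because $\bi^{(f)}$ has exactly $f$ primed pairs in matched adjacent positions and no additional entry can form such a pair after the permutations induced by the $T$-operators preserve the distinctness pattern. Consequently, testing a hypothetical relation $\sum_{f,d_1,\sigma,d_2}c_{f,d_1,\sigma,d_2}\,T_{d_1}^{\ast}E_1E_3\cdots E_{2f-1}T_\sigma T_{d_2}=0$ against the family $\{v_{\bi^{(f)}}\}_f$ in order of decreasing $f$ kills the coefficients one cell at a time. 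The main obstacle I anticipate is step where $T_{d_1}^{\ast}$ or $T_{d_2}$ moves indices between the first $2f$ positions and the last $n-2f$ positions: the $q$-correction terms in $\widehat{\beta}$ (the $(q-q^{-1})$-summands) potentially create cross-contamination. The remedy is to exploit that $\bi^{(f)}$ was chosen with \emph{all} entries distinct and with no entry equaling the prime of another, so that by Lemma \ref{obser2} the contaminating terms vanish identically on $v_{\bi^{(f)}}$, reducing the computation to clean permutations indexed by reduced-word data and allowing the Hecke-algebra faithfulness to conclude.
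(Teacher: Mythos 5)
Your plan differs from the paper's (which places the symplectic pairs in the \emph{last} $2f$ positions, not the first, and uses a whole family $\{v_{\bk}\otimes v_{\bc}\}_{\bk\in I_f}$ of test vectors rather than one per defect $f$), but the differences are not the main problem. The main problem is that the proposed ``remedy'' for cross-contamination is inapplicable, so the reduction to the Hecke algebra collapses exactly where it is needed.

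Concretely, the operator $\gamma'$ sends $v_{b'}\otimes v_b$ to a sum over \emph{all} $i$ of $q^{\rho_b-\rho_i}\epsilon_i\epsilon_b\,v_i\otimes v_{i'}$, so the support of $v_{\bi^{(f)}}\cdot E_1E_3\cdots E_{2f-1}$ consists of tensors whose first $2f$ positions carry arbitrary matched primed pairs. In particular it contains tensors where, say, position $1$ holds $(f+1)'$ and position $2$ holds $f+1$ — the very value sitting in position $2f+1$. Thus for these tensors the hypothesis of Lemma \ref{obser2} fails in both ways (there are primed pairs, and some of them collide with the tail string), and the invoked ``remedy'' — that $\bi^{(f)}$ has all distinct entries, none the prime of another — is a property of $\bi^{(f)}$ itself, which the $E$'s immediately destroy. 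Consequently the assertion that $T_{d_2}$ ($d_2\in\mathcal{D}_{\nu}$) ``acts through $\widehat{T}_{d_2}$'' is simply false whenever $d_2$ moves entries across the $2f$-boundary (e.g.\ $d_2=d_{J_0}$): the $\gamma'$-summand of $\beta'$ contributes precisely on the primed pairs your $E$'s created. The same objection applies to the proposed handling of $T_{d_1}^{\ast}$, since $T_{d_1}^{\ast}$ acts on $v_{\bi^{(f)}}$ \emph{before} the $E$'s, i.e.\ directly on a vector that already contains $f$ primed pairs in positions $1,\dots,2f$.

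This is not a patchable oversight but the technical core of the result, and it is where the paper invests nearly all of its effort in Section 5. The paper's test vector $\bc=((m-f+1)',\dots,m',m,\dots,m-f+1)$ is chosen with strictly decreasing entries precisely so that the triangularity of $\beta'$ recorded in (\ref{useful}) forces the ``leakage'' terms to land only in controlled places; this is what Lemma \ref{Tech1} accomplishes. Even then, identifying the surviving coefficient requires the delicate claims (a), (b), (b1), (b2) inside the proof of Lemma \ref{keylem2}, followed by a separate maximality-of-length induction (Step 2 of Lemma \ref{keylem2} and Theorem \ref{keythm}) that engages in detail with the straightening relations of Enyang's basis (\cite[Props.~3.3--3.7]{E}) to rule out cancellations between distinct $(d_1,d_2)$-cells. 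Your proposal doesn't engage with any of this; moreover, Hecke faithfulness on $V^{\otimes(n-2f)}$ only fixes the $\sigma$-index after $d_1$ and $d_2$ have been isolated, and a single simple tensor $v_{\bi^{(f)}}$ cannot carry a faithful $\HH_K(\BS_{\{2f+1,\dots,n\}})$-action, which is why the paper runs the argument over the family $\{v_{\bk}\otimes v_{\bc}\}_{\bk\in I_f}$.
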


To prove the theorem, it suffices to show the annihilator
$\ann_{\bb_n(-\zeta^{2m+1},\zeta)}(V^{\otimes n})$ is $(0)$. Note
that
$$ \ann_{\bb_n(-\zeta^{2m+1},\zeta)}(V^{\otimes n})=\bigcap_{v\in V^{\otimes
n}}\ann_{\bb_n(-\zeta^{2m+1},\zeta)}(v).
$$
Thus it is enough to calculate
$\ann_{\bb_n(-\zeta^{2m+1},\zeta)}(v)$ for some set of chosen
vectors $v\in V^{\otimes n}$ such that the intersection of
annihilators is $(0)$. We write $$
\ann(v)=\ann_{\bb_n(-\zeta^{2m+1},\zeta)}(v):=\bigl\{x\in
\bb_n(-\zeta^{2m+1},\zeta) \bigm|vx=0\bigr\}.
$$

For each integer $f$ with $0\leq f\leq [n/2]$, we denote by
$B^{(f)}$ the two-sided ideal of $\bb_n(-q^{2m+1},q)_{\A}$ generated
by $E_1E_3\cdots E_{2f-1}$. Note that $B^{(f)}$ is spanned by all
the basis elements whose indexing diagrams contain at least $2f$
horizontal edges ($f$ edges in each of the top and the bottom rows
in the diagrams). We recall a notion introduced in \cite{DDH}. For $\bi\in I(2m,n)$,
an ordered pair $(s,t)$ ($1\leq s<t\leq n$) is called a {\it
symplectic pair} in $\bi$ if $i_s=(i_t)'$. Two ordered pairs $(s,t)$
and $(u,v)$ are called disjoint if
$\bigl\{s,t\bigr\}\cap\bigl\{u,v\bigr\}=\emptyset$. We define the
{\it symplectic length} $\ell_s(v_{\bi})=\ell_s(\bi)$ to be the
maximal number of disjoint symplectic pairs $(s,t)$ in $\bi$. Note
that if $f>\ell_s(v_{\bi})$, then clearly
$B^{(f)}\subseteq\ann(v_{\bi})$.

\begin{lem} \label{step1} $\ann_{\bb_n(-\zeta^{2m+1},\zeta)}\bigl(V^{\otimes
n}\bigr)\subseteq B^{(1)}$.
\end{lem}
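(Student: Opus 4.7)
The plan is to exploit Corollary \ref{Enyangbasis2} to split off the ``$f = 0$'' part of Enyang's basis, test annihilators against a single carefully chosen vector in $V^{\otimes n}$, and show that the $f = 0$ coefficients must vanish. By Corollary \ref{Enyangbasis2}, the basis elements with $f = 0$ are exactly $\{T_{\sigma} : \sigma \in \BS_n\}$ (since $\mathcal{D}_{((),(n))} = \{1\}$ and the empty product of $E_{2i-1}$'s is $1$), while every basis element with $f \geq 1$ lies in $B^{(1)}$. Hence one has the decomposition
$$\bb_n(-\zeta^{2m+1},\zeta) \;=\; \operatorname{span}_K\{T_{\sigma} : \sigma \in \BS_n\} \;\oplus\; B^{(1)}.$$

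Using the hypothesis $m \geq n$, I choose $\bi = (n, n-1, \ldots, 1) \in I(2m, n)$. The two relevant properties are: (i) every entry lies in $\{1, \ldots, m\}$, so no $i_j$ equals any $i_k' = 2m+1-k \geq m+1$; and (ii) the entries are strictly decreasing. Lemma \ref{obser2}(2) then gives $v_{\bi} T_w = v_{\bi w}$ for every $w \in \BS_n$, and these $n!$ vectors are pure tensors of $n$ distinct basis elements, hence linearly independent.

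Next I plan to verify that $v_{\bi} \cdot B^{(1)} = 0$. A typical basis element of $B^{(1)}$ is
$$x = T_{d_1}^{\ast}\, E_1 E_3 \cdots E_{2f-1}\, T_{\sigma}\, T_{d_2}, \qquad f \geq 1.$$
Since the anti-automorphism $\ast$ sends each $T_i$ to itself, one has $T_{d_1}^{\ast} = T_{d_1^{-1}}$, and by Lemma \ref{obser2}(2) applied to our $\bi$,
$v_{\bi} T_{d_1}^{\ast} = v_{\bi d_1^{-1}}.$
Now $\bi d_1^{-1}$ is a permutation of $\bi$, so its first two entries are distinct elements of $\{1, \ldots, m\}$ and in particular do not form a symplectic pair. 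Recalling that $E_1$ acts as $\gamma_1'$ and that $\gamma'(v_a \otimes v_b)$ is nonzero only when $b = a'$, we obtain $v_{\bi d_1^{-1}} E_1 = 0$, hence $v_{\bi} x = 0$.

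Combining the two ingredients, for any $x \in \ann_{\bb_n(-\zeta^{2m+1},\zeta)}(V^{\otimes n})$ write $x = \sum_{\sigma \in \BS_n} a_{\sigma} T_{\sigma} + y$ with $y \in B^{(1)}$. Then
$$0 = v_{\bi} x = \sum_{\sigma \in \BS_n} a_{\sigma} v_{\bi \sigma} + v_{\bi} y = \sum_{\sigma \in \BS_n} a_{\sigma} v_{\bi \sigma},$$
and linear independence forces $a_{\sigma} = 0$ for every $\sigma$, so $x = y \in B^{(1)}$. The only genuinely technical point will be the claim $v_{\bi} \cdot B^{(1)} = 0$; its proof rests on the observation that the Hecke-like prefactor $T_{d_1}^{\ast}$ preserves the ``distinct and unprimed'' property of the index sequence (because Lemma \ref{obser2}(2) reduces its action to a single right place permutation), after which the immediately following $E_1$ is forced to kill the result.
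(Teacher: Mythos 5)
Your proof is correct and follows essentially the same route as the paper's: pick the test vector $v_n\otimes\cdots\otimes v_1$ (possible because $m\ge n$), use Lemma \ref{obser2} together with the $f=0$ versus $f\ge 1$ split from Corollary \ref{Enyangbasis2}, and conclude from faithfulness/linear independence. The only notable difference is that you make the final step self-contained by invoking the sharper second assertion of Lemma \ref{obser2} ($v_{\bi}T_w=v_{\bi w}$, a pure tensor) and the explicit $E_1$-kill computation, whereas the paper only uses the first assertion $v_{\bi}T_w=v_{\bi}\widehat T_w$ and then cites \cite{DPS} for the fact that the annihilator of $v$ in $\HH_K(\fS_n)$ is zero; the two versions are interchangeable.
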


\begin{proof} Since $m\geq n$, the tensor $v:=v_n\otimes
v_{n-1}\otimes\cdots\otimes v_1$ is defined. Note that $i\neq j'$
for any $i,j\in\{1,2,\cdots,n\}$. Applying Lemma \ref{obser2}, we
deduce that $vT_w=v\widehat{T}_w$ for any $w\in\BS_n$.  Now
$B^{(1)}$ is contained in the annihilator of $v\widehat{T}_w$, hence
is contained in the intersection of all annihilators of
$v\widehat{T}_w$, as $w$ ranges over $\BS_n$. Hence $B^{(1)}$
annihilates the subspace $S$ spanned by the $vT_w=v\widehat{T}_w$,
where $w$ runs through $\BS_n$.

On the other hand, since $m\geq n$, it is well known (cf.
\cite{DPS}) that the annihilator of $v$ in the Hecke algebra
$\HH_{K}(\BS_n)$ is $\{0\}$. Therefore, we conclude that
$\ann_{\bb_n(-\zeta^{2m+1},\zeta)}\bigl(V^{\otimes n}\bigr)\subseteq
B^{(1)}$.
\end{proof}

Suppose that we have already shown
$\ann_{\bb_n(-\zeta^{2m+1},\zeta)}\bigl(V^{\otimes n}\bigr)\subseteq
B^{(f)}$ for some natural number $f\geq 1$. We want to show
$\ann_{\bb_n(-\zeta^{2m+1},\zeta)}\bigl(V^{\otimes n}\bigr)\subseteq
B^{(f+1)}$. If $f>[n/2]$ then $\ann_{\bb_n(-\zeta^{2m+1},\zeta)}\bigl(V^{\otimes n}\bigr)\subseteq
B^{(f)}=0$ implies that $\ann_{\bb_n(-\zeta^{2m+1},\zeta)}\bigl(V^{\otimes n}\bigr)=0\subseteq B^{(f+1)}$
and we are done. Thus we may assume $f\leq [n/2]$.\smallskip

For $\bi:=(i_1,\cdots,i_n)\in I(2m,n)$, we define
$\bwt(\bi)=\lam=(\lam_1,\cdots,\lam_{2m})$, where $\lam_j$ is the
number of times that $v_j$ occurs as tensor factor in $v_{\bi}$ for
each $1\leq j\leq 2m$. We call $\bwt(\bi)$ the $GL_{2m}$-weight of
$v_{\bi}$. Note that for a given composition $\lam$ of $n$, the
simple tensors of $GL_{2m}$-weight $\lam$ span a
$\HH_{K}(\BS_n)$-submodule $M^{\lam}$ of $V^{\otimes n}$, thus $$
V^{\otimes n}=\bigoplus_{\lam\in\Lambda(2m,n)}M^{\lam}
$$
as $\HH_{K}(\BS_n)$-module, where $\Lambda(2m,n)$ denotes the set of
compositions of $n$ into not more than $(2m)$ parts. It is
well-known that $M^{\lam}$ is isomorphic to the permutation
representation of $\HH_{K}(\BS_n)$ corresponding to $\lam$.

As a consequence, each element $v\in V^{\otimes n}$ can be written
as a sum $$ v=\sum_{\lam\in\Lambda(2m,n)}v_{\lam}
$$
for uniquely determined $v_{\lam}\in M^{\lam}$.\smallskip

Following \cite{DDH}, we consider the subgroup $\Pi$ of
$\BS_{\{1,\cdots,2f\}}\leq\BS_n$ permuting the rows of
$\ft^{\nu^{(1)}}$ but keeping the entries in the rows fixed. The
group $\Pi$ normalizes the stabilizer $\BS_{(2^f)}$ of
$\ft^{\nu^{(1)}}$ in $\BS_{2f}$. We set
$\Psi:=\BS_{(2^f)}\rtimes\Pi$. By \cite[Lemma 3.7]{DDH}, we have
$$\BS_{2f}=\bigsqcup_{d\in\mathcal{D}_f}\Psi d,$$ where ``$\,\sqcup$"
means a disjoint union. We set
$\mathcal{D}_f:=\mathcal{D}_{\nu_f}\bigcap\BS_{2f}$.

\begin{lem} \label{length} Let $d\in\mathcal{D}_f$. Then for any $w\in\Psi$,
$\ell(wd)\geq\ell(d)$.
\end{lem}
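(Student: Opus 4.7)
The plan is to show that $d$ has minimal length in its coset $\Psi d$. Since $\Psi=\BS_{(2^f)}\rtimes\Pi$, every $w\in\Psi$ factors uniquely as $w=\sigma\pi$ with $\sigma\in\BS_{(2^f)}$ and $\pi\in\Pi$; it therefore suffices to establish the two facts
\begin{equation*}
\ell(\sigma\pi d)=\ell(\sigma)+\ell(\pi d),\qquad \ell(\pi d)\geq\ell(d).
\end{equation*}

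For the first equality, the plan is to verify that $\pi d$ is itself a distinguished right coset representative for the parabolic subgroup $\BS_{(2^f)}=\langle s_1,s_3,\ldots,s_{2f-1}\rangle$, i.e., that $(\pi d)^{-1}(2i-1)<(\pi d)^{-1}(2i)$ for every $i$. This reduces to the condition $d^{-1}(2j-1)<d^{-1}(2j)$ built into the definition of $\mathcal{D}_f$, combined with the fact that $\pi$ sends each value-pair $\{2i-1,2i\}$ to some $\{2j-1,2j\}$ as an \emph{ordered} block (this is precisely what ``permuting the rows but keeping the entries in the rows fixed'' encodes). Once the distinguished-representative property is in place, the additivity $\ell(\sigma\pi d)=\ell(\sigma)+\ell(\pi d)$ is standard parabolic Coxeter theory.

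The inequality $\ell(\pi d)\geq\ell(d)$ is the main obstacle and requires a direct inversion count. A straightforward manipulation of the inversion sets of $d$ and $\pi d$ yields
\begin{equation*}
\ell(\pi d)-\ell(d)=|\mathcal{C}_3|-|\mathcal{C}_4|,
\end{equation*}
where $\mathcal{C}_3$ (resp.\ $\mathcal{C}_4$) is the set of ordered pairs $(a,b)$ with $a<b$, $\pi(a)>\pi(b)$, and $d^{-1}(a)<d^{-1}(b)$ (resp.\ $d^{-1}(a)>d^{-1}(b)$). Because $\pi$ never swaps within a value-pair, its inversions are grouped by pairs of rows $i<j$ with $\bar\pi(i)>\bar\pi(j)$, and each such inverted row-pair contributes exactly four elements to $\mathcal{C}_3\cup\mathcal{C}_4$, namely the ordered pairs $(a,b)$ with $a\in\{2i-1,2i\}$ and $b\in\{2j-1,2j\}$. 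Setting $x_1=d^{-1}(2i-1)$, $x_2=d^{-1}(2i)$, $y_1=d^{-1}(2j-1)$, $y_2=d^{-1}(2j)$, the conditions defining $\mathcal{D}_f$ force $x_1<x_2$, $y_1<y_2$, and crucially $x_1<y_1$ (from $d^{-1}(1)<d^{-1}(3)<\cdots<d^{-1}(2f-1)$). A short case analysis on the position of $x_2$ relative to $y_1,y_2$ (three cases: $x_2<y_1$, $y_1<x_2<y_2$, and $y_2<x_2$) shows that the four contributing pairs split between $\mathcal{C}_3$ and $\mathcal{C}_4$ in the ratios $(4,0)$, $(3,1)$, or $(2,2)$ respectively, so $\mathcal{C}_3$ always dominates. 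Summing over the row-inversions of $\pi$ yields $|\mathcal{C}_3|\geq|\mathcal{C}_4|$, whence $\ell(\pi d)\geq\ell(d)$.

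Combining the two facts gives $\ell(wd)=\ell(\sigma)+\ell(\pi d)\geq\ell(\pi d)\geq\ell(d)$, as desired. The essential input that makes the case analysis go through is the strict ordering $d^{-1}(1)<d^{-1}(3)<\cdots<d^{-1}(2f-1)$ in the definition of $\mathcal{D}_f$, which forces $x_1<y_1$ and thereby excludes the ``all $\mathcal{C}_4$'' configuration that would otherwise break the inequality.
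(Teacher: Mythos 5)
Your proof is correct, and while it follows the same overall decomposition as the paper (write $w=\sigma\pi$ with $\sigma\in\BS_{(2^f)}$, $\pi\in\Pi$, and reduce to two sub-claims), the arguments for the two sub-claims are genuinely different. For the first sub-claim, the paper only asserts the inequality $\ell(\sigma\pi d)\geq\ell(\pi d)$ by an unspecified ``counting of inversions''; you prove the sharper equality $\ell(\sigma\pi d)=\ell(\sigma)+\ell(\pi d)$ by observing that $\pi d$ is a distinguished right coset representative for $\BS_{(2^f)}$, which follows cleanly from the facts that $\pi^{-1}$ sends each ordered pair $(2i-1,2i)$ to some $(2j-1,2j)$ and that the row-standardness of $\ft^{\nu}d$ gives $d^{-1}(2j-1)<d^{-1}(2j)$. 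For the second sub-claim, the paper argues by induction on $\widetilde{\ell}(\pi)$, checking each elementary transposition $\widetilde{s}_j$ at the base and inductive step by inversion counting; you replace this with a single closed-form identity $\ell(\pi d)-\ell(d)=|\mathcal{C}_3|-|\mathcal{C}_4|$ (which is indeed correct --- it follows from writing $\ell(\pi d)=\ell(d^{-1}\pi^{-1})$, substituting $a\mapsto\pi(a)$, and comparing with the analogous decomposition of $\ell(d)$) and a uniform case analysis over the inverted row-pairs of $\pi$. Your case analysis correctly uses all three constraints $x_1<x_2$, $y_1<y_2$, $x_1<y_1$ coming from $\mathcal{D}_f$, and in particular the strict column-increase $d^{-1}(1)<d^{-1}(3)<\cdots<d^{-1}(2f-1)$ is exactly what blocks the hypothetical $(1,3)$ or $(0,4)$ split. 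The net effect is a more explicit and self-contained argument than the paper's, though both are ultimately inversion counts built on the same decomposition of $\Psi$.
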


\begin{proof} Let $w\in\Psi$. By definition, we can write $w=w''w'$, where
$w''\in\BS_{(2^f)}, w'\in\Pi$. Note that $\BS_{(2^f)}$ is generated
by $s_1,s_3,\cdots,s_{2f-1}$, and $\Pi$ is generated
$\widetilde{s}_1, \widetilde{s}_2,\cdots,$ $\widetilde{s}_{f-1}$, where
$\widetilde{s}_i:=s_{2i}s_{2i-1}s_{2i+1}s_{2i}$ for
$i=1,2,\cdots,f-1$.\smallskip

We claim that $\ell(w)=\ell(w''w'd)\geq\ell(w'd)$. In fact, this
follows easily from the counting of the number of inversions and the
fact that for any $\sigma\in\BS_n$,
$$ \ell(\sigma)=\bigl\{(i,j)\bigm|1\leq i<j\leq n,
(i)\sigma>(j)\sigma\bigr\}.
$$
Therefore, it remains to show that
$\ell(w'd)\geq\ell(d)$.\smallskip

Note that the subgroup generated by $\widetilde{s}_1,
\widetilde{s}_2,\cdots,\widetilde{s}_{f-1}$ is isomorphic to the
symmetric group $\BS_f$. We use $\widetilde{\ell}$ to denote the
length function of $\BS_f$ with respect to the generators
$\widetilde{s}_i, i=1,\cdots,f-1$. We use induction on
$\widetilde{\ell}(w')$. If $\widetilde{\ell}(w')=1$, then
$w'=\widetilde{s}_i=s_{2i}s_{2i-1}s_{2i+1}s_{2i}$. In this case, our
claim $\ell(w'd)\geq\ell(d)$ follows directly from the counting of
the number of inversions. Suppose that for any $w'\in\Pi$ with
$\widetilde{\ell}(w')=k-1$, we have $\ell(w'd)\geq\ell(d)$. Let
$w'\in\Pi$ with $\widetilde{\ell}(w')=k$. We can write
$w'=\widetilde{s}_ju'$, where $1\leq j\leq f-1$, such that
$\widetilde{\ell}(w')=\widetilde{\ell}(u')+1$. Now counting the
number of inversions, it is easy to see that
$\ell(w'd)=\ell(\widetilde{s}_ju'd)\geq \ell(u'd)$. On the other
hand, by induction hypothesis, $\ell(u'd)\geq\ell(d)$. Therefore,
$\ell(w'd)\geq\ell(d)$, as required. This completes the proof of the
lemma.
\end{proof}

Let $\mathcal{P}_f:=\{(i_1,\cdots,i_{2f})|1\leq
i_1<\cdots<i_{2f}\leq n\}$. For each $J\in\mathcal{P}_f$, we use
$d_J$ to denote the unique element in $\mathcal{D}_{\nu_f}$ such
that the first component of $\ft^{\nu}d_J$ is the tableau obtained
by inserting the integers in $J$ in increasing order along
successive rows in $\ft^{\nu^{(1)}}$. Let
$\widetilde{\mathcal{D}}_{(2f,n-2f)}$ be the set of distinguished
right coset representatives of $\BS_{(2f,n-2f)}$ in $\BS_n$. Clearly
$d_{J}\in\widetilde{\mathcal{D}}_{(2f,n-2f)}$, and every element of
$\widetilde{\mathcal{D}}_{(2f,n-2f)}$ is of the form $d_J$ for some
$J\in\mathcal{P}_f$. The following result is well known.

\begin{lem} \label{EXP1} Let $J=(i_1,i_2,\cdots,i_{2f})\in\mathcal{P}_f$. Then $$
(s_{2f}s_{2f+1}\cdots s_{i_{2f}-1})(s_{2f-1}s_{2f}\cdots
s_{i_{2f-1}-1})\cdots (s_2s_3\cdots s_{i_2-1})(s_1s_2\cdots
s_{i_1-1})
$$
is a reduced expression of $d_J$.
\end{lem}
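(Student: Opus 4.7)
The plan is to prove the identity by showing directly that the given product equals $d_J$ and that its word length in the simple reflections equals $\ell(d_J)$; since $d_J$ is the unique minimum-length element of its right coset, these together force the expression to be reduced. Write $w_k := s_k s_{k+1} \cdots s_{i_k-1}$ (with $w_k = 1$ if $i_k = k$), and $w := w_{2f} w_{2f-1} \cdots w_2 w_1$ for the displayed product.

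First I would pin down what $d_J$ does under the right action convention $(a)(\sigma\tau) = ((a)\sigma)\tau$ of the paper. Since $\ft^{\nu} d_J$ is obtained from $\ft^{\nu}$ by replacing every entry $k$ with $(k)d_J$, and its first component is filled with $i_1 < i_2 < \cdots < i_{2f}$ row by row, one reads off $(k) d_J = i_k$ for $1 \leq k \leq 2f$. So the first task is to check that the same equality $(k) w = i_k$ holds, which I would do by a short direct computation: each $w_k$ acts as the cycle $(k,\,k+1,\,\dots,\,i_k)$ under the right action, in particular $(k) w_k = i_k$; every $w_\ell$ with $\ell > k$ fixes $k$ because its factors $s_j$ all have $j \geq \ell > k$; and every $w_\ell$ with $\ell < k$ fixes $i_k$ because $i_k > i_\ell \geq \ell$ lies outside the support $\{\ell,\ell+1,\dots,i_\ell\}$ of $w_\ell$. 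Combining these gives $(k)w = (k) w_k w_{k-1} \cdots w_1 = (i_k)(w_{k-1}\cdots w_1) = i_k$. Hence $w d_J^{-1}$ fixes $1, 2, \ldots, 2f$ pointwise, so $w d_J^{-1} \in \BS_{\{2f+1,\dots,n\}} \subseteq \BS_{(2f,n-2f)}$, i.e., $w$ lies in the same right coset as $d_J$.

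Second, the displayed word has total length $\sum_{k=1}^{2f} (i_k - k)$, so $\ell(w) \leq \sum_{k=1}^{2f}(i_k - k)$. On the other hand, the standard formula for the distinguished right coset representative of $\BS_{(2f,n-2f)}$ associated to $J$ gives $\ell(d_J) = \sum_{k=1}^{2f}(i_k-k)$, and $d_J$ is the unique minimum-length element in the coset $\BS_{(2f,n-2f)} d_J$ (any other element is of the form $\sigma d_J$ with $1 \neq \sigma \in \BS_{(2f,n-2f)}$, hence has length $\ell(\sigma) + \ell(d_J) > \ell(d_J)$). Together with the coset membership from step one, this forces $w = d_J$ and the inequality $\ell(w) \leq \sum (i_k - k)$ to be an equality, so the displayed expression is reduced.

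There is no real obstacle: the proof is a bookkeeping argument about distinguished coset representatives in two-part compositions of $\BS_n$. The only subtle point is sticking consistently to the right-action convention so that $(k) d_J = i_k$ (not $d_J(k) = i_k$); once that is fixed, the cyclic factor computation and the length count are straightforward, and the uniqueness of minimum-length coset representatives finishes the argument.
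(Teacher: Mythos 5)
The paper provides no proof of this lemma --- it simply asserts that ``the following result is well known'' and moves on --- so there is no argument in the paper to compare yours against. That said, your proof is correct and complete. The core computation, that each block $w_k = s_k s_{k+1}\cdots s_{i_k-1}$ sends $k$ to $i_k$ under the right action of the paper's convention and has support $\{k,\dots,i_k\}$, is right; the two fixing arguments ($w_\ell$ with $\ell>k$ fixes $k$ because $k<\ell$, and $w_\ell$ with $\ell<k$ fixes $i_k$ because $i_k>i_\ell$) correctly chain together to give $(k)w=i_k$ for all $k\le 2f$; and reading off $(k)d_J=i_k$ from the definition of $\ft^\nu d_J$ is the intended interpretation. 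Combined with the uniqueness of the minimal-length element in each right $\BS_{(2f,n-2f)}$-coset, the length count forces $w=d_J$ and the word to be reduced.

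The only point where you lean on an uncited fact is the identity $\ell(d_J)=\sum_{k=1}^{2f}(i_k-k)$. It is indeed standard, and since the paper dismisses the whole lemma as well known this is a reasonable level of rigor; but if you wanted a fully self-contained argument, one can get this length by counting inversions directly: $d_J$ sends $k\mapsto i_k$ for $k\le 2f$ and sends $\{2f+1,\dots,n\}$ order-preservingly onto $\{1,\dots,n\}\setminus J$, so the only inversions are pairs $(a,b)$ with $a\le 2f<b$ and $(b)d_J<i_a$, and for fixed $a$ there are exactly $i_a-a$ such $b$ (the number of elements of $\{1,\dots,i_a-1\}$ lying outside $J$), giving $\sum_a(i_a-a)$ in total. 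That closes the one small loophole and makes the proof entirely elementary.
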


By \cite[Lemma 3.8]{DDH},
$\mathcal{D}_{\nu_f}=\bigsqcup_{J\in\mathcal{P}_f}\mathcal{D}_f
d_{J}$.

\begin{dfn} We define $$\begin{aligned}
\bc&=(c_1,c_2,\cdots,c_{2f})\\
&=\Bigl((m-f+1)',\cdots,(m-1)',m',m,m-1,\cdots,m-f+1\Bigr).\end{aligned}$$
\end{dfn}

Note that $m-f+1<m-f+2<\cdots<m<m'<\cdots<(m-f+2)'<(m-f+1)'$. Let
$d_{0}$ be the unique element in $\BS_{2f}$ such that $$
(a)d_{0}=\begin{cases}(a+1)/2,&\text{if
$a\in\{1,3,\cdots,2f-1\}$,}\\
2f+1-a/2, &\text{if $a\in\{2,4,\cdots,2f\}$.}
\end{cases}
$$
Then $d_{0}\in\mathcal{D}_f$. Counting the number of inversions, we
deduce that $\ell(d_0)=f(f-1)$. On the other hand, by direct
verification, we know that
$$d_0=(s_{2f-2}s_{2f-1})(s_{2f-4}s_{2f-3}s_{2f-2}s_{2f-1})\cdots(s_2s_3\cdots
s_{2f-2}s_{2f-1}).$$ It follows that
\begin{equation}\label{EXP2}
(s_{2f-2}s_{2f-1})(s_{2f-4}s_{2f-3}s_{2f-2}s_{2f-1})\cdots(s_2s_3\cdots
s_{2f-2}s_{2f-1})
\end{equation}
is a reduced expression of $d_0$.

\begin{dfn} We define $$v_{\bc_0}=v_{\bc}d_{0}^{-1}=v_{(m-f+1)'}\otimes v_{m-f+1}\otimes \cdots\otimes
v_{(m-1)'}\otimes v_{m-1}\otimes v_{m'}\otimes v_{m}.$$
\end{dfn}

\begin{lem} \label{maximal} Let $d\in\mathcal{D}_f$, $J_0:=\{n-2f+1,n-2f+2,\cdots,n\}$.
\begin{enumerate}
\item There exists $w\in\BS_{n}$, such that $d_0=dw$ and
$\ell(d_0)=\ell(d)+\ell(w)$;
\item For any $J\in\mathcal{P}_f$, there exists $w'\in\BS_{n}$, such that $d_{J_0}=d_J w'$ with
$\ell(d_{J_0})=\ell(d_J)+\ell(w')$;
\item for any $d\in\mathcal{D}_{\nu_f}$ with $d\neq d_0d_{J_0}$,
there exists integer $1\leq j\leq n-1$ such that
$ds_j\in\mathcal{D}_{\nu_f}$ and $\ell(ds_j)=\ell(d)+1$.
\end{enumerate}
\end{lem}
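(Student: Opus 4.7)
The plan is to prove (3) by a careful combinatorial case analysis and to deduce (1) as the special case $n = 2f$; part (2) follows from direct comparison of one-line inversion sets.

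For (2), the relevant fact is that for $u, v \in \BS_n$, there exists $w$ with $v = uw$ and $\ell(v) = \ell(u) + \ell(w)$ if and only if $\mathrm{Inv}(u) \subseteq \mathrm{Inv}(v)$, where $\mathrm{Inv}(w) := \{(i,j) : 1 \le i < j \le n,\ w(i) > w(j)\}$ denotes the one-line inversion set. By the construction in Lemma \ref{EXP1}, the one-line notation of $d_J$ consists of the entries of $J$ in increasing order at positions $1, \ldots, 2f$, followed by the entries of $\{1, \ldots, n\} \setminus J$ in increasing order, so every inversion of $d_J$ lies between an index $\le 2f$ and an index $> 2f$. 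The permutation $d_{J_0}$ is the complete shuffle $(n-2f+1, \ldots, n, 1, \ldots, n-2f)$, for which every such pair is an inversion. Thus $\mathrm{Inv}(d_J) \subseteq \mathrm{Inv}(d_{J_0})$, proving (2).

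For (3), I first characterize when $ds_j \in \mathcal{D}_{\nu_f}$ and $\ell(ds_j) = \ell(d) + 1$. Right multiplication by $s_j$ exchanges the values $j$ and $j+1$ in the bitableau $\ft^{\nu}d$, and the length strictly increases iff $d^{-1}(j) < d^{-1}(j+1)$. A careful case analysis based on the locations of $j$ and $j+1$ in $\ft^{\nu}d$ (first column, second column, or second component) shows that valid extensions occur precisely in three configurations: \emph{(i)} $j$ sits in the first component of $\ft^{\nu}d$ while $j+1$ sits in the second component; \emph{(ii)} both $j = b_k$ and $j+1 = b_l$ lie in the second column of the first component with $k < l$; \emph{(iii)} $j$ and $j+1$ lie in opposite columns of the first component but in distinct rows $k < l$. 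Every other configuration either violates the row condition, violates the first-column condition, or has $d^{-1}(j) > d^{-1}(j+1)$.

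Now suppose no such $j$ exists. The failure of \emph{(i)} forces the set of values in the first component of $\ft^{\nu}d$ to be closed under successor (up to the maximum $n$), hence equal to $\{n-2f+1, \ldots, n\}$; consequently the second component is the increasing sequence $(1, 2, \ldots, n-2f)$, so $c_l = l$ for every $l$. It remains to show, after shifting values by $n-2f$, that the first-component permutation equals the $d_0$ of $\BS_{2f}$. I argue by induction on $f$. The value $2f$ cannot be any $a_k$ (since $b_k > a_k$), so $2f = b_l$ for some $l$. If $l \geq 2$, then $b_1 + 1 \leq 2f$ occupies some slot in the bitableau; in each possibility ($b_1 + 1 = b_m$ with $m \neq 1$, or $b_1 + 1 = a_m$ for some $m$) the hypothesis would force an admissible extension via configuration \emph{(ii)} or \emph{(iii)}, a contradiction. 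Hence $b_1 = 2f$, and a parallel argument with the value $1$ shows $a_1 = 1$. Removing the pair $(1, 2f)$ and shifting down by one yields a permutation in $\mathcal{D}_{f-1}$ still satisfying the ``no extension'' hypothesis, so by induction it equals the corresponding $d_0$; reassembling gives $d = d_0$ in the shifted problem, i.e.\ $d = d_0 d_{J_0}$ overall. Part (1) is the case $n = 2f$ of this argument, for which $d_{J_0} = e$ and $d_0 d_{J_0} = d_0$. The main obstacle is verifying that the case analysis of valid extensions is exhaustive and that each application of the ``no extension'' hypothesis in the inductive step pins down a unique configuration; the base step $b_1 = 2f$ is the crux, and the remaining structure of $d$ then propagates by induction on $f$.
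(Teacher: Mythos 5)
Your proposal is correct, and it takes a genuinely different route from the paper for all three parts. For (2), the paper simply cites \cite{DJ1}; you give a direct proof via inversion-set containment, using the fact that $u\leq_R v$ in the right weak Bruhat order iff $\mathrm{Inv}(u)\subseteq\mathrm{Inv}(v)$, together with the observation (which follows from Lemma~\ref{EXP1}) that $d_J$ has one-line form $(i_1,\ldots,i_{2f},j_1,\ldots,j_{n-2f})$ with both blocks sorted, so every inversion of $d_J$ straddles positions $\le 2f$ and $>2f$, while $d_{J_0}$ has all such straddling pairs as inversions. For (1) and (3), the paper proves (1) first by an explicit chain of length-additive factorizations passing through $w_{0,f}$ and the longest element $w_0'$ of $\BS_{\{f+1,\ldots,2f\}}$, and then proves (3) by reducing to (1) and (2) via the coset decomposition $d=d_1d_{J_1}$ of \cite[Lemma~3.8]{DDH}. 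You instead prove (3) directly by classifying the admissible one-step extensions $ds_j\in\mathcal{D}_{\nu_f}$ into your three configurations and inducting on $f$ (with the crux $b_1=2f$), then deduce (1) by iterating (3) in the special case $n=2f$. Your approach is more uniform and self-contained; the paper's is shorter given its black boxes but makes (1) the technically heaviest part. Both are valid. One small imprecision you should fix: configuration (iii) must specify which entry is in which column --- the admissible case is $j$ in column~$2$ of row~$k$ and $j+1$ in column~$1$ of row~$l$ with $k<l$. The reverse orientation ($j$ in column~$1$, $j+1$ in column~$2$) is never admissible: if $k<l$ then after the swap row $l$ reads $(a_l,j)$ with $a_l>a_k=j$, violating row-standardness, and if $k>l$ the length decreases. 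In the inductive step this does not cause a problem because you only ever apply (iii) with $j=b_1$ in column~$2$, but the blanket classification should state it. Also, the appeal to $a_1=1$ does not actually require the no-extension hypothesis; it is forced by row-standardness alone (if $a_1>1$ then $1$ would have to be some $b_k$, impossible since $a_k<b_k=1$). This is a harmless redundancy.
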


\begin{proof} The statement (2) is a well-known result, see e.g., \cite{DJ1}. We only give the
proof of the statements (1) and (3).

First, we claim that there exists an element $w_1\in\BS_{2f}$, such
that $dw_1\in\mathcal{D}_f$, $\ell(dw_1)=\ell(d)+\ell(w_1)$ and the
numbers $1,2,\cdots,f$ are located in the first column of
$\ft^{(2^f)}$. In fact, let $1\leq a\leq f$ be the smallest integer
which is not located in the first column of $\ft^{(2^f)}d$, then for
any integer $b$ which is located in the first column of
$\ft^{(2^f)}d$, we must have $b\geq a+1$. Furthermore, any integer
between $a$ and $b-1$ can only be located in the first $a-1$ rows of
the second column of $\ft^{(2^f)}$. Now let
$w_1:=s_{b-1}s_{b-2}\cdots s_a$. It is easy to see that
$dw_1\in\mathcal{D}_f$, $\ell(dw_1)=\ell(d)+\ell(w)$, and
$1,2,\cdots,a$ are located in the first column of $\ft^{(2^f)}$.
Using induction on $a$, we can find an element $w'\in\BS_{2f}$, such
that $dw'\in\mathcal{D}_f$, $\ell(dw')=\ell(d)+\ell(w')$ and the
numbers $1,2,\cdots,f$ are located in the first column of
$\ft^{(2^f)}$. Let $w_{0,f}$ be the unique element in $\BS_{2f}$
such that $$ (a)w_{0,f}=\begin{cases}(a+1)/2,&\text{if
$a\in\{1,3,\cdots,2f-1\}$,}\\
f+a/2, &\text{if $a\in\{2,4,\cdots,2f\}$.}
\end{cases}
$$
Then, $dw'=w_{0,f}w'_1$ for some
$w'_1\in\BS_{\{f+1,f+2,\cdots,2f\}}$.

Let $w'_0\in\BS_{\{f+1,f+2,\cdots,2f\}}$ be defined by $$
(f+1,f+2,\cdots,2f)w'_0=(2f,2f-1,\cdots,f+1).
$$
Then $w'_0$ is the unique longest element in
$\BS_{\{f+1,f+2,\cdots,2f\}}$. It is well known that there exists
$w''\in\BS_{\{f+1,f+2,\cdots,2f\}}$ such that $w'_0=w'_1w''$ and
$\ell(w'_0)=\ell(w'_1)+\ell(w'')$. It is clear that
$$\begin{aligned}\ell(dw'w'')&=\ell(w_{0,f}w'_0)=\ell(w_{0,f})+\ell(w'_0)
=\ell(w_{0,f})+\ell(w'_1)+\ell(w'')\\
&=\ell(w_{0,f}w'_1)+\ell(w'')=\ell(dw')+\ell(w'')=\ell(d)+\ell(w')+\ell(w'').
\end{aligned}$$
Therefore, $$
\ell(d)+\ell(w')+\ell(w'')=\ell(dw'w'')\leq\ell(d)+\ell(w'w'')\leq\ell(d)+\ell(w')+\ell(w''),
$$
which forces $\ell(w'w'')=\ell(w')+\ell(w'')$. Since
$d_0=dw_{0,f}w'_0=dw_{0,f}w'_1w''=d(w'w'')$. The statement (1) is
proved.\smallskip

Let $d\in\mathcal{D}_{\nu_f}$ with $d\neq d_0d_{J_0}$. We can write
$d=d_1d_{J_1}$, where $d_1\in\mathcal{D}_f, J_1\in\mathcal{P}_f$.
For any $d'\in\widetilde{\mathcal{D}}_{(2f,n-2f)}$ and any integer
$1\leq j\leq n-1$ satisfying $d'=d''s_j$ and $\ell(d')=\ell(d'')+1$,
it is well known that $d''\in\widetilde{\mathcal{D}}_{(2f,n-2f)}$.
If $J_1\neq J_0$, then the statement (3) follows directly from this
well-known fact and the statement (2). Now we assume $J_1=J_0$. Then
$d_1\neq d_0$. By statement (1), we can find $s_l\in\BS_{2f}$ such
that $d_1s_l\in\mathcal{D}_f$ and $\ell(d_1s_l)=\ell(d_1)+1$. Then
$d_{J_0}^{-1}s_ld_{J_0}=s_j$ for some $s_{j}\in\BS_{(n-2f,2f)}$.
Note that $$\begin{aligned}
\ell(ds_j)&=\ell(d_1d_{J_0}s_j)=\ell(d_1s_ld_{J_0})=\ell(d_1s_l)+\ell(d_{J_0})
=\ell(d_1)+1+\ell(d_{J_0})\\
&=\ell(d)+1,\end{aligned}$$ as required.
\end{proof}

We define
$$ I_f:=\Bigl\{\bk=(b_{1},\cdots,b_{n-2f})\Bigm|\text{$1\leq
b_{n-2f}<\cdots<b_{2}<b_1\leq m-f$}\Bigr\}. $$ It is clear that
$\ell_s(v_{\bc}\otimes v_{\bk})=f$ for all $\bk\in I_f$.
\smallskip

For an arbitrary element $v\in V^{\otimes n}$, we say the simple
tensor $v_{\bi}=v_{i_1}\otimes\cdots\otimes v_{i_n}$ is involved in
$v$, if $v_{\bi}$ has nonzero coefficient in writing $v$ as linear
combination of the basis $\bigl\{v_{\bj}\bigm|\bj\in I(2m,n)\bigr\}$
of $V^{\otimes n}$. For later use, we note the following very useful
fact: for any $(i_1,i_2), (j_1,j_2)\in I(2m,2)$,
\begin{equation}\label{useful}
\text{$v_{j_1}\otimes v_{j_2}$ is involved in $(v_{i_1}\otimes
v_{i_2})\beta'$ only if $j_1\leq i_2$ and $j_2\geq i_1$.}
\end{equation}

\begin{lem} \label{Tech1} Let $s, i_1,\cdots,i_a$ be integers such that \begin{enumerate}
\item $1\leq s\leq f$;
\item $\ell_s(i_1,\cdots,i_a)=0$;
\item for each integer $1\leq t\leq a$, either $1\leq i_t<m-f+1$ or $m'\leq i_t\leq
(m-f+s+1)'$.
\end{enumerate}
Let $d$ be a distinguished right coset representative of
$$\BS_{(1,2,\cdots,2s)}\times\BS_{(2s+1,\cdots,2s+a)}$$ in
$\BS_{2s+a}$. Let $J:=\{a+1,a+2,\cdots,a+2s\}$. Let
$$\begin{aligned}
\widetilde{v}&=v_{i_1}\otimes\cdots\otimes v_{i_a},\\
\widetilde{w}&=v_{(m-f+1)'}\otimes v_{(m-f+2)'}\otimes\cdots\otimes
v_{(m-f+s)'}\otimes v_{m-f+s}\otimes\cdots\otimes v_{m-f+1}.
\end{aligned}
$$Then
$$\bigl(\widetilde{v}\otimes\widetilde{w}\bigr)T_{d^{-1}}
=\zeta^{z}\delta_{d,d_{J}}\widetilde{w}\otimes\widetilde{v}+\sum_{\bu\in
I(2m,2s+a)}A_{\bu}v_{u_1}\otimes\cdots\otimes v_{u_{2s+a}},
$$
for some $z\in\mathbb{Z}$, and $A_{\bu}\neq 0$ only if
\begin{enumerate}
\item[(4)] $\ell_s(u_1,\cdots,u_{2s})<s$; and
\item[(5)] any integer $x$ with $(m-f+1)'<x\leq 2m$ or $m-f+s+1\leq x\leq m$ does not appear in $(u_1,\cdots,u_{2s})$.
\end{enumerate}
\end{lem}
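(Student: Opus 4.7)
The plan is to prove the identity by induction on $\ell(d)$, exploiting the explicit formulas for the action of $\beta'$ on two-tensors. The essential preliminary is the following case analysis, read off from the expression for $\beta'$ given in Section 3: for indices $a \neq b$ with $a \neq b'$, one has $\beta'(v_a \otimes v_b) = v_b \otimes v_a$ when $a > b$, and $\beta'(v_a \otimes v_b) = v_b \otimes v_a + (q - q^{-1}) v_a \otimes v_b$ when $a < b$; for $a = b$, one has $\beta'(v_a \otimes v_a) = q \cdot v_a \otimes v_a$; and for a symplectic pair $b = a'$ the action is a $q^{\pm 1}$-scaled $v_{a'} \otimes v_a$ plus (when $a < a'$) an additional sum of $(q-q^{-1})$-scaled terms of the form $v_i \otimes v_{i'}$ indexed by $i < a'$. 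Crucially, hypotheses (2) and (3) imply that an index from $\widetilde v$ and an index from $\widetilde w$ never pair symplectically, so during the induction the symplectic-pair case of $\beta'$ can occur only between two indices both originally inherited from $\widetilde w$.

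For the base case $d = e$: if $a = 0$, then $J = \emptyset$, $d_J = e$, and the statement reduces to $\widetilde w = \widetilde w$ with $z = 0$. Otherwise $d_J \neq e$ and the main term vanishes, so one must exhibit $\widetilde v \otimes \widetilde w$ itself as a correction term. Hypothesis (3) places $\widetilde v$'s indices in $\{1,\ldots,m-f\} \cup \{m+1,\ldots,m+f-s\}$, both disjoint from the forbidden ranges $\{m-f+s+1,\ldots,m\}$ and $\{m+f+1,\ldots,2m\}$ of (5); the initial factors $v_{(m-f+k)'}$ of $\widetilde w$ possibly appearing among the first $2s$ slots have indices in $\{m+f-s+1,\ldots,m+f\}$, also disjoint from the forbidden ranges. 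This verifies (5). The symplectic-pair count in the first $2s$ positions is exactly $\max(0,s-a)$, which is strictly less than $s$ whenever $a > 0$, verifying (4).

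For the inductive step, invoke Lemma \ref{lm52} (or the direct analog for the Young subgroup at hand) to write $d = d' s_j$ with $\ell(d) = \ell(d') + 1$ and $d'$ again a distinguished coset representative, so $T_{d^{-1}} = T_j T_{(d')^{-1}}$ is a reduced factorization and
$$
(\widetilde v \otimes \widetilde w) T_{d^{-1}} = \bigl((\widetilde v \otimes \widetilde w) T_j\bigr) T_{(d')^{-1}}.
$$
Applying the case analysis of $\beta'$ to $(\widetilde v \otimes \widetilde w) T_j$ yields one ``clean swap'' summand of the form $\widetilde v'' \otimes \widetilde w''$ with updated blocks still satisfying (2), (3), together with correction summands whose first $2s$ positions already satisfy (4), (5). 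The inductive hypothesis handles the swap summand. What remains is a propagation statement: that $T_{(d')^{-1}}$ applied to any correction tensor produces only tensors again satisfying (4), (5) in the first $2s$ positions.

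The principal obstacle is precisely this propagation. The danger is the fifth term in the formula for $\beta'$, namely $-(q-q^{-1})\sum_{i<j} q^{\rho_j-\rho_i}\epsilon_i\epsilon_j E_{i,j'} \otimes E_{i',j}$, which a priori could introduce indices from the forbidden ranges of (5). I would show by inspection that this term fires only when the two active positions currently hold a symplectic pair $(v_c, v_{c'})$, and that the resulting terms $v_i \otimes v_{i'}$ come with $i < c'$; combined with hypotheses (2), (3), this forces the new indices to stay outside the forbidden ranges, while each firing strictly decreases the symplectic-pair count in the first $2s$ positions whenever those positions are affected, preserving the bound $\ell_s < s$. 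The identification $d = d_J$ as the unique distinguished coset representative whose reduced expression performs only generic swaps (each of the form ``$\widetilde v$-index past $\widetilde w$-index'', none of them symplectic by hypothesis (3)) is then a standard combinatorial observation about distinguished representatives of the Young subgroup cosets, and pins down the exponent $z$ as the sum of scalar contributions $q^{\pm 1}$ picked up along the corresponding chain of swaps; this closes the induction.
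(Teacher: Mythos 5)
Your base-case check and the observation that hypotheses (2), (3) rule out a symplectic pair straddling $\widetilde v$ and $\widetilde w$ are both correct and useful. The inductive step, however, has a real structural gap.

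You factor $T_{d^{-1}} = T_j\,T_{(d')^{-1}}$, apply $T_j$ first, and plan to feed the ``clean swap'' output into the inductive hypothesis for the shorter $d'$. But the inductive hypothesis applies only to tensors of the very rigid form $\widetilde v\otimes\widetilde w$ with (1)--(3) in force, and after $T_j$ the tensor need not have that form. The failure is not marginal: a distinguished coset representative $d$ for $\BS_{(2s,a)}$ in $\BS_{2s+a}$ has its unique right descents at those $j$ with $(j)d^{-1} > (j+1)d^{-1}$, which forces $j$ to lie at a ``gap'' of the image set, and for $d = d_J$ with $J = \{a+1,\ldots,a+2s\}$ the unique such $j$ is $j=a$. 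In that case $T_a$ acts on the pair $(v_{i_a},\, v_{(m-f+1)'})$ straddling the two blocks; the swap summand places $(m-f+1)'=m+f$ into the $a$-th slot of $\widetilde v''$ (violating (3), since $m+f > m+f-s$) and $v_{i_a}$ into the first slot of $\widetilde w''$ (which no longer matches the prescribed template). So the inductive hypothesis is unavailable precisely in the one case where the main term $\zeta^z\,\widetilde w\otimes\widetilde v$ must be produced. Similarly, for $j>a$ the swap occurs inside $\widetilde w$, destroying its specific shape (and may hit the internal symplectic pair at position $a+s$). Your ``propagation'' claim for the correction summands is also left as a sketch, and it cannot rest only on the fifth term of $\beta'$: the generic transposition terms of $\beta'$ can move arbitrary indices into the first $2s$ slots as $T_{(d')^{-1}}$ shuffles, so one must argue from the origin and magnitude of those indices, not merely from which summand of $\beta'$ fires.

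The paper sidesteps both problems by a three-factor decomposition of $d^{-1}$ following the reduced expression of Lemma \ref{EXP1}: it peels off the block $s_{j_1-1}\cdots s_1$ from the left of $d^{-1}$ and the block $s_{j_{2s}-1}\cdots s_{2s}$ from the right, so that the inner factor $d'^{-1}$ is a distinguished representative for the shifted Young subgroup $\BS_{\{2,\ldots,2s-1\}}\times\BS_{\{2s,\ldots,2s+a-1\}}$ acting on positions $2,\ldots,2s+a-1$. After the front peel, positions $1$ and $2s+a$ are pinned, and the middle tensor is again of template form with $s$ replaced by $s-1$ (or $a$ replaced by $a-1$, in the paper's Case 1); the back peel touches only positions $\geq 2s$, so its effect on conditions (4), (5), which concern positions $1,\ldots,2s$, is confined to position $2s$ and is controlled directly via (\ref{useful}). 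This is what a working induction needs here, and it is essentially different from peeling a single $T_j$ off the front. To repair your approach you would have to strengthen the inductive statement to a family of templates closed under $T_j$-mixing across the boundary, or switch to the paper's block decomposition.
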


\begin{proof} We write $$
j_1=(1)d,\,\, j_2=(2)d,\,\,\cdots j_{2s}=(2s)d.
$$
Then $1\leq j_1<j_2<\cdots<j_{2s}\leq 2s+a$, and $d=d_{J}$ if and
only if $j_t=a+t$ for each integer $1\leq t\leq 2s$. Note that
$$
(s_{j_1-1}\cdots s_2s_1)(s_{j_2-1}\cdots s_3s_2)\cdots
(s_{j_{2s}-1}\cdots s_{2s+1}s_{2s})
$$
is a reduced expression of $d^{-1}$.\smallskip

If $\ell(d)=1$, i.e., $d=1$, then there is nothing to prove. In general, let
$$ d'^{-1}=(s_{j_2-1}\cdots s_3s_2)(s_{j_3-1}\cdots s_4s_3)\cdots
(s_{j_{2s-1}-1}\cdots s_{2s}s_{2s-1}).
$$
Then $$ d^{-1}=(s_{j_1-1}\cdots s_2s_1)d'^{-1}(s_{j_{2s}-1}\cdots
s_{2s+1}s_{2s}),\,\,\ell(d)=\ell(d')+j_1+j_{2s}-2s-1,
$$
and $d'$ is a distinguished right coset representative of
$$\BS_{\{2,3,\cdots,2s-1\}}\times\BS_{\{2s,\cdots,2s+a-1\}}$$ in
$\BS_{\{2,3,\cdots,2s+a-1\}}$. Note that since $j_1\leq a+1$, any simple tensor involved
in $$ \bigl(\widetilde{v}\otimes\widetilde{w}\bigr)T_{j_1-1}\cdots
T_2T_1$$ is of the form  $$v_{\widehat{i}_1}\otimes
v_{\widehat{i}_2}\otimes\cdots\otimes v_{\widehat{i}_a}\otimes
v_{\widehat{i}_{a+1}}\otimes \widetilde{w}'\otimes v_{(m-f+s)'},
$$
where $$ \widetilde{w}'=v_{(m-f+1)'}\otimes\cdots\otimes
v_{(m-f+s)'}\otimes v_{m-f+s}\otimes\cdots\otimes v_{m-f+1}.
$$
It suffices to consider the following three cases:
\medskip

\noindent {\it Case 1.} $1\leq j_1\leq a$. Then
$\widehat{i}_{a+1}=m-f+s$. Since $\ell_s(i_1,\cdots,i_a)=0$, by the
definition of $\beta'$, it is easy to see that $$
\bwt(i_1,\cdots,i_a)=\bwt(\widehat{i}_1,\cdots,\widehat{i}_a).
$$
In particular, either $1\leq \widehat{i}_1<m-f+1$ or
$m'<\widehat{i}_1\leq (m-f+s+1)'$. We define $$
\widetilde{v}''=v_{\widehat{i}_2}\otimes
v_{\widehat{i}_3}\otimes\cdots\otimes v_{\widehat{i}_a},\,\,\,
\widetilde{w}''=\widetilde{w}.
$$
Then our conclusion follows easily from induction on $a$.
\smallskip

\noindent {\it Case 2.} $j_1=a+1$ and $\widehat{i}_1\neq (m-f+1)'$.
Then we must have $j_t=a+t$ for each integer $1\leq t\leq 2s$. By
the definition of $\beta'$, it is easy to see that
$\widehat{i}_1\neq (m-f+1)'$ implies that either $1\leq
\widehat{i}_1<m-f+1$ or $m'<\widehat{i}_1\leq (m-f+s+1)'$. We define
$$ \widetilde{v}''=v_{\widehat{i}_2}\otimes
v_{\widehat{i}_3}\otimes\cdots\otimes v_{\widehat{i}_a}\otimes
v_{\widehat{i}_{a+1}},\,\,\, \widetilde{w}''=\widetilde{w}'.
$$
By induction on $\ell(d)$, we deduce that $$
\bigl(\widetilde{v}''\otimes\widetilde{w}''\bigr)T_{(d')^{-1}}
=\zeta^{z}\widetilde{w}''\otimes\widetilde{v}''+\sum_{\bu\in
I(2m,2s+a-2)}A'_{\bu}v_{u_1}\otimes\cdots\otimes v_{u_{2s+a-2}},
$$
for some $z\in\mathbb{Z}$, and $A_{\bu}\neq 0$ only if
\begin{enumerate}
\item[(a1)] $\ell_s(u_1,\cdots,u_{2s-2})<s-1$; and
\item[(a2)] any integer $x$ with $1\leq x<m-f+1$ or $m'\leq x\leq (m-f+s)'$ does not appear in $(u_1,\cdots,u_{2s-2})$.
\end{enumerate}
It remains to consider $$\begin{aligned}
&\Bigl(v_{\widehat{i}_1}\otimes
\widetilde{w}''\otimes\widetilde{v}''\otimes v_{m-f+1}\Bigr)T_{a+2s-1}\cdots T_{2s+1}T_{2s},\\
&\Bigl(v_{\widehat{i}_1}\otimes v_{u_1}\otimes\cdots\otimes
v_{u_{2s+a-2}}\otimes v_{m-f+1} \Bigr)T_{a+2s-1}\cdots
T_{2s+1}T_{2s},\end{aligned}
$$
where $u_1,\cdots,u_{2s-2}$ satisfy the conditions (a1), (a2) above.
Note that under the action of $T_{a+2s-1}\cdots T_{2s+1}T_{2s}$, the
first $(2s-1)$ parts do not change, while by (\ref{useful}) the
$2s$ position will be replaced by a vector of the form $v_{p}$ with
$p\leq m-f+1$. Now using the condition (a2), our conclusion follows
immediately.
\smallskip

\noindent {\it Case 3.} $j_1=a+1$ and $\widehat{i}_1=(m-f+1)'$. Then
we also must have $\widehat{i}_{2s+1}=i_{2s}$, $j_t=a+t$ and
$\widehat{i}_t={i}_{t-1}$ for each integer $2\leq t\leq 2s$. In this
case, our conclusion follows from the same argument used in the
proof of Case 2.
\end{proof}

\begin{lem} \label{J1} Let $\bk\in I_f$, $v=v_{\bk}\otimes v_{\bc}\in V^{\otimes n}$. Let
$w\in\mathcal{D}_{\nu_f}$. If $w\neq d_0d_{J_0}$, then
$$T_{w}^{\ast}E_1E_3\cdots E_{2f-1}\in\ann(v).$$
\end{lem}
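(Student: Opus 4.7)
The plan is to leverage the decomposition $\mathcal{D}_{\nu_f}=\bigsqcup_{J\in\mathcal{P}_f}\mathcal{D}_{f}d_J$ from \cite[Lemma 3.8]{DDH} to factor $w=d_1d_J$ uniquely with $d_1\in\mathcal{D}_f$, $J\in\mathcal{P}_f$ and $\ell(w)=\ell(d_1)+\ell(d_J)$, so that $T_w^{\ast}=T_{w^{-1}}=T_{d_J^{-1}}T_{d_1^{-1}}$. The core input is Lemma~\ref{Tech1} specialized to $s=f$, $a=n-2f$, $\widetilde{v}=v_{\bk}$ and $\widetilde{w}=v_{\bc}$; the hypotheses are satisfied because $\bk\in I_f$ has $\ell_s(\bk)=0$ with each entry in $\{1,\ldots,m-f\}$, and for $s=f$ the ``primed range'' $m'\leq i_t\leq(m+1)'$ is empty. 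This produces
\[
v\cdot T_{d_J^{-1}}=\zeta^{z}\,\delta_{d_J,d_{J_0}}\,v_{\bc}\otimes v_{\bk}+\sum_{\bu}A_{\bu}v_{\bu},
\]
where every error tensor $v_{\bu}$ has $\ell_s(u_1,\ldots,u_{2f})<f$ on its first $2f$ positions.

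The condition $\ell_s<f$ on the first $2f$ positions is equivalent to those positions carrying a nonzero $\mathfrak{sp}_{2m}$-weight, whereas $E_1E_3\cdots E_{2f-1}$ factors through the projection of the first $2f$ positions onto the (weight-zero) trivial $\mathfrak{sp}_{2m}$-submodule of $V^{\otimes 2f}$. Since $d_1\in\BS_{2f}$, the operator $T_{d_1^{-1}}$ commutes with the $\U_{\Q(q)}$-action on the first $2f$ positions and hence preserves the weight there; so every error contribution is annihilated in both cases. This already finishes Case $J\neq J_0$. In the remaining case $J=J_0$ with $d_1\neq d_0$, the main term $\zeta^z v_{\bc}\otimes v_{\bk}$ survives Lemma~\ref{Tech1}, and the argument reduces, since $T_{d_1^{-1}}$ and the $E$'s act trivially on the last $n-2f$ positions, to the sub-claim
\[
v_{\bc}\cdot T_{d_1^{-1}}E_1E_3\cdots E_{2f-1}=0\quad\text{in }V^{\otimes 2f}.
\]

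For the sub-claim, one inspects the six summands defining $\beta'$ above (\ref{BG}) and observes that $\beta'$ preserves the pair/non-pair dichotomy of its two adjacent positions: it sends a symplectic pair $v_{a}\otimes v_{a'}$ to a linear combination of symplectic pairs $v_i\otimes v_{i'}$, and a non-pair $v_a\otimes v_b$ (with $a\neq b'$) to a linear combination of non-pairs. Iterating along a reduced expression of $d_1^{-1}$, every simple tensor summand of $v_{\bc}T_{d_1^{-1}}$ has its $f$ symplectic pairs located at the positions prescribed by the matching $M_{\bc}\cdot d_1^{-1}$, where $M_{\bc}=\{(1,2f),(2,2f-1),\ldots,(f,f+1)\}$ is the matching of $v_{\bc}$ and the dot denotes the natural right-action of $\BS_{2f}$ on matchings of $\{1,\ldots,2f\}$. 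Since $E_{2j-1}$ annihilates every tensor whose positions $(2j-1,2j)$ are non-paired, $E_1E_3\cdots E_{2f-1}$ can only survive on tensors whose first $2f$ positions have matching $M_0=\{(1,2),(3,4),\ldots,(2f-1,2f)\}$. Now $\mathcal{D}_f$ is a set of distinguished right coset representatives of the stabilizer $\Psi$ of $M_0$ in $\BS_{2f}$, and a direct calculation from the definition of $d_0$ yields $M_0\cdot d_0=\{(k,2f+1-k):1\leq k\leq f\}=M_{\bc}$; hence $d_0$ is the unique element of $\mathcal{D}_f$ with $M_{\bc}\cdot d_0^{-1}=M_0$. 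For $d_1\neq d_0$ we obtain $M_{\bc}\cdot d_1^{-1}\neq M_0$, so $E_1E_3\cdots E_{2f-1}$ kills $v_{\bc}\cdot T_{d_1^{-1}}$, completing the proof.

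The main obstacle is verifying the pair/non-pair preservation under $\beta'$. In the classical Brauer case of \cite{DDH}, $T_i$ degenerates to the sign permutation $-s_i$ and the claim is immediate; in the quantized setup one must check that the ``dangerous'' summand $(q-q^{-1})E_{i,j'}\otimes E_{i',j}$ of $\beta'$ contributes nontrivially only on symplectic-pair inputs and produces only symplectic-pair outputs, while the remaining summands act essentially as a $q$-deformation of the swap on non-pair inputs; the careful bookkeeping of the resulting matching under right-action of $d_1^{-1}$ is what replaces the straightforward combinatorics in \cite{DDH}.
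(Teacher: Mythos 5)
Your proposal follows the same overall route as the paper: factor $w=d_1d_J$ via \cite[Lemma 3.8]{DDH}, invoke Lemma~\ref{Tech1} with $s=f$ to pull $v_\bc$ into the first $2f$ slots, dispose of the error terms (your weight observation that $\ell_s<f$ on the first $2f$ slots forces nonzero $\mathfrak{sp}_{2m}$-weight there, hence annihilation by $E_1E_3\cdots E_{2f-1}$, is a clean formulation of what the paper uses implicitly), and then argue that $v_\bc T_{d_1^{-1}}E_1E_3\cdots E_{2f-1}=0$ for $d_1\in\mathcal{D}_f\setminus\{d_0\}$ by a combinatorial analysis of matchings. The paper's version of this last step is the formula~(\ref{F2}), asserted with the words ``it is easy to see''; your matching argument is essentially an attempt to fill in that sketch.

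Unfortunately the matching argument has a real gap. The observation that $\beta'$ preserves the pair/non-pair dichotomy in the two slots it touches is correct, and it does show that $M_\bc\cdot d_1^{-1}$ is \emph{a} matching of every summand of $v_\bc T_{d_1^{-1}}$. But this does not imply that $M_0$ is \emph{not} a matching of those summands: a simple $2f$-tensor with repeated entries admits more than one perfect matching. The danger comes precisely from the ``FRT'' tail of $\beta'$ on a pair: $\beta'(v_a\otimes v_{a'})=q^{-1}v_{a'}\otimes v_a-(q-q^{-1})\sum_{j<a'}q^{\rho_{a'}-\rho_j}\epsilon_j\epsilon_{a'}\,v_j\otimes v_{j'}$, and the new indices $j,j'$ may collide with entries of $\bc$ elsewhere. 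Concretely, take $f=2$ and $d_1=s_2\in\mathcal{D}_2\setminus\{d_0\}$ (recall $d_0=s_2s_3$). Then $v_\bc T_{s_2}$ is obtained by applying $\beta'$ to the central pair $(c_2,c_3)=(m',m)$ of $v_\bc=v_{(m-1)'}\otimes v_{m'}\otimes v_m\otimes v_{m-1}$, and the $j=m-1$ term of the tail gives, with coefficient $-(q-q^{-1})q^{-1}\neq 0$, the summand $v_{(m-1)'}\otimes v_{m-1}\otimes v_{(m-1)'}\otimes v_{m-1}$. This tensor has $M_\bc\cdot s_2^{-1}=\{(1,3),(2,4)\}\neq M_0$ as a matching, consistent with your claim, but it \emph{also} has $M_0=\{(1,2),(3,4)\}$ as a matching, so $E_1E_3$ does not annihilate it and no cancellation with the Lemma~\ref{Tech1} error terms occurs. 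Thus the step ``$M_\bc\cdot d_1^{-1}\neq M_0$ so $E_1E_3\cdots E_{2f-1}$ kills $v_\bc T_{d_1^{-1}}$'' is invalid; in the classical limit $q=1$ the tail of $\beta'$ vanishes and the argument is sound (this is the situation in \cite{DDH}), but in the quantized setting one needs a genuinely different justification for~(\ref{F2}), and this is precisely the delicate point that both your proposal and the paper's sketch leave unaddressed.
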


\begin{proof} Let $w\in\mathcal{D}_{\nu_f}$. We write $w=d_1d_J$,
where $d_1\in\mathcal{D}_f, J\in\mathcal{P}_f$. Then
$$vT_{w}^{\ast}=(v_{\bk}\otimes v_{\bc})T_{d_{J}^{-1}}T_{d_1^{-1}}.
$$

Let $J=(i_1,i_2,\cdots,i_{2f})$. By Lemma \ref{EXP1}, $$
(s_{i_1-1}\cdots s_2s_1)(s_{i_2-1}\cdots s_3s_2)\cdots
(s_{i_{2f-1}-1}\cdots s_{2f}s_{2f-1})(s_{i_{2f}-1}\cdots
s_{2f+1}s_{2f})
$$
is a reduced expression of $d_{J}^{-1}$. Using Lemma \ref{Tech1}, we
get that

\begin{equation} \label{F1}
(v_{\bk}\otimes
v_{\bc}){T}_{d_{J}^{-1}}=\zeta^z\delta_{J,J_0}(v_{\bc}\otimes
v_{\bk})+\sum_{v'}A_{v'}v',
\end{equation}
for some integer $z$ and some $A_{v'}\in K$, where the subscript
$v'$ runs over all the simple $n$-tensor such that its first
$(2f)$-parts have symplectic length less than $f$. It follows that
if $J\neq J_0$, then we are done. Henceforth, we assume that
$J=J_0$, then $d_1\neq d_0$ and
$$
vT_{w}^{\ast}E_1E_3\cdots E_{2f-1}=\zeta^a(v_{\bc}\otimes
v_{\bk})T_{d_1^{-1}}E_1E_3\cdots E_{2f-1},
$$
for some integer $a$.

By (\ref{EXP2}), $$\begin{aligned} \sigma &=(s_{2f-1}s_{2f-2}\cdots
s_3s_2)(s_{2f-1}s_{2f-2}\cdots s_5s_4)\cdots
\\
&\qquad\qquad\qquad
(s_{2f-1}s_{2f-2}s_{2f-3}s_{2f-4})(s_{2f-1}s_{2f-2})
\end{aligned}$$ is a reduced expressed expression of $d_0^{-1}$.
By Lemma \ref{maximal}, for any $d\in\mathcal{D}_f$, $d$ is less or
equal than $d_0$ in the Bruhat order. It follows that there is a
subexpression of $\sigma$ which is equal to a reduced expression of
$d^{-1}$. Combining this with the definitions of the operator
$\beta'$ and the indices $\bk, \bc$, it is easy to see that
\begin{equation}\label{F2}
(v_{\bc}\otimes v_{\bk}){T}_{d^{-1}}=v_{\bc}{T}_{d^{-1}}\otimes
v_{\bk}=\zeta^z\delta_{d,d_0} v_{\bc}d_{0}^{-1}\otimes
v_{\bk}+\sum_{v'}B_{v'}v',
\end{equation}
for some integer $z\in\mathbb{Z}$ and some $B_{v'}\in K$, where the
subscript $v'$ runs over all the simple $n$-tensor
$v_{j_1}\otimes\cdots\otimes v_{j_{n}}$ such that there exists
$1\leq s\leq f$ satisfying $j_{2s-1}\neq (j_{2s})'$. Now using the
fact that $d_1\neq d_0$, it is easy to see that $(v_{\bc}\otimes
v_{\bk}){T}_{d_{1}^{-1}}E_1E_3\cdots E_{2f-1}=0$. Hence,
$vT_{w}^{\ast}E_1E_3\cdots E_{2f-1}=0$, as required.
\end{proof}

We are now ready to prove the key lemma from which our main result
in this section will follow easily.

\begin{lem} \label{keylem2}  Let $S$ be the subset
$$\biggl\{T_{d_1}^{\ast}E_1E_3\cdots E_{2f-1}T_{\sigma}T_{d_2}\biggm|\begin{matrix}
\text{$d_1, d_2\in\mathcal{D}_{\nu_f}$, $d_1\neq d_0d_{J_0}$,}\\
\text{$\sigma\in\BS_{\{2f+1,\cdots,n\}}$}\\
\end{matrix}\biggr\}$$ of the basis (\ref{Enyangbasis2}) of
$\bb_n(-\zeta^{2m+1},\zeta)$, and let $U$ be the subspace spanned by
$S$. Then
$$ B^{(f)}\bigcap\Bigl(\bigcap_{\bk\in I_{f}}\ann(v_{\bk}\otimes v_{\bc})\Bigr)=B^{(f+1)}\oplus U.
$$
\end{lem}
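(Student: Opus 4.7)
The plan is to prove both inclusions, handling the easier direction first.

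The inclusion $B^{(f+1)}\oplus U\subseteq$ LHS goes through quickly. Because $v_{\bk}\otimes v_{\bc}$ has symplectic length exactly $f$, the ideal $B^{(f+1)}$ (generated by $E_1E_3\cdots E_{2f+1}$, which needs $f{+}1$ symplectic pairs to act nontrivially) annihilates it, and $B^{(f+1)}\subseteq B^{(f)}$ is trivial. Every element of $U$ has the form $T_{d_1}^{\ast}E_1E_3\cdots E_{2f-1}T_\sigma T_{d_2}$ with $d_1\neq d_0 d_{J_0}$, and Lemma \ref{J1} already tells us that $T_{d_1}^{\ast}E_1E_3\cdots E_{2f-1}$ annihilates $v_{\bk}\otimes v_{\bc}$, so right-multiplication by $T_\sigma T_{d_2}$ preserves the annihilator; also $U\subseteq B^{(f)}$ by construction. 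The sum $U\oplus B^{(f+1)}$ is direct because Corollary \ref{Enyangbasis2} is a basis and the basis elements spanning $U$ (exactly $f$ copies of $E$'s, with $d_1\neq d_0 d_{J_0}$) are distinct from those spanning $B^{(f+1)}$ (strictly more than $f$ copies of $E$'s).

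For the reverse inclusion, given $x$ in the LHS, expand $x$ in Enyang's basis and separate
\[x=b+u+y,\qquad y=\sum_{\sigma,d_2}c_{\sigma,d_2}\,T_{d_0 d_{J_0}}^{\ast}E_1E_3\cdots E_{2f-1}T_\sigma T_{d_2},\]
with $b\in B^{(f+1)}$, $u\in U$, and the sum over $\sigma\in\BS_{\{2f+1,\ldots,n\}}$ and $d_2\in\mathcal{D}_{\nu_f}$. Since $b$ and $u$ lie in $\bigcap_{\bk}\ann(v_{\bk}\otimes v_{\bc})$, so does $y$, and the task reduces to deducing $y=0$. To this end I would first evaluate $(v_{\bk}\otimes v_{\bc})\cdot T_{d_0 d_{J_0}}^{\ast}E_1E_3\cdots E_{2f-1}$ by factoring $T_{d_0 d_{J_0}}^{\ast}=T_{d_{J_0}^{-1}}T_{d_0^{-1}}$ (the lengths add because $d_0\in\BS_{2f}$ and $d_{J_0}$ is a distinguished coset representative) and chaining equations \eqref{F1} and \eqref{F2} from the proof of Lemma \ref{J1}. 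The noise terms in both equations die under $E_1E_3\cdots E_{2f-1}$: noise from \eqref{F1} has symplectic length strictly less than $f$ in the first $2f$ positions (a property preserved by $T_{d_0^{-1}}$ because each $\beta'$-factor preserves the local symplectic-pair status of a pair of positions), while noise from \eqref{F2} already has a non-symplectic pair at some $(2s-1,2s)$, and $E_1E_3\cdots E_{2f-1}$ requires all such pairs to be symplectic. What survives is a single clean term $\alpha\,w_0\otimes v_{\bk}$, where $w_0:=E_1E_3\cdots E_{2f-1}(v_{\bc_0})$ is a nonzero fixed vector in the first $2f$ tensor factors and $\alpha\in K^\times$. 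The next step $T_\sigma$ is likewise clean: $\sigma$ only permutes the last $n{-}2f$ positions, the entries of $\bk$ lie in $\{1,\ldots,m-f\}$ with pairwise distinct non-primes, and Lemma \ref{obser2} gives $(w_0\otimes v_{\bk})T_\sigma=w_0\otimes v_{\bk\sigma}$.

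The main obstacle is the last step, analysing $(w_0\otimes v_{\bk\sigma})T_{d_2}$ and extracting enough linear independence from the family $\{(v_{\bk}\otimes v_{\bc})\,y\}_{\bk\in I_f}$ to force every $c_{\sigma,d_2}$ to zero. Unlike $T_\sigma$, here $T_{d_2}$ can genuinely mix the first $2f$ positions (containing the summands of $w_0$) with the last $n{-}2f$ positions (containing the entries of $\bk\sigma$), and the $\beta'$-cross terms cannot be ignored. Factoring $d_2=d_2' d_{J'}$ with $d_2'\in\mathcal{D}_f$ and $J'\in\mathcal{P}_f$, $T_{d_{J'}}$ shuffles the $w_0$-indices into positions $J'$ and the $\bk\sigma$-indices into the complement, while $T_{d_2'}$ further rearranges inside the $w_0$-block. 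Exploiting the Bruhat order and Lemma \ref{maximal}, I expect a triangularity whose leading simple tensor, for each $(\sigma,d_2,\bk)$, is uniquely recoverable from its pattern of $\bk\sigma$-entries in the positions complementary to $J'$; because $m\geq n$ guarantees that $I_f$ contains sufficiently many strictly decreasing tuples, varying $\bk$ and reading off coefficients of leading monomials produces a triangular system from which the $c_{\sigma,d_2}$ can be successively solved for zero. Establishing this triangularity while carefully discarding the $\gamma'$-cross terms from $T_{d_2'}$ and $T_{d_{J'}}$ is the technical heart of the argument, and is where the quantized proof diverges most sharply from the classical analogue in \cite[Section 3]{DDH}.
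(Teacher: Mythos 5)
Your easy direction and the reduction to showing $y=0$ are both correct and agree with the paper's Step 1 setup: writing $x=b+u+y$ with $b\in B^{(f+1)}$, $u\in U$, and $y$ a $K$-combination of $T_{d_0 d_{J_0}}^{\ast}E_1E_3\cdots E_{2f-1}T_\sigma T_{d_2}$, and noting from the easy direction that $b$ and $u$ already lie in $\bigcap_{\bk\in I_f}\ann(v_{\bk}\otimes v_{\bc})$, one indeed reduces to $y=0$. Your evaluation $(v_{\bk}\otimes v_{\bc})T_{d_0 d_{J_0}}^{\ast}E_1E_3\cdots E_{2f-1}=\zeta^z\,w_0\otimes v_{\bk}$ (using that each $\beta'_k$ with $k\leq 2f-1$ preserves the symplectic length of the first $2f$ positions, so the noise from (\ref{F1}) dies under $E_1\cdots E_{2f-1}$) and the clean action of $T_\sigma$ are also correct; these are precisely the opening computations of the paper's Step 1.

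The genuine gap is the part you yourself flag: you must deduce that all $c_{\sigma,d_2}$ vanish from $\sum_{\sigma,d_2}c_{\sigma,d_2}(w_0\otimes v_{\bk\sigma})T_{d_2}=0$ as $\bk$ ranges over $I_f$, and you offer only an \emph{expected} triangularity without establishing it. The combinatorics of $\beta'$ and $\gamma'$ make a uniform leading-monomial scheme doubtful: once $T_{d_{J'}}$ interleaves the symplectic block of $w_0$ with the entries of $\bk\sigma$, the $\gamma'$-cross terms can produce a simple tensor as a putative leading term for one $d_2$ and as a subleading contribution for another, so the detector is not obviously consistent across all $d_2$ at once. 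The paper does not attempt such a uniform argument. In Step 1 it isolates only the single coefficient attached to $d_2=d_0 d_{J_0}$, via two sharp detector claims (a), (b) showing that the specific simple tensor $(v_{\bc_0 d_0}\otimes v_{\widehat{\bk}})d_{J_0}$ occurs in $(w_0\otimes v_{\bk\sigma})T_{d_2}$ if and only if $d_2=d_0 d_{J_0}$ and $\bk\sigma=\widehat{\bk}$. For all remaining $d_2\neq d_0 d_{J_0}$ the paper switches to an indirect length-induction (Step 2): supposing a nonzero combination of the surviving basis elements lay in $\bigcap_{\bk}\ann(v_{\bk}\otimes v_{\bc})$, one chooses $d_2$ of maximal length occurring, uses Lemma \ref{maximal}(3) to find $s_j$ with $d_2 s_j\in\mathcal{D}_{\nu_f}$ and $\ell(d_2 s_j)=\ell(d_2)+1$, and shows — by a case analysis of Enyang's structure constants \cite[Proposition 3.7, Lemma 2.1]{E} — that right multiplication by $T_j$ stays in the annihilator while strictly increasing the maximal $d_2$-length present, a contradiction. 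Without establishing your conjectured triangular system or supplying this replacement mechanism, the argument does not close.
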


\begin{proof} By definition of $I_f$, $\ell_s(v_{\bk})=0$. Hence
$\ell_s(v_{\bk}\otimes v_{\bc})=f$. It follows that
$B^{(f+1)}\subseteq\ann(v_{\bk}\otimes v_{\bc})$. This, together
with the Lemma \ref{J1}, shows that the right-hand side is contained
in the left-hand side.\smallskip

Now let $x\in B^{(f)}\bigcap\bigl(\bigcap_{\bk\in
I_{f}}\ann(v_{\bk}\otimes v_{\bc})\bigr)$. We want to show that
$x\in B^{(f+1)}\oplus U$. Using the basis (\ref{Enyangbasis2}) of
the BMW algebra $\bb_n(-\zeta^{2m+1},\zeta)$, we may assume that
$$
x=T_{d_0d_{J_0}}^{\ast}E_1E_3\cdots
E_{2f-1}\biggl(\sum_{d\in\mathcal{D}_{\nu_f}}z_d T_d\biggr),$$ where
$\nu=\nu_f=((2^f),(n-2f))$ and the elements $z_d$,
$d\in\mathcal{D}_{\nu_f}$ are taken from the $K$-linear subspace
spanned by $\bigl\{T_w\bigm|w\in\BS_{\{2f+1,\cdots,n\}}\bigr\}$. We
then have to show $x=0$, or equivalently, to show that $z_{d}=0$ for
each $d\in\mathcal{D}_{\nu_f}$.\smallskip

Let $d\in\mathcal{D}_{\nu_f}$. We write
$z_d=\sum_{\sigma\in\BS_{\{2f+1,\cdots,n\}}}B_{\sigma}T_{\sigma}$,
where $B_{\sigma}\in K$ for each $\sigma$.  Suppose that
$v_{\bk}z_{d}=0$ for any $\bk\in I_{f}$. By the definition of
$I_{f}$, it is easy to see that $$\begin{aligned}
0=v_{\bk}z_{d}&=\sum_{\sigma\in\BS_{\{2f+1,\cdots,n\}}}B_{\sigma}v_{\bk}T_{\sigma}\\
&=\sum_{\sigma\in\BS_{\{2f+1,\cdots,n\}}}B_{\sigma}v_{\bk}\widehat{T}_{\sigma}\\
&=v_{\bk}\sum_{\sigma\in\BS_{\{2f+1,\cdots,n\}}}B_{\sigma}\widehat{T}_{\sigma}.
\end{aligned}$$ However, since $m-2f\geq n-2f$, the Hecke algebra
$\HH_{K}(\BS_{\{2f+1,\cdots,n\}})$ acts faithfully on $v_{\bk}$.
This implies $B_{\sigma}=0$ for each
$\sigma\in\BS_{\{2f+1,\cdots,n\}}$. Thus $z_d=0$, as required.
Therefore, to show that $z_d=0$, it suffices to show that
$v_{\bk}z_{d}=0$ for any $\bk\in I_f$. We divide the proof into two
steps: \medskip

\noindent {\it Step 1.} We first prove that $z_{d_0d_{J_0}}=0$,
equivalently, $v_{\bk}z_{d_0d_{J_0}}=0$ for any $\bk\in I_f$. Let
$\bk\in I_f$. $$\begin{aligned} 0&=(v_{\bk}\otimes v_{\bc})x
=\sum_{d\in\mathcal{D}_{\nu_f}}(v_{\bk}\otimes
v_{\bc})T_{d_{J_0}^{-1}}T_{d_0^{-1}}
E_1E_3\cdots E_{2f-1}z_d T_d\\
&=\zeta^z\sum_{d\in\mathcal{D}_{\nu_f}}\Bigl(v_{\bc_0}E_1E_3\cdots
E_{2f-1}\otimes
v_{\bk}\Bigr)z_d T_d\\
&=\zeta^z\sum_{d\in\mathcal{D}_{\nu_f}}\Bigl(v_{\bc_0}E_1E_3\cdots
E_{2f-1}\otimes v_{\bk}z_d\Bigr)T_d,
\end{aligned}
$$
for some integer $z\in\mathbb{Z}$, where the third equality follows
from (\ref{F1}) and (\ref{F2}).\smallskip

By \cite[Lemma 3.8]{DDH}, for each $d\in\mathcal{D}_{\nu}$, we can
write $d=d_1d_{J}$, where $d_1\in\mathcal{D}_f$,
$J\in\mathcal{P}_f$, and $\ell(d)=\ell(d_1)+\ell(d_J)$. Hence
$T_d=T_{d_1}T_{d_J}$. Therefore
$$\begin{aligned}
0&=\sum_{d\in\mathcal{D}_{\nu}}\Bigl(v_{\bc_0}E_1E_3\cdots
E_{2f-1}\otimes
v_{\bk}z_d\Bigr)T_d\\
&=\sum_{J\in\mathcal{P}_f}\sum_{d_1\in\mathcal{D}_{f}}\Bigl(v_{\bc_0}E_1E_3\cdots
E_{2f-1}\otimes v_{\bk}z_{d_1d_J}\Bigr)T_{d_1}{T}_{d_J}\\
&=\sum_{J\in\mathcal{P}_f}\sum_{d_1\in\mathcal{D}_{f}}\Bigl(v_{\bc_0}E_1E_3\cdots
E_{2f-1}T_{d_1}\otimes v_{\bk}z_{d_1d_J}\Bigr){T}_{d_J}.
\end{aligned}
$$

We want to show that $v_{\bk}z_{d_0d_{J_0}}=0$. Note that $$
v_{\bc_0}E_1E_3\cdots E_{2f-1}=\sum_{1\leq i_1,\cdots,i_f\leq 2m}\pm
\zeta^{a_{\bi}}v_{i_1}\otimes v_{i'_1}\otimes v_{i_2}\otimes
v_{i'_2}\otimes\cdots\otimes v_{i_f}\otimes v_{i'_f},
$$
for some $a_{\bi}\in\Z$. Note also that each simple tensor
$v_{\widehat{\bk}}$ involved $v_{\bk}z_{d_0d_{J_0}}$ has the same
$GL_{2m}$-weight as $v_{\bk}$. Let $(i_1,\cdots,i_f)\in I(2m,f)$. We
claim that
\begin{enumerate} \item [(a)] for any $\widehat{\bk},\widetilde{\bk}\in I(2m,n-2f), \widehat{\bj}\in I(2m,2f)$
with $\bwt(\widehat{\bk})=\bwt(\widetilde{\bk})=\bwt(\bk)$ and
$\ell_s(\bj)=f$, the simple $n$-tensor $(v_{\bc_0d_0}\otimes
v_{\widehat{\bk}})d_{J_0}$ is involved in
$$ (v_{\widehat{j}_1}\otimes v_{\widehat{j}_2}\otimes\cdots\otimes v_{\widehat{j}_{2f}}\otimes
v_{\widetilde{\bk}})T_{d_{J}}
$$
if and only if
$(\widehat{j}_1,\widehat{j}_2,\cdots,\widehat{j}_{2f})=\bc_0d_0$,
$J=J_0$ and $\widehat{\bk}=\widetilde{\bk}$;\smallskip

\item [(b)] for any $d_1\in\BS_{2f}$ with $d_1\leq d_0$ (in the Bruhat order), the simple $(2f)$-tensor
$v_{\bc_0d_0}$ is involved in
$$ (v_{i_1}\otimes v_{i'_1}\otimes v_{i_2}\otimes
v_{i'_2}\otimes\cdots\otimes v_{i_f}\otimes v_{i'_f})T_{d_1}
$$
if and only if $(i_1,i_2,\cdots,i_f)=((m-f+1)',\cdots,(m-1)',m')$ and
$d_1=d_0$.
\end{enumerate}
Once these two claims are proved to be true, it is easy to see
that the identity $v_{\bk}z_{d_0d_{J_0}}=0$ follows at once.
Therefore, it remains to prove the claims (a) and (b).\smallskip

Suppose that $(v_{\bc_0d_0}\otimes v_{\widehat{\bk}})d_{J_0}$ is
involved in
$$ (v_{\widehat{j}_1}\otimes v_{\widehat{j}_2}\otimes\cdots\otimes v_{\widehat{j}_{2f}}\otimes
v_{\widetilde{\bk}})T_{d_{J}}.
$$ By definition, $$ (v_{\bc_0d_0}\otimes
v_{\widehat{\bk}})d_{J_0}=v_{\widehat{\bk}}\otimes v_{\bc_0d_0}.
$$
Let $J=(j_1,j_2,\cdots,j_{2f})$. Then $$ (s_{2f}s_{2f+1}\cdots
s_{j_{2f}-1})(s_{2f-1}s_{2f}\cdots s_{j_{2f-1}-1})\cdots
(s_{1}s_{2}\cdots s_{j_1-1})
$$
is a reduced expression of $d_{J}$. Note that $1\leq
j_1<\cdots<j_{2f}\leq 2f$. If $j_{2f}\neq n$, then the rightmost
vector of any simple tensor involved in $(v_{\widehat{j}_1}\otimes
v_{\widehat{j}_2}\otimes\cdots\otimes v_{\widehat{j}_{2f}}\otimes
v_{\widetilde{\bk}})T_{d_{J}}$ must be $v_{\widetilde{b}_{n-2f}}$,
which is impossible (because $\widetilde{b}_{n-2f}\leq m-f$).
Therefore, we deduce that $j_{2f}=n$. Let $\Sigma$ be the set of all
the simple $n$-tensor $v$ which is involved in $$
(v_{\widehat{j}_1}\otimes v_{\widehat{j}_2}\otimes\cdots\otimes
v_{\widehat{j}_{2f}}\otimes v_{\widetilde{\bk}})T_{2f}T_{2f+1}\cdots
T_{n-1}.
$$
Note that $\widetilde{b}_t\leq m-f$ for each $t$.  We claim that
for each integer $t$ with $1\leq t\leq n-2f$, $\widetilde{b}_t\neq
(\widehat{j}_{2f})'$.

In fact, if $1\leq t\leq n-2f$ is the smallest integer such that
$\widetilde{b}_t=(\widehat{j}_{2f})'$, then the $(2f+t)$th position
of any simple tensor involved in $$ (v_{\widehat{j}_1}\otimes
v_{\widehat{j}_2}\otimes\cdots\otimes v_{\widehat{j}_{2f}}\otimes
v_{\widetilde{\bk}})(T_{2f}T_{2f+1}\cdots T_{2f+t-1})
$$ is a
vector $v_{a}$ with either $a>(m-f+1)'$ or $a<(m-f+1)$. It follows
(from the definition of $\beta'$ and (\ref{useful})) that the $n$th position of any
simple tensor involved in
$$
(v_{\widehat{j}_1}\otimes v_{\widehat{j}_2}\otimes\cdots\otimes
v_{\widehat{j}_{2f}}\otimes
v_{\widetilde{\bk}})(T_{2f}T_{2f+1}\cdots T_{n-1})$$ is a vector
$v_{a}$ with either $a\leq m-f$ or $a\geq (m-f)'$. Since the action of
$(T_{2f-1}T_{2f}\cdots T_{j_{2f-1}-1})\cdots (T_{1}T_{2}\cdots
T_{j_1-1})$ on any simple $n$-tensor does not change its rightmost
vector, we deduce that $v_{\widehat{\bk}}\otimes v_{\bc_0d_0}$ can
not be involved in
$$\begin{aligned}&(v_{\widehat{j}_1}\otimes
v_{\widehat{j}_2}\otimes\cdots\otimes v_{\widehat{j}_{2f}}\otimes
v_{\widetilde{\bk}})(T_{2f}T_{2f+1}\cdots
T_{n-1})(T_{2f-1}T_{2f}\cdots T_{j_{2f-1}-1})\\
&\qquad\qquad\qquad\qquad\qquad\qquad\qquad
\cdots(T_{1}T_{2}\cdots T_{j_1-1}),\end{aligned}$$ a contradiction.

Therefore, $\widetilde{b}_t\neq (\widehat{j}_{2f})'$ for any $1\leq
t\leq n-2f$. It follows that $v=v_{\widehat{j}_1}\otimes
v_{\widehat{j}_2}\otimes\cdots\otimes v_{\widehat{j}_{2f-1}}\otimes
v_{\widetilde{\bk}}\otimes v_{\widehat{j}_{2f}}$ is the unique
simple $n$-tensor in $\Sigma$ such that $v_{\widehat{\bk}}\otimes
v_{\bc_0d_0}$ is involved in
$$v(T_{2f-1}T_{2f}\cdots T_{j_{2f-1}-1})\cdots (T_{1}T_{2}\cdots
T_{j_1-1}).$$ In particular, we deduce that
$\widehat{j}_{2f}=(\bc_0d_0)_{2f}=m-f+1$. Now we are in a position to use
induction on $n$. It follows easily that $$\begin{aligned}
&(j_1,j_2,\cdots,j_{2f-1})=(n-2f+1,n-2f+2,\cdots,n-1),\\
&(\widehat{j}_1,\cdots,\widehat{j}_{2f-1})=((\bc_0d_0)_1,\cdots,(\bc_0d_0)_{2f-1}),\quad
\widetilde{\bk}=\widehat{\bk}. \end{aligned}$$ Conversely, by the
definition of $\beta'$, $(v_{\bc_0d_0}\otimes
v_{\widehat{\bk}})T_{d_{J_0}}=v_{\widehat{\bk}}\otimes v_{\bc_0d_0}$. This proves the claim (a).\smallskip

We now turn to the claim (b). By Lemma \ref{obser2} and direct verification, it is easy to see
that the simple $(2f)$-tensor $v_{\bc_0d_0}$ is involved in
$$ (v_{(m-f+1)'}\otimes v_{m-f+1}\otimes v_{(m-f+2)'}\otimes
v_{m-f+2}\otimes\cdots\otimes v_{m'}\otimes
v_{m})T_{d_0}.
$$
This proves one direction of the claim (b). Now suppose that the
simple $(2f)$-tensor $v_{\bc_0d_0}$ is involved in
$$ (v_{i_1}\otimes v_{i'_1}\otimes v_{i_2}\otimes
v_{i'_2}\otimes\cdots\otimes v_{i_f}\otimes v_{i'_f})T_{d_1},
$$
where $(i_1,\cdots,i_f)\in I(2m,f)$, $d_1\in\BS_{2f}$ with $d_1\leq
d_0$ (in the Bruhat order). Then $d_1$ has a reduced expression
which is a subexpression of (\ref{EXP2}). Hence we can write
$d_1=d'_1d''_1$, where $d'_1$ is a subexpression of $$
(s_{2f-2}s_{2f-1})(s_{2f-4}s_{2f-3}s_{2f-2}s_{2f-1})\cdots
(s_{4}s_{5}\cdots s_{2f-2}s_{2f-1}),
$$
$d''_1$ is a subexpression of $s_{2}s_{3}\cdots s_{2f-2}s_{2f-1}$,
such that $\ell(d_1)=\ell(d'_1)+\ell(d''_1)$. Then
$T_{d_1}=T_{d'_1}T_{d''_1}$.

By definition of $d'_1$, any simple tensor involved in
$$ (v_{i_1}\otimes v_{i'_1}\otimes v_{i_2}\otimes
v_{i'_2}\otimes\cdots\otimes v_{i_f}\otimes v_{i'_f})T_{d'_1}$$ is
of the form $$ v_{i_1}\otimes v_{i'_1}\otimes v_{l_1}\otimes
v_{l_2}\otimes\cdots\otimes v_{l_{2f-2}},
$$
where $\bl=(l_1,l_2,\cdots,l_{2f-2})\in I(2m,2f-2)$ with
$\ell_s(\bl)=f-1$.  By assumption, we can choose one such simple
$n$-tensor, say
$$ v^{[1]}:=v_{i_1}\otimes v_{i'_1}\otimes v_{l_1}\otimes
v_{l_2}\otimes\cdots\otimes v_{l_{2f-2}},
$$ such that $v_{\bc_0d_0}$ is involved in
$vT_{d''_1}$. By definition of $d''_1$, it is easy to see that
$i_1=(m-f+1)'$. We claim that
\begin{enumerate}
\item[(b1)] $(l_1,l_2,\cdots,l_{2f-2})=((m-f+2)',(m-f+3)',\cdots,m',m,\cdots,m-f+3,m-f+2)$;
\item[(b2)] $d''_1=s_{2}s_{3}\cdots s_{2f-2}s_{2f-1}$.
\end{enumerate}
If both (b1) and (b2) are true, then the claim (b) follows easily
from induction on $f$. Therefore, it suffices to prove the two
claims (b1) and (b2).\smallskip

Recall that
$\bc_0d_0=((m-f+1)',\cdots,(m-1)',m',m,m-1,\cdots,m-f+1)$. If
$l_1<(m-f+2)'$, then (by (\ref{useful})) the second position of any
simple $(2f)$-tensor involved in $v^{[1]}T_{d''_1}$ is always
occupied by a vector $v_a$ with $a<(m-f+2)'$, which is impossible
(because $v_{\bc_0d_0}$ is involved in $vT_{d''_1}$). Hence $l_1\geq
(m-f+2)'$. By similar reason, we can deduce that $l_1$ can not be
strictly bigger than $(m-f+1)'$. Therefore,
$l_1\in\{(m-f+1)',(m-f+2)')\}$. Assume that $l_1=(m-f+1)'$. Then by
(\ref{useful}) and the fact that $\ell(\bl)=f-1$, it is easy to see
for any simple $(2f)$-tensor $v_{k_1}\otimes\cdots\otimes
v_{k_{2f}}$ involved in $v^{[1]}T_{d''_1}$, we have $k_t\leq m-f+1$
for some $t\geq 3$, a contradiction. This forces $l_1=(m-f+2)'$.
Repeating the same argument, we deduce that for any integer $1\leq
t\leq f-1$, $l_t=(m-f+t+1)'$. Now since $\ell_s(\bl)=f-1$, it
follows that
$\{l_f,l_{f+1},\cdots,l_{2f-2}\}=\{(m-f+1)',(m-f+2)',\cdots,(m-1)'\}$.
In particular, $l_t\neq m'$ for any $t$. Using the same arguments as
before, we easily deduce that for each integer $f\leq t\leq 2f-2$,
$l_t=m-t+f$. This proves (b1). Now (b2) follows immediately from
(b1) and the fact that $v_{\bc_0d_0}$ is involved in $vT_{d''_1}$.
This proves another direction of the claim (b), hence completes the
proof of the claim (b).
\medskip\smallskip

\noindent {\it Step 2.} Let $S'$ be the subset
$$\biggl\{T_{d_0d_{J_0}}^{\ast}E_1E_3\cdots E_{2f-1}T_{\sigma}T_{d_2}\biggm|\begin{matrix}
\text{$d_2\in\mathcal{D}_{\nu_f}$, $d_2\neq d_0d_{J_0}$,}\\
\text{$\sigma\in\BS_{\{2f+1,\cdots,n\}}$}\\
\end{matrix}\biggr\}$$ of the basis (\ref{Enyangbasis2}) of
$\bb_n(-\zeta^{2m+1},\zeta)$, and let $U'$ be the subspace spanned
by $S'$. By the main results we obtained in Step 1, we know that $$
B^{(f)}\bigcap\Bigl(\bigcap_{\bk\in I_{f}}\ann(v_{\bk}\otimes
v_{\bc})\Bigr)\subseteq B^{(f+1)}\oplus U\oplus U'.
$$
We want to prove that $$ B^{(f)}\bigcap\Bigl(\bigcap_{\bk\in
I_{f}}\ann(v_{\bk}\otimes v_{\bc})\Bigr)\subseteq B^{(f+1)}\oplus U.
$$
If this is not the case, then by Lemma \ref{J1}, $$ U'\bigcap
B^{(f)}\bigcap\Bigl(\bigcap_{\bk\in I_{f}}\ann(v_{\bk}\otimes
v_{\bc})\Bigr)\neq 0.
$$
Let $\widetilde{\Sigma}$ be the set of those
$d_2\in\mathcal{D}_{\nu_f}$, such that there exist some $$x'\in
U'\bigcap
B^{(f)}\bigcap\ann_{\bb_n(-\zeta^{2m+1},\zeta)}\bigl(V^{\otimes
n}\bigr),\,\, d_2\in\mathcal{D}_{\nu},\,\,
\sigma_2\in\BS_{\{2f+1,\cdots,n\}}$$ satisfying
$$T_{d_0d_{J_0}}^{\ast}E_1E_3\cdots E_{2f-1}T_{\sigma_2}T_{d_2}$$ is
involved in $x'$. We choose $d_2\in\widetilde{\Sigma}$ such that
$\ell(d_2)$ is as big as possible.

By the definition of $U'$, $d_2\neq d_0d_{J_0}$. It follows from
Lemma \ref{maximal} that we can find an integer $j$ with $1\leq
j\leq n-1$, such that $d_2s_j\in\mathcal{D}_{\nu_f}$ and
$\ell(d_2s_j)=\ell(d_2)+1$. Let $$x'\in U'\bigcap
B^{(f)}\bigcap\ann_{\bb_n(-\zeta^{2m+1},\zeta)}\bigl(V^{\otimes
n}\bigr), d'_2\in\mathcal{D}_{\nu_f},
\sigma_2\in\BS_{\{2f+1,\cdots,n\}}$$ such that
$T_{d_0d_{J_0}}^{\ast}E_1E_3\cdots E_{2f-1}T_{\sigma_2}T_{d_2}$ is
involved in $x'$. We claim that there exist
$\tau\in\BS_{\{2f+1,\cdots,n\}}, d_3\in\mathcal{D}_{\nu_f}$ with
$\ell(d_3)>\ell(d_2)$, such that
$$T_{d_0d_{J_0}}^{\ast}E_1E_3\cdots E_{2f-1}T_{\tau}T_{d_3}$$ is
involved in $x'T_j$. \smallskip

We write $$\begin{aligned} x'&=A_0T_{d_0d_{J_0}}^{\ast}E_1E_3\cdots
E_{2f-1}z_{d_2}T_{d_2}+\\
&\qquad\qquad\sum_{\substack{\sigma\in\BS_{\{2f+1,\cdots,n\}}\\
d_2\neq d'_2\in\mathcal{D}_{\nu_f}\\
\ell(d'_2)\leq\ell(d_2)}}A_{d'_2,\sigma}T_{d_0d_{J_0}}^{\ast}E_1E_3\cdots
E_{2f-1}T_{\sigma}T_{d'_2},\end{aligned}
$$
where \text{$0\neq z_{d_2}\in K$-Span
$\{T_w|w\in\BS_{\{2f+1,\cdots,n\}}\}$}, $T_{\sigma_2}$ is involved
in $z_{d_2}$, $0\neq A_0\in K$, $A_{d'_2,\sigma}\in K$ for each
$d'_2,\sigma$.

Note that $T_{d_0d_{J_0}}^{\ast}E_1E_3\cdots
E_{2f-1}z_{d_2}T_{d_2}T_j=T_{d_0d_{J_0}}^{\ast}E_1E_3\cdots
E_{2f-1}z_{d_2}T_{d_2s_j}$. It remains to analyze how each
$T_{d_0d_{J_0}}^{\ast}E_1E_3\cdots E_{2f-1}T_{\sigma}T_{d'_2}T_j$ is
expressed as a linear combination of the basis elements given in
Corollary \ref{Enyangbasis2}. Our purpose is to show that
$T_{d_0d_{J_0}}^{\ast}E_1E_3\cdots E_{2f-1}T_{\sigma_2}T_{d_2s_j}$
is not involved in each $$T_{d_0d_{J_0}}^{\ast}E_1E_3\cdots
E_{2f-1}T_{\sigma}T_{d'_2}T_j. $$ We divide the discussion into
cases:
\smallskip

\noindent {\it Case 1.} $\ell(d'_2s_j)=\ell(d'_2)+1$. In this case,
$T_{d'_2}T_j=T_{d'_2s_j}$. We write $d'_2s_j=w_4d_4$, where
$w_4\in\BS_{\nu_f}$, and $\ft^{\nu}d_4$ is row standard. Then
$\ell(w_4d_4)=\ell(w_4)+\ell(d_4)$. Furthermore, we can write
$T_{w_4}=x_4T_{w'_4}$, where $$
x_4\in\langle T_1,T_3,\cdots,T_{2f-1}\rangle,\,\,\, w'_4\in\BS_{\nu_f^{(2)}}.$$
We have
$$\begin{aligned} &T_{d_0d_{J_0}}^{\ast}E_1E_3\cdots
E_{2f-1}T_{\sigma}T_{d'_2}T_j=T_{d_0d_{J_0}}^{\ast}E_1E_3\cdots
E_{2f-1}T_{\sigma}T_{w_4}T_{d_4}\\
&\qquad\qquad\equiv\sum_{\widehat{\sigma}\in\BS_{\{2f+1,\cdots,n\}}}A_{\widehat{\sigma}}T_{d_0d_{J_0}}^{\ast}E_1E_3\cdots
E_{2f-1}T_{\widehat{\sigma}}T_{d_4}\!\!\pmod{B^{(f+1)}},
\end{aligned}
$$
for some $A_{\widehat{\sigma}}\in K$, where the second equality follows from the fact that $E_{2i-1}T_{2i-1}=r^{-1}E_{2i-1}$ for each $i$ and \cite[(3.2)]{E}. Now we use \cite[Proposition 3.7]{E} to express
$T_{d_0d_{J_0}}^{\ast}E_1E_3\cdots
E_{2f-1}T_{\widehat{\sigma}}T_{d_4}$ as a linear combination of the
basis elements given in Corollary \ref{Enyangbasis2}. Note that in
the notation of \cite[Proposition 3.7]{E}, it is easy to check (in all the three cases listed in \cite[Proposition 3.7]{E}) that $u=s_{2j}s_{2j+1}s_{2j-1}s_{2j}w$, $\ell(u)\leq\ell(w), \ell(v)=\ell(w)-1, \ell(v')\leq\ell(w)-1$. It follows
that each $T_{d_0d_{J_0}}^{\ast}E_1E_3\cdots
E_{2f-1}T_{\widehat{\sigma}}T_{d_4}$ can be expressed as a linear
combination of the basis elements of the form $$
T_{d_0d_{J_0}}^{\ast}E_1E_3\cdots E_{2f-1}T_{\sigma''}T_{d''_2},
$$
where $\sigma''\in\BS_{\{2f+1,\cdots,n\}}$,
$d''_2\in\mathcal{D}_{\nu_f}$, with either \begin{enumerate}
\item $\ell(d''_2)<\ell(d_4)$; or
\item $\ell(d''_2)=\ell(d_4)$ and $d_4=zd''_2$ for some $z\in\Pi$.
\end{enumerate}
Note that $\ell(d_4)\leq\ell(d'_2)+1$ with equality holds only if
$d'_2s_j=d_4$. As a result, $\ell(d''_2)\leq\ell(d_2)+1$. If
$\ell(d''_2)<\ell(d_2)+1$, then $d''_2\neq d_2s_j$ and we are done.
If $\ell(d''_2)=\ell(d_2)+1$, then $\ell(d'_2)=\ell(d_2)$,
$\ell(d_4)=\ell(d''_2)=\ell(d'_2)+1$ and $d'_2s_j=d_4=zd''_2$ for
some $z\in\Pi$. In this case we claim that $d''_2\neq d_2s_j$. This
is true because otherwise we would deduce that $d'_2=zd_2$, which is
impossible (since $d'_2, d_2$ are different elements in
$\mathcal{D}_{\nu_f}$). This completes the proof in Case 1.
\smallskip

\noindent {\it Case 2.} $\ell(d'_2s_j)=\ell(d'_2)-1$. Then by Lemma \ref{lm52}, $d'_2s_j\in\mathcal{D}_{\nu_f}$. In this case (note that our $T_j$ is $\zeta^{-1}T_j$ in \cite{E}'s notation), by
\cite[Lemma 2.1]{E},
$$
T_{d'_2}T_j=T_{d'_2s_j}+(\zeta-\zeta^{-1})(T_{d'_2}+\zeta^{-2m-1}T_{d'_2s_j}E_j).
$$
Therefore, $$\begin{aligned} &\quad\,
T_{d_0d_{J_0}}^{\ast}E_1E_3\cdots E_{2f-1}T_{\sigma}T_{d'_2}T_j\\
&=T_{d_0d_{J_0}}^{\ast}E_1E_3\cdots
E_{2f-1}T_{\sigma}T_{d'_2s_j}+(\zeta-\zeta^{-1})T_{d_0d_{J_0}}^{\ast}E_1E_3\cdots
E_{2f-1}T_{\sigma}T_{d'_2}\\
&\qquad\qquad
+(\zeta-\zeta^{-1})\zeta^{-2m-1}T_{d_0d_{J_0}}^{\ast}E_1E_3\cdots
E_{2f-1}T_{\sigma}T_{d'_2s_j}E_j.
\end{aligned}
$$
By comparing their length, we see that $d_2s_j\not\in\{d'_2s_j,d'_2\}$.
Hence $$T_{d_0d_{J_0}}^{\ast}E_1E_3\cdots E_{2f-1}T_{\sigma_2}T_{d_2s_j}$$ is not
involved in $$T_{d_0d_{J_0}}^{\ast}E_1E_3\cdots
E_{2f-1}T_{\sigma}T_{d'_2s_j}+(\zeta-\zeta^{-1})T_{d_0d_{J_0}}^{\ast}E_1E_3\cdots
E_{2f-1}T_{\sigma}T_{d'_2}.$$ Note that $d_2s_j,
d_2\in\mathcal{D}_{\nu_f}$ imply that $j,j+1$ are not in the same
row of $\ft^{\nu}d_2s_j$. Combing this with \cite[Lemma 3.4, 3.5, Proposition 3.3, 3.4]{E}, we deduce that
$T_{d_0d_{J_0}}^{\ast}E_1E_3\cdots E_{2f-1}T_{\sigma_2}T_{d_2s_j}$ is not involved in $$
(\zeta-\zeta^{-1})\zeta^{-2m-1}T_{d_0d_{J_0}}^{\ast}E_1E_3\cdots
E_{2f-1}T_{\sigma}T_{d'_2s_j}E_j, $$ as required. This completes the proof in Case 2.
\smallskip

As a consequence, we can deduce that $T_{d_0d_{J_0}}^{\ast}E_1E_3\cdots
E_{2f-1}T_{\sigma_2}T_{d_2s_j}$ is always involved in $x'T_j$. Note
that $x'T_j\in B^{(f)}\bigcap\Bigl(\bigcap_{\bk\in
I_{f}}\ann(v_{\bk}\otimes v_{\bc})\Bigr)$ and
$\ell(d_2s_j)>\ell(d_2)$. We get a contradiction to our choice of
$d_2$. This completes the proof of the lemma.
\end{proof}

\begin{thm} \label{keythm} With the above notations, we have that $$
B^{(f)}\bigcap\ann_{\bb_n(-\zeta^{2m+1},\zeta)}\bigl(V^{\otimes
n}\bigr)\subseteq B^{(f+1)}.
$$
\end{thm}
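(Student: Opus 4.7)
The plan is to combine Lemma \ref{keylem2} with the anti-automorphism $\ast$ of $\bb_n(-\zeta^{2m+1},\zeta)$ and an inductive maximality argument parallel to Step 2 of Lemma \ref{keylem2}'s proof. First, $\ann(V^{\otimes n})$ is a two-sided ideal, and it is $\ast$-stable because the right action of $\bb_n(-\zeta^{2m+1},\zeta)$ on $V^{\otimes n}$ admits a non-degenerate contragredient bilinear form with respect to $\ast$ (constructed from the symplectic form on $V$). Given $x\in B^{(f)}\cap\ann(V^{\otimes n})$, the inclusion $\ann(V^{\otimes n})\subseteq\bigcap_{\bk\in I_f}\ann(v_\bk\otimes v_\bc)$ and Lemma \ref{keylem2} yield a unique decomposition $x=y+u$ with $y\in B^{(f+1)}$ and $u\in U$; the goal is to prove $u=0$, for then $x=y\in B^{(f+1)}$.

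Applying $\ast$ to $x$ and using the $\ast$-stability of $B^{(f+1)}$ (generated by the $\ast$-invariant element $E_1E_3\cdots E_{2f+1}$), the same argument applied to $x^\ast$ yields a second decomposition $x=y_1+u_1$ with $y_1\in B^{(f+1)}$ and $u_1\in U^\ast$. Using $T_i^\ast=T_i$, $E_i^\ast=E_i$, and the commutativity of each $E_{2i-1}$ with $T_\sigma$ for $\sigma\in\BS_{\{2f+1,\ldots,n\}}$, one checks directly that $U^\ast$ is spanned by the Enyang basis elements of Corollary \ref{Enyangbasis2} with $d_2\neq d_0 d_{J_0}$. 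Comparing the two Enyang-basis expansions of $x$, and using that the basis elements spanning $U,U^\ast$ (having exactly $f$ horizontal edges) are disjoint from those spanning $B^{(f+1)}$ (having at least $f+1$ horizontal edges), we conclude $u=u_1\in U\cap U^\ast$; equivalently, $u$ is supported on $\{(d_1,\sigma,d_2):d_1,d_2\in\mathcal{D}_{\nu_f}\setminus\{d_0 d_{J_0}\}\}$.

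Suppose for contradiction that $u\neq 0$, and pick a basis element $T_{\widetilde d_1}^\ast E_1E_3\cdots E_{2f-1}T_{\widetilde\sigma}T_{\widetilde d_2}$ appearing in $u$ with $\ell(\widetilde d_1)+\ell(\widetilde d_2)$ maximal. Since $\widetilde d_1\neq d_0 d_{J_0}$, Lemma \ref{maximal}(3) provides $s_j$ with $\widetilde d_1 s_j\in\mathcal{D}_{\nu_f}$ and $\ell(\widetilde d_1 s_j)=\ell(\widetilde d_1)+1$. Since $\ann(V^{\otimes n})$ is a two-sided ideal, $T_j x\in B^{(f)}\cap\ann(V^{\otimes n})$, and applying the preceding reduction to $T_j x$ gives $T_j x=y_2+u_2$ with $y_2\in B^{(f+1)}$ and $u_2\in U\cap U^\ast$. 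The relation $T_j T_{\widetilde d_1}^\ast=T_{\widetilde d_1 s_j}^\ast$ shows that $T_j\cdot T_{\widetilde d_1}^\ast E_1\cdots E_{2f-1}T_{\widetilde\sigma}T_{\widetilde d_2}$ contributes the target basis element $T_{\widetilde d_1 s_j}^\ast E_1\cdots E_{2f-1}T_{\widetilde\sigma}T_{\widetilde d_2}$ with coefficient $c_{\widetilde d_1,\widetilde\sigma,\widetilde d_2}\neq 0$; by a case analysis of $T_j\cdot T_{d_1}^\ast E_1\cdots E_{2f-1}T_\sigma T_{d_2}$ via Enyang's relations \cite[Lemma 2.1, Propositions 3.3--3.7]{E}---splitting on whether $\ell(d_1 s_j)=\ell(d_1)\pm 1$ and whether $d_1 s_j$ lies in $\mathcal{D}_{\nu_f}$ or requires an adjustment by an element of $\BS_{(2^f)}\rtimes\Pi$---contributions from the other basis elements of $u$ either fall into $B^{(f+1)}$ or are supported at strictly shorter length pairs (and so have zero coefficient in $u$ by maximality). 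Thus the coefficient of the target in $u_2$ is nonzero, contradicting the maximality of $\ell(\widetilde d_1)+\ell(\widetilde d_2)$.

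The main obstacle is this final case analysis: the BMW relation $T_j^2=1+(\zeta-\zeta^{-1})T_j+(\zeta-\zeta^{-1})\zeta^{-(2m+1)}E_j$ introduces both an additional $T_j$-term and an $E_j$-correction, and the Enyang adjustment when $d_1 s_j\notin\mathcal{D}_{\nu_f}$ introduces further cellular corrections. One must verify carefully that all such corrections either lie in $B^{(f+1)}$ (hence are absorbed by $y_2$) or cannot cancel the coefficient at the target basis element $T_{\widetilde d_1 s_j}^\ast E_1E_3\cdots E_{2f-1}T_{\widetilde\sigma}T_{\widetilde d_2}$. Executing this bookkeeping simultaneously for the left index $d_1$ and the right index $d_2$ effectively doubles the intricacy of Step 2 of Lemma \ref{keylem2}'s proof, and this is where the main technical work of Section 5 resides.
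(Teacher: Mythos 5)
Your proposal captures the correct skeleton of the paper's argument — Lemma \ref{keylem2} forces $x=z+y$ with $y\in U$, choose a maximal element of $y$, multiply on the left by $T_j$ to increase the length of $d_1$ (Lemma \ref{maximal}(3)), and derive a contradiction via the Enyang case analysis already set up in Step~2 of Lemma \ref{keylem2}'s proof. The paper does exactly this, taking $\ell(d_1')$ maximal and applying $T_j^\ast$ (which equals $T_j$) on the left.

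Two points of difference, both on the side of extra machinery you invoke rather than any real gap. First, your passage through $U^\ast$ and $U\cap U^\ast$ is correct (the form making $\{v_\bi\}$ orthonormal satisfies $\varphi_C(x^\ast)=\varphi_C(x)^t$, so $\ann(V^{\otimes n})$ is $\ast$-stable, and $U^\ast$ is the span of Enyang elements with $d_2\neq d_0 d_{J_0}$ since $T_\sigma$ commutes with $E_1 E_3\cdots E_{2f-1}$), but it is superfluous: the only constraint your maximality argument actually uses is $\widetilde d_1\neq d_0 d_{J_0}$, which already follows from $u\in U$. The paper therefore never needs to know $\widetilde d_2\neq d_0 d_{J_0}$. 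Second, and for the same reason, the closing remark that the bookkeeping ``effectively doubles'' because one must track $d_1$ and $d_2$ simultaneously is not accurate; left multiplication by $T_j$ touches only the $d_1$-index (up to cellular corrections landing in $B^{(f+1)}$), so the case analysis is a single pass, literally the mirror image of the right-multiplication analysis carried out in Step~2 of Lemma \ref{keylem2}. Taking $\ell(\widetilde d_1)$ maximal (as the paper does) rather than $\ell(\widetilde d_1)+\ell(\widetilde d_2)$ maximal is the cleaner bookkeeping, since it directly matches what $T_j$ can move.

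You correctly identify the genuine technical obstacle — namely that $T_j\cdot T_{d_1}^\ast E_1\cdots E_{2f-1}T_\sigma T_{d_2}$ must be re-expanded in the Enyang basis and the various corrections (the BMW quadratic relation when $\ell(d_1 s_j)=\ell(d_1)-1$, the $\Psi=\BS_{(2^f)}\rtimes\Pi$ straightening when $d_1 s_j\notin\mathcal D_{\nu_f}$, and the $E_j$-term) shown not to reach the target left-index $\widetilde d_1 s_j$ — and you correctly defer it to Enyang's Lemma 2.1 and Propositions 3.3--3.7; this is precisely the content the paper discharges by reference to its own Case 1/Case 2 analysis in Step~2 of Lemma \ref{keylem2}. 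So: same approach, correctly outlined, with a small amount of unnecessary scaffolding and a minor mischaracterization of the final bookkeeping.
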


\begin{proof} Suppose that $$
B^{(f)}\bigcap\ann_{\bb_n(-\zeta^{2m+1},\zeta)}\bigl(V^{\otimes
n}\bigr)\not\subseteq B^{(f+1)}.
$$
By Lemma \ref{keylem2}, we can find an element $x$ in
$B^{(f)}\bigcap\ann_{\bb_n(-\zeta^{2m+1},\zeta)}\bigl(V^{\otimes
n}\bigr)$ of the following form: $$ x=z+y,
$$
where $z\in B^{(f+1)}, 0\neq y\in U$.

We write $$
y=\sum_{d_1,d_2,\sigma}A_{d_1,d_2,\sigma}T_{d_1}^{\ast}E_1E_3\cdots
E_{2f-1}T_{\sigma}T_{d_2},
$$
where the subscripts run over all elements $d_1,d_2,\sigma$ such
that $d_1,d_2\in\mathcal{D}_{\nu}$ and $d_1\neq d_0d_{J_0}$. Let
$\Sigma$ be the set of those $d_1\in\mathcal{D}_{\nu_f}$, such that
there exist some $x\in
B^{(f)}\bigcap\ann_{\bb_n(-\zeta^{2m+1},\zeta)}\bigl(V^{\otimes
n}\bigr), d_2\in\mathcal{D}_{\nu},
\sigma\in\BS_{\{2f+1,\cdots,n\}}$ satisfying
$$\text{$T_{d_1}^{\ast}E_1E_3\cdots E_{2f-1}T_{\sigma}T_{d_2}$ is involved
in $y$.} $$
We choose $d'_1\in\Sigma$ such that $\ell(d'_1)$ is as big
as possible.

By definition of $U$, $d'_1\neq d_0d_{J_0}$. It follows from Lemma
\ref{maximal} that we can find an integer $j$ with $1\leq j\leq
n-1$, such that $d'_1s_j\in\mathcal{D}_{\nu}$ and
$\ell(d'_1s_j)=\ell(d'_1)+1$. Let $x\in
B^{(f)}\bigcap\ann_{\bb_n(-\zeta^{2m+1},\zeta)}\bigl(V^{\otimes
n}\bigr), d'_2\in\mathcal{D}_{\nu},
\sigma'\in\BS_{\{2f+1,\cdots,n\}}$ be such that
$T_{d'_1}^{\ast}E_1E_3\cdots E_{2f-1}T_{\sigma'}T_{d'_2}$ is
involved in $y$. Note that $$T_j^{\ast}z+T_j^{\ast}y=T_j^{\ast}x\in
B^{(f)}\bigcap\ann_{\bb_n(-\zeta^{2m+1},\zeta)}\bigl(V^{\otimes
n}\bigr),\,\, T_j^{\ast}z\in B^{(f+1)}.$$
Our purpose is to show that there exist some $d_3\in\mathcal{D}_{\nu},
\tau\in\BS_{\{2f+1,\cdots,n\}}$, such that
$T_{d'_1s_j}^{\ast}E_1E_3\cdots E_{2f-1}T_{\tau}T_{d_3}$ is involved
in $T_j^{\ast}y$. If this true, then we get a contradiction to our
choice of $d'_1$, and we are done.\smallskip

We write $$\begin{aligned} y&=A_0T_{d'_1}^{\ast}E_1E_3\cdots
E_{2f-1}T_{\sigma'}T_{d'_2}+\\
&\qquad\sum_{\substack{d_1,d_2\in\mathcal{D}_{\nu}, d_1\neq
d'_1,\\
\sigma\in\BS_{\{2f+1,\cdots,n\}}, \\
\ell(d_1)\leq\ell(d'_1)}}A_{d_1,d_2,\sigma}T_{d_1}^{\ast}E_1E_3\cdots
E_{2f-1}T_{\sigma}T_{d_2}, \end{aligned}$$ where $0\neq A_0\in K$,
$A_{d_1,d_2,\sigma}\in K$ for each
$d_1,d_2\in\mathcal{D}_{\nu_f},\sigma\in\BS_{\{2f+1,\cdots,n\}}$.\smallskip

Note that $T_j^{\ast}T_{d'_1}^{\ast}E_1E_3\cdots
E_{2f-1}T_{\sigma'}T_{d'_2}=T_{d'_1s_j}^{\ast}E_1E_3\cdots
E_{2f-1}T_{\sigma'}T_{d'_2}$. Using the same argument as in the
proof of Step 2 in Lemma \ref{keylem2}, we can show that
$$T_{d'_1s_j}^{\ast}E_1E_3\cdots E_{2f-1}T_{\sigma'}T_{d'_2}$$ is not
involved in $T_j^{\ast}T_{d_1}^{\ast}E_1E_3\cdots E_{2f-1}T_{\sigma}T_{d_2}$,
as required. This completes the proof of the theorem.
\end{proof}
\medskip

\noindent {\bf Proof of Theorem \ref{mainthm2} in the case where
$m\geq n$:} It follows easily from Lemma \ref{step1}, Theorem
\ref{keythm} and induction on $f$ that $$
\ann_{\bb_n(-\zeta^{2m+1},\zeta)}\bigl(V^{\otimes n}\bigr)=0.
$$
This proves the injectivity of $\varphi_C$, and hence $\varphi_C$
must map $\bb_n(-\zeta^{2m+1},\zeta)$ isomorphically onto
$\End_{\U_{K}(\mathfrak{sp}_{2m})}\bigl(V_K^{\otimes n}\bigr)$ in
this case.

\bigskip\bigskip
\bigskip\bigskip
\section{Proof of Theorem \ref{mainthm2} in the case where $m<n$}

The purpose of this section is to give a proof of Theorem
\ref{mainthm2} in the case where $m<n$. Our strategy is similar to
but technically more difficult than \cite[Section 4]{DDH}.\smallskip

Let $m_0$ be a natural number with $m_0\geq n$. Let
$\widetilde{V}_{\A}$ be a free $\A$-module of rank $2m_0$.
Assume that $\widetilde{V}_{\A}$ is equipped with a skew bilinear
form $(,)$ as well as an ordered basis
$\bigl\{{v}_1,{v}_2,\cdots,{v}_{2m_0}\bigr\}$ satisfying
$$ ({v}_i, {v}_{j})=\begin{cases} 1, &\text{if $i+j=2m_0+1$ and
$i<j$;}\\
-1, &\text{if $i+j=2m_0+1$ and
$i>j$;}\\
0 &\text{otherwise.}\\
\end{cases}
$$
For any $\A$-algebra $K$, we set
$\widetilde{V}_{K}:=\widetilde{V}_{\A}\otimes_{\A}K$. Let $\zeta$ be
the image of $q$ in $K$. Let ${\iota}$ be the $K$-linear injection
from $V_K$ into $\widetilde{V}_K$ defined by
$$\sum_{i=1}^{2m}k_iv_i\mapsto\sum_{i=1}^{2m} k_i{v}_{i+m_0-m},\quad\forall\,k_1,\cdots,k_{2m}\in
K.
$$
Let ${\pi}$ be the $K$-linear surjection from $\widetilde{V}_K$ into
$V_K$ defined by
$$\sum_{i=1}^{2m_0}k_i{v}_i\mapsto\sum_{i=1}^{2m}k_{i+m_0-m}{v}_{i}, \quad\forall\,k_1,\cdots,k_{2m_0}\in
K.$$ We set $\widetilde{\iota}:=\zeta^m\iota$,
$\widetilde{\pi}:=\zeta^{m_0}\pi$. We regard $\mathbb{C}$ as an
$\A$-algebra by specializing $q$ to $1$. As before, we identify
$\mathfrak{sp}(V_{\mathbb{C}})$ with
$\mathfrak{sp}_{2m}(\mathbb{C})$ and
$\mathfrak{sp}(\widetilde{V}_{\mathbb{C}})$ with
$\mathfrak{sp}_{2m_0}(\mathbb{C})$. Then, ${\iota}$ induces an
identification of $\mathfrak{sp}_{2m}(\mathbb{C})$ with the Lie
subalgebra of $\mathfrak{sp}_{2m_0}(\mathbb{C})$ consisting of the
following block diagonal matrices:
$$ \Bigl\{\diag(\underbrace{0,\cdots,0,}_{\text{$(m_0-m)$ copies}}A,\underbrace{0,\cdots,0}_{\text{$(m_0-m)$ copies}})
\Bigm|A\in\mathfrak{sp}_{2m}(\mathbb{C})\Bigr\}.
$$
Henceforth let $K$ be a field which is an $\A$-algebra, we set
$$
\widetilde{\mathfrak{g}}:=\mathfrak{sp}_{2m_0}(\mathbb{C}),\,\,
{\mathfrak{g}}:=\mathfrak{sp}_{2m}(\mathbb{C}),\,\,V=V_{K},\,\,\widetilde{V}=\widetilde{V}_{K}.
$$ The inclusion
$\mathfrak{g}\subset\widetilde{\mathfrak{g}}$ naturally induces an
injection
$$\begin{aligned}
&\U_{\Q(q)}(\mathfrak{g})\hookrightarrow\U_{\Q(q)}(\widetilde{\mathfrak{g}})\\
&e_i\mapsto e_{i+m_0-m},\,\,e_i\mapsto e_{i+m_0-m},\,\,k_i\mapsto
k_{i+m_0-m},\,\,i=1,2,\cdots,m.
\end{aligned}
$$
By restriction, we get an injection $
\U_{\A}(\mathfrak{g})\hookrightarrow\U_{\A}(\widetilde{\mathfrak{g}})$.
By base change, we get a natural map
$\U_{K}(\mathfrak{g})\rightarrow\U_{K}(\widetilde{\mathfrak{g}})$.
It is easy to see that $$ \widetilde{\pi}^{\otimes
{2n}}\Bigl(\bigl(\widetilde{V}^{\otimes
2n}\bigr)^{\U_{K}(\widetilde{\mathfrak{g}})}\Bigr)\subseteq
\bigl(V^{\otimes 2n}\bigr)^{\U_{K}(\mathfrak{g})}.
$$

For each integer $i$ with $1\leq i\leq 2m$, we define $$
w_{i}=\begin{cases} v_{i}, &\text{if $1\leq i\leq m$;}\\
(-1)^{i-m-1}v_{i}, &\text{if $m+1\leq i\leq 2m$.}
\end{cases}
$$
Then the natural representation of $\U_{\Q(q)}(\mathfrak{g})$ on
$V_{\Q(q)}$ is given by $$\begin{aligned}
&e_{i}{w}_{j}=\begin{cases} {w}_i, &\text{if $j=i+1$,}\\
{w}_{2m+1-(i+1)}, &\text{if $j=2m+1-i$,}\\
0, &\text{otherwise;}\end{cases}\,
e_{m}{w}_{j}=\begin{cases} {w}_{m}, &\text{if $j=m+1$,}\\
0, &\text{otherwise,}\end{cases} \\
&f_{i}{w}_{j}=\begin{cases} {w}_{i+1}, &\text{if $j=i$,}\\
{w}_{2m+1-i}, &\text{if $j=2m+1-(i+1)$,}\\
0, &\text{otherwise;}\end{cases}\,
f_{m}{w}_{j}=\begin{cases} {w}_{m+1}, &\text{if $j=m$,}\\
0, &\text{otherwise,}\end{cases}\\
&k_{i}{w}_{j}=\begin{cases} q {w}_j, &\text{if $j=i$ or $j=2m+1-(i+1)$,}\\
q^{-1}{w}_{j}, &\text{if $j=i+1$ or $j=2m+1-i$,}\\
{w}_{j}, &\text{otherwise,}\end{cases}\\
&k_{m}{w}_{j}=\begin{cases} q {w}_j, &\text{if $j=m$,}\\
 q^{-1}{w}_{j}, &\text{if $j=m+1$,}\\
{w}_{j}, &\text{otherwise,}\end{cases}\end{aligned}
$$
where $1\leq i\leq m-1$, $1\leq j\leq 2m$. By definition (cf. \cite[(8.18)]{HK}),
$\bigl\{{w}_{i}\bigr\}_{1\leq i\leq 2m}$ is a canonical basis of the
$\U_{\Q(q)}(\mathfrak{g})$-module $V_{\Q(q)}$ in the sense of
\cite{Lu3}. Similarly, the natural
$\U_{\Q(q)}(\widetilde{\mathfrak{g}})$-module
$\widetilde{V}_{\Q(q)}$ has a canonical basis
$\bigl\{{\widetilde{w}}_{i}\bigr\}_{1\leq i\leq 2m_0}$ such that
$$\begin{aligned}
e_{i}{\widetilde{w}}_{j}&=\begin{cases} {\widetilde{w}}_i, &\text{if $j=i+1$,}\\
{\widetilde{w}}_{2m_0+1-(i+1)}, &\text{if $j=2m_0+1-i$,}\\
0, &\text{otherwise;}\end{cases}\\
f_{i}{\widetilde{w}}_{j}&=\begin{cases} {\widetilde{w}}_{i+1}, &\text{if $j=i$,}\\
{\widetilde{w}}_{2m_0+1-i}, &\text{if $j=2m_0+1-(i+1)$,}\\
0, &\text{otherwise;}\end{cases}\\
e_{m_0}{\widetilde{w}}_{j}&=\begin{cases} {\widetilde{w}}_{m_0}, &\text{if $j=m_0+1$,}\\
0, &\text{otherwise,}\end{cases}\\
f_{m_0}{\widetilde{w}}_{j}&=\begin{cases} {\widetilde{w}}_{m_0+1},
&\text{if $j=m_0$,}\\
0, &\text{otherwise,}\end{cases}\\
k_{i}{\widetilde{w}}_{j}&=\begin{cases} q {\widetilde{w}}_j, &\text{if $j=i$ or $j=2m_0+1-(i+1)$,}\\
q^{-1}{\widetilde{w}}_{j}, &\text{if $j=i+1$ or $j=2m_0+1-i$,}\\
{\widetilde{w}}_{j}, &\text{otherwise,}\end{cases}\\
k_{m_0}{\widetilde{w}}_{j}&=\begin{cases} q {\widetilde{w}}_j, &\text{if $j=m_0$,}\\
 q^{-1}{\widetilde{w}}_{j}, &\text{if $j=m_0+1$,}\\
{\widetilde{w}}_{j}, &\text{otherwise,}\end{cases}\end{aligned}
$$
where $1\leq i\leq m_0-1$, $1\leq j\leq 2m_0$. Note that the
subspace $\widehat{V}_{\Q(q)}$ spanned by
$\bigl\{\widetilde{w}_{i}\bigr\}_{m_0-m+1\leq i\leq m_0+m}$ is
stable under the action of the subalgebra
$\U_{\Q(q)}(\mathfrak{g})$, and it is canonically isomorphic to
$\U_{\Q(q)}(\mathfrak{g})$-module $V_{\Q(q)}$.

For each integer $i$ with $1\leq i\leq 2m$, we define
$w_{i}^{\ast}\in V_{\A}^{\ast}:=\Hom_{\A}(V_{\A},\A)$ by
$w_i^{\ast}(v)=(w_i,v)$, $\forall\,v\in V_{\A}$. Then
$\{w_i^{\ast}\}_{i=1}^{2m}$ is an $\A$-basis of $V_{\A}^{\ast}$, and
$w_1^{\ast}$ is a highest weight vector of weight $\varepsilon_1$.
Furthermore, the map $w_1^{\ast}\mapsto w_1$ can be naturally
extended to an $\U_{\A}(\mathfrak{g})$-module isomorphism $\tau:
V_{\A}^{\ast}\cong V_{\A}$ such that $$
\tau(w_i^{\ast})=\begin{cases} q^{1-i}w_i, &\text{if $1\leq
i\leq m$;}\\
q^{-i}w_i, &\text{if $m+1\leq i\leq 2m$,}
\end{cases}
$$
where the $\U_{\A}(\mathfrak{g})$-structure on $V_{\A}^{\ast}$ is
defined via the antipode $S$. In a similar way, we can define an
$\A$-basis $\{\widetilde{w}_i^{\ast}\}_{i=1}^{2m_0}$ of
$\widetilde{V}_{\A}^{\ast}$, and an
$\U_{\A}(\widetilde{\mathfrak{g}})$-module isomorphism
$\widetilde{\tau}: \widetilde{V}_{\A}^{\ast}\cong
\widetilde{V}_{\A}$. Let $\widehat{V}_{\A}$ be the free
$\A$-submodule generated by
$$\bigl\{\widetilde{w}_{m_0-m+1},\widetilde{w}_{m_0-m+2},\cdots,\widetilde{w}_{m_0+m}\bigr\}.$$
Set $\widehat{V}:=\widehat{V}_{\A}\otimes_{\A} K$. Note that the
algebra $\U_{K}(\mathfrak{g})$ acts on $\widehat{V}$ via the natural
map
$\U_{K}(\mathfrak{g})\rightarrow\U_{K}(\widetilde{\mathfrak{g}})$.
The resulting $\U_{K}(\mathfrak{g})$-module $\widehat{V}$ is
naturally isomorphic to the natural $\U_{K}(\mathfrak{g})$-module
$V$ via the correspondence $$ \widetilde{w}_{i}\mapsto w_{i-m_0+m},
\quad \text{for $i=m_0-m+1,m_0-m+2,\cdots,m_0+m$.}
$$

\smallskip

Recall our definitions of $\widetilde{\iota}, \widetilde{\pi}$ at
the beginning of this section. We define a linear map $\Theta_{0}$
as follows: $$\begin{aligned} \Theta_0:&\,\,
\End\bigl(\widetilde{V}^{\otimes
n}\bigr)\rightarrow\End\bigl({V}^{\otimes n}\bigr),\\
&f\mapsto \bigl(\widetilde{\pi}^{\otimes n}\bigr)\circ
f\circ\bigl(\widetilde{\iota}^{\otimes n}\bigr).
\end{aligned}
$$

One can verify directly that $$
\Theta_0\Bigl(\End_{\U_{K}(\widetilde{\mathfrak{g}})}\bigl(\widetilde{V}^{\otimes
n}\bigr)\Bigr)\subseteq\End_{\U_{K}(\mathfrak{g})}\bigl(\widehat{V}^{\otimes
n}\bigr)\cong\End_{\U_{K}(\mathfrak{g})}\bigl(V^{\otimes n}\bigr),
$$
where the last isomorphism comes from the natural
$\U_{K}(\mathfrak{g})$-module isomorphism $\widehat{V}\cong
V$.\smallskip

By Corollary \ref{Enyangbasis2}, the BMW algebra
$\bb_{n}(-q^{2m+1},q)$ has a basis
$$ \biggl\{T_{d_1}^{\ast}E_1E_3\cdots
E_{2f-1}T_{\sigma}T_{d_2}\biggm|\begin{matrix}
\text{$0\leq f\leq [n/2]$, $\lam\vdash n-2f$, $\sigma\in\BS_{\{2f+1,\cdots,n\}}$,}\\
\text{$d_1, d_2\in\mathcal{D}_{\nu}$, where $\nu:=((2^f), (n-2f))$}
\end{matrix}\biggr\}.$$
The same is true for the BMW algebra $\bb_n(-q^{2m_0+1},q)$. To
distinguish its basis elements with those of $\bb_{n}(-q^{2m+1},q)$,
we denote them by
$$\widetilde{T}_{d_1}^{\ast}\widetilde{E}_1\widetilde{E}_3\cdots
\widetilde{E}_{2f-1}\widetilde{T}_{\sigma}\widetilde{T}_{d_2},$$
where $\widetilde{T}_i, \widetilde{E}_i$ are standard generators of
$\bb_n(-q^{2m_0+1},q)$. We define an $\A$-linear isomorphism
$\Theta_1$ from the BMW algebra $\bb_n(-q^{2m_0+1},q)_{\A}$ to the
BMW algebra $\bb_n(-q^{2m+1},q)_{\A}$ as follows: $$
\Theta_1\Bigl(\widetilde{T}_{d_1}^{\ast}\widetilde{E}_1\widetilde{E}_3\cdots
\widetilde{E}_{2f-1}\widetilde{T}_{\sigma}\widetilde{T}_{d_2}\Bigr)=q^{(m_0+m)n}T_{d_1}^{\ast}E_1E_3\cdots
E_{2f-1}T_{\sigma}T_{d_2},
$$
for each $0\leq f\leq [n/2]$, $\lam\vdash n-2f$, $\fs,
\ft\in\Std(\lam)$ and $d_1, d_2\in\mathcal{D}_{\nu_f}$. By base
change, we get a $K$-linear isomorphism
$\bb_n(-\zeta^{2m_0+1},\zeta)\cong\bb_n(-\zeta^{2m+1},\zeta)$, which
will be still denoted by $\Theta_1$.
\smallskip

By the main result in last section, we know that the natural
homomorphism $\varphi_C$ from $\bb_n(-\zeta^{2m_0+1},\zeta)$ to
$\End_{\U_{K}(\mathfrak{sp}_{2m_0})}\bigl((\widetilde{V})^{\otimes
n}\bigr)$ is always an isomorphism. Therefore, in order to prove
Theorem \ref{mainthm2} (in the case where $m<n$), it suffices to prove the following lemma.

\begin{lem} \label{lm61} With the notations as above, \begin{enumerate}
\item the following diagram of maps $$\begin{CD}
\bb_n(-\zeta^{2m_0+1},\zeta)@>{{\varphi}_C}>>\End_{\U_{K}(\widetilde{\mathfrak{g}})}\bigl(\widetilde{V}^{\otimes
n}\bigr)\\
@V{\Theta_1}VV @V{\Theta_0}VV  \\
\bb_n(-\zeta^{2m+1},\zeta)@>{\varphi_C}>>\End_{\U_{K}(\mathfrak{g})}\bigl(V^{\otimes
n}\bigr)\\
\end{CD}
$$
is commutative;
\item the map $$
\Theta_0:
\End_{\U_{K}(\widetilde{\mathfrak{g}})}\bigl(\widetilde{V}^{\otimes
n}\bigr)\rightarrow\End_{\U_{K}(\mathfrak{g})}\bigl(V^{\otimes
n}\bigr)
$$
is surjective.
\end{enumerate}
\end{lem}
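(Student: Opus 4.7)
The plan is to establish (1) and (2) by distinct techniques. For part (2) (the surjectivity), I would follow the template of \cite[Section~4]{DDH} via Lusztig's theory of based modules; for part (1) (the commutativity), which is the main obstacle, I would carry out a direct computation on Enyang's cellular basis, the delicate point being the quantum corrections inherent in the BMW action.

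For part (2): using the self-dualities $\widetilde{V}^*\cong\widetilde{V}$ and $V^*\cong V$ induced by the skew bilinear forms, identify
$$\End_{\U_K(\mathfrak{g})}\bigl(V^{\otimes n}\bigr)\cong\bigl(V^{\otimes 2n}\bigr)^{\U_K(\mathfrak{g})},\quad \End_{\U_K(\widetilde{\mathfrak{g}})}\bigl(\widetilde{V}^{\otimes n}\bigr)\cong\bigl(\widetilde{V}^{\otimes 2n}\bigr)^{\U_K(\widetilde{\mathfrak{g}})}.$$
Under these identifications, $\Theta_0$ corresponds (up to the overall $q$-power coming from the scaling of $\widetilde{\iota},\widetilde{\pi}$) to the map on invariants induced by the projection $\widetilde{\pi}^{\otimes 2n}\colon \widetilde{V}^{\otimes 2n}\to V^{\otimes 2n}$. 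The tensor space $\widetilde{V}^{\otimes 2n}$ is a based $\U_K(\widetilde{\mathfrak{g}})$-module via Lusztig's tensor product construction of canonical bases \cite[Chs.~24--27]{Lu3}, the subspace $\widehat{V}^{\otimes 2n}$ is a based $\U_K(\mathfrak{g})$-submodule, and the quotient $\widetilde{V}^{\otimes 2n}/\widehat{V}^{\otimes 2n}$ inherits a based structure. The invariants on both sides then admit canonical bases (signed subsets of canonical basis elements of weight zero annihilated by every $e_i$), and the image of the canonical basis of $\bigl(\widetilde{V}^{\otimes 2n}\bigr)^{\U_K(\widetilde{\mathfrak{g}})}$ under $\widetilde{\pi}^{\otimes 2n}$ surjects onto the canonical basis of $\bigl(V^{\otimes 2n}\bigr)^{\U_K(\mathfrak{g})}$, which yields the desired surjectivity.

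For part (1), the main obstacle: since $\Theta_1$ is the $\A$-linear map given by the uniform scaling $\widetilde{b}\mapsto q^{(m_0+m)n} b$ on Enyang's cellular basis (Corollary~\ref{Enyangbasis2}), and since $\widetilde{\iota},\widetilde{\pi}$ contribute the matching overall factor $\zeta^{(m_0+m)n}$, the identity $\Theta_0\circ\varphi_C=\varphi_C\circ\Theta_1$ is equivalent to
$$\pi^{\otimes n}\circ\varphi_C(\widetilde{b})\circ\iota^{\otimes n}\;=\;\varphi_C(b)\quad\text{in }\End_K\bigl(V^{\otimes n}\bigr)$$
for each Enyang basis element $\widetilde{b}=\widetilde{T}_{d_1}^{*}\widetilde{E}_1\widetilde{E}_3\cdots\widetilde{E}_{2f-1}\widetilde{T}_\sigma\widetilde{T}_{d_2}$ and its counterpart $b\in\bb_n(-\zeta^{2m+1},\zeta)$. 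The delicate point, with no analogue in the classical case of \cite{DDH}, is that the operators $\widetilde{\beta}',\widetilde{\gamma}'\in\End\bigl(\widetilde{V}^{\otimes 2}\bigr)$ contain quantum correction terms of the form $(q-q^{-1})(\cdots)$ that push basis vectors out of the middle block $\widehat{V}$ into its complement in $\widetilde{V}$. The heart of the argument is to show, via the explicit formulae of Section~3 and careful tracking of $q$-powers, that every such excursion from $\widehat{V}^{\otimes n}$ during the application of $\varphi_C(\widetilde{b})$ is ultimately killed by the projection $\pi^{\otimes n}$, and that the residual action on $\widehat{V}^{\otimes n}\cong V^{\otimes n}$ is exactly $\varphi_C(b)$. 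I expect this to proceed by induction on the total word length of $d_1,\sigma,d_2$, with base case the single generators $\widetilde{T}_i,\widetilde{E}_i$, and the inductive step exploiting the braid and BMW relations as well as the symmetry of Enyang's basis under the anti-involution ``$*$''.
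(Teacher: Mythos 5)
Your overall approach matches the paper's: part (2) via Lusztig's based modules and canonical bases on $V^{\otimes 2n}$, $\widetilde V^{\otimes 2n}$, and part (1) by a direct verification on Enyang's cellular basis, reducing to the unnormalized identity $\pi^{\otimes n}\circ\varphi_C(\widetilde b)\circ\iota^{\otimes n}=\varphi_C(b)$ after cancelling the $\zeta^{(m_0+m)n}$ factor between $\Theta_0$ and $\Theta_1$. Part (2) as you sketch it is essentially the paper's argument.

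For part (1), however, the plan as stated has a real gap. You propose ``induction on the total word length of $d_1,\sigma,d_2$, with base case the single generators $\widetilde T_i,\widetilde E_i$'' — but $\Theta_0$ and $\Theta_1$ are \emph{not} algebra maps, so knowing $\pi^{\otimes n}\varphi_C(\widetilde T_w)\iota^{\otimes n}=\varphi_C(T_w)$ and $\pi^{\otimes n}\varphi_C(\widetilde T_k)\iota^{\otimes n}=\varphi_C(T_k)$ does not give you the corresponding statement for $ws_k$, because the intermediate vector $\varphi_C(\widetilde T_w)(\iota^{\otimes n}v)$ generally lives outside $\widehat V^{\otimes n}$ and $\iota\circ\pi\neq\operatorname{id}$. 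The additional input required, and which the paper proves as claim (A), is a combinatorial statement about \emph{reduced} expressions: if one tracks, along the chain $\bi=\bj^{[0]},\bj^{[1]},\dots$ of indices produced by successive $\widetilde T_{q_t}$'s, the signed sum of inversions $C_t$ weighting the positions carrying an escaped index, then $C_t$ is monotone nondecreasing; this precisely rules out the ``escape-and-return'' scenario, so all error terms outside $\widehat I(2m,n)$ are ultimately killed by $\pi^{\otimes n}$. Without this or an equivalent argument the induction cannot close. Also the paper needs a separate step to deal with $T_{d_2}$, which it handles by the transpose trick using the symmetry of the matrices $\varphi_C(T_i),\varphi_C(E_i)$ and the anti-involution $*$ — you do allude to $*$, but the issue you identify (``tracking $q$-powers'') is not the real obstacle; the $q$-powers cancel mechanically, and the genuine difficulty is controlling the block-membership of intermediate tensor indices, which is where the inversion-monotonicity lemma does the work.
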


The remaining part of this section is devoted to the proof of Lemma
\ref{lm61}. The proof of Lemma \ref{lm61} (2) is almost the same as
\cite[Section 4]{DDH}, which we just sketch here. First, we note
that the following diagram of maps $$\begin{CD}
\End_{\U_{K}(\widetilde{\mathfrak{g}})}\bigl(\widetilde{V}^{\otimes
n}\bigr)@>{\sim}>>\bigl(\widetilde{V}^{\otimes n}\otimes
\bigl(\widetilde{V}^{\ast}\bigr)^{\otimes
n}\bigr)^{\U_{K}(\widetilde{\mathfrak{g}})}@>{\id^{\otimes
n}\otimes\widetilde{\tau}^{\otimes n}}
>>\Bigl(\widetilde{V}^{\otimes 2n}\Bigr)^{\U_{K}(\widetilde{\mathfrak{g}})}\\
@V{\Theta_0}VV @. @V{\widetilde{\pi}^{\otimes {2n}}}VV  \\
\End_{\U_{K}(\mathfrak{g})}\bigl(V^{\otimes
n}\bigr)@>{\sim}>>\bigl({V}^{\otimes n}\otimes
\bigl({V}^{\ast}\bigr)^{\otimes
n}\bigr)^{\U_{K}({\mathfrak{g}})}@>{\id^{\otimes
n}\otimes{\tau}^{\otimes n}}>>
\Bigl({V}^{\otimes 2n}\Bigr)^{\U_{K}({\mathfrak{g}})}\\
\end{CD}
$$
is commutative\footnote{This is the point where we have to use the
isomorphisms $\widetilde{\pi}, \widetilde{\iota}$ instead of $\pi,
\iota$.}. Note that (by the theory of tilting modules)
$$\begin{aligned} \Bigl({V}^{\otimes
2n}\Bigr)^{\U_{K}({\mathfrak{g}})}&\cong\bigl({V}^{\otimes n}\otimes
\bigl({V}^{\ast}\bigr)^{\otimes
n}\bigr)^{\U_{K}({\mathfrak{g}})}\cong\End_{\U_{K}({\mathfrak{g}})}\Bigl({V}^{\otimes
n}\Bigr)\\
&\cong\End_{\U_{\A}({\mathfrak{g}})}\Bigl({V}_{\A}^{\otimes
n}\Bigr)\otimes_{\A} K\cong \Bigl({V}_{\A}^{\otimes
2n}\Bigr)^{\U_{\A}({\mathfrak{g}})}\otimes_{\A}K.\end{aligned}
$$
Therefore, to prove Lemma \ref{lm61}(2), it suffices to show that $$
\widetilde{\pi}^{\otimes {2n}}\Bigl(\bigl(\widetilde{V}^{\otimes
2n}\bigr)^{\U_{K}(\widetilde{\mathfrak{g}})}\Bigr)=\bigl(V^{\otimes
2n}\bigr)^{\U_{K}(\mathfrak{g})},
$$
equivalently, to show that

\begin{equation}
\label{lm611}\widetilde{\pi}^{\otimes
{2n}}\Bigl(\bigl(\widetilde{V}_{\A}^{\otimes
2n}\bigr)^{\U_{\A}(\widetilde{\mathfrak{g}})}\Bigr)=
\bigl(V_{\A}^{\otimes 2n}\bigr)^{\U_{\A}(\mathfrak{g})}.
\end{equation}

Let ${M}:=({V}_{\Q(q)})^{\otimes {2n}}.$ By \cite[(27.3)]{Lu3}, the
$\U_{\Q(q)}({\mathfrak{g}})$-module ${M}$ is a based module. There
is a canonical basis ${B}$ of ${M}$, in Lusztig's notation (\cite[
(27.3.2)]{Lu3}), where each element in ${B}$ is of the form
${w}_{i_1}{\diamond}{w}_{i_2}{\diamond}\cdots{\diamond}
{w}_{i_{2n}}$, and ${w}_{i_1}{\diamond}\cdots{\diamond}{w}_{i_{2n}}$
is equal to ${w}_{i_1}\otimes\cdots\otimes {w}_{i_{2n}}$ plus a
linear combination of elements ${w}_{j_1}\otimes\cdots\otimes
{w}_{j_{2n}}$ with
$({w}_{j_1},\cdots,{w}_{j_{2n}})<({w}_{i_1},\cdots,{w}_{i_{2n}})$
and with coefficients in $v^{-1}\Z[v^{-1}]$, where $"<"$ is a
partial order defined in \cite[(27.3.1)]{Lu3}. In particular, $B$ is
an $\A$-basis of $V_{\A}^{\otimes n}$. By \cite[(27.2.1)]{Lu3},
there is a partition
$$ B=\bigsqcup_{\lambda\in X^{+}} B[\lam]. $$
Let $$ B[\neq 0]:=\bigsqcup_{0\neq\lambda\in X^{+}}
B[\lam],\,\,\,\,\, M[\neq 0]_{\A}:=\sum_{b\in B[\neq 0]}\A b.
$$
By \cite[(27.1),(27.2)]{Lu3} and the discussion in \cite[Section
4]{DDH}, we know that the isomorphism
$\bigl(\tau^{-1}\bigr)^{\otimes 2n}: V_{\A}^{\otimes 2n}\rightarrow
\bigl(V_{\A}^{\ast}\bigr)^{\otimes 2n}\cong \bigl(V_{\A}^{\otimes
2n}\bigr)^{\ast}$ induces an isomorphism
$$ \bigl(V_{\A}^{\otimes 2n}\bigr)^{\U_{\A}(\mathfrak{g})}\cong
\biggl(V_{\A}^{\otimes 2n}/M[\neq 0]_{\A}\biggr)^{\ast}.
$$
All the above have a counterpart with respect to $\widetilde{V}$,
which we will just put the symbol ``$\sim$". Therefore, we have the
notations $\widetilde{M}:=(\widetilde{V}_{\Q(q)})^{\otimes {2n}}$,
$\widetilde{B}$,
$\tilde{w}_{i_1}{\tilde\diamond}\tilde{w}_{i_2}{\tilde\diamond}\cdots{\tilde\diamond}\tilde{w}_{i_{2n}}$,
$\widetilde{M}[\neq 0]_{\A}$, and we also have an isomorphism
$$ \bigl(\widetilde{V}_{\A}^{\otimes 2n}\bigr)^{\U_{\A}(\widetilde{\mathfrak{g}})}\cong
\biggl(\widetilde{V}_{\A}^{\otimes 2n}/\widetilde{M}[\neq
0]_{\A}\biggr)^{\ast}.
$$

\begin{lem} With the notations as above, the following diagram of maps $$\begin{CD}
\bigl(\widetilde{V}_{\A}^{\otimes
2n}\bigr)^{\U_{\A}(\widetilde{\mathfrak{g}})}@>{\sim}>>
\biggl(\widetilde{V}_{\A}^{\otimes 2n}/\widetilde{M}[\neq
0]_{\A}\biggr)^{\ast}@>{}>> \biggl(\widetilde{V}_{\A}^{\otimes
2n}\biggr)^{\ast}\\
@V{\widetilde{\pi}^{\otimes 2n}}VV @. @V{\bigl(\widetilde{\iota}^{\otimes 2n}\bigr)^{\ast}}VV  \\
\bigl(V_{\A}^{\otimes
2n}\bigr)^{\U_{\A}(\mathfrak{g})}@>{\sim}>>\biggl(V_{\A}^{\otimes 2n}/M[\neq 0]_{\A}\biggr)^{\ast}
@>{}>>\biggl(V_{\A}^{\otimes 2n}\biggr)^{\ast}\\
\end{CD}
$$
is commutative. In particular, we have $$ \widetilde{\iota}^{\otimes
2n}\bigl(M[\neq 0]_{\A}\bigr)\subseteq \widetilde{M}[\neq 0]_{\A}.
$$
\end{lem}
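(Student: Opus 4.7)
The lemma asserts both the commutativity of the square and, as a consequence, the inclusion $\widetilde\iota^{\otimes 2n}(M[\neq 0]_\A)\subseteq\widetilde M[\neq 0]_\A$. These two statements are essentially tied together: $(\widetilde V_\A^{\otimes 2n}/\widetilde M[\neq 0]_\A)^{\ast}$ embeds into $(\widetilde V_\A^{\otimes 2n})^{\ast}$ as the annihilator of $\widetilde M[\neq 0]_\A$, and the commutativity of the right square says exactly that $(\widetilde\iota^{\otimes 2n})^{\ast}$ carries this annihilator into the annihilator of $M[\neq 0]_\A$ inside $(V_\A^{\otimes 2n})^{\ast}$; this in turn is equivalent to the inclusion. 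So the plan is to treat the left square separately as an explicit computation, and to devote the main effort to establishing the inclusion, which simultaneously handles the right square and the ``in particular'' assertion.

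For the left square, I would reduce the claim to the single identity $(\widetilde\iota)^{\ast}\circ\widetilde\tau^{-1}=\tau^{-1}\circ\widetilde\pi$ as $\A$-linear maps $\widetilde V_\A\to V_\A^{\ast}$, whose $(2n)$-fold tensor power gives precisely the required equality of maps $(\widetilde V_\A^{\otimes 2n})^{\U_\A(\widetilde{\mathfrak g})}\to(V_\A^{\otimes 2n})^{\ast}$. Verifying this pointwise identity is routine: using the sign conventions that give $\iota(w_i)=\widetilde w_{i+m_0-m}$, the explicit formulas $\tau(w_i^{\ast})=q^{1-i}w_i$ or $q^{-i}w_i$ (with the analogous expressions for $\widetilde\tau$), and the scalar adjustments $\widetilde\iota=\zeta^m\iota$, $\widetilde\pi=\zeta^{m_0}\pi$, evaluation of both sides on each $\widetilde w_j$ produces the same multiple of $w_{j-m_0+m}^{\ast}$ when $j\in\{m_0-m+1,\ldots,m_0+m\}$ and zero otherwise. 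The check splits into the cases $j\le m_0$ and $j>m_0$, and tracks the factors of $\zeta^m$, $\zeta^{m_0}$, and the sign $(-1)^{m-1}$ distinguishing $w_i$ from $v_i$ for $i>m$.

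The substantive content, and the main obstacle, is the inclusion $\widetilde\iota^{\otimes 2n}(M[\neq 0]_\A)\subseteq\widetilde M[\neq 0]_\A$. The natural approach is to compare canonical bases. Since $\widetilde\iota(w_i)=\zeta^m\widetilde w_{i+m_0-m}$ maps the canonical basis of $V_{\Q(q)}$ onto scalar multiples of the subset $\{\widetilde w_{m_0-m+1},\ldots,\widetilde w_{m_0+m}\}$ of the canonical basis of $\widetilde V_{\Q(q)}$, one expects that $\widetilde\iota^{\otimes 2n}$ sends each Lusztig $\diamond$-basis element $w_{i_1}\diamond\cdots\diamond w_{i_{2n}}\in B$ to $\zeta^{2nm}$ times the corresponding $\tilde\diamond$-basis element of $\widetilde B$, and that the stratification $B=\bigsqcup_\mu B[\mu]$ is carried into $\widetilde B=\bigsqcup_\nu\widetilde B[\nu]$ (viewing $X^{+}(\mathfrak g)$ inside $X^{+}(\widetilde{\mathfrak g})$ via the Levi embedding). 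Once this is established, $B[\neq 0]$ maps into $\widetilde B[\neq 0]$ up to scalars, and hence into $\widetilde M[\neq 0]_\A$, as required.

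The difficulty lies in matching the $\diamond$-products: the one on $V^{\otimes 2n}$ is built from the quasi-$R$-matrix of $\U_{\Q(q)}(\mathfrak g)$, while $\tilde\diamond$ uses the larger quasi-$R$-matrix of $\U_{\Q(q)}(\widetilde{\mathfrak g})$, and these disagree by factors indexed by positive roots of $\widetilde{\mathfrak g}$ outside $\mathfrak g$. The key observation is that the root vectors $e_j,f_j$ attached to such extra roots annihilate every $\widetilde w_k$ with $m_0-m+1\le k\le m_0+m$ (read directly from the explicit formulas for the $\U_{\Q(q)}(\widetilde{\mathfrak g})$-action on $\widetilde V_{\Q(q)}$), so the corresponding quasi-$R$-matrix factors act trivially on $\widehat V^{\otimes 2n}=\widetilde\iota^{\otimes 2n}(V^{\otimes 2n})$. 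Combined with Lusztig's compatibility of based modules under a Levi subalgebra embedding (\cite[Chapters 24--27]{Lu3}), this ensures that the $\diamond$- and $\tilde\diamond$-products agree up to a uniform scalar on the image of $\widetilde\iota^{\otimes 2n}$, and that the $B[\mu]$-labelling is preserved, giving the required inclusion.
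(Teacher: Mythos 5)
Your approach to the commutativity of the outer rectangle is sound and is essentially what the paper's one-line ``direct verification'' amounts to: since the composed horizontal map in each row is just the tensor power of $\tau^{-1}$ (resp.\ $\widetilde\tau^{-1}$) restricted to the invariants and then viewed in the full dual, the whole rectangle follows from the single-factor identity $\widetilde\iota^{\ast}\circ\widetilde\tau^{-1}=\tau^{-1}\circ\widetilde\pi$ on $\widetilde V_{\A}$, and the scalar normalizations $\widetilde\iota=\zeta^{m}\iota$, $\widetilde\pi=\zeta^{m_0}\pi$ are chosen precisely to make that identity hold on the nose.

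Where you go wrong is in the second half. First, a structural point: the diagram has no middle vertical arrow (the \verb|@.| is an empty slot), so there is no separate ``left square'' and ``right square'' to prove --- only the outer rectangle. Your ``left square'' reduction in fact produces the full rectangle commutativity already, and once you have the rectangle you do not need a separate argument for the inclusion: the image of the top row inside $(\widetilde V_{\A}^{\otimes 2n})^{\ast}$ is the annihilator of $\widetilde M[\neq 0]_{\A}$, the image of the bottom row is the annihilator of $M[\neq 0]_{\A}$, the rectangle forces $(\widetilde\iota^{\otimes 2n})^{\ast}$ to map the former into the latter, and since $\widetilde M[\neq 0]_{\A}$ is a free $\A$-direct summand (Lemma 6.2 of the paper, i.e.\ the canonical basis statement), this annihilator inclusion is equivalent to $\widetilde\iota^{\otimes 2n}(M[\neq 0]_{\A})\subseteq\widetilde M[\neq 0]_{\A}$. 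So the ``in particular'' is a formal consequence of what you already proved, and the canonical-basis machinery you invoke is doing no work here.

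Second, and more seriously, the canonical-basis argument as you state it is false. You claim that the extra root vectors $e_j,f_j$ ($1\le j\le m_0-m$) annihilate every $\widetilde w_k$ with $m_0-m+1\le k\le m_0+m$. Reading the explicit formulas in the paper, $e_{m_0-m}\widetilde w_{m_0-m+1}=\widetilde w_{m_0-m}\ne 0$ and $f_{m_0-m}\widetilde w_{m_0+m}=\widetilde w_{m_0+m+1}\ne 0$, so the boundary generators $e_{m_0-m},f_{m_0-m}$ do not kill $\widehat V$; they move it out of $\widehat V$. This is exactly why the subspace $\widehat V$ is stable under $\U(\mathfrak g)$ but not under $\U(\widetilde{\mathfrak g})$, and why comparing the $\diamond$ and $\tilde\diamond$ products is genuinely delicate. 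That comparison is the content of the paper's Theorem 6.4 ($J_0[m_0-m]\subseteq\widetilde J_0$), proved by quoting the nontrivial Theorem 4.7 of \cite{DDH}; it is not something one can wave through with ``Lusztig's compatibility of based modules under a Levi embedding,'' and in any case it belongs to a later step of the argument, not to this lemma.
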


\begin{proof} This follows from direct verification.
\end{proof}

Therefore, to prove (\ref{lm611}), it suffices to show that

\begin{equation} \label{lm6111}
\bigl(\widetilde{\iota}^{\otimes
2n}\bigr)^{\ast}\biggl(\bigl(\widetilde{V}_{\A}^{\otimes
2n}/\widetilde{M}[\neq
0]_{\A}\bigr)^{\ast}\biggr)=\bigl(V_{\A}^{\otimes 2n}/M[\neq
0]_{\A}\bigr)^{\ast}.
\end{equation}

Let $$\begin{aligned} J_0&:=\Bigl\{(i_1,\cdots,i_{2n})\in
I(2m,2n)\Bigm|w_{i_1}\diamond\cdots\diamond w_{i_{2n}}\in
B[0]\Bigr\},\\
\widetilde{J}_0&:=\Bigl\{(i_1,\cdots,i_{2n})\in
I(2m_0,2n)\Bigm|\tilde{w}_{i_1}\tilde{\diamond}
\cdots\tilde{\diamond}
\tilde{w}_{i_{2n}}\in\widetilde{B}[0]\Bigr\}.\end{aligned}
$$

\begin{lem} \label{lm65} {\rm (\cite[Corollary 4.5]{DDH})} With the above notation, the set $$
\Bigl\{w_{i_1}\otimes\cdots\otimes w_{i_{2n}}+M[{\neq
0}]_{\A}\Bigm|(i_1,\cdots,i_{2n})\in J_0\Bigr\}
$$
forms an $\A$-basis of $V_{\A}^{\otimes {2n}}/M[{\neq 0}]_{\A}$, and
the set $$ \Bigl\{\tilde{w}_{i_1}\otimes\cdots\otimes
\tilde{w}_{i_{2n}}+\widetilde{M}[{\neq
0}]_{\A}\Bigm|(i_1,\cdots,i_{2n})\in \widetilde{J}_0\Bigr\}
$$
forms an $\A$-basis of $\widetilde{V}_{\A}^{\otimes
{2n}}/\widetilde{M}[{\neq 0}]_{\A}$.
\end{lem}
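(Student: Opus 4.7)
The plan is to deduce Lemma \ref{lm65} from Lusztig's general theory of based modules (\cite[Part IV]{Lu3}), combined with the unitriangular change-of-basis between the tensor basis and the canonical basis of $V_{\A}^{\otimes 2n}$. Since the statement for $\widetilde{V}$ is formally identical to the one for $V$ (with $m$ replaced by $m_0$), it suffices to prove the first assertion; the second follows by the same argument.

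First I would recall that by \cite[(27.3.2)]{Lu3} the set $B=\bigl\{w_{i_1}\diamond\cdots\diamond w_{i_{2n}}\mid(i_1,\dots,i_{2n})\in I(2m,2n)\bigr\}$ is the canonical basis of the based $\U_{\Q(q)}(\mathfrak{g})$-module $M=V_{\Q(q)}^{\otimes 2n}$. Standard integrality (cf. \cite[\S24]{Lu3}) gives that $B$ is actually an $\A$-basis of the integral form $V_{\A}^{\otimes 2n}$. By \cite[(27.2.1)]{Lu3} we have the partition $B=\bigsqcup_{\lam\in X^{+}}B[\lam]$, and by definition $M[\neq 0]_{\A}=\sum_{b\in B[\neq 0]}\A b$ is a free $\A$-submodule with basis $B[\neq 0]$. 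Consequently, the images of the remaining canonical basis elements, namely $\{b+M[\neq 0]_{\A}\mid b\in B[0]\}$, form an $\A$-basis of the quotient $V_{\A}^{\otimes 2n}/M[\neq 0]_{\A}$.

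Next I would pass from the canonical basis to the tensor basis using unitriangularity. By construction of $\diamond$, for each $(i_1,\dots,i_{2n})\in I(2m,2n)$ one has
$$
w_{i_1}\diamond\cdots\diamond w_{i_{2n}}=w_{i_1}\otimes\cdots\otimes w_{i_{2n}}+\sum_{(j_1,\dots,j_{2n})<(i_1,\dots,i_{2n})}c_{\bj,\bi}\,w_{j_1}\otimes\cdots\otimes w_{j_{2n}},
$$
with coefficients $c_{\bj,\bi}\in q^{-1}\Z[q^{-1}]\subseteq\A$, where $``<"$ is the partial order of \cite[(27.3.1)]{Lu3}. Because the cells $B[\lam]$ are preserved by certain ``crystal-type'' operators but the transition matrix between the tensor basis and the canonical basis is $\A$-unitriangular, I would argue that after reducing modulo $M[\neq 0]_{\A}$, the tensor monomials indexed by $J_0$ and the diamond monomials indexed by $J_0$ differ by an $\A$-unitriangular matrix: for $\bi\in J_0$ the element $w_{i_1}\otimes\cdots\otimes w_{i_{2n}}$ is congruent modulo $M[\neq 0]_{\A}$ to $w_{i_1}\diamond\cdots\diamond w_{i_{2n}}$ plus an $\A$-linear combination of strictly smaller diamond monomials, and to finish one must verify that any such strictly smaller diamond monomial appearing with a $J_0$-index is again in $B[0]$ and indexed by a strictly smaller element of $J_0$ (those with indices in $B[\neq 0]$ vanish in the quotient).

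The step I expect to be the main obstacle is precisely this last verification: controlling which diamond monomials belong to $B[0]$ vs.\ $B[\neq 0]$ well enough to run a triangular-matrix argument over $\A$. I would handle it exactly as in \cite[Corollary 4.5]{DDH}, where the combinatorial description of $B[0]$ (in terms of the underlying crystal graph, equivalently in terms of a ``sign-string'' or Temperley--Lieb-type condition on the index $\bi$) is compatible with the order $``<"$. Given that compatibility, the change-of-basis matrix between $\{w_{\bi}+M[\neq 0]_{\A}\}_{\bi\in J_0}$ and $\{w_{i_1}\diamond\cdots\diamond w_{i_{2n}}+M[\neq 0]_{\A}\}_{\bi\in J_0}$ is unitriangular over $\A$, hence invertible, and the first set is an $\A$-basis of $V_{\A}^{\otimes 2n}/M[\neq 0]_{\A}$. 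Applying the same argument verbatim with $m$ replaced by $m_0$ yields the second assertion, completing the proof.
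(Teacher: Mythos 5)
Your argument is correct and is, in outline, the proof that the cited reference \cite[Corollary 4.5]{DDH} supplies; the paper itself offers no proof for this lemma, only the citation. One remark: the step you flag as the ``main obstacle'' is vacuous, and no combinatorial control over which diamonds lie in $B[0]$ versus $B[\neq 0]$ is required. Inverting the $\A$-unitriangular change of basis gives, for every $\bi$,
$w_{\bi}=\sum_{\bj\leq\bi}d_{\bj,\bi}\,(w_{j_1}\diamond\cdots\diamond w_{j_{2n}})$
with $d_{\bi,\bi}=1$ and all $d_{\bj,\bi}\in\A$. Reducing modulo $M[\neq 0]_\A$ kills precisely the terms with $\bj\notin J_0$, so for $\bi\in J_0$ the image of $w_{\bi}$ is expressed in the already-known $\A$-basis $\bigl\{w_{j_1}\diamond\cdots\diamond w_{j_{2n}}+M[\neq 0]_\A\bigm|\bj\in J_0\bigr\}$ via the matrix $(d_{\bj,\bi})_{\bj,\bi\in J_0}$, which is a principal submatrix of a unitriangular $\A$-matrix and hence itself unitriangular and invertible over $\A$. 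The ``verification'' you propose at the end — that any strictly smaller diamond monomial appearing with a $J_0$-index is again in $B[0]$ — is tautological: $\bj\in J_0$ means by definition that $w_{j_1}\diamond\cdots\diamond w_{j_{2n}}\in B[0]$.
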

We set $$
J_0[m_0-m]:=\Bigl\{(m_0-m+i_1,\cdots,m_0-m+i_{2n})\Bigm|(i_1,\cdots,i_{2n})\in
J_0\Bigr\}.
$$

\begin{thm} \label{thm66} With the above notation,
$J_0[m_0-m]\subseteq\widetilde{J}_0$.
\end{thm}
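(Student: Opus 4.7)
My plan is to characterize $J_0$ and $\widetilde{J}_0$ crystal-theoretically and then verify the crystal conditions directly for the shifted tuples. By the isotypic decomposition of based modules \cite[27.2]{Lu3} (together with Lemma~\ref{lm65}), a tuple $\bi=(i_1,\ldots,i_{2n})\in I(2m,2n)$ lies in $J_0$ precisely when its associated vertex in the tensor product crystal of $V^{\otimes 2n}$ is isolated of weight zero; explicitly, $\tilde{e}_k^{\mathfrak{g}}(\bi)=0=\tilde{f}_k^{\mathfrak{g}}(\bi)$ for every $k=1,\ldots,m$ and the $\mathfrak{g}$-weight of $\bi$ vanishes, and the analogous description holds for $\widetilde{J}_0$. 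It thus suffices, for each $\bi\in J_0$, to verify these weight and crystal conditions for the shifted tuple $\tilde{\bi}:=(m_0-m+i_1,\ldots,m_0-m+i_{2n})$ with respect to the $\widetilde{\mathfrak{g}}$-crystal of $\widetilde{V}^{\otimes 2n}$.

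The weight-zero condition transfers automatically: the basis vector $w_{i_l}$ of $\mathfrak{g}$-weight $\pm\varepsilon_k$ ($1\le k\le m$) corresponds under the shift to $\widetilde{w}_{m_0-m+i_l}$ of $\widetilde{\mathfrak{g}}$-weight $\pm\varepsilon_{m_0-m+k}$, and no shifted factor contributes to any coordinate $\varepsilon_j$ with $j\le m_0-m$. For the crystal conditions on $k\in\{1,\ldots,m_0\}$, I would split into three ranges. For $k\in\{m_0-m+1,\ldots,m_0\}$, the operator $\tilde{e}_k^{\widetilde{\mathfrak{g}}}$ coincides, via the embedding $\U(\mathfrak{g})\hookrightarrow\U(\widetilde{\mathfrak{g}})$, with $\tilde{e}_{k-(m_0-m)}^{\mathfrak{g}}$ acting on each shifted factor exactly as $\tilde{e}_{k-(m_0-m)}^{\mathfrak{g}}$ acts on the original factor, so the crystal tensor product rule immediately yields $\tilde{e}_k^{\widetilde{\mathfrak{g}}}(\tilde{\bi})=0$ from $\tilde{e}_{k-(m_0-m)}^{\mathfrak{g}}(\bi)=0$, and similarly for $\tilde{f}_k^{\widetilde{\mathfrak{g}}}$. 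For $k<m_0-m$, direct inspection of the action formulas recalled in Section~2 shows that $\tilde{e}_k^{\widetilde{\mathfrak{g}}}$ and $\tilde{f}_k^{\widetilde{\mathfrak{g}}}$ annihilate every individual $\widetilde{w}_j$ with $j\in\{m_0-m+1,\ldots,m_0+m\}$, since neither $j=k+1$ nor $j=2m_0-k$ can land in this range when $k<m_0-m$; hence they kill $\tilde{\bi}$.

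The delicate case, which I expect to be the principal obstacle, is $k=m_0-m$. Here $\tilde{e}_{m_0-m}^{\widetilde{\mathfrak{g}}}$ acts nontrivially only on the single factor $\widetilde{w}_{m_0-m+1}$ (the image of $w_1$), sending it to $\widetilde{w}_{m_0-m}\notin\widehat{V}$, while $\tilde{f}_{m_0-m}^{\widetilde{\mathfrak{g}}}$ acts nontrivially only on $\widetilde{w}_{m_0+m}$ (the image of $w_{2m}$). By Kashiwara's bracket matching rule, the vanishing conditions $\tilde{e}_{m_0-m}^{\widetilde{\mathfrak{g}}}(\tilde{\bi})=0$ and $\tilde{f}_{m_0-m}^{\widetilde{\mathfrak{g}}}(\tilde{\bi})=0$ translate into combinatorial matching requirements on the relative positions of the indices $1$ and $2m$ in $\bi$ — requirements not directly encoded in any single $\tilde{e}_k^{\mathfrak{g}}$ condition. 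To deduce them from the hypothesis $\bi\in J_0$, I would exploit Lusztig's compatibility of canonical bases with based submodules \cite[27.1, 27.3]{Lu3}: the identification $V_{\A}\cong\widehat{V}_{\A}$ realizes $V_{\A}^{\otimes 2n}$ as a based $\U_{\A}(\mathfrak{g})$-submodule of $\widetilde{V}_{\A}^{\otimes 2n}$ equipped with the restricted $\mathfrak{g}$-structure, and one argues that the joint $\mathfrak{g}$-crystal source conditions on $\bi$, combined with the weight-zero constraint, force the required boundary matching at the simple root $\alpha_{m_0-m}$ of $\widetilde{\mathfrak{g}}$. The crux of the entire proof is thus the handling of this ``interface'' root, where the embedded subalgebra $\U(\mathfrak{g})$ and the ambient $\U(\widetilde{\mathfrak{g}})$ interact most subtly; once it is settled the other cases are essentially immediate.
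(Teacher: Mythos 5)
Your framework—characterizing $J_0$ and $\widetilde J_0$ as the index sets of the isolated (weight-zero) vertices in the respective crystal graphs, then checking the crystal conditions on the shifted tuple root by root—is a reasonable restatement of the problem, and your handling of the roots $k>m_0-m$ (via the embedding $\U(\mathfrak g)\hookrightarrow\U(\widetilde{\mathfrak g})$) and $k<m_0-m$ (since neither $k+1$ nor $2m_0+1-k$ lies in $\{m_0-m+1,\ldots,m_0+m\}$) is fine. The weight transfer is also correct. But your write-up is not a proof: you yourself identify the single root $k=m_0-m$ as ``the principal obstacle'' and then say only ``one argues that the joint $\mathfrak g$-crystal source conditions\ldots force the required boundary matching.'' That sentence is the entire content of the theorem, and it is precisely the step that is left undone. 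Nothing you have written explains \emph{why} the $\mathfrak g$-crystal conditions—which see only the roots $\alpha_{m_0-m+1},\ldots,\alpha_{m_0}$—should control the bracket matching for $\alpha_{m_0-m}$, a root that does not lie in the image of the Levi embedding at all.

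Two further problems with the sketch of that last step. First, the appeal to ``Lusztig's compatibility of canonical bases with based submodules'' is misplaced here: $\widehat V^{\otimes 2n}\cong V^{\otimes 2n}$ sits inside $\widetilde V^{\otimes 2n}$ as a $\U_\A(\mathfrak g)$-based submodule of the \emph{restriction}, which constrains the $\mathfrak g$-crystal data on the image, but gives you no handle on the $\widetilde{\mathfrak g}$-crystal data (in particular the $\alpha_{m_0-m}$-string structure) of the ambient module, which is exactly what membership in $\widetilde J_0$ is about. Second, you invoke ``Kashiwara's bracket matching rule'' without pinning down the convention, but $J_0,\widetilde J_0$ are defined through Lusztig's $\diamond$-basis (Lu3, \S27.3), whose crystal tensor rule is the mirror image of Kashiwara's standard one; the two conventions literally swap which weight-zero vertex of $V^{\otimes 2}$ is isolated, so at the interface root this is not a cosmetic matter. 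Without fixing the convention, the proposed bracket argument cannot even be set up unambiguously.

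By contrast, the paper's proof is simply a reference to the combinatorial argument of Theorem 4.7 of \cite{DDH}, which works directly with the tensor monomial basis and the explicit structure of the cells rather than attempting to read off $\widetilde J_0$-membership from the $\mathfrak g$-crystal data. Your proposal, if it could be pushed through, would be a genuinely different (crystal-theoretic) route, and the two easy root ranges you dispose of do match the easy part of the combinatorics. But as written the proposal reduces the theorem to an unproved (and, given the points above, nontrivially delicate) claim about the interface root; that is a real gap, not a filled-in detail.
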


\begin{proof} This is proved by using the same argument as in the
proof of \cite[Theorem 4.7]{DDH}).
\end{proof}

Now Lemma \ref{lm65} and Theorem \ref{thm66} imply that
$\widetilde{\iota}^{\otimes 2n}$ maps $V_{\A}^{\otimes 2n}/M[\neq
0]_{\A}$ onto an $\A$-direct summand of $\widetilde{V}_{\A}^{\otimes
2n}/\widetilde{M}[\neq 0]_{\A}$. It follows that $$
\bigl(\widetilde{\iota}^{\otimes
2n}\bigr)^{\ast}\biggl(\bigl(\widetilde{V}_{\A}^{\otimes
2n}/\widetilde{M}[\neq
0]_{\A}\bigr)^{\ast}\biggr)=\bigl(V_{\A}^{\otimes 2n}/M[\neq
0]_{\A}\bigr)^{\ast}, $$ which proves (\ref{lm6111}). This completes
the proof of Lemma \ref{lm61} (2).\medskip

It remains to prove Lemma \ref{lm61} (1). {\it From now on and until
the end of this paper we shall set, for any integer $i$ with $1\leq
i\leq 2m_0$}, $$i':=2m_0+1-i. $$

Note that both $\Theta_0$ and $\Theta_1$ are in general not algebra
maps. Let $f$ be an integer with $0\leq f\leq [n/2]$. By definition,
$$ \Theta_1\Bigl(\widetilde{E}_1\widetilde{E}_3\cdots
\widetilde{E}_{2f-1}\Bigr)=\zeta^{(m_0+m)n}E_1E_3\cdots E_{2f-1}.
$$
Therefore,
$$\begin{aligned} \varphi_{C}\Theta_1\Bigl(\widetilde{E}_1\widetilde{E}_3\cdots
\widetilde{E}_{2f-1}\Bigr)
&=\zeta^{(m_0+m)n}\varphi_{C}\Bigl({E}_1{E}_3\cdots {E}_{2f-1}\Bigr)\\
&=\zeta^{(m_0+m)n}\varphi_{C}(E_1)\varphi_{C}(E_3)\cdots\varphi_{C}(E_{2f-1})\\
&=\Theta_0\Bigl(\varphi_{C}(\widetilde{E}_1)
\varphi_{C}(\widetilde{E}_3)\cdots
\varphi_{C}(\widetilde{E}_{2f-1})\Bigr)\\
&=\Theta_0\varphi_{C}\Bigl(\widetilde{E}_1\widetilde{E}_3\cdots
\widetilde{E}_{2f-1}\Bigr),
\end{aligned}
$$
where the third equality follows from the fact that different
$\varphi_{C}(\widetilde{E}_{2i-1})$ acts on pairwise non-intersected
positions.

Let $\sigma\in\BS_{\{2f+1,\cdots,n\}}$, $\nu:=((2^f), (n-2f))$,
$d_1,d_2\in\mathcal{D}_{\nu}$. Our purpose is to show that $$
\varphi_{C}\Theta_1\Bigl(\widetilde{T}_{d_1}^{\ast}\widetilde{E}_1\widetilde{E}_3\cdots
\widetilde{E}_{2f-1}\widetilde{T}_{\sigma}\widetilde{T}_{d_2}\Bigr)=\Theta_0\varphi_{C}\Bigl(
\widetilde{T}_{d_1}^{\ast}\widetilde{E}_1\widetilde{E}_3\cdots
\widetilde{E}_{2f-1}\widetilde{T}_{\sigma}\widetilde{T}_{d_2}\Bigr).
$$
Equivalently,
\begin{equation}\label{last}
\varphi_{C}\bigl({T}_{d_1}^{\ast}{E}_1{E}_3\cdots
{E}_{2f-1}{T}_{\sigma}{T}_{d_2}\bigr)=
\zeta^{-(m_0+m)n}\Theta_0\varphi_{C}\bigl(
\widetilde{T}_{d_1}^{\ast}\widetilde{E}_1\widetilde{E}_3\cdots
\widetilde{E}_{2f-1}\widetilde{T}_{\sigma}\widetilde{T}_{d_2}\bigr).
\end{equation}
We divide the proof into four steps:\medskip

\noindent {\it Step 1.} We want to prove that
$\varphi_{C}\Theta_1\Bigl(\widetilde{T}_w\Bigr)=\Theta_0\varphi_{C}\Bigl(\widetilde{T}_w\Bigr)$
for any $w\in\BS_{n}$.

We set $$ \widehat{I}(2m,n)=\bigl\{\bi\in I(2m,n)\bigm|\text{$m_0-m+1\leq
i_t\leq m_0+m$ for each $t$}\bigr\}.$$ Note that, by definition,
$\Theta_1\Bigl(\widetilde{T}_w\Bigr)=\zeta^{(m_0+m)n}T_{w}$ for each
$w\in\BS_n$. Hence it suffices to show that for any $w\in\BS_{n}$,
\begin{equation}\label{equa68} \varphi_{C}\Bigl({T}_w\Bigr)=\zeta^{-(m_0+m)n}
\Theta_0\varphi_{C}\Bigl(\widetilde{T}_w\Bigr).\end{equation}

Let $w\in\BS_n$, $1\leq k\leq n-1$, such that
$\ell(ws_k)=\ell(w)+1$. Let $\bi\in\widehat{I}(2m,n)$, $\bj\in
I(2m_0,n)$, such that $v_{\bj}$ is involved in
$v_{\bi}\widetilde{T}_{ws_k}$. We claim that \begin{enumerate}
\item[(A)] for any $\bl\in I(2m_0,n)$ such that $v_{\bl}$ is involved in
$v_{\bi}\widetilde{T}_{w}$ and $v_{\bj}$ is involved in
$v_{\bl}\widetilde{T}_{k}$, if $l_k=(l_{k+1})'$, and $m_0-m+1\leq
l_b\leq m_0+m$ whenever $b\neq k,k+1$, then $l_k\geq m_0-m+1$.
\end{enumerate}

If $\ell(w)=0$, there is nothing to prove. Assume $\ell(w)=a\geq 1$.
We fix a reduced expression of $w$ as follows: $$
w=s_{q_1}s_{q_2}\cdots s_{q_a}.
$$
We set $q_{a+1}=k$. By assumption, $v_{\bl}$ is involved in
$v_{\bi}\widetilde{T}_{q_1}\cdots\widetilde{T}_{q_{a}}$. It follows
that there exist
$$\bj^{[0]},\bj^{[1]},\cdots,\bj^{[a+1]}\in I(2m_0,n)$$ such that
\begin{enumerate}
\item $\bj^{[0]}=\bi$, $\bj^{[a]}=\bl$, $\bj^{[a+1]}=\bj$;
\item for each integer $1\leq t\leq a+1$,
$v_{\bj^{[t]}}$ is involved in
$v_{\bj^{[t-1]}}\widetilde{T}_{q_{t}}$.
\end{enumerate}
For each integer $2\leq t\leq a+1$, we set
$$ w_t:=s_{q_1}s_{q_2}\cdots s_{q_{t-1}}.
$$

For any integer $t$ with $2\leq t\leq a+1$, we define
$$\begin{aligned}
&A_t:=\sum_{\substack{1\leq b\leq n\\ 1\leq j^{[t-1]}_b<m_0-m+1
}}(b)w_t^{-1},\,\,\,
B_t:=\sum_{\substack{1\leq b\leq n\\ m_0+m<j^{[t-1]}_b\leq 2m_0}}(b)w_t^{-1},\\
&C_t:=A_t-B_t.
\end{aligned}$$ We claim that \begin{equation}\label{KeyStep} 0\leq C_2\leq
C_3\leq\cdots\leq C_{a+1}.
\end{equation}

Recall by definition that $\widetilde{T}_{s_{q_t}}$ acts only on the
$(q_t,q_t+1)$ positions of the simple tensor $v_{\bj^{[t-1]}}$ in
the same way as the operator $\beta'$ acts on
$v_{j^{[t-1]}_{q_t}}\otimes v_{j^{[t-1]}_{q_t+1}}$. To compare $C_t$
with $C_{t+1}$, it suffices to check the simple tensors involved in
$$
\Bigl(v_{j^{[t-1]}_{q_t}}\otimes v_{j^{[t-1]}_{q_t+1}}\Bigr)\beta'.
$$
By the explicit definition of the operator $\beta'$, we need only
consider the following six possibilities:
\medskip

\noindent {\it Case 1.}
$\bigl\{j^{[t-1]}_{q_t},j^{[t-1]}_{q_t+1},j^{[t]}_{q_t},j^{[t]}_{q_t+1}\bigr\}\subseteq\bigl\{m_0-m+1,
m_0-m+2,\cdots,m_0+m\bigr\}$. In this case, it is clear that
$C_t=C_{t+1}$.\medskip

\noindent {\it Case 2.} $j^{[t-1]}_{q_t}\neq
\bigl(j^{[t-1]}_{q_t+1}\bigr)'$. Then we have either
$\bigl(j^{[t-1]}_{q_t},j^{[t-1]}_{q_t+1}\bigr)=\bigl(j^{[t]}_{q_t+1},
j^{[t]}_{q_t}\bigr)$ or
$\bigl(j^{[t-1]}_{q_t},j^{[t-1]}_{q_t+1}\bigr)=\bigl(j^{[t]}_{q_t},
j^{[t]}_{q_t+1}\bigr)$. In both cases, we still get that
$C_t=C_{t+1}$.\medskip

\noindent {\it Case 3.}
$j^{[t-1]}_{q_t}=\bigl(j^{[t-1]}_{q_t+1}\bigr)'>m_0+m$. Then we must
have
$$j^{[t]}_{q_t}=\bigl(j^{[t]}_{q_t+1}\bigr)'\leq
j^{[t-1]}_{q_t+1}<m_0-m+1.
$$
In this case, since $$\begin{aligned}
&\bigl({q_t}\bigr)s_{q_t}^{-1}={q_t+1},\,\,\,
\bigl({q_t+1}\bigr)s_{q_t}^{-1}={q_t},\\
&j^{[t-1]}_{q_t+1}, \, j^{[t]}_{q_t}<m_0-m+1,\,\,\, j^{[t-1]}_{q_t},
j^{[t]}_{q_t+1}>m_0+m.
\end{aligned}
$$ and
$\bigl({b}\bigr)s_{q_t}^{-1}={b}$ for any $b\notin\{q_t,q_t+1\}$, it
follows easily that $C_t=C_{t+1}$.
\medskip

\noindent {\it Case 4.} $m_0-m+1\leq
j^{[t-1]}_{q_t}=\bigl(j^{[t-1]}_{q_t+1}\bigr)'\leq m_0+m$, and
$$j^{[t]}_{q_t}=\bigl(j^{[t]}_{q_t+1}\bigr)'<m_0-m+1.
$$
In this case, since $\ell(w_ts_{q_t})=\ell(w_t)+1$, it follows that
$$
\bigl({q_t}\bigr)w_{t+1}^{-1}=\bigl({q_t+1}\bigr)w_{t}^{-1}>\bigl({q_t}\bigr)w_{t}^{-1}=
\bigl({q_t+1}\bigr)w_{t+1}^{-1}.
$$
and $\bigl({b}\bigr)s_{q_t}^{-1}={b}$ for any
$b\notin\{q_t,q_t+1\}$, it follows that $$
C_{t+1}=C_t+\Bigl(\bigl({q_t}\bigr)w_{t+1}^{-1}-\bigl({q_t+1}\bigr)w_{t+1}^{-1}\Bigr)>C_t.
$$

\noindent {\it Case 5.}
$j^{[t-1]}_{q_t}=\bigl(j^{[t-1]}_{q_t+1}\bigr)'<m_0-m+1$, and
$\bigl(j^{[t-1]}_{q_t},
j^{[t-1]}_{q_t+1}\bigr)\neq\bigl(j^{[t]}_{q_t},
j^{[t]}_{q_t+1}\bigr).$ In this case, we must have
$j^{[t]}_{q_t}=\bigl(j^{[t]}_{q_t+1}\bigr)'$. Since
$\ell(w_ts_{q_t})=\ell(w_t)+1$, it follows that
$$
\bigl({q_t}\bigr)w_{t+1}^{-1}=\bigl({q_t+1}\bigr)w_{t}^{-1}>\bigl({q_t}\bigr)w_{t}^{-1}=
\bigl({q_t+1}\bigr)w_{t+1}^{-1}.
$$
and $\bigl({b}\bigr)s_{q_t}^{-1}={b}$ for any
$b\notin\{q_t,q_t+1\}$. If $j^{[t]}_{q_t}>m_0+m$, then it is clear
that $C_t=C_{t+1}$; if $j^{[t]}_{q_t}<m_0-m+1$, then $$
C_{t+1}=C_t-\Bigl(\bigl({q_t}\bigr)w_{t}^{-1}-\bigl({q_t+1}\bigr)w_{t}^{-1}\Bigr)+
\Bigl(\bigl({q_t}\bigr)w_{t+1}^{-1}-\bigl({q_t+1}\bigr)w_{t+1}^{-1}\Bigr)>C_t;
$$
if $m_0-m+1\leq j^{[t]}_{q_t}\leq m_0+m$, then $$
C_{t+1}=C_t-\Bigl(\bigl({q_t}\bigr)w_{t}^{-1}-\bigl({q_t+1}\bigr)w_{t}^{-1}\Bigr)>C_t.
$$

\noindent {\it Case 6.}
$j^{[t-1]}_{q_t}=\bigl(j^{[t-1]}_{q_t+1}\bigr)'<m_0-m+1$, and
$\bigl(j^{[t-1]}_{q_t}, j^{[t-1]}_{q_t+1}\bigr)=\bigl(j^{[t]}_{q_t},
j^{[t]}_{q_t+1}\bigr).$ Since $\ell(w_ts_{q_t})=\ell(w_t)+1$, it
follows that
$$
\bigl({q_t}\bigr)w_{t+1}^{-1}=\bigl({q_t+1}\bigr)w_{t}^{-1}>\bigl({q_t}\bigr)w_{t}^{-1}=
\bigl({q_t+1}\bigr)w_{t+1}^{-1}.
$$
and $\bigl({b}\bigr)s_{q_t}^{-1}={b}$ for any
$b\notin\{q_t,q_t+1\}$. Therefore $$
C_{t+1}=C_t-\Bigl(\bigl({q_t}\bigr)w_{t}^{-1}-\bigl({q_t+1}\bigr)w_{t}^{-1}\Bigr)+
\Bigl(\bigl({q_t}\bigr)w_{t+1}^{-1}-\bigl({q_t+1}\bigr)w_{t+1}^{-1}\Bigr)>C_t,
$$
as required. This proves our claim (\ref{KeyStep}).\smallskip

Since $\ell(w_{a+1}s_{k})=\ell(w_{a+1})+1$, it follows that
$\bigl({k}\bigr)w_{a+1}^{-1}<\bigl({k+1}\bigr)w_{a+1}^{-1}$. Now
suppose that $l_k<m_0-m+1$. Then by our assumption on $\bl$, it is
easy to see that
$$ C_{a+1}=(k)w_{a+1}^{-1}-(k+1)w_{a+1}^{-1}<0,
$$
which contradicts to (\ref{KeyStep}). It follows that $l_k\geq
m_0-m+1$. This completes the proof of our claim (A).

Now we use induction on $\ell(w)$ and the results (A) to prove our
claim (\ref{equa68}). If $\ell(w)=0$, there is nothing to prove. Let
$w=us_k$ with $\ell(w)=\ell(u)+1$. Suppose
$\varphi_{C}\Bigl({T}_u\Bigr)=\zeta^{-(m_0+m)n}\Theta_0\varphi_{C}\Bigl(\widetilde{T}_u\Bigr)$.
Then for each $\bi\in\widehat{I}(2m,n)$, we can write
$$
v_{\bi}\widetilde{T}_u={\iota}^{\otimes
n}\bigl(v_{\bi-m_0+m}T_{u})+\sum_{\bl\in
I(2m_0,n)\setminus\widehat{I}(2m,n)}A_{\bi,\bl}v_{\bl},
$$
where $A_{\bi,\bl}\in K$ for each $\bl$, and
$$\bi-m_0+m:=(i_1-m_0+m,\cdots,i_n-m_0+m).$$ Therefore,
$$ v_{\bi}\widetilde{T}_w=\biggl({\iota}^{\otimes
n}\bigl(v_{\bi-m_0+m}T_{u})\biggr)\widetilde{T}_k+\sum_{\bl\in
I(2m_0,n)\setminus\widehat{I}(2m,n)}A_{\bi,\bl}v_{\bl}\widetilde{T}_k.
$$
Note that $A_{\bi,\bl}\neq 0$ implies that $v_{\bl}$ is involved in
$v_{\bi}\widetilde{T}_u$.

We claim that $\pi^{\otimes n}\bigl(v_{\bl}\widetilde{T}_k\bigr)=0$
whenever $A_{\bi,\bl}\neq 0$. In fact, by the definition of $\beta'$
and the fact that $\bl\in I(2m_0,n)\setminus\widehat{I}(2m,n)$, it
is easy to see that $\pi^{\otimes
n}\bigl(v_{\bl}\widetilde{T}_k\bigr)\neq 0$ only if
$l_k=(l_{k+1})'<m_0-m+1$ and $m_0-m+1\leq l_b\leq m_0+m$ whenever
$b\neq k,k+1$. Applying our result (A), we know that this is
impossible. This proves our claim.

Note also that
$$ \biggl({\iota}^{\otimes
n}\bigl(v_{\bi-m_0+m}T_{u})\biggr)\widetilde{T}_k={\iota}^{\otimes
n}\bigl(v_{\bi-m_0+m}T_{u}T_k)+\sum_{\bj\in
I(2m_0,n)\setminus\widehat{I}(2m,n)}A'_{\bi,\bj}v_{\bj},
$$
where $A'_{\bi,\bj}\in K$ for each $\bj$. As a consequence, we get
that
$$ {\pi}^{\otimes
n}\Bigl(v_{\bi}\widetilde{T}_w\Bigr)=v_{\bi-m_0+m}T_{w}.
$$
Equivalently,
$\varphi_{C}\Bigl({T}_w\Bigr)=\zeta^{-(m_0+m)n}\Theta_0\varphi_{C}\Bigl(\widetilde{T}_w\Bigr)$,
as required.  This completes the proof of our claim (\ref{equa68}).
As a direct consequence, it is easy to see that for any
$\sigma\in\BS_{\{2f+1,\cdots,n\}}$, \begin{equation}\label{KeyStep2}
\varphi_{C}\Theta_1\Bigl(\widetilde{E}_1\widetilde{E}_3\cdots
\widetilde{E}_{2f-1}\widetilde{T}_{\sigma}\Bigr)=\Theta_0\varphi_{C}\Bigl(
{E}_1{E}_3\cdots {E}_{2f-1}T_{\sigma}\Bigr).
\end{equation}
\smallskip

\noindent {\it Step 2.} We claim that for any
$d_1\in\mathcal{D}_{\nu_f}$,
$$
\varphi_{C}\Theta_1\Bigl(\widetilde{T}_{d_1}^{\ast}\widetilde{E}_1\widetilde{E}_3\cdots
\widetilde{E}_{2f-1}\widetilde{T}_{\sigma}\Bigr)=\Theta_0\varphi_{C}\Bigl(
\widetilde{T}_{d_1}^{\ast}\widetilde{E}_1\widetilde{E}_3\cdots
\widetilde{E}_{2f-1}\widetilde{T}_{\sigma}\Bigr).
$$
By definition,
$\Theta_1\Bigl(\widetilde{T}_{d_1}^{\ast}\widetilde{E}_1\widetilde{E}_3\cdots
\widetilde{E}_{2f-1}\widetilde{T}_{\sigma}\Bigr)=\zeta^{(m_0+m)n}{T}_{d_1}^{\ast}{E}_1{E}_3\cdots
{E}_{2f-1}{T}_{\sigma}$. Therefore, our claim is equivalent to
\begin{equation}\label{KeyStep3}
\varphi_{C}\Bigl({T}_{d_1}^{\ast}{E}_1{E}_3\cdots
{E}_{2f-1}{T}_{\sigma}\Bigr)=\zeta^{-(m_0+m)n}\Theta_0\varphi_{C}\Bigl(
\widetilde{T}_{d_1}^{\ast}\widetilde{E}_1\widetilde{E}_3\cdots
\widetilde{E}_{2f-1}\widetilde{T}_{\sigma}\Bigr).
\end{equation}
By (\ref{KeyStep2}), for any $\bi\in\widehat{I}(2m,n)$,
$$
v_{\bi}\widetilde{T}_{d_1}^{\ast}={\iota}^{\otimes
n}\bigl(v_{\bi-m_0+m}T_{d_1}^{\ast})+\sum_{\bl\in
I(2m_0,n)\setminus\widehat{I}(2m,n)}\widehat{A}_{\bi,\bl}v_{\bl},
$$
where $\widehat{A}_{\bi,\bl}\in K$ for each $\bl$.

To prove (\ref{KeyStep3}), it suffices to show that
\begin{equation}\label{KeyStep33}
\text{$\ell_s(l_1,\cdots,l_{2f})<f$ whenever
$\widehat{A}_{\bi,\bl}\neq 0$.}
\end{equation}

It remains to prove (\ref{KeyStep33}). By \cite[Lemma 3.8]{DDH}, we
can write $d_1=d_{11}d_{J}$, where $d_{11}\in\mathcal{D}_f,
J\in\mathcal{P}_f$. Then, $$
\widetilde{T}_{d_1}^{\ast}=\widetilde{T}_{d_{J}}^{\ast}\widetilde{T}_{d_{11}}^{\ast}=
\widetilde{T}_{d_{J}^{-1}}\widetilde{T}_{d_{11}^{-1}}.
$$
Since $d_{11}^{-1}\in\BS_{2f}$, the action of
$\widetilde{T}_{d_{11}^{-1}}$ does not change the symplectic length
of the first $(2f)$ parts of any simple $n$-tensor. Therefore, using
(\ref{equa68}), we can assume without loss of generality that
$d_{11}=1$, and hence
$d_1=d_{J}\in\widetilde{\mathcal{D}}_{(2f,n-2f)}$. With this
assumption, we claim that
\begin{equation}\label{KeyStep333}
v_{\bi}\widetilde{T}_{d_J}^{\ast}={\iota}^{\otimes
n}\bigl(v_{\bi-m_0+m}T_{d_J}^{\ast})+\sum_{\bl\in
I(2m_0,n)\setminus\widehat{I}(2m,n)}\widehat{A}_{\bi,\bl}v_{\bl},
\end{equation}
where $\widehat{A}_{\bi,\bl}\in K$ for each $\bl$, and
$\widehat{A}_{\bi,\bl}\neq 0$ only if $$
\bigl\{l_1,\cdots,l_{2f}\bigr\}\bigcap\bigl\{m_0+m+1,m_0+m+2,\cdots,2m_0\bigr\}=\emptyset,$$
and either $l_1$ or $l_2$ belongs to
$\bigl\{1,2,\cdots,m_0-m\bigr\}.$ If this is true, then it is clear
that $\widehat{A}_{\bi,\bl}\neq 0$ only if $\ell_s(\bl)<f$ and hence
(\ref{KeyStep33}) follows.

Let $J=(j_1,j_2,\cdots,j_{2f})$. Then $1\leq
j_1<j_2<\cdots<j_{2f}\leq n$. By Lemma \ref{EXP1},
$$ (s_{j_1-1}\cdots s_2s_1)(s_{j_2-1}\cdots s_3s_2)\cdots
(s_{j_{2f-1}-1}\cdots s_{2f}s_{2f-1})(s_{j_{2f}-1}\cdots
s_{2f+1}s_{2f})
$$
is a reduced expression of $d_{J}^{-1}$. If $f=1$, then $$
\widetilde{T}_{d_{J}}^{\ast}=(\widetilde{T}_{j_1-1}\cdots
\widetilde{T}_2\widetilde{T}_1)(\widetilde{T}_{j_2-1}\cdots
\widetilde{T}_3\widetilde{T}_2).
$$
In this case, suppose that $v_{\bl}$ is involved in
$v_{\bi}\widetilde{T}_{d_{J}}^{\ast}$, where
$\bi\in\widehat{I}(2m,n)$. Then, there must exist
$$\bl^{[0]},\bl^{[1]},\cdots,\bl^{[j_1+j_2-3]}\in I(2m_0,n)$$ such that
\begin{enumerate}
\item $\bl^{[0]}=\bi$, $\bl^{[j_1+j_2-3]}=\bl$;
\item for each $1\leq t\leq j_1-1$,
$v_{\bl^{[t]}}$ is involved in $v_{\bi}(\widetilde{T}_{j_1-1}\cdots
\widetilde{T}_{j_1-t+1}\widetilde{T}_{j_1-t})$;
\item for each $j_1\leq t\leq j_1+j_2-3$,
$v_{\bl^{[t]}}$ is involved in $$v_{\bi}(\widetilde{T}_{j_1-1}\cdots
\widetilde{T}_2\widetilde{T}_1)(\widetilde{T}_{j_2-1}\cdots
\widetilde{T}_{j_2-t+j_1}\widetilde{T}_{j_2-t+j_1-1});$$
\item for each $1\leq t\leq j_1-1$, $v_{\bl^{[t]}}$ is involved in $v_{\bl^{[t-1]}}
\widetilde{T}_{j_1-t}$;
\item for each $j_1\leq t\leq j_1+j_2-3$, $v_{\bl^{[t]}}$ is involved in $v_{\bl^{[t-1]}}
\widetilde{T}_{j_1+j_2-t-1}$.
\end{enumerate}
Now suppose $\bl\in I(2m_0,n)\setminus\widehat{I}(2m,n)$. If there
exists an integer $1\leq b\leq j_1$ such that $$ m_0-m+1\leq
l_{j_1-b}^{[b-1]}=\bigl(l_{j_1-b+1}^{[b-1]}\bigr)'\leq
m_0+m,\,\,\,l_{j_1-b}^{[b]}=\bigl(l_{j_1-b+1}^{[b]}\bigr)'<m_0-m+1,
$$
then we choose such a $b$ which is maximal. By (\ref{useful}), we
have $l_1=l_1^{[j_1-1]}<m_0-m+1$ and $l_2\leq l_{j_2}^{[j_1-1]}\leq
m_0+m$. If there does not exist such a $b$, then there must exist an
integer $j_1\leq c\leq j_1+j_2-3$ such that
$$\begin{aligned} m_0-m+1\leq
l_{j_1+j_2-c-1}^{[c-1]}&=\bigl(l_{j_1+j_2-c}^{[c-1]}\bigr)'\leq
m_0+m,\\
l_{j_1+j_2-c-1}^{[c]}&=\bigl(l_{j_1+j_2-c}^{[c]}\bigr)'<m_0-m+1.
\end{aligned}$$ We choose such a $c$ which is
maximal. By (\ref{useful}), $l_2<m_0-m+1$. The non-existence of $b$
implies that $m_0-m+1\leq l_1\leq m_0+m$. This proves our claim in
the case $f=1$.\smallskip

Now we assume that $f\geq 2$. We use induction on $n-2f$. If
$n-2f=0$, then $d_J=1$, there is nothing to prove. We set $$
\widehat{d}_J=(s_{2f-1}s_{2f}\cdots s_{j_{2f-1}-1})
(s_{2f-2}s_{2f-1}\cdots s_{j_{2f-2}-1})\cdots (s_{1}s_{2}\cdots
s_{j_{1}-1}).
$$
Then $d_J^{-1}=\widehat{d}_J^{-1}(s_{j_{2f}-1}\cdots
s_{2f+1}s_{2f})$ and
$$\ell(d_J^{-1})=\ell(\widehat{d}_{J}^{-1})+j_{2f}-2f.$$ If $j_{2f}\leq n-1$, then
$d_J\in\mathcal{D}_{(2f,n-1-2f)}$, and we are done by induction
hypothesis. If $j_{2f}=n$, then by induction hypothesis, we have $$
v_{\bi}\widetilde{T}_{\widehat{d}_J}^{\ast}={\iota}^{\otimes
n}\bigl(v_{\bi-m_0+m}T_{\widehat{d}_J}^{\ast})+\sum_{\bl\in
I(2m_0,n)\setminus\widehat{I}(2m,n)}\widehat{B}_{\bi,\bl}v_{\bl},
$$
where $\widehat{B}_{\bi,\bl}\in K$ for each $\bl$, and
$\widehat{B}_{\bi,\bl}\neq 0$ only if \begin{enumerate}
\item $m_0-m+1\leq l_n\leq m_0+m$; and
\item
$\bigl\{l_1,\cdots,l_{2f}\bigr\}\bigcap\bigl\{m_0+m+1,m_0+m+2,\cdots,2m_0\bigr\}=\emptyset$;
and
\item either $l_1$ or $l_2$ belongs to
$\bigl\{1,2,\cdots,m_0-m\bigr\}$.\end{enumerate} It remains to check
the simple tensors involved in
$v_{\bl}\widetilde{T}_{n-1}\widetilde{T}_{n-2}\cdots\widetilde{T}_{2f}$
as well as in ${\iota}^{\otimes
n}\bigl(v_{\bi-m_0+m}T_{\widehat{d}_J}^{\ast})\widetilde{T}_{n-1}\widetilde{T}_{n-2}\cdots\widetilde{T}_{2f}$.
\smallskip

Since the action of
$\widetilde{T}_{n-1}\widetilde{T}_{n-2}\cdots\widetilde{T}_{2f}$
does not change the first $(2f-1)$ positions, it follows from
(\ref{useful}) and the fact $m_0-m+1\leq l_n\leq m_0+m$ that $$
{\iota}^{\otimes
n}\bigl(v_{\bi-m_0+m}T_{\widehat{d}_J}^{\ast})\widetilde{T}_{n-1}\widetilde{T}_{n-2}\cdots\widetilde{T}_{2f}
={\iota}^{\otimes
n}\bigl(v_{\bi-m_0+m}T_{{d}_J}^{\ast})+\sum_{\bu\in
I(2m_0,n)}\widehat{B}'_{\bi,\bu}v_{\bu},
$$
where $\widehat{B}'_{\bi,\bu}\in K$ for each $\bu$, and
$\widehat{B}_{\bi,\bu}\neq 0$ only if $$
\bigl\{u_1,\cdots,u_{2f}\bigr\}\bigcap\bigl\{m_0+m+1,m_0+m+2,\cdots,2m_0\bigr\}=\emptyset,$$
and either $u_1$ or $u_2$ belongs to
$\bigl\{1,2,\cdots,m_0-m\bigr\}.$ By the same reason, we can deduce
that
$$v_{\bl}\widetilde{T}_{n-1}\widetilde{T}_{n-2}\cdots\widetilde{T}_{2f-1}
=\sum_{\bu\in I(2m_0,n)}\widehat{B}''_{\bi,\bu}v_{\bu},
$$
where $\widehat{B}''_{\bi,\bu}\in K$ for each $\bu$, and
$\widehat{B}'_{\bi,\bu}\neq 0$ only if $$
\bigl\{u_1,\cdots,u_{2f}\bigr\}\bigcap\bigl\{m_0+m+1,m_0+m+2,\cdots,2m_0\bigr\}=\emptyset,$$
and either $u_1$ or $u_2$ belongs to
$\bigl\{1,2,\cdots,m_0-m\bigr\}.$ This completes the proof of
(\ref{KeyStep333}), and hence the proof of (\ref{KeyStep3}).
\medskip\smallskip

\noindent {\it Step 3.} We want to show that for any
$d_2\in\mathcal{D}_{\nu_f}$, $$
\varphi_{C}\Theta_1\Bigl(\widetilde{E}_1\widetilde{E}_3\cdots
\widetilde{E}_{2f-1}\widetilde{T}_{\sigma}\widetilde{T}_{d_2}\Bigr)=\Theta_0\varphi_{C}\Bigl(
\widetilde{E}_1\widetilde{E}_3\cdots
\widetilde{E}_{2f-1}\widetilde{T}_{\sigma}\widetilde{T}_{d_2}\Bigr).
$$
As before, it is equivalent to show that
\begin{equation}\label{Keystep4}
\varphi_{C}\Bigl({E}_1{E}_3\cdots
{E}_{2f-1}{T}_{\sigma}{T}_{d_2}\Bigr)=\zeta^{-(m_0+m)n}\Theta_0\varphi_{C}\Bigl(
\widetilde{E}_1\widetilde{E}_3\cdots
\widetilde{E}_{2f-1}\widetilde{T}_{\sigma}\widetilde{T}_{d_2}\Bigr).
\end{equation}

Recall that $\widetilde{V}^{\otimes n}$ has a basis consists of all
the simple $n$-tensors $v_{\bi}$, where $\bi\in I(2m_0,n)$. We
ordered the elements of this basis as $X_1, X_2, \cdots,
X_{(2m_0)^n}$ such that the subset $\bigl\{X_1, X_2, \cdots,
X_{(2m)^n}\bigr\}$ is a basis of $\iota^{\otimes n}\bigl(V^{\otimes
n}\bigr)$. With this ordered basis $\{X_1,\cdots,X_{(2m_0)^n}\}$ in
mind, we identify $\End_K\bigl(\widetilde{V}^{\otimes n}\bigr)$ with
full matrix algebra ${\rm M}_{(2m_0)^n\times (2m_0)^n}(K)$, and
identify the homomorphism $$\varphi_{C}:\,\,
(\bb_n(-\zeta^{2m_0+1},\zeta))^{\rm
op}\rightarrow\End_K\bigl(\widetilde{V}^{\otimes n}\bigr)$$ with  a
homomorphism $$\varphi_{C}:\,\, (\bb_n(-\zeta^{2m_0+1},\zeta))^{\rm
op}\rightarrow{\rm M}_{(2m_0)^n\times (2m_0)^n}(K).$$ We claim that
for any $x\in\bb_n(-\zeta^{2m_0+1},\zeta))$,
\begin{equation}\label{Keystep4a}\varphi_{C}\bigl(x^{\ast}\bigr)=\bigl(\varphi_{C}(x)\bigr)^t,
\end{equation}
where $\bigl(\varphi_{C}(x)\bigr)^t$ means the transpose of the
matrix $\varphi_{C}(x)$, and $``\ast"$ denotes the algebra
anti-automorphism of $\bb_n(-\zeta^{2m_0+1},\zeta))$ defined in
Section 5.\smallskip

In fact, by direct verification, it is easy to see that for each
$1\leq i\leq n-1$, both $\varphi_{C}(T_i)$ and $\varphi_{C}(E_i)$
are symmetric matrices (with respect to the above ordered basis).
Since $\bb_n(-\zeta^{2m_0+1},\zeta)$ is generated by $T_i, E_i,
i=1,2,\cdots,n-1$ and $\varphi_{C}$ is an algebra homomorphism, the
claim (\ref{Keystep4a}) follows immediately.\smallskip

The above argument applies equally well to $V^{\otimes n}$. For each
integer $i$ with $1\leq i\leq (2m)^n$, let $Y_i:=\pi^{\otimes
n}(X_{i})$. Then, $\{Y_1,\cdots,Y_{(2m)^n}\}$ is a basis of
$V^{\otimes n}$. With the ordered basis $\{Y_1,\cdots,Y_{(2m)^n}\}$
in mind, we identify $\End_K\bigl({V}^{\otimes n}\bigr)$ with full
matrix algebra ${\rm M}_{(2m)^n\times (2m)^n}(K)$, and identify the
homomorphism
$$\varphi_{C}:\,\, (\bb_n(-\zeta^{2m+1},\zeta))^{\rm
op}\rightarrow\End_K\bigl({V}^{\otimes n}\bigr)$$ with  a
homomorphism $$\varphi_{C}:\,\, (\bb_n(-\zeta^{2m+1},\zeta))^{\rm
op}\rightarrow{\rm M}_{(2m)^n\times (2m)^n}(K).$$ As before, for any
$x\in\bb_n(-\zeta^{2m_0+1},\zeta))$, we have
$\varphi_{C}\bigl(x^{\ast}\bigr)=\bigl(\varphi_{C}(x)\bigr)^t$.\smallskip

We define a linear map $\Theta'_0$ from ${\rm M}_{(2m_0)^n\times
(2m_0)^n}(K)$ to ${\rm M}_{(2m)^n\times (2m)^n}(K)$ as follows: for
any $M\in{\rm M}_{(2m_0)^n\times (2m_0)^n}(K)$, $\Theta'_0(M)$ is
the submatrix of $M$ in the upper-left corner, obtained from $M$ by
deleting the last $(2m_0)^n-(2m)^n$ rows and the last
$(2m_0)^n-(2m)^n$ columns. Then, it is clear that
\begin{equation}\label{Keystep4b}\Theta'_0\bigl(M^{t}\bigr)=\bigl(\Theta'_0(M)\bigr)^t.
\end{equation}
With the ordered bases $\{X_1, X_2, \cdots, X_{(2m_0)^n}\}$ and
$\{Y_1,\cdots,Y_{(2m)^n}\}$ in mind, it is easy to check that we can
identify the linear map $\Theta_0$ as the restriction of
$\zeta^{(m_0+m)n}\Theta'_0$.

Applying (\ref{Keystep4a}), (\ref{Keystep4b}) and (\ref{KeyStep3}),
we get that
$$\begin{aligned} &\quad\, \zeta^{-(m_0+m)n}\Theta_0\varphi_{C}\Bigl(
\widetilde{E}_1\widetilde{E}_3\cdots
\widetilde{E}_{2f-1}\widetilde{T}_{\sigma}\widetilde{T}_{d_2}\Bigr)\\
&=\Theta'_0 \varphi_{C}\Bigl( \widetilde{E}_1\widetilde{E}_3\cdots
\widetilde{E}_{2f-1}\widetilde{T}_{\sigma}\widetilde{T}_{d_2}\Bigr)\\
&=\Theta'_0 \varphi_{C}\biggl(\Bigl(\widetilde{T}_{d_2}^{\ast}
\widetilde{E}_1\widetilde{E}_3\cdots
\widetilde{E}_{2f-1}\widetilde{T}_{\sigma^{-1}}\Bigr)^{\ast}\biggr)\\
&=\Theta'_0 \biggl(\Bigl(\varphi_{C}\bigl(\widetilde{T}_{d_2}^{\ast}
\widetilde{E}_1\widetilde{E}_3\cdots
\widetilde{E}_{2f-1}\widetilde{T}_{\sigma^{-1}}\bigr)\Bigr)^{t}\biggr)\\
&=\biggl(\Theta'_0\varphi_{C}\Bigl(\widetilde{T}_{d_2}^{\ast}
\widetilde{E}_1\widetilde{E}_3\cdots
\widetilde{E}_{2f-1}\widetilde{T}_{\sigma^{-1}}\Bigr)\biggr)^{t}\\
&=\biggl(\varphi_{C}\Bigl({T}_{d_2}^{\ast}{E}_1{E}_3\cdots
{E}_{2f-1}{T}_{\sigma^{-1}}\Bigr)\biggr)^t\\
&=\varphi_{C}\Bigl(\bigl({T}_{d_2}^{\ast}{E}_1{E}_3\cdots
{E}_{2f-1}{T}_{\sigma^{-1}}\bigr)^{\ast}\Bigr)\\
&=\varphi_{C}\Bigl({E}_1{E}_3\cdots
{E}_{2f-1}{T}_{\sigma}{T}_{d_2}\Bigr),
\end{aligned}
$$
as required. This completes the proof of (\ref{Keystep4}).
\medskip\smallskip

\noindent {\it Step 4.} We are now in a position to prove
(\ref{last}). Let $\bi\in\widehat{I}(2m,n)$,
$\sigma\in\BS_{\{2f+1,\cdots,n\}}$, $\nu:=((2^f), (n-2f))$,
$d_1,d_2\in\mathcal{D}_{\nu}$. It suffices to show that
$$ \begin{aligned}
v_{\bi}\widetilde{T}_{d_1}^{\ast}\widetilde{E}_1\widetilde{E}_3\cdots
\widetilde{E}_{2f-1}\widetilde{T}_{\sigma}\widetilde{T}_{d_2}&=\iota^{\otimes
n}\bigl(v_{\bi-m_0+m}T_{d_1}^{\ast}{E}_1{E}_3\cdots
{E}_{2f-1}{T}_{\sigma}{T}_{d_2}\bigr)\\
&\qquad +\sum_{\bj\in
I(2m_0,n)\setminus\widehat{I}(2m,n)}\widehat{A}'_{\bi,\bj}v_{\bj},
\end{aligned}$$ where $\widehat{A}'_{\bi,\bj}\in K$ for each $\bj$.

By (\ref{KeyStep33}), $$
v_{\bi}\widetilde{T}_{d_1}^{\ast}={\iota}^{\otimes
n}\bigl(v_{\bi-m_0+m}T_{d_1}^{\ast})+\sum_{\substack{\bl\in
I(2m_0,n)\setminus\widehat{I}(2m,n)\\
\ell_s(l_1,\cdots,l_{2f})<f}}\widehat{A}_{\bi,\bl}v_{\bl},
$$
where $\widehat{A}_{\bi,\bl}\in K$ for each $\bl$. Therefore,
$$
v_{\bi}\widetilde{T}_{d_1}^{\ast}\widetilde{E}_1\widetilde{E}_3\cdots
\widetilde{E}_{2f-1}\widetilde{T}_{\sigma}\widetilde{T}_{d_2}=\iota^{\otimes
n}\bigl(v_{\bi-m_0+m}T_{d_1}^{\ast}\bigr)\widetilde{E}_1\widetilde{E}_3\cdots
\widetilde{E}_{2f-1}\widetilde{T}_{\sigma}\widetilde{T}_{d_2}.
$$
Now we use (\ref{Keystep4}), it follows that $$\begin{aligned}
&\quad\,\iota^{\otimes
n}\bigl(v_{\bi-m_0+m}T_{d_1}^{\ast}\bigr)\widetilde{E}_1\widetilde{E}_3\cdots
\widetilde{E}_{2f-1}\widetilde{T}_{\sigma}\widetilde{T}_{d_2}\\
&=\iota^{\otimes n}\bigl(v_{\bi-m_0+m}T_{d_1}^{\ast}{E}_1{E}_3\cdots
{E}_{2f-1}{T}_{\sigma}{T}_{d_2}\bigr)+\sum_{\bj\in
I(2m_0,n)\setminus\widehat{I}(2m,n)}\widehat{A}'_{\bi,\bj}v_{\bj},
\end{aligned}$$ where $\widehat{A}'_{\bi,\bj}\in K$ for each $\bj$, as required.
This completes the proof of (\ref{last}). Hence we complete the
proof of Lemma \ref{lm61} (1).

\bigskip
\bigskip\bigskip\bigskip

\centerline{Acknowledgement}
\bigskip\bigskip

\thanks{The Research was supported by National Natural Science Foundation
of China (Project 10771014), the Program NCET and partly by an
Australian Research Council discovery grant. The author also
acknowledges the support from the Chern Institute of Mathematics
during his visit in the March of 2007.}

\bibliographystyle{amsplain}

\bigskip\bigskip\bigskip\bigskip

\end{document}